\newcommand{\E}{\mathbb{E}}
\newcommand{\R}{\mathbb{R}}
\newcommand{\C}{\mathbb{C}}
\newcommand{\N}{\mathbb{N}}
\newcommand{\Z}{\mathbb{Z}}
\renewcommand{\P}{\mathbb{P}}
\let\Re\relax \DeclareMathOperator{\Re}{Re}
\newcommand{\condparentheses}[2]{\left(\left.#1\,\right\vert#2\right)}
\newcommand{\condP}[2]{\mathbb{P}\condparentheses{#1}{#2}}
\DeclareMathOperator{\Var}{Var}
\DeclareMathOperator{\Cov}{Cov}
\newcommand{\set}[1]{\left\{#1\right\}}
\newcommand{\abs}[1]{\left\vert#1\right\vert}
\newcommand{\norm}[1]{\left\Vert#1\right\Vert}
\newcommand{\bigset}[1]{\big\{#1\big\}}
\newcommand{\shortset}[1]{\{#1\}}
\newcommand{\shortabs}[1]{\vert#1\vert}
\newcommand{\bigabs}[1]{\big\vert#1\big\vert}
\newcommand{\ocinterval}[1]{\left(#1\right]} 
\newcommand{\cointerval}[1]{\left[#1\right)} 
\newcommand{\indicatorofset}[1]{\mathbbm{1}_{#1}}
\newcommand{\indicator}[1]{\indicatorofset{\set{#1}}}
\newcommand{\mat}[1]{\begin{pmatrix}#1\end{pmatrix}}
\newcommand{\smallmat}[1]{\left(\begin{smallmatrix}#1\end{smallmatrix}\right)}
\newcommand{\bigunion}{\bigcup}
\newcommand{\decreasesto}{\searrow}
\renewcommand{\phi}{\varphi}
\let\epsilon\varepsilon
\newcommand{\blank}[1]{}
\newcommand{\Poisson}{\mathrm{Poisson}}
\newcommand{\Binomial}{\mathrm{Binomial}}
\newcommand{\Geometric}{\mathrm{Geometric}}
\newtheorem{theorem}{Theorem}
\newtheorem{prop}[theorem]{Proposition}
\newtheorem{lemma}[theorem]{Lemma}
\newtheorem{coro}[theorem]{Corollary}
\theoremstyle{definition}
\newtheorem{example}[theorem]{Example}
\newcommand{\textandreference}[2]{\texorpdfstring{\hyperref[#2]{#1\ref*{#2}}}{#1\ref*{#2}}}
\newcommand{\lbsect}[1]{\label{s:#1}}
\newcommand{\refsect}[1]{\textandreference{Section~}{s:#1}}
\newcommand{\lbsubsect}[1]{\label{ss:#1}}
\newcommand{\refsubsect}[1]{\textandreference{Section~}{ss:#1}}
\newcommand{\lbsubsubsect}[1]{\label{sss:#1}}
\newcommand{\refsubsubsect}[1]{\textandreference{Section~}{sss:#1}}
\newcommand{\lbthm}[1]{\label{T:#1}}
\newcommand{\refthm}[1]{\textandreference{Theorem~}{T:#1}}
\newcommand{\lbprop}[1]{\label{P:#1}}
\newcommand{\refprop}[1]{\textandreference{Proposition~}{P:#1}}
\newcommand{\lblemma}[1]{\label{L:#1}}
\newcommand{\reflemma}[1]{\textandreference{Lemma~}{L:#1}}
\newcommand{\lbcoro}[1]{\label{C:#1}}
\newcommand{\refcoro}[1]{\textandreference{Corollary~}{C:#1}}
\newcommand{\lbexample}[1]{\label{ex:#1}}
\newcommand{\refexample}[1]{\textandreference{Example~}{ex:#1}}
\numberwithin{equation}{section}
\newcommand{\MLE}{\mathrm{MLE}}
\newcommand{\CLT}{\mathrm{CLT}}
\DeclareMathOperator{\rank}{rank}
\DeclareMathOperator{\trace}{tr}
\DeclareMathOperator{\interior}{int}
\DeclareMathOperator{\Log}{Log}
\DeclareMathOperator*{\argmax}{argmax}
\DeclareMathOperator{\sech}{sech}
\newcommand{\ii}{\mathrm{i}}
\newcommand{\ones}{\mathbbm{1}}
\newcommand{\thetadomain}{\mathcal{R}}
\let\grad\nabla
\newcommand{\xdim}{d}
\newcommand{\latentdim}{\ell}
\newcommand{\lbappendix}[1]{\label{a:#1}}
\newcommand{\refappendix}[1]{\textandreference{Appendix~}{a:#1}}
\newtheorem*{remarknonumber}{Remark}
\begin{document}

\title{Asymptotic accuracy of the saddlepoint approximation for maximum likelihood estimation}
\author{Jesse Goodman\footnote{Department of Statistics, University of Auckland, Private Bag 92019, Auckland 1142, New Zealand}}
\date{\today}

\maketitle

\begin{abstract}
~
The saddlepoint approximation gives an approximation to the density of a random variable in terms of its moment generating function.
When the underlying random variable is itself the sum of $n$ unobserved i.i.d.\ terms, the basic classical result is that the relative error in the density is of order $1/n$.
If instead the approximation is interpreted as a likelihood and maximised as a function of model parameters, the result is an approximation to the maximum likelihood estimate (MLE) that can be much faster to compute than the true MLE.
This paper proves the analogous basic result for the approximation error between the saddlepoint MLE and the true MLE: subject to certain explicit identifiability conditions, the error has asymptotic size $O(1/n^2)$ for some parameters, and $O(1/n^{3/2})$ or $O(1/n)$ for others.
In all three cases, the approximation errors are asymptotically negligible compared to the inferential uncertainty.

The proof is based on a factorisation of the saddlepoint likelihood into an exact and approximate term, along with an analysis of the approximation error in the gradient of the log-likelihood.
This factorisation also gives insight into alternatives to the saddlepoint approximation, including a new and simpler saddlepoint approximation, for which we derive analogous error bounds.
As a corollary of our results, we also obtain the asymptotic size of the MLE error approximation when the saddlepoint approximation is replaced by the normal approximation.
\end{abstract}

\section{Introduction}\lbsect{Intro}

Let $X$ be a random variable with density function $f(x)$, $x\in\R$.
Define
\begin{equation}
M(s) = \E(e^{sX}), \qquad K(s) = \log M(s),
\end{equation}
the moment generating function (MGF) and cumulant generating function (CGF), respectively, associated to $X$.
Given $x\in\R$, let $\hat{s}$ be the solution to 
\begin{equation}
K'(\hat{s}) = x
\end{equation}
and set
\begin{equation}\label{BasicSPA1d}
\hat{f}(x) = \frac{\exp(K(\hat{s}) - \hat{s}x)}{\sqrt{2\pi K''(\hat{s})}}.
\end{equation}
We call $\hat{f}(x)$ the \emph{saddlepoint approximation} to the density function $f(x)$.

In the statistical context, an important use of the saddlepoint approximation has been to analyse sampling distributions, with $X$ an estimator or a related statistic  to be understood via its sampling density $f(x)$.
In this setting, the natural way to assess the saddlepoint approximation is to measure how well $\hat{f}(x)$ approximates $f(x)$ as a function of $x$ -- for instance, by determining how fast the absolute error $\shortabs{\hat{f}(x)-f(x)}$ or the relative error $\shortabs{\hat{f}(x)/f(x)-1}$ decay in a suitable limit, and whether this convergence is uniform.

The most prominent results of this kind concern what we will call the \emph{standard asymptotic regime}, in which the observed value $X$ is the sample average of $n$ i.i.d.\ values.
In this setup, the saddlepoint approximation has simple $n$-dependence and can be computed in constant time, whereas the true density $f(x)$ commonly becomes intractable.
The classical error estimate \cite{Daniels1954} for the standard asymptotic regime states that $\hat{f}(x)/f(x) = 1+O(1/n)$ as $n\to\infty$.
Under stronger assumptions, the ratio $\hat{f}(x)/f(x)$ may remain uniformly bounded even in the tails, see \cite[Theorem~4.6.1]{Kolassa2006SeriesApproxMethodsStats} and \cite{Jensen1988,B-NKlu1999}, whereas other density approximations such as normal approximations (see \refappendix{SaddlepointVsCLT}) or Edgeworth expansions quickly lose relative accuracy away from the mean.
For many applications based on densities, these error bounds are more than enough to justify using the saddlepoint approximation $\hat{f}(x)$ as a readily computable substitute for $f(x)$.

In this paper, we shift perspectives and consider \eqref{BasicSPA1d} as an approximation to the \emph{likelihood}.
In this viewpoint, $X$ represents the raw data obtained from an experiment, modelled by the parameter $\theta$, and the observation $x$ is fixed.
We write the density and CGF as $f(x;\theta)$, $K(s;\theta)$ to emphasise their dependence on the parameter, and set $L(\theta;x) = f(x;\theta)$.
Instead of \eqref{BasicSPA1d} we form 
\begin{equation}\label{BasicSPA1dL}
\hat{L}(\theta;x) = \frac{\exp(K(\hat{s};\theta) - \hat{s}x)}{\sqrt{2\pi K''(\hat{s};\theta)}},
\end{equation}
the \emph{saddlepoint approximation to the likelihood}, considered as a function of $\theta$.
Note that the saddlepoint $\hat{s}=\hat{s}(\theta,x)$ is a function of both the parameter $\theta$ and the observed value $x$, defined implicitly by
\begin{equation}
K'(\hat{s}(\theta,x); \theta) = x
,
\end{equation}
and the derivatives $K'$, $K''$ are with respect to $s$.

Maximum likelihood inference involves maximising $L(\theta; x)$ with respect to $\theta$ to produce the maximum likelihood estimate (MLE)
\begin{equation}\label{TrueMLE}
\theta_{\MLE}(x) = \argmax_\theta L(\theta;x),
\end{equation}
if it exists.
In cases where the true likelihood $L(\theta; x)$ is intractable, but the saddlepoint likelihood $\hat{L}(\theta; x)$ can be computed, it is natural to ask what are the consequences for inference in maximising the saddlepoint likelihood in place of the true likelihood. 
We term the resulting estimate the \emph{saddlepoint MLE}
\begin{equation}\label{SPMLE}
\hat{\theta}_{\MLE}(x) = \argmax_\theta \hat{L}(\theta;x),
\end{equation}
if it exists.
Our interest is in the error introduced by this approach: specifically, the error in the saddlepoint MLE as an approximation to the true MLE, $\bigabs{\hat{\theta}_{\MLE}(x) - \theta_{\MLE}(x)}$. 
Investigation of approximation error has been a central issue when saddlepoint MLEs have been applied in the literature \cite[see Examples~\ref{ex:ZhaBraFew}--\ref{ex:PedDavFok} in \refappendix{Examples}]{ZhaBraFew2019,DavHauKraParameterLinearBirthDeath,PedDavFok2015}, but no general theory is currently available.
The classical saddlepoint error estimate $\hat{L}(\theta;x)/L(\theta;x)=1+O(1/n)$ does not provide easily interpretable guidance about whether $\hat{\theta}_\MLE(x)$ is close to $\theta_\MLE(x)$.
Instead, the key question is whether the \emph{gradient} $\grad_\theta \log\hat{L}(\theta;x)$ provides a good approximation to the true gradient $\grad_\theta\log L(\theta;x)$ \cite{Ogden2017,OgdenErrorLaplaceHighDimension}.
The main theorems of this paper will provide sharp asymptotic bounds for $\bigabs{\grad_\theta \log\hat{L}(\theta;x)-\grad_\theta\log L(\theta;x)}$ and hence for the size of the MLE approximation error $\bigabs{\hat{\theta}_{\MLE}(x) - \theta_{\MLE}(x)}$ under general conditions.

\paragraph*{Outline of the paper}

\refsubsect{Setup} introduces further notation for the multivariate saddlepoint approximation, including conventions for row and column vectors and gradients.
In \refsubsect{SAR}, we formulate the standard asymptotic regime as an explicit limiting framework relating the distribution $X_\theta$, its CGF $K$, the observed value $x$, and the parameter $n$.
\refsubsect{IntroExample} introduces a class of examples in this asymptotic framework.
The main results, Theorems~\ref{T:GradientError}--\ref{T:IntegerValued}, are stated in \refsubsect{MainResults}, and their interpretation is discussed in \refsubsect{Discussion}.
Several examples from theory and the literature are discussed in detail in \refappendix{Examples}; see \refsubsect{ExamplesGuide} for a brief summary.

\refsubsect{TwoSteps} expresses the saddlepoint procedure as a combination of an exact step (tilting) and an approximation step, and introduces a factorisation and reparametrisation of the likelihood that underlie the rest of the paper.
As a by-product, we obtain in \refsubsect{LowerOrder} a ``lower-order'' version of the saddlepoint approximation, satisfying analogues of Theorems~\ref{T:GradientError}--\ref{T:SamplingMLEWellSpecified} with a different power of $n$: see \refthm{LowerOrderSaddlepoint}.

The proofs of Theorems~\ref{T:GradientError}--\ref{T:MLEerror} are given in \refsect{Proofs}, along with a summary in \refsubsect{DerivativesSummary} of gradients of quantities related to the saddlepoint approximation.
Further proofs, examples and technical details appear in Appendices~\ref{a:Invariance}--\ref{a:Examples}.

\refsect{Conclusion} includes a summary, additional discussion, and directions for further inquiry.

\section{Main results}

\subsection{Setup and notation}\lbsubsect{Setup}

\subsubsection{Moment and cumulant generating functions}

We consider a vector-valued random variable $X$ of dimension $\xdim$ depending on a parameter $\theta$ of dimension $p$, and write $X=X_\theta$ when we wish to emphasise the dependence.
We consider the values of $X_\theta$ and $\theta$ to be column vectors, i.e., $\xdim\times 1$ or $p\times 1$ matrices, which we express as $X_\theta\in\R^{\xdim\times 1}$, $\theta\in\thetadomain\subset\R^{p\times 1}$, where $\thetadomain$ is an open subset of $\R^{p\times 1}$.
The multivariate MGF and CGF are
\begin{equation}\label{MGFCGF}
M(s;\theta) = \E(e^{sX_\theta}), \qquad K(s;\theta) = \log M(s;\theta).
\end{equation}
On those occasions when we consider several random variables, we will write $M_X$, $M_Y$ and so on to distinguish the respective generating functions.
 
In \eqref{MGFCGF}, $s$ is called the \emph{dual variable} to $X$, and we interpret it as a row vector, a $1\times \xdim$ matrix, so that $sX$ is a scalar or $1\times 1$ matrix.
This convention emphasises that $s$ and $X$, despite being vectors of the same dimension, play quite different roles and are not interchangeable; rather, the space of row vectors is the natural dual space to the space of column vectors.
This convention also avoids excessive use of transposes and explicit inner products.

We wish to consider $M$ and $K$ for complex-valued $s$, and to this end we must take care of convergence issues in \eqref{MGFCGF}.
Let 
\begin{equation}
\mathcal{S}_\theta=\set{s\in\R^{1\times\xdim}\colon\E(e^{sX_\theta})<\infty}, 
\qquad
\mathcal{S} = \set{(s,\theta)\colon s\in\mathcal{S}_\theta}.
\end{equation}
Writing $\Re(z)$ for the real part of the complex number $z$, we have $\abs{e^z} = e^{\Re(z)}$ for $z\in\C$, so the expectation in \eqref{MGFCGF} converges absolutely whenever $\Re(s)\in\mathcal{S}_\theta$.
We take the domain of $M$ to be $\set{(s,\theta)\in\C^{1\times\xdim}\times\thetadomain\colon\Re(s)\in\mathcal{S}_\theta}$, where $\Re(s)$ is interpreted coordinatewise for each of the $\xdim$ complex entries of $s$.
Note that for certain distributions $X_\theta$, $\mathcal{S}_\theta$ may reduce to the single point $0$ or otherwise become degenerate, but our assumptions will rule this out.
As soon as the interior $\interior\mathcal{S}_\theta$ is non-empty, $M(s;\theta)$ is analytic as a function of $s\in\interior\mathcal{S}_\theta$.

\subsubsection{Gradients, Hessians, moments and cumulants}\lbsubsubsect{GradHessMomCum}

We interpret gradient operators $\grad_s$, $\grad_\theta$ to have the shape of the transposes of the variables.
Thus $\grad_\theta K$ is a $1\times p$ row vector and $\grad_s K$, which we write as $K'$, is a $\xdim\times 1$ column vector.
We apply this convention also to vector-valued functions; in particular, $\grad_s \grad_\theta K$ is the $(\xdim\times p)$-matrix-valued function with $i,j$ entry $\frac{\partial^2 K}{\partial s_i\partial\theta_j}$.
We can write the Hessians of a scalar-valued function $f(s,\theta)$ as $\grad_s\grad_s^T f$ and $\grad_\theta^T\grad_\theta f$, with $i,j$ entries $\frac{\partial^2 f}{\partial s_i\partial s_j}$ and $\frac{\partial^2 f}{\partial\theta_i\partial\theta_j}$.
When $f=K$ we write the $s$-Hessian as $K''=\grad_s\grad_s^T K$.
For other derivative conventions, see \refappendix{DerivativesDerivation}.

When $0\in\interior\mathcal{S}_\theta$, the derivatives $M'(0;\theta)$, $M''(0;\theta)$ and $K'(0;\theta)$, $K''(0;\theta)$ give moments and cumulants of $X$:
\begin{equation}\label{M'M''K'K''0}
\begin{aligned}
M'(0;\theta) &= \E(X_\theta), & M''(0;\theta) &= \E(X_\theta X_\theta^T), \\ 
K'(0;\theta) &= \E(X_\theta), & K''(0;\theta) &= \Cov(X_\theta,X_\theta).
\end{aligned}
\end{equation}
In particular, $K''(0;\theta)$ must be positive semi-definite.
More generally, as we shall see in \refsubsect{TwoSteps}, $K''(s;\theta)$ has an interpretation as a covariance matrix for all $s\in\interior\mathcal{S}_\theta$.
It is natural to exclude the case where this covariance matrix is singular, and indeed our hypotheses will imply that
\begin{equation}\label{K''PosDef}
K''(s;\theta)\text{ is positive definite for all }s\in\interior\mathcal{S}_\theta.
\end{equation}
As a consequence, $K$ is strictly convex as a function of $s$.

\subsubsection{Multivariate saddlepoint approximation}\lbsubsubsect{MultivariateSPA}

With these preparations we can state the multivariate saddlepoint approximation.
For $x\in\R^{\xdim\times 1}$, we form the \emph{saddlepoint equation}
\begin{equation*}\tag{SE}\label{SaddlepointEquation}
K'(\hat{s};\theta) = x
\end{equation*}
for $\hat{s}\in\mathcal{S}_\theta$.
The strict convexity of $K$ implies that if equation~\eqref{SaddlepointEquation} has a solution, then the solution is unique and we call it the \emph{saddlepoint} $\hat{s}=\hat{s}(\theta,x)$.
We write
\begin{equation}\label{mathcalXDefinition}
\begin{gathered}
\mathcal{X}_\theta = \set{x\in\R^{\xdim\times 1}\colon \exists s\in\mathcal{S}_\theta\text{ solving }K'(s;\theta)=x}, 
\qquad 
\mathcal{X} = \set{(x,\theta)\colon x\in\mathcal{X}_\theta},
\\
\mathcal{X}^o = \set{(x,\theta)\in\mathcal{X}\colon (\hat{s}(\theta,x), \theta) \in\interior\mathcal{S}}
.
\end{gathered}
\end{equation}
We will not discuss under what conditions the saddlepoint equation \eqref{SaddlepointEquation} has a solution; see for instance \cite[section~2.1]{Jensen1995Saddlepoint} or \cite[Corollary~9.6]{B-N19782014InfoExpFamilies}.
We merely remark that in many common examples, we can solve \eqref{SaddlepointEquation} for all $x$ in the interior of the convex hull of the support of $X$, but that this may fail if, for instance, $X$ is non-negative with finite mean and infinite variance.

For $x\in\interior\mathcal{X}_\theta$, we can define the \emph{saddlepoint approximation to the likelihood},
\begin{equation*}\tag{SPA}\label{SPA}
\hat{L}(\theta;x) = \frac{\exp\left( K(\hat{s}(\theta,x);\theta) - \hat{s}(\theta,x) x \right)}{\sqrt{\det(2\pi K''(\hat{s}(\theta,x);\theta))}},
\end{equation*}
the multivariate analogue of \eqref{BasicSPA1dL}.
As in \eqref{SPMLE}, the saddlepoint MLE $\hat{\theta}_\MLE(x)$ is the value of $\theta$ that maximises $\hat{L}(\theta;x)$, if one exists, for a given observed vector $x$.

We will compare $\hat{L}(\theta;x)$ and $\hat{\theta}_\MLE(x)$ with the true likelihood $L(\theta;x)$ and true MLE $\theta_\MLE(x)$.
We are assuming that $X$ has an absolutely continuous distribution, so that the true likelihood $L(\theta;x)$ should be taken to coincide with the density function for $X$.
Complications can arise if there is ambiguity in the choice of density function -- for instance, if the density function has jumps -- and later we will impose decay bounds on $M$ that will imply that $X$ has a continuous, and therefore essentially unique, density function.
Note however that the saddlepoint approximation can be applied whether or not $X$ has a density function, and indeed even when $X$ has a discrete distribution: see \refthm{IntegerValued}.

Throughout the paper we will use the symbol $\hat{\,}$ to indicate saddlepoint approximations, rather than estimators based on observations.
Thus $\theta_\MLE$ and $\hat{\theta}_\MLE$ denote two deterministic functions of the formal argument $x$, whose nature depends on our chosen parametric model.
Although we will continue to describe $x$ as the observed value of $X$, we will think of $x$ as the arbitrary input value to the functions $\theta_\MLE$ and $\hat{\theta}_\MLE$, rather than as the result of a random experiment or sampling procedure.
When we turn to sampling distributions in Theorems~\ref{T:SamplingMLE}--\ref{T:SamplingMLEWellSpecified}, we will introduce further notation to encode any randomness in the observed value.

\subsection{The standard asymptotic regime}\lbsubsect{SAR}

Often, an approximation is given theoretical justification by proving that the approximation error becomes negligible in some relevant limit.
For the saddlepoint approximation, the most commonly treated and mathematically tractable limiting framework is to assume that $X$ is the sum of $n$ unobserved i.i.d.\ terms,
\begin{equation}\label{SARasSum}
X = \sum_{i=1}^n Y^{(i)},
\end{equation}
where $Y^{(1)},Y^{(2)},\dotsc$ are i.i.d.\ copies of a random variable $Y_\theta$ whose parametric distribution does not depend on $n$.
We also scale the observed value $x$ in a matching way.
Thus, we take
\begin{equation*}\tag{SAR}\label{SAR}
\begin{aligned}
M(s;\theta) &= M_0(s;\theta)^n, &&& K(s;\theta) &= n K_0(s;\theta), &&& x &= ny, &&& n\to\infty,
\end{aligned}
\end{equation*}
where $M_0$ and $K_0$, the MGF and CGF corresponding to $Y_\theta$, are fixed.
Throughout the paper, we will assume the relation $x=ny$ implicitly.
It can be helpful to interpret the value $y=x/n$ as the sample mean implied by an observed value $x$.
We think of $y$ as fixed (or varying within a small neighbourhood) in the limit $n\to\infty$, so that both $x$ and $X$ will be of order $n$, and constraints on the observed value $x$ will usually be expressed as constraints on $y$.
We refer to this limiting framework as the \emph{standard asymptotic regime} for the saddlepoint approximation.

In the standard asymptotic regime, the saddlepoint equation \eqref{SaddlepointEquation} simplifies to 
\begin{equation*}\tag{$\mathrm{SE}_{\mathrm{SAR}}$}\label{SESAR}
K_0'(\hat{s};\theta) = y.
\end{equation*}
Write $\hat{s}_0(\theta,y)$ for the function that maps $y\in\mathcal{Y}_\theta$ to the solution of \eqref{SESAR}, where $\mathcal{Y}_\theta,\mathcal{Y},\mathcal{Y}^o$ are defined as in \eqref{mathcalXDefinition} with $x,K$ replaced by $y,K_0$.
Note that when the relations \eqref{SAR} hold, the saddlepoint $\hat{s}(\theta,x)$ does not depend on $n$, with
\begin{equation}\label{ss0Relation}
\hat{s}(\theta,x) = \hat{s}_0(\theta,y)
.
\end{equation}
We sometimes write $\hat{s}$ for the common value in \eqref{ss0Relation} when the distinction is immaterial. 
The domains of $\hat{s}$ and $\hat{s}_0$ are related by $\mathcal{X}_\theta=\set{ny\colon y\in\mathcal{Y}_\theta}$, and similarly for $\mathcal{X},\mathcal{X}^o,\mathcal{Y},\mathcal{Y}^o$.

In the standard asymptotic regime, the saddlepoint approximation \eqref{SPA} becomes
\begin{equation*}\tag{$\mathrm{SPA}_{\mathrm{SAR}}$}\label{SPASAR}
\hat{L}(\theta;x) = \frac{\exp\left( n[K_0(\hat{s}_0(\theta,y);\theta) - \hat{s}_0(\theta,y) y] \right)}{\sqrt{\det(2\pi nK_0''(\hat{s}(\theta,y);\theta))}}.
\end{equation*}
The basic error estimate for the saddlepoint approximation states that, in the standard asymptotic regime and subject to certain technical assumptions, the relative error in the likelihood is of order $1/n$:
\begin{equation}\label{LErrorRough}
\frac{\hat{L}(\theta;x)}{L(\theta;x)} = 1+O(1/n) \qquad\text{as $n\to\infty$, for fixed $(y,\theta)\in\mathcal{Y}^o$.}
\end{equation}
Recall the convention that $x$ and $y$ are implicitly related as in \eqref{SAR}, and note that the $n$-dependence of $L$ and $\hat{L}$ is omitted from the notation.
See \eqref{LErrorPrecise} for a more precise statement.
See also \refappendix{SaddlepointVsCLT}, where we compare \eqref{LErrorRough} to its analogue for normal approximations.

\begin{remarknonumber}
The standard asymptotic regime supposes that the observed value $X$ is the sum of $n$ unobserved i.i.d.\ terms.
That is, the likelihoods $L(\theta;x),\hat{L}(\theta;x)$ and MLEs $\theta_\MLE(x),\hat{\theta}_\MLE(x)$ pertain to a single observation, $X=x$, rather than $n$ observations of the summands $Y^{(1)},\dotsc,Y^{(n)}$.
For this reason, the parameter $n$ \textbf{should not be interpreted as a sample size} in the usual sense.

A model with $k$ i.i.d.\ observations can be adapted to the framework of \eqref{SAR} by giving each observation its own entry in the vector $X$; this setup is discussed further in \refsubsubsect{MultipleSamples} and \refappendix{Examples}, \refexample{MultipleSamplesWorkings}.
Note however that the results of this paper apply for $n\to\infty$, with the sample size $k$ fixed; the joint limit $n\to\infty,k\to\infty$ is excluded from consideration.
\end{remarknonumber}

\subsection{A class of examples}\lbsubsect{IntroExample}

One application of saddlepoint MLEs has been to analyse certain \emph{latent identity} models \cite{ZhaBraFew2019}.
Each individual in a population of size $N$ is assigned one of $\ell$ latent identities, with $U_j$ the number of individuals having latent identity $j$.
The counts $U_1,\dotsc,U_\ell$ are not observed directly, and instead we observe $d$ partial totals $X_1,\dotsc,X_d$, $d<\ell$.
The latent identities are constructed sufficiently richly to make the relationship between latent identities and partial totals deterministic, and we can define a $d\times\ell$ matrix $A$ by $A_{ij}=1$ if individuals with latent identity $j$ contribute to the total $X_i$, and $A_{ij}=0$ otherwise.
Thus the vectors of counts are related by $X=AU$, and the MGF for $X$ can be computed simply by $M_X(s;\theta)=M_U(sA;\theta)$.
However, the likelihood is difficult to compute because the value of $U$ cannot be recovered from observing the value of $X$.

In many cases there is no natural model for the observed count vector $X$ itself.
However, we can naturally model the latent vector $U$ by assuming that $N\sim\Poisson(\lambda)$ and that, given $N$, each individual's latent identity is chosen independently.
The main parameter of interest is $\lambda$, the population size intensity, and the probabilities of different latent identities are determined by other model-specific parameters.

To match this model to the setup of \eqref{SAR}, we make the change of variables $\lambda=n\tilde{\lambda}$.
Note that $N\sim\Poisson(\lambda)$ has the same distribution as the sum of $n$ i.i.d.\ $\Poisson(\tilde{\lambda})$ random variables, so that $U$ and hence $X$ can be written as a sum of $n$ i.i.d.\ terms.
The scaling parameter $n$ is chosen so that $y=x/n$ is of order 1 (if the observed data vector $x$ is given, in the limit of large observed counts) or so that $\tilde{\lambda}=\lambda/n$ is of order 1 (if we consider sampling distributions, in the limit of large population size intensities).
In either case, $x$, $n$ and $\lambda$ will be all of the same asymptotic order.
For further details, see Examples~\ref{ex:X=AU} and \ref{ex:ZhaBraFew} in \refappendix{Examples}.

\subsection{Main results}\lbsubsect{MainResults}

\refthm{GradientError} states a general asymptotic error bound for the gradients of the true and saddlepoint log-likelihoods.
The error bound for the MLE depends on the structure of the model, and we distinguish two cases, which we call fully identifiable (Theorems~\ref{T:MLEerror}--\ref{T:SamplingMLEWellSpecified}) and partially identifiable (\refthm{MLEerrorPartiallyIdentifiable}).
\refthm{MLEerrorNormalApprox} states the corresponding error bounds for normal approximations.
Almost the same results apply to integer-valued random variables, and \refthm{IntegerValued} shows how the assumptions should be modified for this setting.

\subsubsection{Approximation error in the log-gradient}\lbsubsubsect{GradientResult}

Because we wish to understand the true and approximate likelihoods as functions of $\theta$, our central objects of study will be $\grad_\theta \log L$ and $\grad_\theta\log\hat{L}$ rather than $L$ and $\hat{L}$.
We therefore prove a general bound analogous to \eqref{LErrorRough}.

As we will see in \refsubsect{TwoSteps}, $L$ and its derivatives can be expressed as integrals involving $M_0(s+\ii\phi;\theta)$ and its derivatives, where $s$ is fixed and $\phi$ is integrated over $\R^{1\times\xdim}$.
To ensure that these integrals converge, we make the following technical assumptions on the growth or decay of $M_0$ and its derivatives:
\begin{align}
\label{DecayBound}
&\text{there is a continuous function $\delta\colon\interior\mathcal{S}\to(0,\infty)$ such that}
\\
&\quad \abs{\frac{M_0(s+\ii\phi;\theta)}{M_0(s;\theta)}} \leq \left( 1+\delta(s,\theta)\abs{\phi}^2 \right)^{-\delta(s,\theta)}
\quad \text{for all }\phi\in\R^{1\times\xdim}, (s,\theta)\in\interior\mathcal{S}
,
\notag
\\
\label{GrowthBound}
&\text{there is a continuous function $\gamma\colon\interior\mathcal{S}\to(0,\infty)$ such that}
\\
&\quad \abs{\frac{\partial^{k+\ell} M_0}{\partial\theta_{i_1}\dotsb\partial\theta_{i_k}\partial s_{j_1}\dotsb\partial s_{j_\ell}}(s+\ii\phi;\theta)} \leq \gamma(s,\theta) (1+\abs{\phi})^{\gamma(s,\theta)}
\notag\\
&\qquad\quad\text{for all $\phi\in\R^{1\times\xdim}$ and $(s,\theta)\in\interior\mathcal{S}$},
\notag\\
&\qquad\quad\text{for $k\in\set{0,1}$, $1\leq k+\ell\leq 6$ and for $k=2$, $0\leq\ell\leq 2$},
\notag\\
&\qquad\quad\text{and each of these partial derivatives is continuous in all its variables}
.
\notag
\end{align}
These assumptions are relatively mild: \eqref{DecayBound} asserts that $\phi\mapsto\abs{M_0(s+\ii\phi;\theta)/M_0(s;\theta)}$, which always attains its maximum at $\phi=0$, has a non-degenerate critical point at $\phi=0$ (and in particular, \eqref{DecayBound} implies \eqref{K''PosDef}) and decays at least polynomially as $\abs{\phi}\to\infty$.
Similarly, \eqref{GrowthBound} asserts that the partial derivatives grow at most polynomially in $\phi$.
Note that the expression in \eqref{DecayBound} means $\frac{\partial^\ell M_0}{\partial s_1\dotsb\partial s_\ell}$ when $k=0$, and similarly when $\ell=0$.

\begin{theorem}[Gradient error bound]\lbthm{GradientError}
If \eqref{SAR} and \eqref{DecayBound}--\eqref{GrowthBound} hold, then
\begin{equation}\label{GradientErrorEquation}
\grad_\theta \log\hat{L}(\theta;x) = \grad_\theta \log L(\theta;x) + O(1/n) \qquad\text{as }n\to\infty
\end{equation}
for $(y,\theta)\in\interior\mathcal{Y}^o$ fixed.
Moreover, given a compact subset $C\subset\interior\mathcal{Y}^o$, there exists $n_0\in\N$ such that the bound in the term $O(1/n)$ is uniform over $n\geq n_0$ and $(y,\theta)\in C$.
\end{theorem}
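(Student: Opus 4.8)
The plan is to obtain integral representations for both $L$ and $\hat L$ (and their $\theta$-gradients) via Fourier inversion of the tilted density, and then compare the two asymptotic expansions term by term. First I would use the tilting/factorisation of \refsubsect{TwoSteps}: at the saddlepoint $\hat s=\hat s(\theta;x)$, writing the exponential tilt of $Y_\theta$ by $\hat s$, the density of $X$ at $x$ equals $e^{K(\hat s;\theta)-\hat s x}$ times the density at $x$ of a sum of $n$ i.i.d.\ tilted variables whose mean is exactly $x$. Fourier inversion gives
\begin{equation}\label{e:Linv}
L(\theta;x) = \frac{e^{K(\hat s;\theta)-\hat s x}}{(2\pi)^m}\int_{\R^{1\times m}} \frac{M_0(\hat s+\ii\phi/\sqrt n;\theta)^n}{M_0(\hat s;\theta)^n}\, e^{-\ii\phi K_0'(\hat s;\theta)\sqrt n}\, \frac{d\phi}{n^{m/2}},
\end{equation}
after rescaling $\phi$ by $\sqrt n$ around the critical point. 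The decay bound \eqref{DecayBound} guarantees the integral converges and lets us localise to $\abs\phi\le n^{1/6}$ say, up to an exponentially small error; on that region a Taylor expansion of $\log M_0(\hat s+\ii\phi/\sqrt n;\theta)$ in powers of $1/\sqrt n$ produces the Gaussian leading term $\det(2\pi K_0''(\hat s;\theta))^{-1/2}$ together with a standard Edgeworth-type correction, so that $L(\theta;x) = \hat L(\theta;x)\bigl(1 + A(\hat s,\theta)/n + O(1/n^2)\bigr)$ with $A$ a smooth, explicit function of the third and fourth derivatives of $K_0$ at $\hat s$. This recovers \eqref{LErrorRough} and is classical; the point is that I need the analogous statement for $\grad_\theta$, not just for $L$ itself.

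Next I would differentiate \eqref{e:Linv} in $\theta$. The prefactor $e^{K(\hat s;\theta)-\hat s x}$ is where care is needed: by the envelope-type identity for the saddlepoint (the derivative of $K(\hat s;\theta)-\hat s x$ with respect to $\hat s$ vanishes because $K'(\hat s;\theta)=x$), we get $\grad_\theta\bigl(K(\hat s;\theta)-\hat s x\bigr) = \grad_\theta K(s;\theta)\big|_{s=\hat s}$, with no contribution from $\grad_\theta\hat s$. Likewise, differentiating the integrand brings down factors of $\grad_\theta\log M_0$ and $\grad_\theta\grad_s\log M_0$ evaluated at $\hat s+\ii\phi/\sqrt n$, together with the explicit $\phi$-dependence coming from $\grad_\theta\hat s$ inside $K_0'(\hat s;\theta)$ and inside the argument $\hat s+\ii\phi/\sqrt n$. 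The growth bound \eqref{GrowthBound}, which is assumed for exactly the mixed derivatives of $M_0$ that arise here ($k\le 1$ with $1\le k+\ell\le 6$, and $k=2$ with $\ell\le 2$), ensures that differentiation under the integral sign is justified and that the localisation argument survives: off the region $\abs\phi\le n^{1/6}$ the polynomial growth is killed by the polynomial decay from \eqref{DecayBound} raised to the power $n$. On the localised region I again Taylor-expand, and the key cancellation is that the $O(1)$ and $O(1/\sqrt n)$ terms in $\grad_\theta\log L$ coincide with those in $\grad_\theta\log\hat L$ — the former because the leading Fourier integral is the Gaussian normalising constant whose $\theta$-derivative reproduces $-\tfrac12\grad_\theta\log\det K_0''(\hat s;\theta)$, the latter because odd-order Edgeworth terms integrate to zero. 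What is left is $\grad_\theta\log L = \grad_\theta\log\hat L + \grad_\theta(A/n) + O(1/n^2)$, and since $A$ is smooth in $(\hat s,\theta)$ and $\hat s=\hat s_0(\theta;y)$ is smooth on $\interior\mathcal Y^o$ (by the implicit function theorem applied to \eqref{SESAR}, using \eqref{K''PosDef}), the term $\grad_\theta(A/n)$ is genuinely $O(1/n)$, giving \eqref{GradientErrorEquation}.

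For the uniformity claim over a compact $C\subset\interior\mathcal Y^o$: the functions $\delta(s,\theta)$ and $\gamma(s,\theta)$ are continuous, hence bounded above and below away from $0$ on the compact image of $C$ under $(y,\theta)\mapsto(\hat s_0(\theta;y),\theta)$ (which lies in $\interior\mathcal S$ by definition of $\mathcal Y^o$); all the Taylor coefficients, the localisation thresholds, and the error constants depend only on these bounds and on sup-norms of finitely many derivatives of $K_0$ over a compact set, so the $O(1/n)$ bound is uniform once $n\ge n_0$ with $n_0$ chosen so that the localisation region $\abs\phi\le n^{1/6}$ sits inside where the expansions are valid. The main obstacle is bookkeeping the differentiation-under-the-integral and the Taylor expansion of $\grad_\theta\log M_0(\hat s+\ii\phi/\sqrt n;\theta)$ carefully enough to see the $O(1)$ and $O(1/\sqrt n)$ terms cancel against $\grad_\theta\log\hat L$ — in particular tracking the two sources of $\theta$-dependence (through $\hat s$ and through the explicit second slot of $M_0$) and confirming that the envelope identity removes precisely the potentially problematic $\grad_\theta\hat s$ contributions from the dominant terms; the analytic estimates themselves are routine given \eqref{DecayBound}--\eqref{GrowthBound}.
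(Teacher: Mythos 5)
Your proposal is correct, and its analytic core is the same as the paper's: both arguments rest on the tilted Fourier-inversion integral, localisation via \eqref{DecayBound}, differentiation under the integral sign justified by \eqref{GrowthBound} (using exactly the derivative orders you cite, as in \reflemma{DiffUnderInt}), the envelope identity $\grad_\theta\bigl(K(\hat s;\theta)-\hat s x\bigr)=\grad_\theta K(\hat s;\theta)$, cancellation of the half-order term by oddness of the cubic correction against the Gaussian, and compactness/continuity of $\delta,\gamma$ for uniformity. Where you differ from the paper is in packaging rather than substance. You extract an explicit Edgeworth-type coefficient $A(\hat s,\theta)/n$ and cancel the $O(1)$ and $O(1/\sqrt n)$ terms of the differentiated expansion directly; the paper instead factors $L=L^*P_n$ and $\hat L=L^*\hat P_n$ as in \eqref{LL*Phats}, so the comparison reduces to the ratio of $\grad P_n$ and $\grad\hat P_n$ (densities at the mean of the tilted law, which stay of polynomial size in $n$), and then proves the stronger statement that the relative errors extend to \emph{continuously differentiable} functions of $\epsilon=1/n$ vanishing at $\epsilon=0$ (\refprop{PddtPError}, \refcoro{gradlogPhatsError}), from which \eqref{GradientErrorEquation} follows by the Mean Value Theorem. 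That extra $C^1$-in-$\epsilon$ structure is what forces the paper's technical detours you do not need here (the cutoff modification of $K_0$ to define the integrals for non-integer $n$, and the interpolation lemmas for the exponentially small tail errors), and it is precisely what gets reused later with the Implicit Function Theorem to obtain the $O(1/n^2)$ MLE bound; your route proves \refthm{GradientError} and the refinement of \eqref{LErrorRough} more directly, but would have to be strengthened along these lines before it could feed into the proof of \refthm{MLEerror}.
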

Note that \refthm{GradientError} includes the implicit assertion that the likelihoods and their gradients exist for $n$ sufficiently large.
However, the likelihoods may be ill-behaved for small $n$: see \refexample{QPlusGamma} in \refappendix{Examples}.
Note also our convention from \eqref{SAR} that $x=ny$.

In the rest of our results, we apply the gradient error bound from \refthm{GradientError} (or rather, its more precise analogues: see \refcoro{gradlogPhatsError} and equation \eqref{gradDifferenceAsq6}) in the neighbourhood of a local maximiser.

\subsubsection{MLE error, posterior and sampling distributions in the fully identifiable case}\lbsubsubsect{FullyIdentifiableResults}

In the limit $n\to\infty$, the asymptotic behaviour of the MLE, and of other quantities derived from the likelihood, depends on the asymptotic shape of the likelihood function near its maximum.
The simplest case occurs when this maximum occurs due a non-degenerate maximum for the leading-order exponential factor in \eqref{SPASAR}.
Concretely, if we set $x=ny_0$, $y=y_0$ and take the limit $n\to\infty$ with $y_0$ fixed, then the leading-order behaviour of the log-likelihood comes from the function
\begin{equation}\label{RateFunctionImplicit}
\theta \mapsto K_0(\hat{s}_0(\theta,y_0);\theta)-\hat{s}_0(\theta,y_0) y_0.
\end{equation}
The results in this section apply when \eqref{RateFunctionImplicit} has a non-degenerate local maximum at $\theta_0$.

\begin{theorem}[MLE error bound -- fully identifiable case]\lbthm{MLEerror}
Let $s_0,\theta_0,y_0$ be related by 
\begin{equation}\label{y0K0's0}
\begin{aligned}
y_0 &= K_0'(s_0;\theta_0)\text{ with }(s_0,\theta_0)\in\interior\mathcal{S}, & \text{or equivalently}
\\[-0.2\baselineskip]
s_0 &= \hat{s}_0(\theta_0,y_0)\text{ with }(y_0,\theta_0)\in\mathcal{Y}^o
,
\end{aligned}
\end{equation}
and suppose that \eqref{SAR} and \eqref{DecayBound}--\eqref{GrowthBound} hold.
Suppose also that
\begin{gather}
\label{RateFunctionImplicitCriticalPoint}
\grad_\theta K_0(s_0;\theta_0) = 0 \quad\text{and that}
\\
\label{RateFunctionImplicitHessianDefinite}
\begin{aligned}
H=\grad_\theta^T\grad_\theta K_0(s_0;\theta_0) - (\grad_s\grad_\theta K_0(s_0;\theta_0))^T K_0''(s_0;\theta_0)^{-1} & (\grad_s\grad_\theta K_0(s_0;\theta_0)) 
\\[-0.2\baselineskip]
&\text{ is negative definite}
.
\end{aligned}
\end{gather}
Then there exist $n_0\in\N$ and neighbourhoods $U\subset\thetadomain$ of $\theta_0$ and $V\subset\R^{\xdim\times 1}$ of $y_0$ such that, for all $n\geq n_0$ and $y\in V$, the functions $\theta\mapsto \hat{L}(\theta;x)$ and $\theta\mapsto L(\theta;x)$ have unique local maximisers in $U$.
Moreover, writing these local maximisers as $\hat{\theta}_{\MLE\,\mathrm{in}\,U}(x)$ and $\theta_{\MLE\,\mathrm{in}\,U}(x)$,
\begin{equation}
\abs{\hat{\theta}_{\MLE\,\mathrm{in}\,U}(x) - \theta_{\MLE\,\mathrm{in}\,U}(x)} = O(1/n^2) \qquad\text{as }n\to\infty,
\end{equation}
uniformly over $n\geq n_0,y\in V$.
\end{theorem}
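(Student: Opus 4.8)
The plan is to realise both $\theta_{\MLE\,\mathrm{in}\,U}(x)$ and $\hat\theta_{\MLE\,\mathrm{in}\,U}(x)$ as the unique zeros of $\grad_\theta\log L(\cdot\,;x)$ and $\grad_\theta\log\hat L(\cdot\,;x)$ in a small neighbourhood of $\theta_0$, and then to convert the $O(1/n)$ bound of \refthm{GradientError} on the difference of these two gradients into an $O(1/n^2)$ bound on the difference of the zeros, by dividing by a $\theta$-Hessian of size $\asymp n$. Write $R_y(\theta)=K_0(\hat s_0(\theta;y);\theta)-\hat s_0(\theta;y)\,y$ for the function in \eqref{RateFunctionImplicit}. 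Since \eqref{SESAR} gives $\grad_s K_0(\hat s_0(\theta;y);\theta)=y$, the terms involving $\grad_\theta\hat s_0$ cancel and $\grad_\theta R_y(\theta)=\grad_\theta K_0(\hat s_0(\theta;y);\theta)$; differentiating once more, and using $\grad_\theta\hat s_0=-(K_0'')^{-1}\,\grad_s\grad_\theta K_0$ (obtained by differentiating \eqref{SESAR}), identifies the $\theta$-Hessian of $R_{y_0}$ at $\theta_0$ with the Schur complement $H$ of \eqref{RateFunctionImplicitHessianDefinite}. Thus \eqref{RateFunctionImplicitCriticalPoint}--\eqref{RateFunctionImplicitHessianDefinite} say precisely that $R_{y_0}$ has a non-degenerate local maximum at $\theta_0$.

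From \eqref{SPASAR}, $\log\hat L(\theta;x)=nR_y(\theta)-\tfrac12\log\det\!\big(2\pi nK_0''(\hat s_0(\theta;y);\theta)\big)$, so $\tfrac1n\grad_\theta\log\hat L(\theta;x)=\grad_\theta R_y(\theta)+O(1/n)$, locally uniformly together with its $\theta$-derivative; and the more precise form of \refthm{GradientError} provided by \refcoro{gradlogPhatsError}, together with the integral representation of $L$ from \refsubsect{TwoSteps} and the two-$\theta$-derivative case of \eqref{GrowthBound}, gives the same expansion for $\tfrac1n\grad_\theta\log L(\theta;x)$, again with control of one further $\theta$-derivative. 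Since $\grad_\theta R_{y_0}(\theta_0)=0$ and its $\theta$-Jacobian $H$ is invertible, the implicit function theorem supplies $n_0$ and neighbourhoods $U$ of $\theta_0$ and $V$ of $y_0$ (with $V\times U\subset\interior\mathcal{Y}^o$, using \eqref{y0K0's0}) such that, for every $n\geq n_0$ and $y\in V$, each of $\grad_\theta\log\hat L(\cdot\,;x)$ and $\grad_\theta\log L(\cdot\,;x)$ has exactly one zero in $U$, and both zeros tend to $\theta_0$ as $n\to\infty$ uniformly in $y\in V$. Shrinking $U,V$ so that $\grad_\theta^T\grad_\theta R_y(\theta)$ stays close to $H$ throughout, the $\theta$-Hessians of $\log\hat L$ and of $\log L$ are of the form $nH+o(n)$ on $U$, uniformly in $y\in V$; in particular they are negative definite — so the two zeros are indeed the unique local maximisers in $U$ — and invertible with inverse of operator norm $O(1/n)$.

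To finish, put $E(\theta)=\grad_\theta\log L(\theta;x)-\grad_\theta\log\hat L(\theta;x)=O(1/n)$ uniformly on $U$ (\refthm{GradientError}). Evaluating at $\theta=\theta_{\MLE\,\mathrm{in}\,U}(x)$, where $\grad_\theta\log L$ vanishes, gives $\grad_\theta\log\hat L(\theta_{\MLE\,\mathrm{in}\,U}(x);x)=-E(\theta_{\MLE\,\mathrm{in}\,U}(x))$; subtracting $\grad_\theta\log\hat L(\hat\theta_{\MLE\,\mathrm{in}\,U}(x);x)=0$ and applying the mean value theorem componentwise, the left-hand side equals $\big[\grad_\theta^T\grad_\theta\log\hat L(\tilde\theta;x)\big]\big(\theta_{\MLE\,\mathrm{in}\,U}(x)-\hat\theta_{\MLE\,\mathrm{in}\,U}(x)\big)$ for some $\tilde\theta\in U$. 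Inverting this Hessian, whose inverse is $O(1/n)$, gives $\bigabs{\theta_{\MLE\,\mathrm{in}\,U}(x)-\hat\theta_{\MLE\,\mathrm{in}\,U}(x)}=O(1/n)\cdot O(1/n)=O(1/n^2)$, with all constants uniform over $n\geq n_0$ and $y\in V$.

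The conceptual heart is the last step: the maximiser error is the gradient error divided by a Hessian of size $\asymp n$, which is exactly why the $O(1/n)$ of \refthm{GradientError} is promoted to $O(1/n^2)$ here (and why, when the Hessian degenerates in some directions, one instead gets the weaker rates $O(1/n^{3/2})$ or $O(1/n)$ of the partially identifiable case). The routine but delicate work is the uniform-in-$(n,y)$ bookkeeping of the middle paragraph — and especially the fact that $\log L(\cdot\,;x)$ itself, not merely the explicit $\log\hat L(\cdot\,;x)$, is $C^2$ in $\theta$ with $\theta$-Hessian $nH+o(n)$ for all large $n$. Establishing this uniform regularity of the \emph{true} log-likelihood, via the factorisation and integral representation of \refsubsect{TwoSteps} and the two-$\theta$-derivative case of \eqref{GrowthBound}, is the main obstacle; once it is in hand, the implicit function theorem and the division above are straightforward.
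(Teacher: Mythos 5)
Your proposal is correct, and it reaches the conclusion by a genuinely different route than the paper. The paper keeps the saddlepoint variable $s$ explicit and treats $\epsilon=1/n$ as a continuous variable: it builds the joint system $F(s^T,\theta;y,\epsilon)=0$ from \eqref{F1F2FDefinition}, applies the classical Implicit Function Theorem to get implicit solutions $G$ and $\hat G$ for the true and saddlepoint critical points, and then extracts the rate from the identity $F(\hat G(y,\epsilon);y,\epsilon)=\epsilon^2 q_4(y,\epsilon)$ together with a Lipschitz bound on an augmented inverse function $\widetilde G$; the $O(1/n^2)$ comes from the $\epsilon^2$ factor, which in turn comes from the fact that $F_2-\hat F_2=\epsilon q_3$ with $q_3(\theta,y,0)=0$. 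You instead eliminate $s$ by substituting $\hat s_0(\theta;y)$, locate both maximisers as zeros of the two $\theta$-gradients, and divide the unnormalised gradient error $O(1/n)$ of \refthm{GradientError} by an averaged Hessian of size $nH+o(n)$ via the mean value theorem — which is exactly the mechanism the paper records heuristically in \refsubsubsect{ImpliesHeuristic}, here made rigorous. Both arguments stand or fall on the same key input, \refcoro{gradlogPhatsError}: the continuously differentiable $q_3(\theta,y,\epsilon)$ vanishing at $\epsilon=0$ is what gives (i) the joint $C^1$ structure in $(\theta,y,\epsilon)$ that legitimises your invocation of the Implicit Function Theorem for existence and uniqueness of the zeros uniformly in $y\in V$ and $n\ge n_0$ (as stated your IFT step is slightly loose, since $n$ is discrete; it becomes the classical IFT once you phrase the rescaled gradient as $\grad_\theta R_y(\theta)+\epsilon(\cdots)$ with the $\epsilon$-slot filled by $q_3$ and the explicit $\log\hat P$ terms), and (ii) the $C^2$ control of the true log-likelihood needed for your Hessian bound — you correctly identify this as the main obstacle. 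Two minor points to tidy: the componentwise mean value theorem gives a matrix whose rows are evaluated at different intermediate points, so argue that any such matrix is within $o(n)$ of $nH$ uniformly on $U$ (or use the integral form of the remainder); and to pass from "unique critical point with negative definite Hessian on $U$" to "unique local maximiser in $U$" take $U$ convex so the log-likelihoods are strictly concave there, as in \reflemma{F=0ISMLE}. What the paper's route buys is that the same $(G,\hat G,\widetilde G)$ framework is reused verbatim for \refthm{SamplingMLE} and \refthm{LowerOrderSaddlepoint}\ref{item:LOMLE}; what yours buys is transparency about where the extra power of $n$ comes from.
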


The assumptions of \refthm{MLEerror} can be understood as follows.
We shall show that, when \eqref{y0K0's0} holds, the expressions in \eqref{RateFunctionImplicitCriticalPoint}--\eqref{RateFunctionImplicitHessianDefinite} are the gradient and Hessian, respectively, of \eqref{RateFunctionImplicit}: see the differentiation formulas in \eqref{L*0hatL*0}--\eqref{L*0Gradients} and \eqref{loghatL*Gradient}--\eqref{loghatL*Hessian}.
Thus \eqref{RateFunctionImplicitCriticalPoint}--\eqref{RateFunctionImplicitHessianDefinite} state that $\theta_0$ should be a non-degenerate local maximiser of \eqref{RateFunctionImplicit}.

More directly, we can interpret $\theta_0$ as the limiting local maximiser for the likelihood of an observation $x=ny_0$, in the limit $n\to\infty$ with $y_0$ fixed. 
In such a limit, an observed value $x=ny_0$ for $X$ amounts to an observed value $y_0$ for the true mean of the summands $Y^{(1)},\dotsc,Y^{(n)}$ from \eqref{SARasSum}.
Thus \eqref{y0K0's0}--\eqref{RateFunctionImplicitHessianDefinite} state that we should be able to recover the asymptotic (local) MLE $\theta_0$ based solely on the implied sample mean $y_0$.
We might describe \eqref{y0K0's0}--\eqref{RateFunctionImplicitHessianDefinite} as saying that the model is \emph{fully identifiable at the level of the sample mean}.

In particular, \refthm{MLEerror} applies in the well-specified case where the observed value is itself drawn according to the model with true parameter $\theta_0$, where $(0,\theta_0)\in\interior\mathcal{S}$.
Then, setting $s_0=0$ and $y_0=K_0'(0;\theta_0)=\E(Y_{\theta_0})$, the conditions \eqref{y0K0's0}--\eqref{RateFunctionImplicitCriticalPoint} hold identically, while \eqref{RateFunctionImplicitHessianDefinite} reduces to the condition that the $\xdim\times p$ matrix $\grad_s\grad_\theta K_0(0;\theta_0)$ should have rank $p$.
Because $X_{\theta_0}/n\to y_0$ and $\P(X_{\theta_0}/n\in V)\to 1$ as $n\to\infty$, the conclusions of \refthm{MLEerror} will hold with high probability with $x$ replaced by $X_{\theta_0}$.

To place the MLE approximation error $O(1/n^2)$ in context, we can compare it to the inferential uncertainty inherent in the model.
The next three theorems give the asymptotics, either Bayesian or frequentist, that result from taking $n\to\infty$.

In a Bayesian framework, let the parameter $\Theta$ be drawn according to a prior $\pi_\Theta$ on $\thetadomain$.
For a given observed value $x$, we will consider the posterior distribution $\pi_{\Theta\,\vert\,U,x}$ on a neighbourhood $U\subset\thetadomain$ with $\pi_\Theta(U)>0$, defined by the Radon-Nikodym derivative
\begin{equation}
\frac{d\pi_{\Theta\,\vert\,U,x}}{d\pi_\Theta}(\theta) = \frac{L(\theta;x)\indicator{\theta\in U}}{C}
,\qquad
C=C_{U,x}=\int_U L(\theta;x) d\pi_\Theta(\theta).
\end{equation}
We construct the \emph{saddlepoint posterior distribution} on $U$, $\hat{\pi}_{\Theta\,\vert\,U,x}$, by replacing $L$ with $\hat{L}$:
\begin{equation}\label{hatpiFormula}
\frac{d\hat{\pi}_{\Theta\,\vert\,U,x}}{d\pi_\Theta}(\theta) = \frac{\hat{L}(\theta;x)\indicator{\theta\in U}}{\hat{C}}
, \qquad 
\hat{C}=\int_U \hat{L}(\theta;x) d\pi_\Theta(\theta).
\end{equation}

\begin{theorem}[Posterior distributions]\lbthm{BayesianError}\lbthm{BAYESIANERROR}
Let $(s_0,\theta_0,y_0)$ be related as in \eqref{y0K0's0}--\eqref{RateFunctionImplicitCriticalPoint}, and suppose that \eqref{SAR}, \eqref{DecayBound}--\eqref{GrowthBound} and \eqref{RateFunctionImplicitHessianDefinite} hold.
Suppose also that the prior distribution $\pi_\Theta$ has a probability density function that is continuous and positive at $\theta_0$.
Fix $y=y_0$, $x=ny_0$.
Then there exists a neighbourhood $U\subset\thetadomain$ of $\theta_0$ such that 
\begin{equation}
\text{under $\pi_{\Theta\,\vert\,U,x}$ or $\hat{\pi}_{\Theta\,\vert\,U,x}$,} \quad \sqrt{n}\left( \Theta-\theta_0 \right) \overset{d}{\to} \mathcal{N}(0,-H^{-1}) \quad\text{as }n\to\infty,
\end{equation}
where $H$ is the negative definite matrix from \eqref{RateFunctionImplicitHessianDefinite}.
\end{theorem}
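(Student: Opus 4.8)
The plan is to carry out a local Bernstein--von Mises argument. With $y=y_0$ and $x=ny_0$ fixed, the regime \eqref{SAR} makes both $\log L(\theta;x)$ and $\log\hat L(\theta;x)$ split as $n$ times the rate function \eqref{RateFunctionImplicit} plus a lower-order term, with the difference between the two controlled by \refthm{GradientError}. Write $\psi(\theta)=K_0(\hat s_0(\theta;y_0);\theta)-\hat s_0(\theta;y_0)y_0$ for \eqref{RateFunctionImplicit} at $y=y_0$. By the identification of \eqref{RateFunctionImplicitCriticalPoint}--\eqref{RateFunctionImplicitHessianDefinite} with $\grad_\theta\psi(\theta_0)$ and $\grad_\theta^T\grad_\theta\psi(\theta_0)$ discussed after \refthm{MLEerror}, the hypotheses say precisely that $\psi$ has a non-degenerate critical point at $\theta_0$ with Hessian $H$ negative definite; and by \eqref{GrowthBound} and the implicit function theorem (using that $K_0''$ is positive definite by \eqref{K''PosDef}), $\psi$ and $\theta\mapsto\hat s_0(\theta;y_0)$ are $C^2$ near $\theta_0$. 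I would therefore fix a bounded convex neighbourhood $U\subset\thetadomain$ of $\theta_0$ on which $\grad_\theta^T\grad_\theta\psi$ stays negative definite and such that $\{y_0\}\times\overline{U}$ is a compact subset of $\interior\mathcal{Y}^o$; Taylor's theorem then gives a constant $c>0$ with $\psi(\theta)\le\psi(\theta_0)-c\abs{\theta-\theta_0}^2$ for all $\theta\in U$.

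The next step is a quadratic expansion and a domination bound for the rescaled likelihood ratios at $\theta=\theta_0+t/\sqrt n$, $t\in\R^{p\times 1}$. From the closed form \eqref{SPASAR},
\begin{equation*}
\log\hat L(\theta;x)=n\psi(\theta)+g(\theta)-\tfrac m2\log(2\pi n),\qquad g(\theta)=-\tfrac12\log\det K_0''(\hat s_0(\theta;y_0);\theta),
\end{equation*}
where $g$ is $C^2$ on $U$. For the true likelihood, \refthm{GradientError} (applied as in \refthm{MLEerror}) gives $\grad_\theta\log L(\theta;x)=\grad_\theta\log\hat L(\theta;x)+O(1/n)$ uniformly over $\theta\in\overline{U}$. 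Integrating this identity along the segment from $\theta_0$ to $\theta_0+t/\sqrt n$ and using $\grad_\theta\psi(\theta_0)=0$ together with the continuity of $\grad_\theta^T\grad_\theta\psi$ at $\theta_0$ yields, for each fixed $t$,
\begin{equation*}
\log\frac{L(\theta_0+t/\sqrt n;x)}{L(\theta_0;x)}=n\bigl[\psi(\theta_0+t/\sqrt n)-\psi(\theta_0)\bigr]+O\!\left(\frac{\abs t}{\sqrt n}\right)\longrightarrow\tfrac12\,t^{T}Ht\qquad(n\to\infty),
\end{equation*}
and the same with $\hat L$ replacing $L$ (there the vanishing correction is $g(\theta_0+t/\sqrt n)-g(\theta_0)$). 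For domination, restricting to $\theta\in U$ and combining $\psi(\theta)-\psi(\theta_0)\le-c\abs{\theta-\theta_0}^2$, the boundedness of $g$ on $\overline{U}$, and the uniform $O(1/n)$ error produces a single bound $L(\theta_0+t/\sqrt n;x)/L(\theta_0;x)\le C\,e^{-c\abs t^2}$ valid for all large $n$ and all $t$ with $\theta_0+t/\sqrt n\in U$, and likewise for $\hat L$.

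Finally I would apply dominated convergence. Write the prior as $\pi_\Theta(d\theta)=p(\theta)\,d\theta$ with $p$ continuous and positive at $\theta_0$. Under $\pi_{\Theta\,\vert\,U,x}$ the density of $t=\sqrt n(\Theta-\theta_0)$ is proportional in $t$ to $q_n(t)=\frac{L(\theta_0+t/\sqrt n;x)}{L(\theta_0;x)}\,p(\theta_0+t/\sqrt n)\,\indicator{\theta_0+t/\sqrt n\in U}$; as $n\to\infty$ the sets $\sqrt n(U-\theta_0)$ exhaust $\R^{p\times 1}$, $q_n(t)\to e^{\frac12 t^{T}Ht}p(\theta_0)$ pointwise by the previous display and continuity of $p$, and $q_n$ is dominated by the integrable function $C\,e^{-c\abs t^2}\sup_{\overline{U}}p$. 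Hence the normalising constants converge to $p(\theta_0)(2\pi)^{p/2}\det(-H)^{-1/2}\in(0,\infty)$, and since the normalised densities converge pointwise and integrate to one, Scheff\'e's lemma gives convergence in $L^1$, hence in total variation and a fortiori in distribution, of $\sqrt n(\Theta-\theta_0)$ to $\mathcal{N}(0,-H^{-1})$. The argument under $\hat\pi_{\Theta\,\vert\,U,x}$ is the same, and is in fact simpler because $\log\hat L$ is available in closed form and no appeal to \refthm{GradientError} is required.

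I expect the main obstacle to be the uniform-in-$n$ bookkeeping behind the domination and pointwise-limit steps. \refthm{GradientError} supplies only a uniform $O(1/n)$ control of the gradient error over compact subsets of $\interior\mathcal{Y}^o$, and after the $n$-dependent substitution $\theta=\theta_0+t/\sqrt n$ --- which drives $\theta$ to the boundary of the fixed set $U$ --- one must verify that this control, the quadratic decay $\psi(\theta)-\psi(\theta_0)\le-c\abs{\theta-\theta_0}^2$, and the boundedness of $g$ genuinely assemble into an $n$-independent Gaussian envelope, and that the integrated gradient error contributes only a vanishing $O(\abs t/\sqrt n)$ rather than a term growing in $\abs t$. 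No single step is deep; keeping these dependencies straight is where the work lies.
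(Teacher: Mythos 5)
Your argument is correct: with $y=y_0$ fixed, centring the expansion at $\theta_0$ (where $\grad_\theta\psi(\theta_0)=0$ by \eqref{RateFunctionImplicitCriticalPoint}) and transferring from $\hat L$ to $L$ by integrating the uniform $O(1/n)$ gradient bound of \refthm{GradientError} along segments inside a convex $U$ does assemble into a valid Gaussian envelope and pointwise limit, and the dominated-convergence/Scheff\'e step then gives the claim; the worries you flag at the end (the gradient-error contribution is in fact $O(\abs{t}/n^{3/2})$, hence harmless both pointwise and in the envelope since $\abs{t}/\sqrt n\le\operatorname{diam}U$) resolve exactly as you anticipate. The probabilistic core — rescale by $\sqrt n$, show pointwise convergence of the rescaled posterior density to a Gaussian, dominate, apply dominated convergence — is the same as the paper's, but the technical route differs. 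The paper proves a stronger statement (\refprop{BayesianErrorGeneral}): it does not fix $y=y_0$, treats $\pi_{\Theta\,\vert\,U,x}$, $\hat\pi_{\Theta\,\vert\,U,x}$ and $\hat\pi^*_{\Theta\,\vert\,U,x}$ simultaneously, and centres each expansion at the corresponding local MLE rather than at $\theta_0$. That centring kills the first-order term exactly, but it requires Theorems~\ref{T:MLEerror} and \ref{T:LowerOrderSaddlepoint}\ref{item:LOMLE} as prerequisites (existence and convergence of the local maximisers) and second-derivative control of $\log P_n(\hat s_0(\theta;y),\theta)$, obtained from the continuously differentiable correction $q_3$ in \refcoro{gradlogPhatsError}; in exchange it yields uniformity in $y$ near $y_0$ and the joint centring statements used elsewhere. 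Your version is lighter — no MLE machinery and no Hessian-level control of the true log-likelihood, only the gradient bound — but it is tied to $y=y_0$ (so that $\theta_0$ itself is the limiting maximiser) and would need the same recentring at $\hat\theta^*_{\MLE\,\mathrm{in}\,U;\,0}(y)$ to recover the paper's more general proposition. One small point of bookkeeping: the prior density is only assumed continuous at $\theta_0$, so justify $\sup_{\overline U}p<\infty$ by shrinking $U$ to a neighbourhood on which $p\le 2p(\theta_0)$, which is all your domination step needs.
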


In particular, \refthm{BayesianError} shows that both the true and saddlepoint likelihoods lead to the same asymptotic posterior.
The proof will follow from a stronger statement, \refprop{BayesianErrorGeneral} in \refappendix{BayesianErrorProof}, that removes the assumption $y=y_0$.

Theorems~\ref{T:MLEerror} and~\ref{T:BayesianError} concern the deterministic functions that map an observed value $x$ to the corresponding MLE or posterior distribution, via either the true likelihood or the saddlepoint approximation.
In this description, the observed value $x$ has been treated as deterministic, separate from any consideration of the random process that might have generated this observation.
The next theorem describes the sampling distribution when the observation is itself a random variable $\chi_n$.

\begin{theorem}[Sampling distributions]\lbthm{SamplingMLE}\lbthm{SAMPLINGMLE}
Let $(s_0,\theta_0,y_0)$ be related as in \eqref{y0K0's0}--\eqref{RateFunctionImplicitCriticalPoint}, and suppose that \eqref{SAR}, \eqref{DecayBound}--\eqref{GrowthBound} and \eqref{RateFunctionImplicitHessianDefinite} hold.
Let $U$ be the neighbourhood of $\theta_0$ given by \refthm{MLEerror}.
Suppose also that $\chi_n\in\R^{\xdim\times 1}$ are random variables satisfying
\begin{equation}\label{chinCLT}
\frac{\chi_n - n y_0}{\sqrt{n}} \overset{d}{\longrightarrow} \mathcal{N}(0,\Sigma) \quad\text{as }n\to\infty,
\end{equation}
where $\Sigma\in\R^{\xdim\times\xdim}$ is a positive semi-definite matrix.
Then:
\begin{enumerate}
\item\label{item:SamplingGeneral}
The joint sampling distribution of the true and saddlepoint MLEs satisfies
\begin{equation}
\left( \sqrt{n}\left( \big. \theta_{\MLE\,\mathrm{in}\,U}(\chi_n) - \theta_0 \right), \sqrt{n}\bigl( \hat{\theta}_{\MLE\,\mathrm{in}\,U}(\chi_n)-\theta_0 \bigr) \right)
\overset{d}{\longrightarrow} (Z, Z)\quad\text{as }n\to\infty
\end{equation}
with
\begin{equation}
Z \sim \mathcal{N}\left( 0, H^{-1} B^T A^{-1} \Sigma A^{-1} B H^{-1} \right)
,
\end{equation}
where we have abbreviated $A=K''_0(s_0;\theta_0)$, $B=\grad_s\grad_\theta K_0(s_0;\theta_0)$, and $H$ is the negative definite matrix from \eqref{RateFunctionImplicitHessianDefinite}.

\item\label{item:SamplingWellSpecified}
If in addition $s_0=0$, $y_0=K'_0(0;\theta_0)$ and $\Sigma = K''_0(0;\theta_0)$, then the limiting distribution has
\begin{equation}
Z \sim \mathcal{N}\left( 0, -H^{-1} \right).
\end{equation}
\end{enumerate}
\end{theorem}

Theorems~\ref{T:MLEerror} and \ref{T:SamplingMLE}\ref{item:SamplingWellSpecified} apply in particular in the well-specified case where the observed data are drawn according to the model distribution $X_{\theta_0}$, with $s_0=0$ and $y_0=\E(Y_{\theta_0})$:

\begin{theorem}[Sampling distribution in the well-specified case]\lbthm{SamplingMLEWellSpecified}\lbthm{SAMPLINGMLEWELLSPECIFIED}
Let $\theta_0\in\mathcal{R}$ be such that $(0,\theta_0)\in\interior\mathcal{S}$, and set $s_0=0$.
Suppose that \eqref{SAR} and \eqref{DecayBound}--\eqref{GrowthBound} hold and that 
\begin{equation}\label{MeanGradientFullRank}
B=\grad_s\grad_\theta K_0(0;\theta_0) \quad\text{has rank $p$.}
\end{equation}
Then:
\begin{enumerate}
\item\label{item:HReduces}
The matrix $H$ from \eqref{RateFunctionImplicitHessianDefinite} reduces to $H=- B^T K_0''(0;\theta_0)^{-1} B$ and is negative definite.

\item\label{item:ConsistentAN}
With observed data $X_{\theta_0}$, both $\theta_{\MLE\,\mathrm{in}\,U}(X_{\theta_0})$ and $\hat{\theta}_{\MLE\,\mathrm{in}\,U}(X_{\theta_0})$ are consistent and asymptotically normal estimators of $\theta_0$ in the limit $n\to\infty$, with
\begin{equation}
\left( \sqrt{n}\left( \big. \theta_{\MLE\,\mathrm{in}\,U}(X_{\theta_0}) - \theta_0 \right), \sqrt{n}\bigl( \hat{\theta}_{\MLE\,\mathrm{in}\,U}(X_{\theta_0})-\theta_0 \bigr) \right)
\overset{d}{\longrightarrow} (Z, Z)\quad\text{as }n\to\infty,
\end{equation}
where $Z\sim \mathcal{N}\left( 0, -H^{-1} \right)$.
Moreover
\begin{equation}
\abs{\hat{\theta}_{\MLE\,\mathrm{in}\,U}(X_{\theta_0}) - \theta_{\MLE\,\mathrm{in}\,U}(X_{\theta_0})} = O_\P(1/n^2) \qquad\text{as }n\to\infty.
\end{equation}

\item\label{item:HEstimator}
With $\Theta=\theta_{\MLE\,\mathrm{in}\,U}(X_{\theta_0})$ or $\Theta=\hat{\theta}_{\MLE\,\mathrm{in}\,U}(X_{\theta_0})$, the Hessians $\frac{1}{n}\grad_\theta^T\grad_\theta\log L(\Theta;X_{\theta_0})$, $\frac{1}{n}\grad_\theta^T\grad_\theta\log\hat{L}(\Theta;X_{\theta_0})$ and $- (\grad_s\grad_\theta K_0(0;\Theta))^T K_0''(0;\Theta)^{-1} (\grad_s\grad_\theta K_0(0;\Theta))$ are consistent estimators of $H$.
\end{enumerate}
\end{theorem}

A key conclusion from these results is that the approximation error in using the saddlepoint MLE in place of the true MLE is negligible, in the limit $n\to\infty$ as in \eqref{SAR}, compared to the underlying inferential uncertainty.
Namely, according to \refthm{MLEerror}, the difference between the true and saddlepoint MLEs is of order $1/n^2$.
Asymptotically, this approximation error is much smaller than the spatial scale $1/\sqrt{n}$ corresponding to either sampling variability of the MLE (in the frequentist setup of Theorems~\ref{T:SamplingMLE}--\ref{T:SamplingMLEWellSpecified}) or posterior uncertainty of the parameter (in the Bayesian setup of \refthm{BayesianError}).
To the extent that the assumptions of \eqref{SAR} and Theorems~\ref{T:GradientError}--\ref{T:SamplingMLEWellSpecified} apply in a given application, the saddlepoint likelihood and saddlepoint MLE may therefore be appropriate as readily-calculated substitutes for the true likelihood and MLE.

\subsubsection{MLE error in the partially identifiable case}\lbsubsubsect{PartiallyIdentifiableResult}

Theorems~\ref{T:MLEerror}--\ref{T:SamplingMLEWellSpecified} apply to models that are fully identifiable at the level of the sample mean.
However, many reasonable models lack this property, notably when some parameters affect the variance only.
Then lower-order contributions to \eqref{SPASAR} become relevant, and the scaling of the MLE approximation error changes.
The following theorem is the analogue of \refthm{MLEerror} in this case, for the well-specified setting (see \refsubsubsect{WellSpecified}) where $s_0=0$.

\begin{theorem}[MLE error bound -- partially identifiable case]\lbthm{MLEerrorPartiallyIdentifiable}\lbthm{MLEERRORPARTIALLYIDENTIFIABLE}
Suppose we can split the parameter vector as
\begin{equation}\label{thetaSplitting}
\theta=\mat{\omega\\ \nu} \quad\text{such that}\quad K_0'(0;\theta)=\E(Y_\theta)\text{ depends only on $\omega$}
,
\end{equation}
where $\omega\in\R^{p_1\times 1}$, $\nu\in\R^{p_2\times 1}$, $p_1+p_2=p$.
Let $\theta_0=\smallmat{\omega_0\\ \nu_0}$ be such that $(0,\theta_0)\in\interior\mathcal{S}$, and suppose that \eqref{SAR} and \eqref{DecayBound}--\eqref{GrowthBound} hold.
Suppose further that the $\xdim\times p_1$ matrix $B_\omega = \grad_s\grad_\omega K_0(0;\theta_0)$ has rank $p_1$.

Introduce the ``partially linearised'' model in which $\Xi_{w,\nu}\in\R^{\xdim\times 1}$ is normally distributed with mean vector $B_\omega w$ and covariance matrix $K_0''\left( 0;\smallmat{\omega_0\\ \nu} \right)$, where the parameters are $w\in\R^{p_1\times 1}$ and $\nu\in\R^{p_2\times 1}$.
Consider $\xi_0\in\R^{\xdim\times 1}$, and suppose that $(w,\nu)=(w_0,\nu_0)$ is a non-degenerate local MLE for the observation $\Xi_{w,\nu}=\xi_0$.
Make the change of variables
\begin{equation}\label{PIChangeOfVariables}
x = n K_0'\left( 0;\smallmat{\omega'\\ \nu_0} \right) + \sqrt{n} \xi, \qquad y = x/n = K_0'\left( 0;\smallmat{\omega'\\ \nu_0} \right) + \xi/\sqrt{n},
\end{equation}
where $\omega'\in\R^{p_1\times 1}$, $\xi\in\R^{\xdim\times 1}$.
Then:
\begin{enumerate}
\item\label{item:PI1/n}
There exist $n_0\in\N$ and neighbourhoods $U,U'\subset\thetadomain$ of $\theta_0$ and $\tilde{V}\subset\R^{\xdim\times 1}$ of $\xi_0$ such that, whenever $n\geq n_0$, $\smallmat{\omega'\\ \nu_0}\in U'$, $\xi\in\tilde{V}$, and \eqref{PIChangeOfVariables} holds, the functions $\theta\mapsto \hat{L}(\theta;x)$ and $\theta\mapsto L(\theta;x)$ have unique local maximisers in $U$.
Moreover, writing these local maximisers as $\hat{\theta}_{\MLE\,\mathrm{in}\,U}(x) = \smallmat{\hat{\omega}_{\MLE\,\mathrm{in}\,U}(x)\\ \hat{\nu}_{\MLE\,\mathrm{in}\,U}(x)}$ and $\theta_{\MLE\,\mathrm{in}\,U}(x) = \smallmat{\omega_{\MLE\,\mathrm{in}\,U}(x)\\ \nu_{\MLE\,\mathrm{in}\,U}(x)}$,
\begin{equation}\label{omeganuErrors}
\begin{aligned}
\abs{\hat{\omega}_{\MLE\,\mathrm{in}\,U}(x) - \omega_{\MLE\,\mathrm{in}\,U}(x)} &= O(1/n^{3/2}),
\\
\abs{\hat{\nu}_{\MLE\,\mathrm{in}\,U}(x) - \nu_{\MLE\,\mathrm{in}\,U}(x)} &= O(1/n),
\end{aligned}
\qquad\text{as }n\to\infty,
\end{equation}
uniformly over $n\geq n_0$, $\xi\in\tilde{V}$.

\item\label{item:PI1/n^2}
Suppose in addition that
\begin{equation}\label{PIOffDiagonalVanishes}
\tilde{B}_\omega^T \tilde{A}^{-1} \frac{\partial K_0''}{\partial\nu_j}(0;\theta) \tilde{J} = 0
\quad\text{for all $\theta$ and for }j=1,\dotsc,p_2
,
\end{equation}
where we have abbreviated $\tilde{A}=K_0''(0;\theta)^{-1}$, $\tilde{B}_\omega^T=\grad_s\grad_\omega K_0''(0;\theta)$ and $\tilde{J} = \tilde{A}^{-1} - \tilde{A}^{-1} \tilde{B}_\omega (\tilde{B}_\omega^T \tilde{A}^{-1} \tilde{B}_\omega)^{-1} \tilde{B}_\omega^T \tilde{A}^{-1}$.
Suppose also that \eqref{GrowthBound} holds for $k\leq 2$, $1\leq k+\ell\leq 7$ and for $k=3, \ell\leq 4$.
Then
\begin{equation}
\abs{\hat{\omega}_{\MLE\,\mathrm{in}\,U}(x) - \omega_{\MLE\,\mathrm{in}\,U}(x)} = O(1/n^2) \qquad\text{as }n\to\infty,
\end{equation}
uniformly over $n\geq n_0$, $\xi\in\tilde{V}$.
\end{enumerate}
\end{theorem}

In \refthm{MLEerrorPartiallyIdentifiable}, we are assuming, roughly, that $\omega$ is identifiable at the level of the sample mean but $\nu$ is not.
Thus with $\nu=\nu_0$ fixed, the function $(s,\omega)\mapsto K_0(s; \smallmat{\omega\\ \nu_0})$ satisfies the assumptions of Theorems~\ref{T:MLEerror}--\ref{T:SamplingMLEWellSpecified} with $s_0=0$, $y_0=K_0'(0;\theta_0)$, whereas for the function $(s,\nu)\mapsto K_0(s;\smallmat{\omega_0\\ \nu})$ the analogue of the matrix $H$ from \eqref{RateFunctionImplicitHessianDefinite} vanishes identically.
As a result, the inferential uncertainty for $\nu$ need not decrease as $n\to\infty$; see \refexample{Normal} in \refappendix{Examples} for an instance of this.
However, \refthm{MLEerrorPartiallyIdentifiable} shows that the MLE approximation error is still negligible in the limit $n\to\infty$.

The change of variables \eqref{PIChangeOfVariables} is the same one used in Edgeworth expansions, with a separation of scales between the mean, parametrised by $\omega'$, and lower-order fluctuations, parametrised by $\xi$.
The partially linearised model $\Xi_{w,\nu}$ arises from $X_\theta$ under this rescaling in the limit $n\to\infty$, and in \eqref{xi0Constraint}--\eqref{EijFormula} we give explicit conditions equivalent to the MLE assumption from \refthm{MLEerrorPartiallyIdentifiable}.

\subsubsection{Normal approximations}\lbsubsubsect{NormalApproxResult}

Given a complicated parametric model $X_\theta$, a different and more elementary approach is to replace $X_\theta$ by the normal random vector $\tilde{X}_\theta$ having the same mean and variance as functions of $\theta$.
Note that the combination of \eqref{SAR} and \eqref{PIChangeOfVariables} is the standard asymptotic setting in which to apply the (Local) Central Limit Theorem, suggesting that this approximation is reasonable.
We might therefore expect the MLE for observing $\tilde{X}_\theta=x$ to be a reasonable approximation to the MLE for observing $X_\theta=x$.
The following theorem states the asymptotic size of the resulting MLE approximation error.

\begin{theorem}[MLE error bound -- normal approximations]\lbthm{MLEERRORNORMALAPPROX}\lbthm{MLEerrorNormalApprox}
Let $\tilde{L}(\theta;x)$ denote the likelihood function for the normal approximation model $\tilde{X}_\theta \sim \mathcal{N}(K'(0;\theta), K''(0;\theta))$.
Then, under the hypotheses of \refthm{MLEerrorPartiallyIdentifiable}\ref{item:PI1/n}, there exist $n_0\in\N$ and neighbourhoods $U,U'\subset\thetadomain$ of $\theta_0$ and $\tilde{V}\subset\R^{\xdim\times 1}$ of $\xi_0$ such that, whenever $n\geq n_0$, $\smallmat{\omega'\\ \nu_0}\in U'$, $\xi\in\tilde{V}$, and \eqref{PIChangeOfVariables} holds, the function $\theta\mapsto \tilde{L}(\theta;x)$ has a unique local maximiser in $U$.
Moreover, writing this local maximiser as $\tilde{\theta}_{\MLE\,\mathrm{in}\,U}(x) = \smallmat{\tilde{\omega}_{\MLE\,\mathrm{in}\,U}(x)\\ \tilde{\nu}_{\MLE\,\mathrm{in}\,U}(x)}$,
\begin{equation}
\begin{aligned}
\abs{\tilde{\omega}_{\MLE\,\mathrm{in}\,U}(x) - \omega_{\MLE\,\mathrm{in}\,U}(x)} &= O(1/n),
\\
\abs{\tilde{\nu}_{\MLE\,\mathrm{in}\,U}(x) - \nu_{\MLE\,\mathrm{in}\,U}(x)} &= O(1/\sqrt{n}),
\end{aligned}
\qquad\text{as }n\to\infty,
\end{equation}
uniformly over $n\geq n_0$, $\xi\in\tilde{V}$.
\end{theorem}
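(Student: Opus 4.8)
The plan is to exploit that the normal-approximation likelihood $\tilde L$ is itself a saddlepoint likelihood, and thereby to piggyback on \refthm{MLEerrorPartiallyIdentifiable}. First I would note that $\tilde L(\theta;x)$ is the exact density of $\tilde X_\theta\sim\mathcal N(nK_0'(0;\theta),nK_0''(0;\theta))$, that for a Gaussian the saddlepoint approximation \eqref{SPA} is exact, and hence that $\tilde L$ is exactly the output of the saddlepoint construction \eqref{SPASAR} when $K_0(s;\theta)$ is replaced by its second-order Taylor polynomial in $s$ at the origin,
\[
K_0^{(2)}(s;\theta)=sK_0'(0;\theta)+\tfrac12\,sK_0''(0;\theta)s^T,
\]
with associated ``saddlepoint'' $\tilde s_0=\tilde s_0(\theta;y)$ solving the linear equation $K_0''(0;\theta)\tilde s_0^T=y-K_0'(0;\theta)$ and associated second derivative the constant-in-$s$ matrix $K_0''(0;\theta)$; in particular the gradient identities of \refsubsect{DerivativesSummary} apply to $\log\tilde L$ after these substitutions. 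Since \refthm{MLEerrorPartiallyIdentifiable}\ref{item:PI1/n} already controls $|\hat\theta_{\MLE\,\mathrm{in}\,U}-\theta_{\MLE\,\mathrm{in}\,U}|$ by $O(1/n^{3/2})$ in $\omega$ and $O(1/n)$ in $\nu$, the triangle inequality reduces the theorem to producing a unique local maximiser $\tilde\theta_{\MLE\,\mathrm{in}\,U}(x)$ of $\tilde L(\cdot;x)$ in a suitable $U$ together with
\[
\bigabs{\tilde\omega_{\MLE\,\mathrm{in}\,U}-\hat\omega_{\MLE\,\mathrm{in}\,U}}=O(1/n),\qquad
\bigabs{\tilde\nu_{\MLE\,\mathrm{in}\,U}-\hat\nu_{\MLE\,\mathrm{in}\,U}}=O(1/\sqrt n).
\]

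Next I would write $\log\tilde L(\theta;x)=\log\hat L(\theta;x)+r(\theta;x)$ with $r=r_{\det}+r_{\exp}$, where $r_{\det}(\theta;x)=\tfrac12\log\det\bigl(K_0''(\hat s_0;\theta)K_0''(0;\theta)^{-1}\bigr)$ and $r_{\exp}(\theta;x)=n\bigl[K_0^{(2)}(\tilde s_0;\theta)-\tilde s_0y-K_0(\hat s_0;\theta)+\hat s_0y\bigr]$, abbreviating $\hat s_0=\hat s_0(\theta;y)$. On the scale \eqref{PIChangeOfVariables}, on the neighbourhood of the maximisers (where the $\omega$-component of $\theta$ is within $O(1/\sqrt n)$ of $\omega'$, so that $y-K_0'(0;\theta)=O(1/\sqrt n)$ by \eqref{thetaSplitting}), one has $\hat s_0(\theta;y)=O(1/\sqrt n)$, and since $K_0^{(2)\prime}$ linearises $K_0'$ at the origin, $\hat s_0-\tilde s_0=O(|\hat s_0|^2)=O(1/n)$. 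Combining the envelope identities $\grad_\theta[n(K_0(\hat s_0;\theta)-\hat s_0y)]=n\grad_\theta K_0(\hat s_0;\theta)$ and $\grad_\theta[n(K_0^{(2)}(\tilde s_0;\theta)-\tilde s_0y)]=n\grad_\theta K_0^{(2)}(\tilde s_0;\theta)$ with the $s$-Taylor expansion of $K_0$ about the origin, and differentiating $r_{\det}$ using $\grad_\omega\hat s_0=O(1)$ but $\grad_\nu\hat s_0=O(|\hat s_0|)=O(1/\sqrt n)$ (the latter because $\grad_s\grad_\nu K_0(0;\theta)=0$ by \eqref{thetaSplitting}), I would obtain the asymmetric estimates
\[
\grad_\omega r(\theta;x)=O(1),\qquad \grad_\nu r(\theta;x)=O(1/\sqrt n),
\]
and, by similar estimates, that $\grad_\theta^T\grad_\theta r$ is of strictly smaller order, block by block, than $\grad_\theta^T\grad_\theta\log\hat L$ in the decomposition from the proof of \refthm{MLEerrorPartiallyIdentifiable} (order $n$ in the $\omega\omega$-block, order $\sqrt n$ in the $\omega\nu$-block, with $\nu\nu$-Schur complement tending to the negative definite matrix $E$ of \eqref{EijFormula}). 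Hence $\grad_\theta^T\grad_\theta\log\tilde L$ has the same structure near the maximum, so (choosing $U$ small) the existence and uniqueness of $\tilde\theta_{\MLE\,\mathrm{in}\,U}$ in $U$ follow by the implicit-function-theorem argument already used for $L$ and $\hat L$ in \refthm{MLEerrorPartiallyIdentifiable}.

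To conclude I would expand the score equation $0=\grad_\theta\log\tilde L(\tilde\theta_{\MLE\,\mathrm{in}\,U};x)$ about $\hat\theta_{\MLE\,\mathrm{in}\,U}$; since $\grad_\theta\log\hat L(\hat\theta_{\MLE\,\mathrm{in}\,U};x)=0$,
\[
\tilde\theta_{\MLE\,\mathrm{in}\,U}-\hat\theta_{\MLE\,\mathrm{in}\,U}=-\bigl(\grad_\theta^T\grad_\theta\log\hat L\bigr)^{-1}\grad_\theta r+(\text{higher order}),
\]
all evaluated near $\hat\theta_{\MLE\,\mathrm{in}\,U}$. Inverting the Hessian in $(\omega,\nu)$ block form via the Schur complement, its $\omega\omega$-, $\omega\nu$- and $\nu\nu$-blocks have orders $1/n$, $1/\sqrt n$ and $1$; combined with $\grad_\omega r=O(1)$ and $\grad_\nu r=O(1/\sqrt n)$ this gives $|\tilde\omega_{\MLE\,\mathrm{in}\,U}-\hat\omega_{\MLE\,\mathrm{in}\,U}|=O(1/n)\cdot O(1)+O(1/\sqrt n)\cdot O(1/\sqrt n)=O(1/n)$ and $|\tilde\nu_{\MLE\,\mathrm{in}\,U}-\hat\nu_{\MLE\,\mathrm{in}\,U}|=O(1/\sqrt n)\cdot O(1)+O(1)\cdot O(1/\sqrt n)=O(1/\sqrt n)$, uniformly over the asserted ranges; adding these to the bounds of \refthm{MLEerrorPartiallyIdentifiable}\ref{item:PI1/n} finishes the proof.

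I expect the middle step to be the main obstacle: establishing $\grad_\omega r=O(1)$, $\grad_\nu r=O(1/\sqrt n)$ (and the matching $\grad_\theta^T\grad_\theta r$ bounds), which hold not uniformly over the fixed set $U$ but only on the finer scale \eqref{PIChangeOfVariables} near the maximiser where $\hat s_0=O(1/\sqrt n)$ — so the parametrisation and the location of the maximiser must be carried through the envelope-and-Taylor bookkeeping already set up for \refthm{GradientError} and \refthm{MLEerrorPartiallyIdentifiable}. The genuinely new contributions to $r$, beyond the saddlepoint case, come from the discarded cubic-and-higher term of $K_0$ in $s$ (of log-likelihood size $n\cdot O(|\hat s_0|^3)=O(1/\sqrt n)$) and from replacing $K_0''(\hat s_0;\theta)$ by $K_0''(0;\theta)$; it is the latter, once differentiated in the $\omega$-direction through $\grad_\omega\hat s_0=O(1)$, that makes $\grad_\omega r$ of order exactly $1$ rather than $o(1)$, hence the $\omega$-error of order $1/n$ rather than $1/n^2$, in contrast with \refthm{MLEerrorPartiallyIdentifiable}\ref{item:PI1/n^2}. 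The remaining steps are direct transcriptions of those for $\hat L$.
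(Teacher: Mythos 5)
Your proposal is correct, but it takes a genuinely different route from the paper's proof. The paper never compares $\tilde{\theta}_{\MLE\,\mathrm{in}\,U}$ with the saddlepoint MLE: it reuses the implicit-function machinery from the proof of \refthm{MLEerrorPartiallyIdentifiable} verbatim, but inserts a second interpolation parameter $\delta'$ alongside $\delta=1/\sqrt{n}$ into the functions $F_{1,\CLT},F_{\omega,\CLT},F_{\nu,\CLT}$, so that $\delta'=\delta$ reproduces the system for $L$ while $\delta'=0$ reproduces the system for the quadratic CGF (equivalently the Gaussian model, for which $q_3$ vanishes and $\hat{P}$ is exact). Both MLEs are then values of one continuously differentiable implicit function $G(\omega',\xi,\delta,\delta')$, evaluated at $\delta'=1/\sqrt{n}$ and at $\delta'=0$, and the bounds are immediate: $\abs{G(\omega',\xi,\delta,0)-G(\omega',\xi,\delta,\delta)}=O(1/\sqrt{n})$ by differentiability in $\delta'$, with the extra $1/\sqrt{n}$ for the $\omega$-component supplied by the reparametrisation $\omega=\omega'+w/\sqrt{n}$; uniformity comes for free from the Implicit Function Theorem on a compact neighbourhood. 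Your route instead identifies $\tilde{L}$ as the exact saddlepoint likelihood of the quadratic CGF, reduces the problem to comparing $\tilde{\theta}$ with $\hat{\theta}$ via the triangle inequality and \refthm{MLEerrorPartiallyIdentifiable}\ref{item:PI1/n}, and then runs a score expansion using the asymmetric estimates $\grad_\omega r=O(1)$, $\grad_\nu r=O(1/\sqrt{n})$ together with inverse-Hessian blocks of orders $1/n$, $1/\sqrt{n}$, $1$. Those estimates do hold on the scale \eqref{PIChangeOfVariables} (and your Hessian claim is consistent with the limiting matrix whose $\nu\nu$ Schur complement is $E$ from \eqref{EijFormula}), so your argument closes — at the cost of redoing the fine-scale bookkeeping and the tube argument showing that the maximiser of $\tilde{L}$ in a fixed $U$ actually lies within $O(1/\sqrt{n})$ of $\omega'$, steps the paper's interpolation absorbs automatically; in exchange, your version makes explicit where each power of $n$ originates. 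One small correction of emphasis: the order-one contribution to $\grad_\omega r$ comes not only from the determinant factor through $\grad_\omega\hat{s}_0=O(1)$, but equally from the exponent, through $n(\tilde{s}_0-\hat{s}_0)\,\grad_s\grad_\omega K_0(0;\theta)=O(1)$ (the discrepancy $\tilde{s}_0-\hat{s}_0=O(1/n)$ being driven by third cumulants); this does not affect your upper bounds.
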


In both Theorems~\ref{T:MLEerrorPartiallyIdentifiable} and \ref{T:MLEerrorNormalApprox}, the change of variables \eqref{PIChangeOfVariables} requires the implied sample mean $y$ to lie in a narrow ``tube'' of diameter of order $1/\sqrt{n}$ around the $p_1$-dimensional surface of possible model means, parametrised by $\omega'\mapsto K_0'(0;\smallmat{\omega'\\ \nu_0})$.
In \refthm{MLEerrorPartiallyIdentifiable}, this restriction does not seem entirely essential, and we conjecture that a similar result holds for the case $s_0\neq 0$.
\refthm{MLEerrorNormalApprox}, by contrast, can have no such analogue, even in the fully identifiable case $p_1=p$, $p_2=0$.
Indeed, away from their shared surface of possible model means, the models $\theta\mapsto X_\theta$ and $\theta\mapsto \tilde{X}_\theta$ may be completely different, and there is no reason to expect any relationship between their MLEs; see for instance \refexample{Poisson} in \refappendix{Examples}.

\subsubsection{The integer-valued case}\lbsubsubsect{IntegerValuedResults}

Finally all of these results apply to integer-valued random variables -- although, as we shall discuss in \refsubsubsect{IntegerValued}, it would be natural to make different and more flexible assumptions in the integer-valued case.

\begin{theorem}\lbthm{INTEGERVALUED}\lbthm{IntegerValued}
Let $X_\theta$ have values in $\Z^{\xdim\times 1}$ and set $L(\theta;x)=\P(X_\theta = x)$, with the restriction $x\in\Z^{\xdim\times 1}$.
Then the results of Theorems~\ref{T:GradientError}--\ref{T:MLEerrorNormalApprox} hold, with the assumption \eqref{DecayBound} replaced by the assumption that $\abs{M_0(s+\ii\phi;\theta)}< M_0(s;\theta)$ for all $(s,\theta)\in\interior\mathcal{S}$ and $\phi\in[-\pi,\pi]^{1\times\xdim}\setminus\set{0}$.
\end{theorem}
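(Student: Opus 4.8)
The plan is to show that the lattice-valued setting is a transcription of the continuous-density setting in which every Fourier integral over $\R^{1\times m}$ is replaced by an integral over the torus $[-\pi,\pi]^{1\times m}$. The starting point is the inversion formula: for $X_\theta$ valued in $\Z^{m\times 1}$, $x\in\Z^{m\times 1}$, and any $(s,\theta)\in\interior\mathcal{S}$,
\[
\P(X_\theta=x)=\frac{1}{(2\pi)^m}\int_{[-\pi,\pi]^{1\times m}} M(s+\ii\phi;\theta)\,e^{-(s+\ii\phi)x}\,d\phi=\frac{e^{K(s;\theta)-sx}}{(2\pi)^m}\int_{[-\pi,\pi]^{1\times m}}\left(\frac{M_0(s+\ii\phi;\theta)}{M_0(s;\theta)}\right)^{n} e^{-\ii\phi x}\,d\phi,
\]
the second equality using \eqref{SAR} and $x=ny$. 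This is the exact analogue of the representation of $L$ that underlies the factorisation and reparametrisation of \refsubsect{TwoSteps}, and I would check that the whole algebraic apparatus there — together with the derived formulas for $\grad_\theta\log L$ and its higher $\theta$-derivatives — carries over verbatim with this torus integral in place of the integral over $\R^{1\times m}$. Unlike in the continuous case, the integral converges trivially since the integrand is bounded and the domain compact, so \eqref{DecayBound} is not needed for existence; likewise, differentiation under the integral sign in $\theta$ is immediate from \eqref{GrowthBound}, a bound polynomial in $\phi$ being a uniform bound on the compact torus.

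The second step is to verify that the replacement hypothesis, $\abs{M_0(s+\ii\phi;\theta)}<M_0(s;\theta)$ for $\phi\in[-\pi,\pi]^{1\times m}\setminus\{0\}$, delivers everything that \eqref{DecayBound} did. First, it implies \eqref{K''PosDef}: if $v^TK_0''(s;\theta)v=0$ for some nonzero real $v$, then the exponentially tilted distribution — which has the same $\Z^{m\times 1}$-valued support as $X_\theta$ — is concentrated on a proper affine subspace, hence on a coset of a sublattice of $\Z^{m\times 1}$ whose real span $W$ has dimension at most $m-1$; any small nonzero $\phi\in\R^{1\times m}$ with $\phi w=0$ for all $w\in W$ then lies in $[-\pi,\pi]^{1\times m}\setminus\{0\}$ and makes $e^{\ii\phi X_\theta}$ almost surely constant, forcing $\abs{M_0(s+\ii\phi;\theta)}=M_0(s;\theta)$, a contradiction. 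Second, by continuity of $M_0$ and compactness, for each compact $C\subset\interior\mathcal{S}$ and each neighbourhood $N$ of $0$ there is $\rho=\rho(C,N)<1$ with $\abs{M_0(s+\ii\phi;\theta)/M_0(s;\theta)}\le\rho$ for all $(s,\theta)\in C$ and $\phi\in[-\pi,\pi]^{1\times m}\setminus N$; combined with the polynomial bounds of \eqref{GrowthBound} on the derivative factors, this makes the contribution to the integral above, and to its $\theta$-derivatives, from $\phi\notin N$ exponentially small in $n$, uniformly over $C$ — a strictly stronger tail estimate than the polynomial decay \eqref{DecayBound} supplied on $\R^{1\times m}$. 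Near $\phi=0$ nothing changes: the local expansion of the integrand uses only the positive definiteness of $K_0''(s;\theta)$ just established and the growth and continuity of the low-order derivatives of $M_0$, none of which refers to the global integration domain.

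With these two points in hand, the proofs of Theorems~\ref{T:GradientError}--\ref{T:MLEerrorNormalApprox} transfer essentially line by line. For \refthm{GradientError} one reruns the argument behind \refcoro{gradlogPhatsError}, splitting each $\phi$-integral into a shrinking ball about $0$ (handled by the unchanged local expansion) and its complement in $[-\pi,\pi]^{1\times m}$ (now exponentially small rather than polynomially small); the conclusion $\grad_\theta\log\hat L=\grad_\theta\log L+O(1/n)$, uniform on compacts, is unaffected. Theorems~\ref{T:MLEerror}, \ref{T:BayesianError}, \ref{T:SamplingMLE} and \refthm{MLEerrorPartiallyIdentifiable} are downstream of the gradient bound (and, for the $O(1/n^2)$ conclusion of \refthm{MLEerrorPartiallyIdentifiable}, of its higher-order refinement, which needs the extra growth bounds already assumed there, but again only through torus integrals); their implicit-function and Laplace-type arguments never see the $\phi$-domain and carry over with no change. \refthm{MLEerrorNormalApprox} involves $M_0$ only through $K_0'(0;\theta)$ and $K_0''(0;\theta)$ and is thus entirely unaffected; and since $\P(X_\theta=x)$ is unambiguously defined, the earlier caveats about continuity and essential uniqueness of $L(\theta;x)$ are vacuous here. (Throughout, the observed value $x$ and hence the implied mean $y=x/n$ range over the lattice, which is immaterial because the relevant neighbourhoods of $y_0$ meet $\tfrac1n\Z^{m\times 1}$ once $n$ is large.) I expect the only part demanding genuine care to be the bookkeeping of the second step: confirming that the single hypothesis on $[-\pi,\pi]^{1\times m}\setminus\{0\}$ really does subsume \emph{every} appeal to \eqref{DecayBound} in the earlier proofs — notably \eqref{K''PosDef} and the uniformity over compacts of the tail estimates — and that the lattice inversion formula is normalised so that the factorisation of \refsubsect{TwoSteps} reproduces itself exactly; the rest is a faithful transcription of the continuous-case argument.
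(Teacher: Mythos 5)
Your proposal is correct and follows essentially the same route as the paper's own proof (\refprop{PddtPErrorInt} and \refappendix{IntegerValuedProof}): replace the Fourier integral over $\R^{1\times m}$ by the torus integral defining $P_{\mathrm{int},n}$, use the non-lattice hypothesis together with continuity and compactness to bound $\abs{M_0(s+\ii\phi;\theta)/M_0(s;\theta)}$ strictly below $1$ away from $\phi=0$ (making the tail of the integral exponentially small in $n$), leave the local analysis near $\phi=0$ unchanged since it rests only on \eqref{K''PosDef} and \eqref{GrowthBound}, and then note that Theorems~\ref{T:GradientError}--\ref{T:MLEerrorNormalApprox} use the integrand only through the analogue of \refcoro{gradlogPhatsError} (plus its higher-order extension for \refthm{MLEerrorPartiallyIdentifiable}\ref{item:PI1/n^2}). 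Your explicit derivation of \eqref{K''PosDef} from the non-lattice condition is a detail the paper leaves implicit, but it is the same argument in spirit, so nothing genuinely new or missing.
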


In \refthm{IntegerValued}, the assumption $\abs{M_0(s+\ii\phi;\theta)}< M_0(s;\theta)$ can be interpreted as a ``non-lattice'' condition on the distribution $Y_\theta$.
For instance, the condition fails if $Y_\theta$ has an entry with only odd values.
On the other hand, the condition holds if, for some $n\in\N$, $\P(X_\theta=x)>0$ for all $x\in\set{0,1}^{\xdim\times 1}$.
Meanwhile, because $M_0(s+\ii\phi;\theta)$ is periodic in $\phi$, \eqref{GrowthBound} can be verified by showing that the relevant partial derivatives are continuous.

\subsection{Discussion}\lbsubsect{Discussion}

\subsubsection{Application and scope of the results}\lbsubsubsect{ApplicationAndScope}

The results in this paper show that the saddlepoint MLE offers a high degree of asymptotic accuracy, considered as a substitute for the true MLE.
Most notably, when we consider the limit $n\to\infty$, the approximation error in using the saddlepoint MLE, of size $O(1/n^2)$, $O(1/n^{3/2})$ or $O(1/n)$, is negligible compared to the inferential uncertainty, of order $1/\sqrt{n}$ or larger, inherent in the MLE.

The scope for applying saddlepoint methods is reasonably wide: it is enough to know the moment generating function $M$ (either exactly or numerically to high precision on a computer).
Provided we know $M$, the saddlepoint approximation can be computed quickly, uniformly in $n$, whereas the true likelihood often becomes increasingly intractable for larger $n$.
The model need not even fall into the standard asymptotic regime described here, as in \cite[see \refexample{PedDavFok} from \refappendix{Examples}]{PedDavFok2015}; indeed, we have followed Butler in thinking of \eqref{SPASAR} as the special case of \eqref{SPA} where $X$ is a sum of $n$ i.i.d.\ terms, rather than thinking of \eqref{SPA} as the special case of \eqref{SPASAR} where $n=1$; see \cite[section~2.2.2]{Butler2007}.
In all cases, the usual suite of likelihood-based approaches can be applied, using the saddlepoint likelihood as a substitute for the true likelihood.

The encouraging asymptotic results from Theorems~\ref{T:GradientError}--\ref{T:IntegerValued} should, like many limiting statements, be interpreted with some caution in practice.
Whereas the theorems apply when, for instance, we have a sequence of observed values $x_n$ with $y_n=x_n/n\to y_0$ as $n\to\infty$, a typical application yields a single observed value $x$, with $n$ fixed.
Even if we interpret the observed value as being part of an infinite sequence, we may not have access to the limiting implied sample mean $y_0$ (as in Theorems~\ref{T:MLEerror}--\ref{T:SamplingMLEWellSpecified}) or the limiting rescaled deviation $\xi_0$ (as in Theorems~\ref{T:MLEerrorPartiallyIdentifiable}--\ref{T:MLEerrorNormalApprox}), nor the corresponding parameter $\theta_0$.
Thus, in practice, rather than verifying that the Hessians from \eqref{RateFunctionImplicitHessianDefinite} or \eqref{EijFormula} are non-singular at a specified base point $\theta_0$, it may be more relevant to enquire whether these Hessians are nearly singular near the computed saddlepoint MLE $\hat{\theta}_\MLE(x)$.
However, such complications are not specific to the saddlepoint approximation: the same dilemma applies whenever we appeal to an asymptotic result to interpret a fixed dataset.

Note also that this paper is concerned primarily with approximation accuracy.
Theorems~\ref{T:GradientError}--\ref{T:IntegerValued} guarantee under broad conditions that the saddlepoint log-likelihood, its gradient, and the resulting MLE are close to the true values, provided only that $n$ is large.
However, large $n$ does not guarantee that the true or saddlepoint MLE values will be close to an underlying true parameter (if one exists) in the partially identifiable case from Sections~\ref{sss:PartiallyIdentifiableResult}--\ref{sss:NormalApproxResult}.
Such a guarantee would typically come from having $k$ i.i.d.\ observations, with $k$ large.
\refsubsubsect{MultipleSamples} outlines how repeated observations can be implemented in our notation, but the theorems consider the number of i.i.d.\ observations to be fixed.
This paper excludes from consideration the double limit $n\to\infty,k\to\infty$.

Finally, we note that our results concern local maxima and local neighbourhoods in parameter space.
The true likelihood $L(\theta;x)$ may have a complicated global structure as a function of $\theta$, with multiple local maxima, and the saddlepoint approximation cannot do better than faithfully replicating this complicated structure.
Moreover, the saddlepoint approximation might have greater error in distant parts of parameter space, so that the saddlepoint MLE might fail to exist globally even if the true likelihood has a global maximum: see \refexample{MLEWrong} in \refappendix{Examples}.
This possibility does not usually arise in practice but seems difficult to rule out \emph{a priori}.
We note however that the uniformity assertions in Theorems~\ref{T:GradientError}--\ref{T:IntegerValued} robustly handle variability in the observed values: if we assume that $x$ is drawn from the distribution $X_{\theta_0}$, the Law of Large Numbers means that $y=X_{\theta_0}/n$ will lie in a neighbourhood of $y_0=\E(Y_{\theta_0})$ for $n$ large enough.

\subsubsection{Integer-valued versus continuous distributions}\lbsubsubsect{IntegerValued}

On the face of it, the discrete \refthm{IntegerValued} parallels quite closely the continuous Theorems~\ref{T:GradientError}--\ref{T:MLEerrorNormalApprox}: the conclusions are identical, and the hypotheses are quite similar.
There is however an important contextual difference: in the discrete case, the restriction to $\interior\mathcal{S}$ and $\mathcal{Y}^o$ excludes values of interest.
For instance, suppose $X$ represents count data, $X\in\Z_+^{\xdim\times 1}$.
If we observe a count of 0 (or an observed vector including one or more zero counts) then we will be unable to find the saddlepoint $\hat{s}$.
Even if we circumvent this issue by interpreting $\hat{s}$ as the limit $s\to-\infty$, the resulting saddlepoint approximation will still diverge, and no MLE can be computed.

This distinction arises in part because, as we shall discuss in \refsubsect{TwoSteps}, the saddlepoint approximation is fundamentally a (normal) density approximation, and it can be problematic when applied as an approximation for a probability mass function.
It is the author's intention to return to this topic in future research.

\subsubsection{Heuristic for the true MLE and saddlepoint MLE}\lbsubsubsect{ImpliesHeuristic}

We give a heuristic argument for how the size of the gradient error from \refthm{GradientError} leads to the size of the MLE error in \refthm{MLEerror}.
Namely, fix $x=ny_0$ and assume as a simplification that 
\begin{itemize}
\item
the function $\theta \mapsto K_0(\hat{s}_0(\theta,y_0);\theta)-\hat{s}_0(\theta,y_0) y_0$ from \eqref{RateFunctionImplicit} -- which by \eqref{RateFunctionImplicitCriticalPoint}--\eqref{RateFunctionImplicitHessianDefinite} has a non-degenerate local maximum at $\theta=\theta_0$ -- is purely quadratic around its maximum value, say $\theta\mapsto a+\tfrac{1}{2}(\theta-\theta_0)^T H (\theta-\theta_0)$ where $H$ is negative definite;
\item
the function $\theta\mapsto -\tfrac{1}{2}\log\det K_0''(\hat{s}(\theta,y_0);\theta)$ is purely affine, say $\theta\mapsto b+u (\theta-\theta_0)$ for some fixed $u\in\R^{1\times p}$; and
\item
the difference $\grad_\theta\log\hat{L}(\theta;x)-\grad_\theta\log L(\theta;x)$ from \refthm{GradientError} has the form $\frac{1}{n}v$ for some fixed $v\in\R^{1\times p}$.
\end{itemize}
Under these assumptions
\begin{equation}\label{logLhatHeuristic}
\log\hat{L}(\theta;x) = na + \tfrac{1}{2} n (\theta-\theta_0)^T H (\theta-\theta_0) - \tfrac{\xdim}{2}\log (2\pi n) + b + u (\theta-\theta_0),
\end{equation}
where $\xdim$ is the dimension of the vectors $X,x$.
We can complete the square to find
\begin{align}
\log\hat{L}(\theta;x) = \tfrac{1}{2} n \bigl( \theta-\theta_0 &+ \tfrac{1}{n}H^{-1}u^T \bigr)^T H \left( \theta-\theta_0+\tfrac{1}{n}H^{-1}u^T \right)
\notag\\&\qquad
- \tfrac{1}{2n} u H^{-1} u^T + na - \tfrac{\xdim}{2}\log (2\pi n) + b.
\end{align}
Thus the saddlepoint MLE comes to
\begin{equation}
\hat{\theta}_\MLE(x) = \theta_0 - \tfrac{1}{n}H^{-1}u^T.
\end{equation}
For the true likelihood, define the constants $c_n=\log\hat{L}(\theta_0;x)-\log L(\theta_0;x)$.
Then 
\begin{multline}\label{logLHeuristic}
\log L(\theta;x) = na + \tfrac{1}{2} n (\theta-\theta_0)^T H (\theta-\theta_0) - \tfrac{\xdim}{2}\log (2\pi n)
\\
 + b + u (\theta-\theta_0) - c_n - \tfrac{1}{n} v (\theta-\theta_0).
\end{multline}
Comparing \eqref{logLHeuristic} with \eqref{logLhatHeuristic}, we see that changing from $\hat{L}$ to $L$ amounts to replacing $u$ by $u-\tfrac{1}{n}v$ and subtracting a constant term $c_n$.
We can again complete the square to find
\begin{align}
\log L(\theta;x) = \tfrac{1}{2} n \bigl( \theta-\theta_0 &+ \tfrac{1}{n}H^{-1}u^T - \tfrac{1}{n^2}H^{-1}v^T \bigr)^T H \left( \theta-\theta_0+\tfrac{1}{n}H^{-1}u^T - \tfrac{1}{n^2}H^{-1}v^T \right)
\notag\\&\qquad
- \tfrac{1}{2n} (u - \tfrac{1}{n}v) H^{-1} (u - \tfrac{1}{n}v)^T + na - \tfrac{\xdim}{2}\log (2\pi n) + b - c_n
\end{align}
leading to
\begin{equation}
\theta_\MLE(x) = \theta_0 - \tfrac{1}{n}H^{-1}u^T + \tfrac{1}{n^2}H^{-1}v^T,
\end{equation}
with an extra term of order $1/n^2$ in accordance with \refthm{MLEerror}.

Note that order $1/n^2$ is better than what follows from the likelihood error bound \eqref{LErrorRough} alone: knowing only that $\log\hat{L}$ and $\log L$ differ by $O(1/n)$, we could conclude at best that $\hat{\theta}_\MLE$ and $\theta_\MLE$ differ by $O(1/n)$, since that is the size of the region in which the functions remain within $O(1/n)$ of their maximum even in the fully identifiable case.
In the heuristic calculation above, however, the size of $c_n$ (the log-likelihood approximation error at $\theta=\theta_0$) was irrelevant to the size of the MLE error.
So indeed was the term $-\tfrac{\xdim}{2}\log(2\pi n)$ in $\log\hat{L}$.
In fact, even if we drop all terms arising from the factor $(\det(2\pi K''(\hat{s}(\theta,x);\theta)))^{-1/2}$ in the saddlepoint approximation $\hat{L}$, the heuristic suggests that the resulting MLE approximation would still be within $O(1/n)$ of the true MLE.
This intuition is correct: see \refthm{LowerOrderSaddlepoint}.

\subsubsection{The well-specified case}\lbsubsubsect{WellSpecified}

In Theorems~\ref{T:MLEerror}--\ref{T:SamplingMLE}, some simplification occurs if 
\begin{equation}\label{y0Matches}
y_0=\E(Y_{\theta_0}),
\end{equation}
i.e., if the limiting implied sample mean matches with the model mean for some parameter value.
In this case we might say that the model is ``well-specified at the level of the mean.''
By the Law of Large Numbers, this condition holds under the usual assumption of well-specifiedness where the observed value $x$ is itself drawn randomly with the distribution $X_{\theta_0}$, because then $X_{\theta_0}/n\to\E(Y_{\theta_0})$ as $n\to\infty$.

If \eqref{y0Matches} holds and $(0,\theta_0)\in\interior\mathcal{S}$, we see from \eqref{M'M''K'K''0} that $s_0=0$ is the solution of the saddlepoint equation $K_0'(s_0)=y_0$.
Since $K_0(0;\theta)=0$ for all $\theta$, it follows that
\begin{equation}
\grad_\theta K_0(0;\theta_0) = 0, \qquad \grad_\theta^T\grad_\theta K_0(0;\theta_0) = 0.
\end{equation}
Thus the condition \eqref{RateFunctionImplicitCriticalPoint} holds automatically, and the matrix $H$ from \eqref{RateFunctionImplicitHessianDefinite} simplifies to
\begin{equation}
H = - (\grad_s\grad_\theta K_0(0;\theta_0))^T K_0''(0;\theta_0)^{-1} (\grad_s\grad_\theta K_0(0;\theta_0)). 
\end{equation}
The matrix $K_0''(0;\theta_0)$ is already positive definite by assumption, see \eqref{K''PosDef}, so the condition \eqref{RateFunctionImplicitHessianDefinite} reduces to \eqref{MeanGradientFullRank}.
Note that the $\xdim\times p$ matrix $\grad_s\grad_\theta K_0(0;\theta_0)$ from \eqref{MeanGradientFullRank} is the gradient of the mapping
\begin{equation}
\theta\mapsto \E(Y_\theta)
\end{equation}
(evaluated at $\theta_0$) so the condition \eqref{RateFunctionImplicitHessianDefinite}/\eqref{MeanGradientFullRank} is equivalent to saying that the linear approximation (at $\theta_0$) to the mapping $\theta\mapsto\E(Y_\theta)$ is one-to-one.

Heuristically, if the implied sample mean $y_0$ matches with the model at parameter value $\theta_0$, and if the mapping $\theta\mapsto\E(Y_\theta)$ is one-to-one, then by the Law of Large Numbers, $y_0$ is an unlikely observation under any other parameter value $\theta\neq\theta_0$.
This matches the observation that the function \eqref{RateFunctionImplicit} will vanish at $\theta=\theta_0$ and must be strictly negative elsewhere.

Conversely, if the gradient of $\theta\mapsto \E(Y_\theta)$ has rank $p_1<p$, there will be a $p_2$-dimensional hyperplane, $p_2=p-p_1$, along which the mapping $\theta\mapsto\E(Y_\theta)$ is constant to first order.
Hence, for all $\theta$ along this surface, $\hat{s}(\theta,y_0)\approx 0$ continues to be an approximate solution of the saddlepoint equation, the leading-order coefficient $K_0(\hat{s}(\theta,y_0);\theta)-\hat{s}(\theta,y_0)y_0$ remains zero to first order, and the heuristic from \refsubsubsect{ImpliesHeuristic} fails.
In such a case it may still be possible to apply \refthm{MLEerrorPartiallyIdentifiable}, with the coordinate $\nu$ representing the position within $p_2$-dimensional level surfaces along which $\E(Y_\theta)$ is constant.

\subsubsection{Observing multiple samples}\lbsubsubsect{MultipleSamples}

As remarked in \refsubsect{SAR}, the parameter $n$ should not be interpreted as a sample size in the traditional sense since the summands $Y^{(1)},\dotsc,Y^{(n)}$ of \eqref{SARasSum} are not observed.
A model with i.i.d.\ observations $X^{(1)},\dotsc,X^{(k)}\in\R^{\xdim_0\times 1}$ can be fit into the framework of this paper by concatenating them into a vector $\vec{X}$ of dimension $\xdim=k\xdim_0$.
Likewise, the observed data values $x^{(1)},\dotsc,x^{(k)}\in\R^{\xdim_0\times 1}$ are concatenated into a vector $\vec{x}\in\R^{k\xdim_0\times 1}$.
We then study the likelihood $L_{\vec{X}}(\theta;\vec{x})$ and MLE $\theta_\MLE(\vec{x})=\argmax_\theta L_{\vec{X}}(\theta;\vec{x})$ for a single combined observation $\vec{x}$ by applying Theorems~\ref{T:GradientError}--\ref{T:IntegerValued} to $\vec{X},\vec{x}$ instead of $X,x$.

When we apply the saddlepoint approximation to the concatenated vector $\vec{X}$ of dimension $k\xdim_0$, the covariance matrix $K_{\vec{X}}''(\vec{s})$ will be block-diagonal. 
Other quantities appearing in Theorems~\ref{T:GradientError}--\ref{T:IntegerValued} also take a special form: see \refexample{MultipleSamplesWorkings} in \refappendix{Examples} for further details.
For present purposes we note that the saddlepoint approximation for $\vec{X}$ factors as a product of $k$ $\xdim_0$-dimensional saddlepoint approximations for $X^{(1)},\dotsc,X^{(k)}$, each of which is a saddlepoint approximation applied to the distribution $X_\theta$.

This paper considers only the limit $n\to\infty$ along which the saddlepoint approximation becomes more accurate.
In particular, the number $k$ of i.i.d.\ observations is considered to be fixed throughout.

\subsubsection{Sufficient statistics of exponential families}\lbsubsubsect{ExpFamily}

Consider the case where $X$ is a sufficient statistic for a full exponential family of distributions with natural parameter $\eta\in\R^{1\times\xdim}$.
That is, we assume that $p=\xdim$ and that $\eta=\eta(\theta)$ is a reparametrisation of $\theta\in\thetadomain\subset\R^{\xdim\times 1}$, i.e., the mapping $\theta\mapsto\eta(\theta)$ and its inverse are smooth, and $\eta$ varies over an open subset of $\R^{1\times\xdim}$.
Then we can write
\begin{equation}
L(x;\theta) = f(x;\theta) = h(x) \exp\left( \eta x - \rho(\eta) \right)
,\qquad
K_X(s;\theta) = \rho(\eta+s)-\rho(\eta)
,
\end{equation}
for scalar-valued functions $h,\rho$ with $\rho$ convex.
The saddlepoint equation \eqref{SaddlepointEquation} reduces to
\begin{equation}\label{SEExpFamily}
\rho'(\eta+\hat{s}) = x.
\end{equation}
In particular, the quantity $\hat{\eta}=\eta+\hat{s}$ depends on $x$ alone and is fixed as a function of $\eta$, provided \eqref{SEExpFamily} has a solution.
The saddlepoint approximation can be written as
\begin{equation}\label{SPAExpFamily}
\hat{f}(x;\theta) = \frac{\exp\left( \big. \smash{\rho(\hat{\eta}) - \hat{\eta} x } \right)}{\sqrt{\big.\smash{\det(2\pi\rho''(\hat{\eta}))}}} \exp\left( \eta x - \rho(\eta) \right).
\end{equation}
The first factor need not coincide with $h(x)$, so the saddlepoint approximation need not be exact, but because the first factor depends on $x$ only, \emph{the saddlepoint MLE is exact for an exponential family} provided that the saddlepoint approximation itself is well-defined.
Indeed, in terms of the natural parameter $\eta$, the MLE is precisely the quantity $\hat{\eta} = \eta+\hat{s}$ solving \eqref{SEExpFamily}, which we already find in the course of computing the saddlepoint.

\subsection{Guide to examples}\lbsubsect{ExamplesGuide}

\refappendix{Examples} contains a number of examples based on theory and on the literature.
\refexample{X=AU} discusses the case $X=AU$, where $A$ is a constant matrix and $U$ is a random vector with $K_U$ known.
\refexample{MultipleSamplesWorkings} gives the details for the construction of \refsubsubsect{MultipleSamples}, in which $\vec{X}$ is a concatenation of $k$ independent random vectors each formed in accordance with \eqref{SARasSum} and \eqref{SAR}.

Examples~\ref{ex:ZhaBraFew}--\ref{ex:PedDavFok} examine applications of saddlepoint MLEs in the literature \cite{ZhaBraFew2019,DavHauKraParameterLinearBirthDeath,PedDavFok2015}.
\refexample{ZhaBraFew} gives further detail for certain models mentioned in \refsubsect{IntroExample}; it illustrates the fully identifiable case of Theorems~\ref{T:MLEerror}--\ref{T:SamplingMLEWellSpecified}, and also the setup of \refexample{X=AU}.
\refexample{DavHauKra} illustrates the partially identifiable case of Theorems~\ref{T:MLEerrorPartiallyIdentifiable}--\ref{T:MLEerrorNormalApprox}, and also the setup of \refexample{MultipleSamplesWorkings}.
\refexample{PedDavFok} shows a model that falls outside the setup of \eqref{SAR}.

Examples~\ref{ex:Poisson}--\ref{ex:DecayFails} explore families of distributions for which some direct calculations are possible, including the normal, Poisson and Gamma families; families where the true likelihood has different global behaviour than the saddlepoint likelihood; and ill-behaved distributions showing how the regularity conditions \eqref{DecayBound}--\eqref{GrowthBound} may hold or fail.

\section{Structure of the saddlepoint approximation}\lbsect{SPStructure}

In \refsubsect{TwoSteps} we break down the saddlepoint approximation into two steps: an exact step based on tilting, and an approximation step based on the normal distribution.
Understanding the saddlepoint approximation via tilting is not a new idea, cf.\ for instance \cite[section~2]{Reid1988}, but here we use it to motivate a novel factorisation of the likelihood into an exact factor, which encodes the effect of tilting and is shared between the true and approximate likelihoods; and a correction term, a normal approximation of which leads to the saddlepoint likelihood.
This factorisation establishes the framework in which the proofs will take place.

As a natural by-product of the factorisation, we introduce in \refsubsect{LowerOrder} a simpler but less accurate alternative to the saddlepoint approximation, which satisfies results similar to Theorems~\ref{T:GradientError}--\ref{T:IntegerValued}.

\subsection{Tilting and the saddlepoint approximation}\lbsubsect{TwoSteps}

Calculating the saddlepoint approximation splits naturally into two steps.
For given $\theta,x$, we first compute the saddlepoint $\hat{s}(\theta,x)$ by solving \eqref{SaddlepointEquation}, and then this value is substituted into the expression from \eqref{SPA}.
As we now explain, the first step can be understood in terms of tilting, and this will clarify the nature of the approximation made in the second step.

If $X$ has density function $f$, the MGF $M(\ii\phi;\theta)$ along the imaginary axis gives the Fourier transform of $f$.
Consequently we can use the inverse Fourier transform to recover $f$:
\begin{equation}\label{MGFInversion}
f(x;\theta) = \int_{\R^{1\times\xdim}} M(\ii\phi;\theta) e^{-\ii\phi x} \frac{d\phi}{(2\pi)^\xdim} = \int_{\R^{1\times\xdim}} \exp\left( K(\ii\phi;\theta) - \ii\phi x \right) \frac{d\phi}{(2\pi)^\xdim}.
\end{equation}
In practice, either of the factors $M(\ii\phi;\theta)$ or $e^{-\ii\phi x}$ may be highly oscillatory, and will not typically cancel with each other.
To make the integral more manageable, we \emph{exponentially tilt} the distribution of $X$.
Define
\begin{equation}
f_{s_0}(x;\theta) = \frac{e^{s_0 x}f(x;\theta)}{M(s_0;\theta)}
\end{equation}
for all $s_0\in\mathcal{S}_\theta$.
Then $f_{s_0}$ is still a density function, corresponding to a tilted distribution $X_\theta^{(s_0)}$ having the same support as $X_\theta$, and we compute
\begin{equation}
M_{X^{(s_0)}}(s;\theta) = \frac{M(s_0 + s;\theta)}{M(s_0;\theta)}, \qquad K_{X^{(s_0)}}(s;\theta) = K(s_0+s;\theta)-K(s_0;\theta).
\end{equation}
Recalling \eqref{M'M''K'K''0}, we note that $K'(s_0;\theta) = \E(X_\theta^{(s_0)})$ and $K''(s_0;\theta)=\Cov(X^{(s_0)},X^{(s_0)})$ can themselves be interpreted as means and covariance matrices, respectively.
We remark also that, since $X_\theta$ and $X_\theta^{(s_0)}$ have the same support, \eqref{K''PosDef} is unaffected by the choice of $s\in\interior\mathcal{S}_\theta$.

If we apply the inversion formula \eqref{MGFInversion} to $X_\theta^{(s)}$, we can solve to find
\begin{align}
f(x;\theta) &= M(s;\theta)e^{-s x} f_{s}(x;\theta) 
\notag\\&
= \exp\left( K(s;\theta)-sx \right) \int_{\R^{1\times\xdim}} \exp\left( K(s+\ii\phi;\theta)-K(s;\theta) - \ii\phi x \right) \frac{d\phi}{(2\pi)^\xdim}.
\label{MGFInversionTilted}
\end{align}
Note that the assumption \eqref{DecayBound} implies that the integral in \eqref{MGFInversionTilted} converges absolutely and thus defines a continuous density function $x\mapsto f(x;\theta)$ whenever $(s,\theta)\in\interior\mathcal{S}$ and $n\delta(s,\theta)>1$, and this justifies our use of the Fourier inversion formula.
Subject to this condition, we can choose $s\in\mathcal{S}_\theta$ arbitrarily.
To make the integral more tractable, we wish to choose $s$ so that the linear term $-\ii\phi x$ cancels with $K(s+\ii\phi;\theta)-K(s;\theta)$ to leading order.
That is, we choose $s=\hat{s}(\theta;x)$, the solution of \eqref{SaddlepointEquation}, provided that $x\in\mathcal{X}_\theta$.
Replacing $x$ by $K'(\hat{s}(\theta,x);\theta)$, 
\begin{multline}\label{CGFInversionTilted}
f(x;\theta) = \exp\left( K(\hat{s};\theta)-\hat{s} K'(\hat{s};\theta) \right) 
\\
\cdot
\int_{\R^{1\times\xdim}} \exp\left( K(\hat{s}+\ii\phi;\theta)-K(\hat{s};\theta) - \ii\phi K'(\hat{s};\theta) \right) \frac{d\phi}{(2\pi)^\xdim}.
\end{multline}
At this point, explicit $x$ dependence has been virtually eliminated from \eqref{CGFInversionTilted}; all $x$ dependence on the right-hand side is now carried implicitly by $\hat{s}=\hat{s}(\theta,x)$.
We introduce new notation to exploit this feature.
Along with this new notation, we again switch from considering the density $f$ to the likelihood $L$.

Define
\begin{equation}\label{L*Pformula}
\begin{aligned}
L^*(s,\theta) &= \exp\left( \big. K(s;\theta) - s K'(s;\theta) \right),\\
P(s,\theta) &= \int_{\R^{1\times\xdim}} \exp\left( \big. K(s+\ii\phi;\theta)-K(s;\theta) - \ii\phi K'(s;\theta) \right) \frac{d\phi}{(2\pi)^\xdim},
\end{aligned}
\end{equation}
for all $s\in\interior\mathcal{S}_\theta$.
Setting $x=K'(s;\theta)$, \eqref{CGFInversionTilted} can be reformulated as
\begin{equation}\label{LL*P}
L(\theta; K'(s;\theta)) = L^*(s,\theta) P(s,\theta).
\end{equation}
We can recognise $\log L^*(s,\theta)$ as the negative of the relative entropy (or Kullback-Leibler divergence) for the distribution of $X^{(s)}_\theta$ relative to $X_\theta$.
We can also give a probabilistic interpretation of $P(s,\theta)$: it is the density of the tilted distribution $X^{(s)}_\theta$ at its mean $K'(s;\theta)$.

We emphasise at this point that all the calculations so far are exact.
The factor $L^*$ measures the probabilistic ``cost'' of shifting from the original distribution $X_\theta$ to the tilted distribution $X^{(s)}_\theta$, but no distributional information is lost in the tilting step.

The natural next step will be to make an approximation to $P(s,\theta)$.
Define
\begin{equation}\label{hatP}
\hat{P}(s,\theta) = \frac{1}{\sqrt{\det(2\pi K''(s;\theta))}}.
\end{equation}
Then the saddlepoint approximation \eqref{SPA} becomes
\begin{equation}\label{hatLL*P}
\hat{L}(\theta;K'(s;\theta)) = L^*(s,\theta) \hat{P}(s,\theta).
\end{equation}
We recognise $\hat{P}(s,\theta)$ as the density (at its mean) of a normal random variable with covariance matrix $K''(s;\theta)$.
Comparing \eqref{LL*P} and \eqref{hatLL*P}, we see that the saddlepoint approximation amounts to approximating $P(s,\theta)$, the density at its mean of the tilted distribution, by $\hat{P}(s,\theta)$, the density at its mean of the normal random variable with the same covariance matrix.

In the remainder of the paper, we will study the two-variable functions $L^*(s,\theta)$, $P(s,\theta)$ and $\hat{P}(s,\theta)$ and their gradients.
Since the factor $L^*(s,\theta)$ appears in both \eqref{LL*P} and \eqref{hatLL*P}, it will suffice to compare the gradients of $P$ and $\hat{P}$.
Ultimately our interest will be in the case $s=\hat{s}(\theta,x)$, and we will therefore rewrite \eqref{LL*P} and \eqref{hatLL*P} as
\begin{equation}\label{LL*Phats}
\begin{aligned}
L(\theta; x) &= L^*(\hat{s}(\theta,x),\theta) P(\hat{s}(\theta,x),\theta),\\
\hat{L}(\theta;x) &= L^*(\hat{s}(\theta,x),\theta) \hat{P}(\hat{s}(\theta,x),\theta),
\end{aligned}
\qquad\text{for }x\in\interior\mathcal{X}_\theta.
\end{equation}

\begin{remarknonumber}
A key advantage to studying $L^*(s,\theta)$, $P(s,\theta)$ and $\hat{P}(s,\theta)$ is that, because $P$ and $\hat{P}$ both represent densities at the mean, they vary less dramatically as a function of their arguments than the full likelihoods $L(\theta;x)$ and $\hat{L}(\theta;x)$, even in the limit $n\to\infty$ from \eqref{SAR}.
As we shall see, both $P$ and $\hat{P}$ and their gradients behave asymptotically as powers of $n$.
The only factor that decays exponentially in $n$ is $L^*$, and since it is a common factor of $L(\theta;x)$ and $\hat{L}(\theta;x)$ we can circumvent its effects.

The representation \eqref{LL*Phats} is also useful for numerical calculation.
Recent work by Lunde, Kleppe \& Skaug \cite{LunKleSkaPreprintSaddlepointInversion} calculates densities and likelihoods to high relative precision, even in the tails, using saddlepoint methodology.
Expressed in our notation, they evaluate $P(s,\theta)$ by applying a quadrature rule to the integral in \eqref{L*Pformula}.
Because $P(s,\theta)$ is a density at the mean, the integral defining $P(s,\theta)$ is much more amenable to numerical integration, in general, than the inverse Fourier integral \eqref{MGFInversion}.

Likewise, the fact that $P$ and $\hat{P}$ both represent densities at the mean helps to explain why the saddlepoint approximation can be so accurate even in the tails.
Given $x$ close to the boundary of $\mathcal{X}_\theta$ (or tending to infinity), the corresponding saddlepoint $\hat{s}$ will be close to the boundary of $\mathcal{S}_\theta$ (or will tend to infinity).
In such a limit, there is no reason in general to expect the ratio $\hat{P}/P$ to converge to 1; but often the tilted density at the tilted mean remains on the order of the inverse of the tilted standard deviation, so that $\hat{P}/P$ may remain bounded.
By contrast, normal approximations and other similar techniques rely on extrapolating a density far from the mean, and such an extrapolation can only be accurate if the true density happens to have the same tail behaviour far from the mean; see \refappendix{SaddlepointVsCLT}.
\end{remarknonumber}

\paragraph*{$L^*$ and $P$ in the integer-valued case}

When $X\in\Z^{\xdim\times 1}$, the analogue of \eqref{MGFInversion} is
\begin{equation}
\P(X_\theta=x) = \int_{[-\pi,\pi]^{1\times\xdim}} M(\ii\phi;\theta) e^{-\ii\phi x}\frac{d\phi}{(2\pi)^\xdim} \qquad\text{for }x\in\Z^{\xdim\times 1}.
\end{equation}
We can repeat the tilting argument above, leading us to define
\begin{equation}\label{PintFormula}
P_{\mathrm{int}}(s,\theta) = \int_{[-\pi,\pi]^{1\times\xdim}} \exp\left( \big. K(s+\ii\phi;\theta)-K(s;\theta) - \ii\phi K'(s;\theta) \right) \frac{d\phi}{(2\pi)^\xdim}.
\end{equation}
At integer values $x\in\Z^{\xdim\times 1}$, we take the likelihood function to be $L(\theta;x)=\P(X_\theta=x)$, and we will have
\begin{equation}\label{LL*Pint}
L(\theta; x) = L^*(\hat{s}(\theta,x),\theta) P_{\mathrm{int}}(\hat{s}(\theta,x),\theta).
\end{equation}
However, $P_{\mathrm{int}}(s,\theta)$ is defined whenever $s\in\interior\mathcal{S}_\theta$.
Thus we can use \eqref{LL*Pint} as a definition of $L(\theta;x)$ even when $x$ is non-integer, although $L(\theta;x)$ may not represent a probability in that case.

\subsection{A lower-order saddlepoint approximation}\lbsubsect{LowerOrder}

Approximating $P$ using first and second moments is a natural and reasonable step, but is not the only possible approach.
As we have seen, the standard saddlepoint approximation selects a density from the family of normal distributions after matching first and second moments.
An alternative is to use a different reference family of distributions: this is one approach to non-Gaussian saddlepoint approximations, originally developed by Wood, Booth \& Butler~\cite{WooBooBut1993} in a different context and with different tools.

An even simpler alternative, however, is to ignore $P$ altogether.
Define
\begin{equation}\label{hatL*}
\hat{L}^*(\theta;x) = L^*(\hat{s}(\theta,x),\theta)
\end{equation}
for $x\in\mathcal{X}_\theta$.
We could describe $\hat{L}^*(\theta;x)$ as the ``zeroth-order'' saddlepoint approximation to the likelihood.
We saw the quantity $\frac{1}{n}\log\hat{L}^*(\theta;ny_0)$ and its derivatives in \eqref{RateFunctionImplicit} and \eqref{RateFunctionImplicitCriticalPoint}--\eqref{RateFunctionImplicitHessianDefinite}, and $I_\theta(x) = -\log\hat{L}^*(\theta;x)$ is the large deviations rate function from Cram\'{e}r's theorem applied to $X_\theta$.  
Equivalently, $-\log\hat{L}^*(\theta;\cdot)$ is the Legendre transform of $K(\cdot;\theta)$; see for instance \cite[sections~I.4 and V.1]{dH2000} or \cite[section~1.2]{Jensen1995Saddlepoint}.

Form the corresponding maximum likelihood estimator
\begin{equation}
\hat{\theta}^*_\MLE(x) = \argmax_\theta \hat{L}^*(\theta;x)
\end{equation}
when it exists, and likewise define $\hat{\pi}^*_{\Theta\,\vert\,U,x}$ as in \eqref{hatpiFormula} with $\hat{L}$ replaced by $\hat{L}^*$.
We remark that in the standard asymptotic regime \eqref{SAR} where $x=ny$, the MLE $\hat{\theta}^*_\MLE(x)$ depends on $y$ but not on $n$; the same is true if we maximise over $\theta$ restricted to a neighbourhood $U$.

Applied to $\hat{L}^*$, the conclusions of Theorems~\ref{T:GradientError}--\ref{T:SamplingMLEWellSpecified} and \ref{T:IntegerValued} are almost unchanged, except that error terms have powers of $n$ changed by one:
\begin{theorem}[Zeroth-order saddlepoint approximation -- fully identifiable case]\lbthm{LowerOrderSaddlepoint}
~
\begin{enumerate}
\item\label{item:LOGradient}
Under the hypotheses of \refthm{GradientError}, 
\begin{equation}
\grad_\theta \log\hat{L}^*(\theta;x) = \grad_\theta \log L(\theta;x) + O(1) \qquad\text{as }n\to\infty
\end{equation}
for $(y,\theta)\in\mathcal{Y}^o$, with uniformity if $(y,\theta)$ is restricted to a compact subset of $\mathcal{Y}^o$.

\item\label{item:LOMLE}
Under the hypotheses of \refthm{MLEerror}, there exist $n_0\in\N$ and neighbourhoods $U\subset\thetadomain$ of $\theta_0$ and $V\subset\R^{\xdim\times 1}$ of $y_0$ such that, for all $n\geq n_0$ and $y\in V$, the function $\theta\mapsto \hat{L}^*(\theta;x)$ has a  unique local maximiser in $U$ and, writing this local maximiser as $\hat{\theta}^*_{\MLE\,\mathrm{in}\,U}(x)$,
\begin{equation}
\abs{\hat{\theta}^*_{\MLE\,\mathrm{in}\,U}(x) - \theta_{\MLE\,\mathrm{in}\,U}(x)} = O(1/n) \qquad\text{as }n\to\infty.
\end{equation}

\item\label{item:LOBayesian}
Under the hypotheses of \refthm{BayesianError}, with $x=ny_0$, the distribution of the rescaled parameter $\sqrt{n}\left( \Theta-\theta_0 \right)$ under $\hat{\pi}^*_{\Theta\,\vert\,U,x}$ converges as $n\to\infty$ to the same limiting distribution $\mathcal{N}(0,-H^{-1})$, where $H$ is the negative definite matrix from \eqref{RateFunctionImplicitHessianDefinite}.

\item\label{item:LOSampling}
Under the hypotheses of Theorems~\ref{T:SamplingMLE} or \ref{T:SamplingMLEWellSpecified}, the sampling distributions of the rescaled approximate MLEs $\sqrt{n}\bigl( \hat{\theta}^*_{\MLE\,\mathrm{in}\,U}(\chi_n)-\theta_0 \bigr)$ or $\sqrt{n}\bigl( \hat{\theta}^*_{\MLE\,\mathrm{in}\,U}(X_{\theta_0})-\theta_0 \bigr)$, respectively, converge as $n\to\infty$ to $Z$, where $Z$ is as in Theorems~\ref{T:SamplingMLE} or \ref{T:SamplingMLEWellSpecified}, respectively.
These convergences occur jointly with the convergences from Theorems~\ref{T:SamplingMLE} or \ref{T:SamplingMLEWellSpecified}, with the same limiting random variable $Z$ in each case.
Furthermore \refthm{SamplingMLEWellSpecified}\ref{item:HEstimator} applies also with $\Theta=\hat{\theta}^*_{\MLE\,\mathrm{in}\,U}(X_{\theta_0})$.

\item\label{item:LOIntegerValued}
The above results also apply under the hypotheses of \refthm{IntegerValued}.

\item\label{item:LOExpFamily}
Suppose $X$ is the sufficient statistic for an exponential family indexed by the natural parameter $\eta$.
Let $\hat{\eta}^*_\MLE(x)$ be the parameter obtained by maximising $\hat{L}^*$, provided the maximiser exists.
Then $\hat{\eta}^*_\MLE(x)$ coincides with the true MLE.
\end{enumerate}
\end{theorem}

It is notable that maximizing $\hat{L}^*$ results in an approximation to the MLE whose error as $n\to\infty$ is still smaller than the spatial scale $1/\sqrt{n}$, even though $\hat{L}^*$ is no longer an adequate approximation to the likelihood itself in the sense that $\hat{L}^*/L\to \infty$ as $n\to\infty$.

The proofs of \ref{item:LOGradient}--\ref{item:LOIntegerValued} are given along with the corresponding proofs of Theorems~\ref{T:GradientError}--\ref{T:SamplingMLEWellSpecified} and \ref{T:IntegerValued}.
Part~\ref{item:LOExpFamily} holds by the same reasoning as in \refsubsubsect{ExpFamily}: in the first factor in \eqref{SPAExpFamily}, we now remove the denominator, and the resulting factor still does not depend on $\eta$.

\section{Proofs of Theorems~\ref{T:GradientError}--\ref{T:MLEerror}}\lbsect{Proofs}

The proof of \refthm{GradientError} is based on two key results, \refprop{PddtPError} and \refcoro{gradlogPhatsError}, that refine the statement of \refthm{GradientError} using the factorisation from \refsubsect{TwoSteps}.
\refthm{MLEerror} follows using a scaling analysis and the Implicit Function Theorem.
We begin with some derivative formulas; see \refappendix{DerivativesDerivation} for their derivation.

\subsection{Summary of saddlepoint derivatives}\lbsubsect{DerivativesSummary}

Under the scaling of \eqref{SAR}, the quantities $\log L^*(s,\theta)$, $\log L^*(\hat{s}(\theta,x),\theta)$ and their gradients are proportional to $n$.
Define
\begin{equation}\label{L*0hatL*0}
\begin{gathered}
L^*_0(s,\theta) = \exp\left( K_0(s;\theta) - s K'_0(s;\theta) \right),
\\
\hat{L}^*_0(\theta;y) = L^*_0(\hat{s}_0(\theta,y),\theta) = \exp\left( K_0(\hat{s}_0(\theta,y);\theta) - \hat{s}_0(\theta,y) y \right),
\end{gathered} 
\end{equation}
so that
\begin{equation}\label{L*L*0}
\begin{aligned}
\log L^*(s,\theta) &= n\log L^*_0(s,\theta), 
\quad
\grad_\theta \log\hat{L}^*(\theta;x) = n \grad_\theta\log\hat{L}^*_0(\theta;y)
\end{aligned}
\end{equation}
and so on, where
\begin{equation}\label{L*0Gradients}
\begin{aligned}
\grad_\theta\log L^*_0(s,\theta) &= \grad_\theta K_0(s;\theta) - s \grad_s\grad_\theta K_0(s;\theta),\\
\grad_s \log L^*_0(s,\theta) &= -K_0''(s;\theta)s^T,\\
\grad_\theta \log\hat{L}^*_0(\theta;y) &= \grad_\theta K_0(\hat{s}_0(\theta,y);\theta),\\
\grad_\theta^T\grad_\theta \log\hat{L}^*_0(\theta;y) &= \grad_\theta^T\grad_\theta K_0(\hat{s};\theta) - \left( \grad_s\grad_\theta K_0(\hat{s};\theta) \right)^T K''_0(\hat{s};\theta)^{-1} \grad_s\grad_\theta K_0(\hat{s};\theta)
\end{aligned}
\end{equation}
By contrast, $\hat{s}, \grad_\theta\hat{s}^T$ and $\frac{\partial}{\partial t} \log\hat{P}$ do not depend on $n$:
\begin{equation}\label{s0hatPGradients}
\begin{aligned}
\hat{s}(\theta,x) &= \hat{s}_0(\theta,y),\\
\grad_\theta\hat{s}^T(\theta,x) &= \grad_\theta\hat{s}_0^T(\theta,y) = - K''_0(\hat{s}(\theta,x);\theta)^{-1} \grad_s\grad_\theta K_0(\hat{s}(\theta,x);\theta),\\
\frac{\partial}{\partial t}\log\hat{P}(s,\theta) &= -\frac{1}{2}\trace\left( K''_0(s;\theta)^{-1} \frac{\partial K''_0}{\partial t}(s;\theta) \right).
\end{aligned}
\end{equation}
For the remainder of the proofs, we will emphasise that $P(s,\theta)$, $P_{\mathrm{int}}(s,\theta)$, $L(\theta;x)$, $\hat{L}(\theta;x)$, $\theta_\MLE(x)$, $\hat{\theta}_\MLE(x)$ depend on $n$ by writing them as $P_n(s,\theta)$, $P_{\mathrm{int},n}(s,\theta)$, $L_n(\theta;x)$, $\hat{L}_n(\theta;x)$, $\theta_\MLE(x,n)$, $\hat{\theta}_\MLE(x,n)$.
The $n$-dependences of $M(s;\theta)$, $K(s;\theta)$, $\log L^*(s,\theta)$ and $\log\hat{L}^*(\theta;x)$ are simple in form and we will handle them by directly substituting the formulas in \eqref{SAR} and \eqref{L*L*0}.
We remark that the expression in \eqref{RateFunctionImplicit} reduces to $\log\hat{L}^*_0(\theta;y_0)$, and \eqref{RateFunctionImplicitCriticalPoint}--\eqref{RateFunctionImplicitHessianDefinite} amount to the assertion that $\theta_0$ is a non-degenerate local maximum for $\theta\mapsto\log\hat{L}^*_0(\theta;y_0)$.
Note that $\hat{P}_n(s,\theta)$ also depends on $n$ in a simple way,
\begin{equation}\label{hatPn}
\hat{P}_n(s,\theta) = \frac{1}{\sqrt{\det(2\pi n K_0''(s;\theta))}} = \frac{n^{-\xdim/2}}{\sqrt{\det(2\pi K_0''(s;\theta))}},
\end{equation}
but by \eqref{s0hatPGradients} the gradients $\grad_s\log\hat{P}(s,\theta),\grad_\theta\log\hat{P}(s,\theta)$ do not depend on $n$ and we will omit the subscript in those cases.

\subsection{Proof of \refthm{GradientError}}\lbsubsect{MainPropStatement}

Unlike $L^*$ and $\hat{P}_n$, the quantity $P_n$ has no closed form and is instead given as an integral as in \eqref{L*Pformula}.
In the standard asymptotic regime given by \eqref{SAR}, we can substitute $K=nK_0$ to obtain
\begin{equation}\label{PformulaK}
P_n(s,\theta) = \int_{\R^{1\times\xdim}} \exp\left( n[K_0(s+\ii\phi;\theta) - K_0(s;\theta) - \ii\phi K_0'(s;\theta)] \right)\frac{d\phi}{(2\pi)^\xdim}.
\end{equation}
Note that the integrand in \eqref{PformulaK} has the form $h(\phi)e^{ng(\phi)}$ with $g(0)=0$ and $g'(0)=0$.
This is the standard setup for applying the multivariate Laplace method, see \cite{Wong2001,BleiNor1986}.
Thus the limiting framework of \eqref{SAR}, in which $X$ is the sum of $n$ i.i.d.\ terms, leads us to expect that $P_n(s,\theta)$, after suitable rescaling by a power of $n$, will have an asymptotic series expansion in powers of $1/n$.

Let the scalar $t$ denote one of the coordinates $\theta_i$ or $s_j$.
As we will show in the proof of \refprop{PddtPError}, for large enough $n$ we may differentiate under the integral sign:
\begin{align}
\frac{\partial P_n}{\partial t}(s,\theta) &= \int_{\R^{1\times\xdim}} \exp\left( n[K_0(s+\ii\phi;\theta) - K_0(s;\theta) - \ii\phi K_0'(s;\theta)] \right) 
\notag\\&\qquad\qquad\cdot
n\left[ \frac{\partial K_0}{\partial t}(s+\ii\phi;\theta) - \frac{\partial K_0}{\partial t}(s;\theta) - \ii\phi \frac{\partial K'_0}{\partial t}(s;\theta) \right] \frac{d\phi}{(2\pi)^\xdim}
.
\label{dPdtFormulaK}
\end{align}
A key observation motivating \refthm{GradientError} is that $\frac{\partial P_n}{\partial t}(s,\theta)$ scales according to the same $n$-dependent factor as $P_n(s,\theta)$.
Indeed, the values of both integrals arise primarily from the region where $\phi$ is of order $1/\sqrt{n}$, and the extra factor in \eqref{dPdtFormulaK} is of order 1 in that region.
We could summarise by saying that taking gradients of $P_n(s,\theta)$ with respect to $\theta$ or $s$ does not substantially change the nature of the $n$-dependence.

When we turn to MLEs, we will need to control the dependence of $P_n(s,\theta)$ (and its gradients) on $s$, $\theta$ and $n$ simultaneously.
Specifically, the proof of \refthm{MLEerror} is based on the Implicit Function Theorem, which requires continuous differentiability.
We will therefore prove the following result, which is more precise than is necessary for \refthm{GradientError}.

\begin{prop}\lbprop{PDDTPERROR}\lbprop{PddtPError}
Under the hypotheses of \refthm{GradientError}, with $t$ being one of the entries $\theta_i$ or $s_j$, there are continuously differentiable functions $q_1(s,\theta,\epsilon),q_2(s,\theta,\epsilon)$, defined on an open set $\mathcal{Q}$ containing $\set{(s,\theta,0)\colon (s,\theta)\in\interior\mathcal{S}}$, such that $q_1(s,\theta,0)=q_2(s,\theta,0)=0$ and
\begin{equation}\label{PddtPErrorFormula}
\begin{aligned}
P_n(s,\theta) &= \hat{P}_n(s,\theta)\left( 1 + q_1(s,\theta,1/n) \right), 
\\
\frac{\partial P_n}{\partial t}(s,\theta) &= \hat{P}_n(s,\theta) \left( \frac{\partial}{\partial t}\log\hat{P}(s,\theta) + q_2(s,\theta,1/n) \right)
\end{aligned}
\end{equation}
whenever $n$ is large enough that $(s,\theta,1/n)\in\mathcal{Q}$.
\end{prop}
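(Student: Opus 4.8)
The plan is to reduce both identities in \eqref{PddtPErrorFormula} to properties of a single rescaled integral. Put $g_{s,\theta}(\phi) = K_0(s+\ii\phi;\theta) - K_0(s;\theta) - \ii\phi K_0'(s;\theta)$, which vanishes to second order in $\phi$ at $\phi = 0$, with Hessian $-K_0''(s;\theta)$ there. Substituting $\phi = \psi/\sqrt n$ in \eqref{PformulaK} and writing $\delta = 1/\sqrt n$,
\[
P_n(s,\theta) = n^{-m/2}\,\Phi(s,\theta,1/\sqrt n), \qquad \Phi(s,\theta,\delta) = \int_{\R^{1\times m}}\exp\!\bigl(\delta^{-2}g_{s,\theta}(\delta\psi)\bigr)\,\frac{d\psi}{(2\pi)^m} \quad(\delta\ne 0).
\]
Since $\delta = 1/\sqrt n$ does not depend on $s$ or $\theta$, once differentiation under the integral sign is justified we also obtain $\frac{\partial P_n}{\partial t}(s,\theta) = n^{-m/2}\frac{\partial\Phi}{\partial t}(s,\theta,1/\sqrt n)$ -- this is \eqref{dPdtFormulaK} rescaled -- so the entire statement concerns the one function $\Phi$. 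By Taylor's theorem with integral remainder, $\delta^{-2}g_{s,\theta}(\delta\psi) = -\int_0^1(1-u)\,\psi\,K_0''(s+\ii u\delta\psi;\theta)\,\psi^T\,du$, which is smooth in $(s,\theta,\delta)$ across $\delta = 0$ and equals $-\tfrac12\psi K_0''(s;\theta)\psi^T$ there; it is therefore natural to define $\Phi(s,\theta,0) = 1/\sqrt{\det(2\pi K_0''(s;\theta))}$, which by \eqref{hatPn} is exactly $n^{m/2}\hat P_n(s,\theta)$, and is positive and smooth in $(s,\theta)$ because $K_0''(s;\theta)$ is positive definite by \eqref{K''PosDef} (a consequence of \eqref{DecayBound}).

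The first substantive step is to show that $\Phi$ is defined on an open set $\mathcal Q$ of the stated form and is continuously differentiable there, with as many continuous partial derivatives as the differentiability budget of \eqref{GrowthBound} affords. Two estimates supply this. The decay bound \eqref{DecayBound} gives $\bigl|\exp(\delta^{-2}g_{s,\theta}(\delta\psi))\bigr| = \bigl|M_0(s+\ii\psi/\sqrt n;\theta)/M_0(s;\theta)\bigr|^n \leq (1+\delta(s,\theta)|\psi|^2/n)^{-n\delta(s,\theta)}$; because $(1+a/n)^{-nc}$ is decreasing in $n$, for $(s,\theta)$ in a compact subset of $\interior\mathcal{S}$ and $n\geq n_0$ this is bounded by a single $\psi$-integrable function with polynomial decay whose exponent can be made as large as desired by enlarging $n_0$. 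The growth bound \eqref{GrowthBound} controls the $(s,\theta,\delta)$-derivatives of the integrand: differentiating the integral-remainder formula, each $\delta$-derivative produces an extra power of $\psi$ and an extra $s$-derivative of $K_0''$, each $s$- or $\theta$-derivative an extra $s$- or $\theta$-derivative of $K_0''$, and the single $K_0$-derivatives that actually arise are all of a form covered by \eqref{GrowthBound}. Dominated convergence and differentiation under the integral sign then give that $\Phi$ and the relevant partials are continuous on $\mathcal Q$, with the $\delta\to 0$ limits equal to the Gaussian integrals above, uniformly on compacts.

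The structural observation is that $\Phi(s,\theta,\cdot)$, and likewise $\frac{\partial\Phi}{\partial t}(s,\theta,\cdot)$, is \emph{even} in $\delta$: replacing $\delta$ by $-\delta$ and then substituting $\psi\mapsto-\psi$ leaves the integral unchanged. Hence, by the classical fact that a function which is jointly $C^2$ and even in one variable is a jointly $C^1$ function of the square of that variable, the maps $G_1(s,\theta,\epsilon) := \Phi(s,\theta,\sqrt\epsilon)$ and $G_2(s,\theta,\epsilon) := \frac{\partial\Phi}{\partial t}(s,\theta,\sqrt\epsilon)$ extend to continuously differentiable functions on a neighbourhood $\mathcal Q$ of $\set{(s,\theta,0)\colon (s,\theta)\in\interior\mathcal{S}}$, where one also uses the joint continuity of the specific low-order partials of $\Phi$ that arise. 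I then set
\[
q_1(s,\theta,\epsilon) = \frac{G_1(s,\theta,\epsilon)}{G_1(s,\theta,0)} - 1, \qquad q_2(s,\theta,\epsilon) = \frac{G_2(s,\theta,\epsilon)}{G_1(s,\theta,0)} - \frac{\partial}{\partial t}\log\hat P(s,\theta),
\]
which is legitimate since $G_1(s,\theta,0) = n^{m/2}\hat P_n(s,\theta) > 0$; these are continuously differentiable, $q_1(s,\theta,0) = 0$ is immediate, and $q_2(s,\theta,0) = 0$ holds because $\frac{\partial\Phi}{\partial t}(s,\theta,0)/\Phi(s,\theta,0) = \frac{\partial}{\partial t}\log\Phi(s,\theta,0) = \frac{\partial}{\partial t}\log\hat P(s,\theta)$ -- the last equality because $\Phi(\cdot,\cdot,0)$ and $\hat P_n$ differ only by the constant factor $n^{-m/2}$ and $\frac{\partial}{\partial t}\log\hat P$ is $n$-free by \eqref{s0hatPGradients}. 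Evaluating at $\epsilon = 1/n$, so that $\sqrt\epsilon = 1/\sqrt n = \delta$, and multiplying through by $n^{-m/2}$ yields \eqref{PddtPErrorFormula}.

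The step I expect to be the main obstacle is the regularity claim of the second paragraph: one must verify carefully that each $(s,\theta,\delta)$-derivative of the integrand that is needed produces an integrand of polynomial growth in $\psi$ covered by \eqref{GrowthBound}, and that all such integrands are dominated uniformly in $(s,\theta)$ over compacts and in $n\geq n_0$, so that differentiation under the integral is valid and $q_1,q_2$ are genuinely $C^1$ up to and including $\delta = 0$; the sixth-order derivative budget in \eqref{GrowthBound} is exactly what makes this go through. (Separately, evaluating $\frac{\partial\Phi}{\partial t}(s,\theta,0)$ via the Gaussian second-moment identity $\int\psi_i\psi_j\,e^{-\frac12\psi A\psi^T}\frac{d\psi}{(2\pi)^m} = (A^{-1})_{ij}\big/\sqrt{\det(2\pi A)}$ reproduces the trace formula for $\frac{\partial}{\partial t}\log\hat P$ in \eqref{s0hatPGradients}; this is only a consistency check and is not needed for the statement.)
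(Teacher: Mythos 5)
There is a genuine gap, and it is precisely the obstacle around which the paper's own proof is organised: your function $\Phi(s,\theta,\delta)$ is not well defined for non-integer $\delta^{-2}$. The hypotheses \eqref{DecayBound}--\eqref{GrowthBound} only bound $\abs{M_0(s+\ii\phi;\theta)}$ from above; they do not prevent $M_0(s+\ii\phi;\theta)$ from vanishing at some $\phi\neq 0$ (this happens already for a uniform summand, where $M_0(\ii\phi)=\sin\phi/\phi$). At such zeros $K_0=\log M_0$ has no single-valued branch, so the exponent $\delta^{-2}g_{s,\theta}(\delta\psi)$ is ambiguous for large $\abs{\psi}$, and your Taylor-remainder identity $\delta^{-2}g_{s,\theta}(\delta\psi)=-\int_0^1(1-u)\,\psi K_0''(s+\ii u\delta\psi;\theta)\psi^T\,du$ fails wherever $M_0$ vanishes on the segment, since $K_0''$ has poles there. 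For integer $n$ none of this matters, because $\exp(nK_0(s+\ii\phi;\theta))$ is unambiguously $M_0(s+\ii\phi;\theta)^n$ and \eqref{PformulaK} is fine; but your whole argument (evenness in $\delta$, ``$C^2$ and even implies $C^1$ in $\delta^2$'') requires $\Phi$ to exist and be regular for a continuum of $\delta$ near $0$, and that object simply does not exist as written. This is why the paper does not interpolate the raw integral: it keeps the true exponent only on $\abs{\phi}\leq 2\delta(s,\theta)$ (where \eqref{ReM0BoundedBelow} keeps $M_0$ away from $0$ and the standard branch is available), replaces it by the quadratic $-\tfrac12\phi K_0''(s;\theta)\phi^T$ outside via the cutoff $\eta$, defines the modified integrals $J_k(s,\theta,\epsilon)$ for all real $\epsilon>0$, and then treats the discrepancy $I_k(s,\theta,n)-J_k(s,\theta,1/n)$ — which exists only at integer $n$ — as an exponentially small sequence that is upgraded to a $C^1$ function of $\epsilon$ by the interpolation Lemmas~\ref{L:SequenceToC1}--\ref{L:FunctionsToC1}. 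Your proposal has no substitute for this step, and without it the definition of $q_1,q_2$ on an open neighbourhood of $\epsilon=0$ cannot get off the ground.

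The parts of your argument that survive are essentially the paper's treatment of the modified integrals: after the substitution $\phi=\psi\sqrt{\epsilon}$, the Gaussian limit at $\epsilon=0$, the domination of the integrand uniformly for $n\geq n_0$ (your monotonicity of $(1+a/n)^{-nc}$ in $n$ plays the role of the paper's bound \eqref{IntegrandBoundphi}), and the removal of the $\sqrt{\epsilon}$ non-smoothness by parity — your evenness-in-$\delta$ device is the same cancellation the paper performs by expanding $H_k(\psi,\psi\sqrt{\epsilon})$ and discarding the odd term. Two smaller points to note if you repair the main gap: you must still check joint continuity of the mixed $(s,\theta,\epsilon)$-partials up to $\epsilon=0$ (dominated convergence on the rewritten integrand, as in \eqref{fWithoutOdd}--\eqref{HkBound}), and you must extend $q_1,q_2$ to small negative $\epsilon$, since \refprop{PddtPError} asks for an open set containing $\set{(s,\theta,0)}$; the paper does this by an odd extension in $\epsilon$.
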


As with \refthm{GradientError}, \refprop{PddtPError} includes the implicit assertion that all the quantities in \eqref{PddtPErrorFormula} exist when $(s,\theta,1/n)\in\mathcal{Q}$.

An almost identical statement holds in the integer-valued case:

\begin{prop}\lbprop{PDDTPERRORINT}\lbprop{PddtPErrorInt}
Under the hypotheses of \refthm{IntegerValued}, the conclusions of \refprop{PddtPError} hold with $P_n(s,\theta)$ replaced by $P_{\mathrm{int},n}(s,\theta)$.
\end{prop}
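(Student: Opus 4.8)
The plan is to run the proof of \refprop{PddtPError} with only one substantive change. That proof treats $P_n(s,\theta)$ by a Laplace/Watson-type analysis of the integral \eqref{PformulaK}, and the continuous decay hypothesis \eqref{DecayBound} enters there in exactly two ways: it guarantees that $\phi=0$ is a non-degenerate critical point of $\phi\mapsto\abs{M_0(s+\ii\phi;\theta)/M_0(s;\theta)}$ (equivalently, \eqref{K''PosDef}), which drives the local expansion around $\phi=0$; and it makes the integral over $\set{\abs{\phi}\ge\rho}$ negligible. The quantity $P_{\mathrm{int},n}(s,\theta)$ is given by the same integrand, from \eqref{PintFormula}, but over the compact box $[-\pi,\pi]^{1\times m}$ rather than over $\R^{1\times m}$. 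I claim the non-lattice hypothesis of \refthm{IntegerValued} supplies replacements for both uses of \eqref{DecayBound}, and that everything else in the proof of \refprop{PddtPError} carries over unchanged.

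First I would observe that the non-lattice hypothesis implies \eqref{K''PosDef}: since $K_0''(s;\theta)=\Cov(Y_\theta^{(s)})$ for the tilted summand, a null vector $a$ would force $a\cdot Y_\theta^{(s)}$ to be almost surely constant, hence $\abs{M_0(s+\ii\lambda a;\theta)/M_0(s;\theta)}=1$ for all small $\lambda$, and choosing $\lambda\neq 0$ with $\lambda a\in[-\pi,\pi]^{1\times m}$ contradicts the hypothesis. With \eqref{K''PosDef} available, and the continuity of the derivatives of $M_0$ from \eqref{GrowthBound}, the local analysis of \refprop{PddtPError} on a ball $B_\rho=\set{\abs{\phi}\le\rho}\subset(-\pi,\pi)^{1\times m}$ goes through verbatim, since the integrand and all its Taylor data at $\phi=0$ are identical; this yields the asserted expansions $\hat{P}_n(1+q_1)$ and $\hat{P}_n\left(\partial_t\log\hat{P}+q_2\right)$, with $q_1,q_2$ continuously differentiable in $(s,\theta,1/n)$ and vanishing at $1/n=0$, up to the contribution of $[-\pi,\pi]^{1\times m}\setminus B_\rho$.

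For that remaining region I would use compactness. By joint continuity of $(s,\theta,\phi)\mapsto M_0(s+\ii\phi;\theta)$, for $(s,\theta)$ in any compact $C\subset\interior\mathcal{S}$ the supremum of $\abs{M_0(s+\ii\phi;\theta)/M_0(s;\theta)}$ over $\phi\in[-\pi,\pi]^{1\times m}\setminus B_\rho$ is attained at some $\phi\neq 0$, so the non-lattice hypothesis makes it equal to some $\eta=\eta_C<1$. Hence this region contributes $O(\eta^n)$ to $P_{\mathrm{int},n}$, uniformly on $C$; differentiation under the integral sign is immediate here because the domain is compact and, by \eqref{GrowthBound}, the factor multiplying the integrand in \eqref{dPdtFormulaK} is bounded by $n$ times a constant, so the contribution to $\partial_t P_{\mathrm{int},n}$ is $O(n\eta^n)$. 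Since $\hat{P}_n(s,\theta)=n^{-m/2}/\sqrt{\det(2\pi K_0''(s;\theta))}$ decays only polynomially, dividing by $\hat{P}_n$ leaves terms that are $O(\eta^n)$ and $O(n\eta^n)$ — smaller than every power of $1/n$, with all $1/n$-derivatives likewise vanishing at $1/n=0$ — so folding them into $q_1$ and $q_2$ preserves continuous differentiability on an open set $\mathcal{Q}$ containing $\set{(s,\theta,0)\colon(s,\theta)\in\interior\mathcal{S}}$ and vanishing on the slice $1/n=0$.

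I do not anticipate a serious obstacle: the substance is entirely in the proof of \refprop{PddtPError}, and in the integer-valued setting the role of \eqref{DecayBound} is discharged more easily, the box $[-\pi,\pi]^{1\times m}$ being compact and the non-lattice hypothesis yielding an exponential rather than merely polynomial tail bound. The only points needing genuine care are the two uniformity statements used above — that $K_0''(s;\theta)$ is positive definite and that $\eta_C<1$, both locally uniformly in $(s,\theta)$, which follow from joint continuity and compactness — and the verification that the exponentially small tail corrections extend to $C^1$ (indeed $C^\infty$) functions of $1/n$ vanishing at the origin, so that they can be absorbed into $q_1,q_2$ without disturbing the stated regularity.
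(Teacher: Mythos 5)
Your proposal is correct and follows essentially the same route as the paper: keep the local analysis near $\phi=0$ (which uses only \eqref{K''PosDef} and \eqref{GrowthBound}), and use the non-lattice hypothesis together with continuity and compactness to make the integrand over $[-\pi,\pi]^{1\times m}$ outside a small ball uniformly bounded below $1$, so that region contributes an exponentially small correction that is absorbed into $q_1,q_2$ via the interpolation machinery (Lemmas~\ref{L:SequenceToC1}--\ref{L:FunctionsToC1}). The paper packages this slightly differently -- after shrinking $\delta_0$ it recovers the bound \eqref{wBound} for all $\phi$ in the box and runs the original proof verbatim with $\tilde{w}=w\indicatorofset{[-\pi,\pi]^{1\times m}}$ -- but the substance is the same as yours, including your (correct) observation that the non-lattice condition supplies \eqref{K''PosDef}.
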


The proofs of Propositions~\ref{P:PddtPError}--\ref{P:PddtPErrorInt} use many of the elements of standard proofs of Laplace's method.
Additional care is needed to ensure that $q_1,q_2$ are continuously differentiable, and we defer the details to \refappendix{MainPropProof}.

Everything we will use from Propositions~\ref{P:PddtPError}--\ref{P:PddtPErrorInt} can be encapsulated in the following corollary, or its analogue for $P_{\mathrm{int},n}(s,\theta)$.

\begin{coro}\lbcoro{GRADLOGPHATSERROR}\lbcoro{gradlogPhatsError}
Under the hypotheses of \refthm{GradientError}, there is a continuously differentiable function $q_3(\theta,y,\epsilon)$, with values in $\R^{1\times p}$, defined on an open set $\mathcal{Q}'$ containing $\set{(\theta,y,0)\colon (y,\theta)\in\mathcal{Y}^o}$, such that $q_3(\theta,y,0)=0$ and
\begin{equation}\label{gradlogDifferenceshat}
\grad_\theta \left( \Big. \log P_n(\hat{s}_0(\theta,y),\theta) \right) = \grad_\theta \left( \log\hat{P}(\hat{s}_0(\theta,y),\theta) \right) + q_3(\theta,y,1/n)
\end{equation}
whenever $(\theta,y,1/n)\in\mathcal{Q}'$.
\end{coro}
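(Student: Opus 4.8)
The plan is to deduce \refcoro{gradlogPhatsError} from \refprop{PddtPError} (and, in the integer-valued case, from \refprop{PddtPErrorInt}) by a chain-rule computation in which $\hat{s}_0(\theta;y)$ is substituted for $s$; essentially all of the analytic content already sits in those propositions, and what remains is bookkeeping. \emph{Step 1 (logarithmic derivatives of $P_n$).} Dividing the second line of \eqref{PddtPErrorFormula} by the first --- legitimate since $\hat{P}_n(s,\theta)>0$ and, after shrinking $\mathcal{Q}$ to the open subset on which $1+q_1>\tfrac12$ (still containing $\set{(s,\theta,0)\colon(s,\theta)\in\interior\mathcal{S}}$, as $q_1(s,\theta,0)=0$), the denominator is bounded away from $0$ --- one obtains, for $t$ any coordinate $s_j$ or $\theta_i$,
\[
\frac{\partial}{\partial t}\log P_n(s,\theta)=\frac{\partial}{\partial t}\log\hat{P}(s,\theta)+r_t(s,\theta,1/n),
\qquad
r_t(s,\theta,\epsilon)\defeq\frac{q_2(s,\theta,\epsilon)-q_1(s,\theta,\epsilon)\,\frac{\partial}{\partial t}\log\hat{P}(s,\theta)}{1+q_1(s,\theta,\epsilon)}.
\]
Here $r_t$ is continuously differentiable on the shrunk $\mathcal{Q}$: $q_1,q_2$ are $C^1$ by \refprop{PddtPError}, and $\frac{\partial}{\partial t}\log\hat{P}(s,\theta)=-\tfrac12\trace(K_0''(s;\theta)^{-1}\tfrac{\partial K_0''}{\partial t}(s;\theta))$ from \eqref{s0hatPGradients} is $C^1$ because $K_0$ is analytic in $s$, its $\theta$-derivatives are continuous by \eqref{GrowthBound}, and $K_0''$ is positive definite; moreover $r_t(s,\theta,0)=0$ since $q_1(s,\theta,0)=q_2(s,\theta,0)=0$.

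\emph{Step 2 (regularity of $\hat{s}_0$).} Next I would record that $\mathcal{Y}^o$ is open and $\hat{s}_0$ is $C^2$ on it. For $(y,\theta)\in\mathcal{Y}^o$ the point $(\hat{s}_0(\theta;y),\theta)$ lies in the open set $\interior\mathcal{S}$, on which $K_0'$ is $C^2$ with invertible $s$-derivative $K_0''$; the Implicit Function Theorem applied to $K_0'(s;\theta')=y'$ yields a $C^2$ local solution which, by strict convexity of $K_0$, coincides with $\hat{s}_0(\theta';y')$ and stays in $\interior\mathcal{S}$. Hence every point of $\mathcal{Y}^o$ has a neighbourhood inside $\mathcal{Y}^o$ over which $\hat{s}_0$ is $C^2$, and the formula for $\grad_\theta\hat{s}_0^T$ in \eqref{s0hatPGradients} is valid there. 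I then set
\[
\mathcal{Q}'=\set{(\theta,y,\epsilon)\colon (y,\theta)\in\mathcal{Y}^o,\ (\hat{s}_0(\theta;y),\theta,\epsilon)\in\mathcal{Q}},
\]
which is open, being the preimage of the open set $\mathcal{Q}$ under the continuous map $(\theta,y,\epsilon)\mapsto(\hat{s}_0(\theta;y),\theta,\epsilon)$ on $\mathcal{Y}^o\times\R$, and contains $\set{(\theta,y,0)\colon(y,\theta)\in\mathcal{Y}^o}$ because $(\hat{s}_0(\theta;y),\theta)\in\interior\mathcal{S}$ and $\mathcal{Q}$ contains $\set{(s,\theta,0)\colon(s,\theta)\in\interior\mathcal{S}}$.

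\emph{Step 3 (chain rule).} Applying the chain rule to $\theta\mapsto\log P_n(\hat{s}_0(\theta;y),\theta)$ and to $\theta\mapsto\log\hat{P}(\hat{s}_0(\theta;y),\theta)$, subtracting, and inserting Step 1 gives
\[
\grad_\theta\bigl(\log P_n(\hat{s}_0(\theta;y),\theta)\bigr)-\grad_\theta\bigl(\log\hat{P}(\hat{s}_0(\theta;y),\theta)\bigr)
= r_s(\hat{s}_0(\theta;y),\theta,1/n)^T\,\grad_\theta\hat{s}_0^T(\theta;y)+r_\theta(\hat{s}_0(\theta;y),\theta,1/n),
\]
where $r_s=(r_{s_1},\dotsc,r_{s_m})^T$ and $r_\theta=(r_{\theta_1},\dotsc,r_{\theta_p})$. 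Taking $q_3(\theta,y,\epsilon)$ to be the right-hand side with $1/n$ replaced by $\epsilon$ yields an $\R^{1\times p}$-valued function on $\mathcal{Q}'$, continuously differentiable as a sum of products of $C^1$ functions composed with the $C^2$ map $\hat{s}_0$, with $q_3(\theta,y,0)=0$; this is \eqref{gradlogDifferenceshat}. The integer-valued case is word-for-word the same using \refprop{PddtPErrorInt} in place of \refprop{PddtPError}.

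There is no substantive obstacle here --- the hard analysis (Laplace's method with $C^1$ remainders) is exactly what \refprop{PddtPError} already supplies. The only point requiring care is the combination of Steps 1 and 2: to upgrade "$q_3$ continuous" to "$q_3$ continuously differentiable" one must carry enough smoothness through the composition (the $C^2$ regularity of $\hat{s}_0$, the smooth dependence of $K_0''(s;\theta)^{-1}$ on its arguments), and one must be slightly attentive that the domain $\mathcal{Q}'$ can be taken genuinely open while still containing the slice $\epsilon=0$ over $\mathcal{Y}^o$.
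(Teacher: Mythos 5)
Your proposal is correct and follows essentially the same route as the paper: the paper isolates your Step 1 as \refcoro{ddtlogPError} (defining $q_t=(q_2-q_1\,\partial_t\log\hat P)/(1+q_1)$ after shrinking the domain to avoid $q_1=-1$, packaged as $q_5,q_6$) and then performs exactly your Step 3 chain-rule composition, setting $q_3(\theta,y,\epsilon)=q_5(\hat{s}_0(\theta;y),\theta,\epsilon)+q_6(\hat{s}_0(\theta;y),\theta,\epsilon)^T\grad_\theta\hat{s}_0^T(\theta;y)$, with the regularity of $\hat{s}_0$ and of $\grad_\theta\hat{s}_0^T$ handled via \eqref{s0hatPGradients} just as in your Step 2.
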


The proof is again deferred to \refappendix{MainPropProof}.
\refthm{GradientError} now follows immediately.

\begin{proof}[Proof of \refthm{GradientError}]
Use \eqref{LL*Phats} and cancel the term $\log L^*$ to obtain
\begin{align}
\grad_\theta\log L(\theta;x) - \grad_\theta\log\hat{L}(\theta;x) &= \grad_\theta \left( \Big. \log P_n(\hat{s}_0(\theta;y),\theta) \right) - \grad_\theta \left( \Big. \log \hat{P}(\hat{s}_0(\theta;y),\theta) \right)
\notag\\&
= q_3(\theta,y,1/n)
.
\label{gradDifferenceAsq6}
\end{align}
By \refcoro{gradlogPhatsError}, $q_3$ is continuously differentiable and $q_3(\theta,y,0)=0$ in a neighbourhood of $(\theta_0,y_0)$.
An application of the Mean Value Theorem, see for instance \cite[Theorem~5.10]{RudinPoMA}, completes the proof.
\end{proof}

Although it is not necessary to our study of MLEs, we note that the same argument yields
\begin{equation}\label{LErrorPrecise}
\log L(\theta;x) - \log\hat{L}(\theta;x) = \tilde{q}(\theta,y,1/n),
\end{equation}
where $\tilde{q}(\theta,y,\epsilon)$ is a continuously differentiable function defined on $\mathcal{Q}'$ with $\tilde{q}(\theta,y,0)=0$.
This is a more precise form of the basic saddlepoint error estimate \eqref{LErrorRough}.
\refthm{LowerOrderSaddlepoint}\ref{item:LOGradient} also follows easily:

\begin{proof}[Proof of \refthm{LowerOrderSaddlepoint}\ref{item:LOGradient}]
Since \refthm{GradientError} already gives a bound on $\grad_\theta\log\hat{L}-\grad_\theta\log L$, it suffices to show that $\grad_\theta\log\hat{L}-\grad_\theta\log\hat{L}^*=O(1)$.
From \eqref{LL*Phats} and \eqref{s0hatPGradients}, $\grad_\theta\log(\hat{L}(\theta;x)/\hat{L}^*(\theta;x))=\grad_\theta\log\hat{P}(\hat{s}_0(\theta,y),\theta)$ is constant with respect to $n$, so it is $O(1)$ in the limit $n\to\infty$.
Since it also depends continuously on $\theta$ and $y$, uniformity follows.
\end{proof}

\subsection{Proof of \refthm{MLEerror}}\lbsubsect{MLEerrorProof}

To study the MLE $\theta_{\MLE\,\mathrm{in}\,U}(x,n)$, we will show that the function $\theta\mapsto \log L_n(\theta;x)$ has a unique maximum when $\theta$ is restricted to lie in a suitably chosen neighbourhood $U$.
In fact it will be more convenient to consider the rescaled function
\begin{equation}
R_{x,n}(\theta) = \tfrac{1}{n}\log L_n(\theta;x) = \log L^*_0(\hat{s}_0(\theta,y),\theta) + \tfrac{1}{n} \log P_n(\hat{s}_0(\theta,y),\theta),
\end{equation}
where we have substituted \eqref{LL*Phats} and \eqref{L*L*0}.
Use \eqref{s0hatPGradients} and \refcoro{gradlogPhatsError} to compute
\begin{align}
\grad_\theta R_{x,n}(\theta) &= \grad_\theta K_0(\hat{s}_0(\theta,y);\theta) + \tfrac{1}{n} q_3(\theta,y,1/n) + \tfrac{1}{n} \grad_\theta \log\hat{P}(\hat{s}_0(\theta,y),\theta) 
\notag
\\&\quad
- \tfrac{1}{n} \grad_s^T \log\hat{P}(\hat{s}_0(\theta,y),\theta) K_0''(\hat{s}_0(\theta,y);\theta)^{-1} \grad_s\grad_\theta K_0(\hat{s}_0(\theta,y);\theta) 
.
\end{align}
We will define a function $F(s^T,\theta;y,\epsilon)$ such that, with the substitution $\epsilon=1/n$, a solution of $F=0$ corresponds to a critical point of $R_{x,n}$.

\begin{proof}[Proof of \refthm{MLEerror}]
Define the functions
\begin{gather}\label{F1F2FDefinition}
F(s^T,\theta;y,\epsilon) = 
\mat{
F_1(s^T,\theta;y)
\\
F_2(s^T,\theta;y,\epsilon)
}
,\qquad
F_1(s^T,\theta;y) = K_0'(s;\theta) - y,
\\
\begin{aligned}
F_2(s^T,\theta;y,\epsilon) &= \grad_\theta^T K_0(s;\theta) + \epsilon q_3(\theta,y,\epsilon)^T
\notag\\&\quad
+ \epsilon \left( \grad_\theta^T \log \hat{P}(s,\theta) - \left( \grad_s\grad_\theta K_0(s;\theta) \right)^T K''_0(s;\theta)^{-1}\grad_s \log \hat{P}(s,\theta) \right) 
.
\end{aligned}
\end{gather}
We think of $F_1,F_2,F$ as column-vector-valued functions of column-vector arguments, with $(s^T,\theta;y,\epsilon)$ and $F(s^T,\theta;y,\epsilon)$ interpreted as column vectors expressed in block form, of sizes $(2\xdim+p+1)\times 1$ and $(\xdim+p)\times 1$ respectively.
We will show that we can solve $F=0$ to define $\theta$ and $s$ implicitly as functions of $y$ and $\epsilon$; to indicate this, we will merge the column vectors $s^T$ and $\theta$ and write $\smallmat{s^T\\ \theta}=G(y,\epsilon)\in\R^{(\xdim+p)\times 1}$ such that $F(G(y,\epsilon);y,\epsilon)=0$.

By \refcoro{gradlogPhatsError}, the function $F$ is continuously differentiable with respect to all its parameters.
Our assumptions imply that $F(s_0^T,\theta_0;y_0,0)=0$ and $\grad_{s^T,\theta} F(s_0^T,\theta_0;y_0,0)$ is non-singular; see \refappendix{FandMLEproof}.
We can therefore apply the Implicit Function Theorem (see for instance \cite[Theorem~9.28]{RudinPoMA}) to find neighbourhoods $U,V,W$ of $\theta_0,y_0,s_0$, a neighbourhood $[-1/n_0,1/n_0]$ of $0$, and a continuously differentiable function $G(y,\epsilon)$ defined on $V\times [-1/n_0,1/n_0]$ such that, for all $y\in V$, $\epsilon\in[-1/n_0,1/n_0]$, the point $\left( \begin{smallmatrix}s^T\\ \theta\end{smallmatrix} \right)=G(y,\epsilon)$ is the unique solution in $W\times U$ of $F(s^T,\theta;y,\epsilon)=0$.

As outlined above, when $\epsilon=1/n$, the solution of $F=0$ corresponds to the MLE:
\begin{lemma}\lblemma{F=0ISMLE}
Possibly after shrinking $U,V$ and increasing $n_0$, we have
\begin{equation}
G(y,1/n) = \mat{\hat{s}_0^T(\theta_{\MLE\,\mathrm{in}\,U}(x,n), y) \\ \theta_{\MLE\,\mathrm{in}\,U}(x,n)} 
\end{equation}
for all $y\in V$ and $n\geq n_0$, including the assertion that the maximum of $L_n(\theta;x)$, restricted to $\theta\in U$, is attained uniquely.
\end{lemma}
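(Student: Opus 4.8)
The plan is to recognise the rescaled log-likelihood $R_{x,n}(\theta) = \tfrac{1}{n}\log L_n(\theta;x)$, restricted to a small convex neighbourhood of $\theta_0$, as a strictly concave function for all large $n$ and all $y$ near $y_0$, and to identify the $\theta$-block of $G(y,1/n)$ as its unique critical point. First I would decode the equation $F(s^T,\theta;y,1/n)=0$. Since $K_0$ is strictly convex in $s$ on $\interior\mathcal{S}_\theta$ (by \eqref{K''PosDef}), the block $F_1(s^T,\theta;y)=K_0'(s;\theta)-y$ vanishes precisely when $s=\hat{s}_0(\theta;y)$, so the $s^T$-block of $G(y,\epsilon)$ is automatically $\hat{s}_0^T(\theta;y)$, where $\theta$ is the $\theta$-block. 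Substituting $s=\hat{s}_0(\theta;y)$ and $\epsilon=1/n$ into $F_2$ from \eqref{F1F2FDefinition} and comparing with the formula for $\grad_\theta R_{x,n}$ recorded above (obtained from \eqref{s0hatPGradients} and \refcoro{gradlogPhatsError}), one sees that $F_2(\hat{s}_0(\theta;y)^T,\theta;y,1/n) = (\grad_\theta R_{x,n}(\theta))^T$; hence, for $\theta\in U$, the solutions of $F=0$ at $\epsilon=1/n$ are exactly the pairs $(\hat{s}_0(\theta;y)^T,\theta)$ with $\theta$ a critical point of $R_{x,n}$. Along the way I would record that for $n$ large and $(\theta,y)$ near $(\theta_0,y_0)$ the quantities $P_n(\hat{s}_0(\theta;y),\theta)$, $L_n(\theta;x) = L^*(\hat{s}_0(\theta;y),\theta)\,P_n(\hat{s}_0(\theta;y),\theta)$ and $R_{x,n}(\theta)$ are all well-defined with $L_n>0$: this uses \refprop{PddtPError} (so $P_n = \hat{P}_n(1+q_1)>0$ once $q_1$ is small), together with $(s_0,\theta_0)\in\interior\mathcal{S}$ from \eqref{y0K0's0} and continuity of $\hat{s}_0$.

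The crux is the Hessian estimate. Writing $R_{x,n}(\theta) = \log\hat{L}^*_0(\theta;y) + \tfrac{1}{n}\log P_n(\hat{s}_0(\theta;y),\theta)$ via \eqref{L*0hatL*0}, the Hessian of the first term is continuous in $(\theta,y)$ by \eqref{L*0Gradients} and, by \eqref{RateFunctionImplicitHessianDefinite}, equals the negative definite matrix $H$ at $(\theta_0,y_0)$. For the second term, \refcoro{gradlogPhatsError} gives $\grad_\theta\log P_n(\hat{s}_0(\theta;y),\theta) = \grad_\theta\log\hat{P}(\hat{s}_0(\theta;y),\theta) + q_3(\theta,y,1/n)$; differentiating this once more in $\theta$ is legitimate because $q_3$ is continuously differentiable and $\log\hat{P}$ is explicit and smooth, and the outcome is bounded on a compact neighbourhood of $(\theta_0,y_0)$, so $\tfrac{1}{n}\grad_\theta^T\grad_\theta\log P_n(\hat{s}_0(\theta;y),\theta) = O(1/n)$ uniformly there. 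Consequently, after shrinking $U$ to a convex neighbourhood and shrinking $V$, and increasing $n_0$, I can arrange that $\grad_\theta^T\grad_\theta R_{x,n}(\theta)$ is bounded above by a fixed negative multiple of the identity for all $\theta\in U$, $y\in V$, $n\geq n_0$; in particular $R_{x,n}$ is strictly concave on $U$.

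Finally I would combine the two observations. A strictly concave $C^1$ function on a convex open set has at most one critical point, and such a point, if it exists, is the unique maximiser of the function over the set (compare values along a line segment, using that the directional derivative is strictly decreasing and vanishes at the critical point). By the first step the $\theta$-block $\theta^\star$ of $G(y,1/n)$ is such a critical point, and it lies in $U$ once $V$ is shrunk and $n_0$ enlarged so that $G(y,1/n)\in W\times U$ (by continuity of $G$ and $G(y_0,0)=(s_0^T,\theta_0)$). Hence $\theta^\star$ is the unique maximiser of $R_{x,n}$, and therefore of $L_n(\cdot;x)$, over $U$, so $\theta^\star = \theta_{\MLE\,\mathrm{in}\,U}(x,n)$ with the maximum attained uniquely, while the $s^T$-block of $G(y,1/n)$ is $\hat{s}_0^T(\theta^\star;y) = \hat{s}_0^T(\theta_{\MLE\,\mathrm{in}\,U}(x,n);y)$, which is the claimed identity. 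The step I expect to be the real obstacle is the uniform $O(1/n)$ control on the \emph{Hessian} of $\tfrac{1}{n}\log P_n(\hat{s}_0(\theta;y),\theta)$: a merely pointwise estimate for $\log P_n-\log\hat{P}$ would not yield strict concavity, and it is precisely to license this extra differentiation that \refprop{PddtPError} and \refcoro{gradlogPhatsError} are phrased with continuously differentiable remainder terms. The rest --- rewriting $F=0$ as a critical-point equation, the concavity argument, and the mutually consistent shrinking of $U,V,W$ and enlarging of $n_0$ --- is routine.
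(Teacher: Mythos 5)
Your proposal is correct and follows essentially the same route as the paper: you decode $F=0$ (via $F_1=0$ giving $s=\hat{s}_0(\theta;y)$ and $F_2$ equalling $\grad_\theta^T R_{x,n}$) as the critical-point equation for $R_{x,n}$, and then use the continuous differentiability of $q_3$ from \refcoro{gradlogPhatsError} to get uniform negative definiteness of $\grad_\theta^T\grad_\theta R_{x,n}$ near $(\theta_0,y_0)$ for large $n$, concluding by strict concavity. The only cosmetic difference is bookkeeping: you split the Hessian as $\grad_\theta^T\grad_\theta\log\hat{L}^*_0$ plus a uniformly bounded term times $1/n$, whereas the paper packages the same information into a continuous function $h(s,\theta,y,\epsilon)$ built from the gradients of $F_2$ and evaluates it at $(s_0,\theta_0,y_0,0)$ to recover $H$; the substance is identical.
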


We defer the proof to \refappendix{FandMLEproof}.

We now turn to the saddlepoint MLE, which amounts to omitting the term $\epsilon q_3$:
\begin{align}
\hat{F}_2(s^T,\theta;y,\epsilon) &= \grad_\theta^T K_0(s;\theta) + \epsilon \grad_\theta^T \log \hat{P}(s,\theta) 
\notag\\&\quad
- \epsilon \left( \grad_s\grad_\theta K_0(s;\theta) \right)^T K''_0(s;\theta)^{-1} \grad_s \log \hat{P}(s,\theta)
, 
\notag\\
\hat{F}(s^T,\theta;y,\epsilon) &= 
\mat{
F_1(s^T,\theta;y)
\\
\hat{F}_2(s^T,\theta;y,\epsilon)
}.
\end{align}
Then $\hat{F}$ and its gradients agree with $F$ and its gradients at $(s_0^T,\theta_0;y_0,0)$, so that (after shrinking $U$, $V$, $W$ and increasing $n_0$ if necessary) the Implicit Function Theorem again produces a function $\hat{G}(y,\epsilon)$ giving the unique solution $\left( \begin{smallmatrix}s^T\\ \theta\end{smallmatrix} \right)$ in $U\times W$ of $\hat{F}(s^T,\theta;y,\epsilon)=0$.
Moreover the analogue of \reflemma{F=0ISMLE} applies, with the same proof, so that
\begin{equation}
\hat{G}(y,1/n) = \mat{\hat{s}_0^T\bigl( \hat{\theta}_{\MLE\,\mathrm{in}\,U}(x,n), y \bigr) \\ \hat{\theta}_{\MLE\,\mathrm{in}\,U}(x,n)}.
\end{equation}
For later convenience write $\hat{G}$ in block form as $\hat{G}=\left( \begin{smallmatrix}\hat{G}^T_s\\ \hat{G}_\theta\end{smallmatrix} \right)$.

To compare $G(y,\epsilon)$ with $\hat{G}(y,\epsilon)$, note that $F_2$ and $\hat{F}_2$ are close:
\begin{equation}
F_2(s^T,\theta;y,\epsilon) = \hat{F}_2(s^T,\theta;y,\epsilon) + \epsilon q_3(\theta,y,\epsilon).
\end{equation}
In particular, $F(\hat{G}(y,\epsilon);y,\epsilon)$ is almost zero:
\begin{equation}
F(\hat{G}(y,\epsilon);y,\epsilon)=\mat{F_1(\hat{G}(y,\epsilon);y,\epsilon) \\ \hat{F}_2(\hat{G}(y,\epsilon);y,\epsilon)} + \mat{0\\ \epsilon q_3(\hat{G}_\theta(y,\epsilon), y,\epsilon)}.
\end{equation}
The first term in the right-hand side vanishes by definition.
In the second term, note that $q_3(\hat{G}_\theta(y,\epsilon),y,\epsilon)$ is a continuously differentiable function of $(y,\epsilon)$ that vanishes whenever $\epsilon=0$ (since $q_3$ has the same property by \refcoro{gradlogPhatsError}).
We can therefore conclude that
\begin{equation}\label{FofhatG}
F\bigl( \hat{G}(y,\epsilon);y,\epsilon \bigr) = \epsilon^2 q_4(y,\epsilon)
\end{equation}
where $\abs{q_4(y,\epsilon)}\leq C$ for $(y,\epsilon)$ in a suitable neighbourhood of $(y_0,0)$.

To make use of \eqref{FofhatG}, we define an augmented version of $F$ that is locally invertible. 
Let 
\begin{equation}
\widetilde{F}(s^T,\theta;y,\epsilon) = \mat{F(s^T,\theta;y,\epsilon)\\y\\ \epsilon}
,
\quad\text{so that}\quad
\grad_{s^T,\theta,y,\epsilon}\widetilde{F} = 
\mat{\grad_{s^T,\theta} F & \grad_y F & \grad_\epsilon F \\
0 & I_{m\times m} & 0\\
0 & 0 & 1
}
\end{equation}
in block form.
Thus $\grad_{s^T,\theta,y,\epsilon}\widetilde{F}(s_0^T,\theta_0;y_0,0)$ is an invertible $(2\xdim+p+1)\times(2\xdim+p+1)$ matrix, and by the Inverse Function Theorem \cite[Theorem~9.24]{RudinPoMA}, after shrinking the domain of $\widetilde{F}$ if necessary, $\widetilde{F}$ has a continuously differentiable inverse function $\widetilde{G}(u;y,\epsilon)$.
As above, we may shrink the domain further to make the partial derivatives of $\widetilde{G}$ uniformly bounded.

The inverse function $\widetilde{G}$ is related to the implicit function $G$ by $\widetilde{G}(0;y,\epsilon) = \left( \begin{smallmatrix}G(y,\epsilon)\\y\\ \epsilon\end{smallmatrix} \right)$.
From \eqref{FofhatG} and the definition of $\widetilde{F}$ we have
\begin{equation}
\widetilde{F}\bigl( \hat{G}(y,\epsilon); y,\epsilon \bigr) = \mat{
\epsilon^2 q_4(y,\epsilon)\\
y\\
\epsilon
}
\quad\text{and so}\quad
\widetilde{G}(\epsilon^2 q_4(y,\epsilon);y,\epsilon) = \mat{\hat{G}(y,\epsilon) \\ y \\ \epsilon}.
\end{equation}
Thus, setting $\epsilon=1/n$ for $n$ sufficiently large,
\begin{align}
\bigabs{\hat{\theta}_{\MLE\,\mathrm{in}\,U}(x,n) - \theta_{\MLE\,\mathrm{in}\,U}(x,n)} 
&\leq
\bigabs{\hat{G}(y,1/n) - G(y,1/n)}
\notag\\&
=\bigabs{ \widetilde{G}(n^{-2} q_4(y,1/n);y,1/n) - \widetilde{G}(0;y,1/n) }.
\end{align}
The boundedness of $q_4$ and of the partial derivatives of $\widetilde{G}$ imply that this upper bound is $O(1/n^2)$, uniformly over $y$ in a suitable neighbourhood of $y_0$.
\end{proof}

The corresponding assertion from \refthm{LowerOrderSaddlepoint} has a similar proof:

\begin{proof}[Proof of \refthm{LowerOrderSaddlepoint}\ref{item:LOMLE}]
Note that $\theta=\hat{\theta}^*_{\MLE\,\mathrm{in}\,U}(x,n)$ and $s=\hat{s}(\theta,x)$ are the solutions of $F(s^T,\theta;y,0)=0$.
(The proof is the same as for \reflemma{F=0ISMLE}, see \refappendix{FandMLEproof}.)
Then
\begin{equation}
\bigabs{\hat{\theta}^*_{\MLE\,\mathrm{in}\,U}(x,n) - \theta_{\MLE\,\mathrm{in}\,U}(x,n)} 
\leq
\abs{G(y,0) - G(y,1/n)}
\end{equation}
and the conclusion follows from the fact that $G$ is continuously differentiable.
\end{proof}

\section{Conclusion}\lbsect{Conclusion}

A long-established result tells us that the saddlepoint approximation gives a relative error of order $1/n$.
That is, applied to a random variable given as a sum of $n$ i.i.d.\ terms, the saddlepoint approximation estimates the values of the density (or likelihood) up to a factor of the form $1+O(1/n)$: see \eqref{LErrorRough} or \refprop{PddtPError}.
Very commonly, however, we are not interested in the likelihood for its own sake but rather as a step towards computing the MLE.
This paper gives the analogous basic results, Theorems~\ref{T:MLEerror} and \ref{T:MLEerrorPartiallyIdentifiable}, for the approximation error between the true MLE and saddlepoint MLE: under certain explicit identifiability conditions, it is $O(1/n)$, $O(1/n^{3/2})$, or $O(1/n^2)$.

It is worth noting that this MLE error estimate is sharper than what we obtain from the basic likelihood error estimate.
In the fully identifiable case, knowing that $\log\hat{L}(\theta;x)$ has a maximum at $\theta=\hat{\theta}_\MLE(x)$ and knowing that the true log-likelihood satisfies $\bigabs{\log L(\theta;x)-\log\hat{L}(\theta;x)}=O(1/n)$, we conclude only that $\bigabs{\theta_\MLE(x)-\hat{\theta}_\MLE(x)}=O(1/n)$: see \refsubsubsect{ImpliesHeuristic}.
Although an MLE error bound of size $O(1/n)$ is small compared to the scale of the inferential uncertainty in estimating $\theta$, see Theorems~\ref{T:BayesianError}--\ref{T:SamplingMLEWellSpecified} and the remarks at the end of \refsubsubsect{FullyIdentifiableResults}, it is still a significant overestimate compared to the true MLE error $O(1/n^2)$.
The results in this paper help to explain why saddlepoint MLEs in practice often turn out to be so much more accurate than expected.

A key point in the analysis is to ask how well the saddlepoint approximation captures the \emph{shape} of the log-likelihood as a function of the parameter $\theta$. 
Specifically, it is error bounds on the \emph{gradient} of the log-likelihood, as in \refthm{GradientError} and \refcoro{gradlogPhatsError}, that control the size of the MLE approximation error and lead to sharp results (see \cite{Ogden2017,OgdenErrorLaplaceHighDimension} for a similar finding in another context).
The same logic underpins the finding of \refsubsubsect{ExpFamily} that saddlepoint MLEs are exact for exponential families: errors in the saddlepoint approximation to the log-likelihood are irrelevant if the sizes of the errors do not depend on the parameter.

Seen in this light, it is less surprising that a lower-order saddlepoint approximation, $\hat{L}^*(\theta;x)$ from \refsubsect{LowerOrder}, can give a good approximation to the MLE (see \refthm{LowerOrderSaddlepoint}) despite being a poor approximation to the likelihood with $\hat{L}^*(\theta;x)/L(\theta;x)\to\infty$ as $n\to\infty$.
We remark that since $\hat{L}^*(\theta;x)$ is even less computationally demanding than the usual saddlepoint approximation, it may be a useful tool for high-dimensional and computationally intensive applications, or for initialising the search for a true or saddlepoint MLE.

In the other direction, refinements of the saddlepoint approximation that improve likelihood accuracy may be less effective than anticipated when applied to MLEs.
For instance, normalising the saddlepoint approximation $\hat{f}(x;\theta)$ to make it a density (as a function of $x$) often brings the saddlepoint density values closer to the true density (see for instance \cite{Butler2007}).
However, this operation is slow and is often not pursued: cf.\ \cite{Daniels1982}.  
Using the viewpoint developed in this paper, we can reframe the issue by asking: does normalising bring the saddlepoint log-gradient closer to the true log-gradient?
The general answer is far from clear.
Indeed, for an exponential family such as the Poisson family, \refexample{Poisson} in \refappendix{Examples}, for which the saddlepoint MLE is already exact, normalising will actually make the MLE worse.

This paper has considered likelihoods and MLEs in a particularly tractable limiting framework, the standard asymptotic regime \eqref{SAR} in which the observation is a sum of $n$ i.i.d.\ terms.
In particular, we have seen that the basic likelihood accuracy estimate does not directly lead to the correct MLE error estimate, which is markedly better.
It would be of interest to extend these results in several directions, including non-Gaussian saddlepoint approximations, as in \cite{WooBooBut1993}; non-i.i.d.\ sums; integer-valued random variables; uniform upper bounds; large-sample-size limits; and applications based on approximating tail probabilities rather than likelihoods.

\section*{Acknowledgements}

The author thanks Rachel Fewster and Joey Wei Zhang, whose use of saddlepoint MLEs sparked the author's interest in this question; Rachel Fewster for helpful comments on drafts of this paper; and Godrick Oketch for suggesting the model of \refexample{GammaFI} in \refappendix{Examples} as a useful one for explicit comparisons.
Useful comments and questions from the anonymous referees helped to improve this paper.

This work was supported in part by funding from the Royal Society of New Zealand.

\newpage

\centerline{\Large\bf Appendices}

\begin{appendix}

\begin{itemize}
\item
\refappendix{Invariance} states the behaviour of the saddlepoint approximation under affine transformations of the random variables.
\item
\refappendix{DerivativesDerivation} explains in greater detail the conventions on row and column vectors, gradients, and Hessians, and collects differentiation formulas for quantities related to the saddlepoint approximation.
\item
\refappendix{MainPropProof} proves \refprop{PddtPError} and \refcoro{gradlogPhatsError}, as well as other technical results used in those proofs.
\item
\refappendix{IntegerValuedProof} explains the adaptations to the proofs from \refappendix{MainPropProof} needed to prove \refprop{PddtPErrorInt} and \refthm{IntegerValued}.
\item
\refappendix{FandMLEproof} proves \reflemma{F=0ISMLE} and verifies other assertions from the proof of \refthm{MLEerror}.
\item
\refappendix{BayesianErrorProof} states and proves \refprop{BayesianErrorGeneral}, which generalises the result of \refthm{BayesianError} and \refthm{LowerOrderSaddlepoint}\ref{item:LOBayesian}.
\item
\refappendix{SamplingMLEProof} proves Theorems~\ref{T:SamplingMLE}--\ref{T:SamplingMLEWellSpecified}.
\item
\refappendix{MLEePIProof} proves Theorems~\ref{T:MLEerrorPartiallyIdentifiable}--\ref{T:MLEerrorNormalApprox}, including a discussion of the scaling assumptions in those results.
\item
\refappendix{SaddlepointVsCLT} compares the saddlepoint approximation to the normal approximation when both are considered as approximations to the density, as functions of $x$.
\item
\refappendix{Examples} gives detailed discussions of a variety of examples, including two general classes of models; specific models studied in the literature; and theoretical examples that illustrate facets of the saddlepoint approximation.
\end{itemize}

\section{Invariance properties of the saddlepoint approximation}\lbappendix{Invariance}

If $b\in\R^{\xdim\times 1}$ and $Y=X+b$, the corresponding saddlepoint approximations are related by
\begin{equation}\label{hatLY=X+b}
\hat{L}_Y(\theta;y) = \hat{L}_X(\theta;y-b),
\end{equation}
with the same saddlepoint $\hat{s}$ in both approximations.

If $A\in\R^{k\times\xdim}$ and $Z_\theta=AX_\theta$, then 
\begin{equation}
K_Z(s;\theta) = K_X(sA;\theta),
\end{equation}
and it follows that 
\begin{equation}
K_Z''(s;\theta) = A K_X''(s;\theta) A^T.
\end{equation}
In particular, if $A$ is $\xdim\times\xdim$ and invertible, then $\det(K_Z''(s)) = \det(A)^2 \det(K_X''(s))$ and 
\begin{equation}\label{hatLZ=AX}
\hat{L}_Z(\theta;z) = \frac{\hat{L}_X(\theta;A^{-1}z)}{\abs{\det(A)}},
\end{equation}
with the saddlepoints related by 
\begin{equation}
\hat{s}_Z = \hat{s}_X A^{-1}.
\end{equation}

Equations~\eqref{hatLY=X+b} and \eqref{hatLZ=AX} show that, under invertible affine transformations of the random variables, the saddlepoint approximation transforms in the same way that densities do.
In particular, such transformations will not affect the MLE or saddlepoint MLE, and in the standard asymptotic regime we might equivalently consider $\overline{Y}=\frac{1}{n}X$, the sample mean of the $n$ unobserved values $Y^{(1)},\dotsc,Y^{(n)}$, in place of $X$.
We have chosen to consider $X$ rather than $\overline{Y}$, in part because the saddlepoint for $\overline{Y}$ scales with $n$ for a fixed value of $y$, whereas the saddlepoint for $X$ does not.

\section{Derivatives of saddlepoint quantities}\lbappendix{DerivativesDerivation}

\subsection{Conventions for row and column vectors, gradients, and Hessians}

Given a row vector argument $s=\mat{s_1 & s_2 & \dotsb & s_\xdim}\in\R^{1\times\xdim}$, we interpret the gradient with respect to $s$ as a $\xdim\times 1$ matrix, so that, when applied to a function $f=f_{\mathrm{row}}\colon \R^{1\times\xdim}\to \R^{1\times k}$ with row-vector values, the result is the $\xdim\times k$ matrix whose $i,j$ entry is $\frac{\partial f_j}{\partial s_i}$.
Thus
\begin{equation}
\grad_s = \mat{\frac{\partial}{\partial s_1} \\ \vdots \\ \frac{\partial}{\partial s_\xdim}}
,\qquad
\grad_s f_{\mathrm{row}}(s) = \mat{\frac{\partial f_1}{\partial s_1} & \dotsb & \frac{\partial f_k}{\partial s_1}\\ 
\vdots & & \vdots \\ 
\frac{\partial f_1}{\partial s_\xdim} & \dotsb & \frac{\partial f_k}{\partial s_\xdim}
}
.
\end{equation}
If instead $\theta\in\R^{p\times 1}$ is a column vector argument and $f=f_{\mathrm{col}}\colon\R^{p\times 1}\to\R^{k\times 1}$, then $\grad_\theta$ has the form of a $1\times p$ matrix and $\grad_\theta f_{\mathrm{col}}(\theta)$ is the $k\times p$ matrix whose $i,j$ entry is $\frac{\partial f_i}{\partial x_j}$.
For either row or column vectors we have
\begin{equation}
\grad_{x^T} = (\grad_x)^T, \qquad \grad_x f = \left( \grad_{x^T} (f^T)  \right)^T.
\end{equation}

We remark that these conventions match with the interpretation of gradients as linear operators, which act via matrix multiplication either on the right or on the left.
Thus the linear approximations to a function $f$ can be written as 
\begin{equation}\label{LinearApproximationForm}
s\mapsto f(s_0) + (s-s_0) \grad_s f(s_0)  \qquad\text{or}\qquad \theta\mapsto f(\theta_0) + \grad_\theta f(\theta_0) (\theta-\theta_0)
\end{equation}
for row vectors or column vectors, respectively.
(Strictly speaking, $\grad_\theta$ should operate on $f_{\mathrm{col}}$ from the right if we wish to preserve the usual matrix-shape conventions.)

For scalar-valued functions, $k=1$, the Hessian matrix of second partial derivatives can be written as
\begin{equation}
\grad_s \grad^T_s f(s) \qquad\text{or}\qquad \grad_\theta^T\grad_\theta f_{\mathrm{col}}(x),
\end{equation}
the $\xdim\times\xdim$ or $p\times p$ symmetric matrices with $i,j$ entry $\frac{\partial^2 f}{\partial s_i\partial s_j}$ or $\frac{\partial^2 f}{\partial\theta_i\partial\theta_j}$.
We do not apply these conventions to higher-order derivatives, for which matrix notation ceases to be appropriate.
We write $K'=\grad_s K$, $K''=\grad_s\grad_s^T K$, and similarly for other CGFs and MGFs such as $M_0$ and $K_0$.

The functions $\hat{s}(\theta,x)$ and $\hat{s}_0(\theta,y)$ take column vector arguments and return row vector values.
To match with the above conventions, we first convert the values to column vectors and write the gradients as $\grad_\theta \hat{s}^T(\theta,x)$, $\grad_x \hat{s}^T(\theta,x)$ and so on, which are $\xdim\times p$ and $\xdim\times\xdim$ matrices, respectively.

In gradient and matrix form, the chain rule comes in two slightly different forms for row vector and column vector arguments.
If $f_{\mathrm{row}}(s)\colon \R^{1\times\xdim}\to\R^{1\times k}$ and $g_{\mathrm{row}}(v)\colon \R^{1\times j}\to\R^{1\times\xdim}$ are differentiable then $h_{\mathrm{row}}(v)=f_{\mathrm{row}}(g_{\mathrm{row}}(v))$ has
\begin{equation}\label{ChainRuleRow}
\grad_v h_{\mathrm{row}}(v) = \grad_v g_{\mathrm{row}}(v) \grad_s f_{\mathrm{row}}(g_{\mathrm{row}}(v)).
\end{equation}
Similarly, if $f_{\mathrm{col}}(\theta)\colon \R^{p \times 1}\to\R^{k\times 1}$ and $g_{\mathrm{col}}(w)\colon \R^{j\times 1}\to\R^{p\times 1}$ are differentiable then $h_{\mathrm{col}}(w)=f_{\mathrm{col}}(g_{\mathrm{col}}(w))$ has
\begin{equation}\label{ChainRuleCol}
\grad_w h_{\mathrm{col}}(w) = \grad_\theta f_{\mathrm{col}}(g_{\mathrm{col}}(w)) \grad_w g_{\mathrm{col}}(w).
\end{equation}
(Comparing with the linear approximations in \eqref{LinearApproximationForm}, we see that for either row or column vectors, $\grad g$ acts on $s-s_0$ or $\theta-\theta_0$ before $\grad f$ does.)
In expressions such as $K(\hat{s}(\theta,x);\theta)$ containing both row and column vectors, we interpret $K$ as a function of two column vectors $s^T$ and $\theta$, so that applying \eqref{ChainRuleCol} gives
\begin{equation}\label{gradthetaKhats}
\begin{aligned}
\grad_\theta\left( \Big. K(\hat{s}(\theta,x);\theta) \right)
&=\grad_\theta K(\hat{s}(\theta,x);\theta) + \grad_{s^T} K(\hat{s}(\theta,x);\theta) \grad_\theta\hat{s}^T(\theta,x)
\\
&=\grad_\theta K(\hat{s}(\theta,x);\theta) + \grad_s^T K(\hat{s}(\theta,x);\theta) \grad_\theta\hat{s}^T(\theta,x)
\end{aligned}
\end{equation}
and similarly for other functions of $s$ and $\theta$.
Note our convention for gradients and composite functions: in \eqref{gradthetaKhats}, an expression such as $\grad_\theta K(\hat{s}(\theta,x);\theta)$ means the $1\times p$ row vector of partial derivatives $\frac{\partial K}{\partial\theta_j}$ evaluated with $s$ replaced by $\hat{s}(\theta,x)$, whereas $\grad_\theta\left( \big. K(\hat{s}(\theta,x);\theta) \right)$ means the gradient of the composite function $\theta\mapsto \grad_s K(\hat{s}(\theta,x);\theta)$.
We make an exception for gradients of logarithms, as in \eqref{GradientErrorEquation}, where $\grad\log\hat{L}$, $\grad\log L$ mean the gradients of the composite functions even without parentheses.

\subsection{Derivative formulas}

We now proceed to differentiate the quantities $\log L^*$, $\hat{s}$, $\log \hat{L}^*$ and $\hat{P}$.
The first of these is simple since $L^*$ is exact and explicitly computable: 
\begin{equation}\label{logL*Gradients}
\begin{aligned}
\grad_\theta\log L^*(s,\theta) &= \grad_\theta K(s;\theta) - s \grad_s\grad_\theta K(s;\theta),\\
\grad_s \log L^*(s,\theta) &= K'(s;\theta) - K'(s;\theta) - K''(s;\theta)s^T = -K''(s;\theta)s^T.
\end{aligned}
\end{equation}

For $\hat{s}(\theta,x)$, suppose $(s_0,\theta_0)\in\interior\mathcal{S}$ with $K'(s_0;\theta_0)=x_0$.
The function $(s,\theta,x)\mapsto K'(s;\theta)-x$ is continuously differentiable and its gradient with respect to $s^T$, i.e., $K''(s;\theta)$, is positive definite and hence non-singular.
Hence the Implicit Function Theorem applied to \eqref{SaddlepointEquation} implies that $\hat{s}(\theta,x)$ is uniquely defined and continuously differentiable for $\theta,x$ in a neighbourhood of $\theta_0,x_0$.
(Note that continuity may fail without the assumption $(s_0,\theta_0)\in\interior\mathcal{S}$, and note in this connection that $\interior\mathcal{X}$ may be strictly larger than $\mathcal{X}^o$.)
The strict convexity of $K$ for fixed $\theta$ implies that $\hat{s}(\theta,x)$ is globally unique (not merely locally unique, as follows from the Implicit Function Thoerem) whenever it is defined.

We now find the gradients of $\hat{s}(\theta,x)$.
Applying the analogue of \eqref{gradthetaKhats} with $K$ replaced by $\grad_s K$,
\begin{equation}\label{SEgradtheta}
\begin{aligned}
0 = \grad_\theta x &= \grad_\theta\left( \Big. \grad_s K(\hat{s}(\theta,x);\theta) \right)
\\&
=\grad_s\grad_\theta K(\hat{s}(\theta,x);\theta) + \grad_s\grad_s^T K(\hat{s}(\theta,x);\theta) \grad_\theta\hat{s}^T(\theta,x)
.
\end{aligned}
\end{equation}
In \eqref{SEgradtheta}, $\grad_s\grad_s^T K = K''$ is non-singular as in \eqref{K''PosDef}, so we can solve to find
\begin{equation}\label{gradthetashat}
\grad_\theta\hat{s}^T(\theta,x) = - K''(\hat{s}(\theta,x);\theta)^{-1} \grad_s\grad_\theta K(\hat{s}(\theta,x);\theta)
\end{equation}
Similarly, with $I_{\xdim\times\xdim}$ representing the $\xdim\times\xdim$ identity matrix,
\begin{equation}
I_{\xdim\times\xdim} = \grad_x x = \grad_x\left( \Big. \grad_s K(\hat{s}(\theta,x);\theta) \right) = \grad_s\grad_s^T K(\hat{s}(\theta,x);\theta) \grad_x \hat{s}^T(\theta,x)
\end{equation}
so that
\begin{equation}
\grad_x\hat{s}^T(\theta,x) = K''(\hat{s}(\theta,x);\theta)^{-1}.
\end{equation}

Turning to $\log\hat{L}^*(\theta;x) = \log L^*(\hat{s}(\theta,x),\theta) = K(\hat{s}(\theta,x);\theta)-\hat{s}(\theta,x)K'(\hat{s}(\theta,x);\theta)$, recall that $K'(\hat{s}(\theta,x);\theta)=x$ by definition.
Noting that $\hat{s} x = x^T\hat{s}^T$, we find using \eqref{gradthetaKhats} that
\begin{align}
\grad_\theta \log\hat{L}^*(\theta;x) 
&= \grad_\theta\left( \big. K(\hat{s}(\theta,x);\theta) - x^T\hat{s}^T(\theta,x) \right)
\notag\\&
= \grad_\theta K(\hat{s}(\theta,x);\theta) + \grad_s^T K(\hat{s}(\theta,x);\theta) \grad_\theta\hat{s}^T(\theta,x) - x^T \grad_\theta\hat{s}^T(\theta,x)
\notag\\&
= \grad_\theta K(\hat{s}(\theta,x);\theta)
\label{loghatL*Gradient}
\end{align}
since $\grad_s^T K(\hat{s}) = (\grad_s K(\hat{s}))^T = x^T$.
(Alternatively, use the relations \eqref{logL*Gradients}.)
Transposing then applying \eqref{gradthetaKhats} again,
\begin{align}
\grad_\theta^T\grad_\theta \log\hat{L}^*(\theta;x) 
&= \grad_\theta \left( \big. \grad_\theta K(\hat{s}(\theta,x);\theta) \right)^T
\notag\\&
= \grad_\theta^T\grad_\theta K(\hat{s}(\theta,x)) + \grad_\theta^T\grad_s^T K(\hat{s}(\theta,x);\theta) \grad_\theta\hat{s}^T(\theta,x)
\notag\\&
= \grad_\theta^T\grad_\theta K(\hat{s}) - \left( \grad_s\grad_\theta K(\hat{s}) \right)^T K''(\hat{s})^{-1} \grad_s\grad_\theta K(\hat{s}).
\label{loghatL*Hessian}
\end{align}
Equations~\eqref{loghatL*Gradient}--\eqref{loghatL*Hessian} verify the assertion before \refthm{MLEerror} that the quantities in \eqref{RateFunctionImplicitCriticalPoint}--\eqref{RateFunctionImplicitHessianDefinite} are the gradient and Hessian of the function in \eqref{RateFunctionImplicit}.

For $\log\hat{P}$, we first show how to differentiate a determinant.
Suppose that $A(t)\colon \R\to\R^{\xdim\times\xdim}$ is a differentiable matrix-valued function of a scalar parameter $t$.
Then
\begin{equation}\label{logdetDerivative}
\frac{d}{dt} \log\det A(t) = \trace(A(t)^{-1} A'(t));
\end{equation}
see for instance \cite[Theorem~8.2]{MagnusNeudeckerMatrixDifferentialCalc}.
Applying \eqref{logdetDerivative} to $\log\hat{P}=-\tfrac{1}{2}\log\det(2\pi K'')$,
\begin{equation}\label{loghatPDerivative}
\frac{\partial}{\partial t}\log\hat{P}(s,\theta) = -\frac{1}{2}\trace\left( K''(s;\theta)^{-1} \frac{\partial K''}{\partial t}(s;\theta) \right).
\end{equation}
We apply \eqref{loghatPDerivative} with the scalar $t$ being one of the coordinates $\theta_i$ or $s_j$.

\section{Proof of \refprop{PddtPError} and \refcoro{gradlogPhatsError}}\lbappendix{MainPropProof}

The proof of \refprop{PddtPError} is based on an asymptotic expansion of $P_n(s,\theta)$ and its derivatives when $n$ is large.
We will need to justify the interchange of integration and differentiation in \eqref{PformulaK}--\eqref{dPdtFormulaK}, and we state a sufficient condition in \reflemma{DiffUnderInt}.

In addition, passing from the discrete parameter $n$ to the continuous parameter $\epsilon$ will require us to define an analogue of \eqref{PformulaK} for non-integer $n$.
Some care is required because, away from the real axis, $K_0(s+\ii\phi;\theta)=\log M_0(s+\ii\phi;\theta)$ may have branch cuts at zeros of $M_0$.
These make no difference in \eqref{PformulaK} provided that $n$ is an integer, because the combination $\exp(nK_0(s+\ii\phi;\theta))$ can be unambiguously interpreted as $M_0(s+\ii\phi;\theta)^n$.
We will circumvent this by modifying $K_0$ away from the real axis.
This modification introduces error terms indexed by $n\in\N$, which will prove to be negligible as $n\to\infty$.
We will use Lemmas~\ref{L:SequenceToC1}--\ref{L:FunctionsToC1} to show that these error terms can be extended to be indexed by $\epsilon\geq 0$ in a continuously differentiable way.

\begin{lemma}\lblemma{DiffUnderInt}
Let $U\subset\R^{k\times 1}$ be open, let $\mu$ be a measure on a space $\mathcal{Y}$, let $h(y)\geq 0$ be a measurable function with $\int h(y)\, d\mu(y)<\infty$, and let $g(x,y):U\times \mathcal{Y}\to\R$ be such that
\begin{equation}
f(x) = \int_{\mathcal{Y}} g(x,y) \, d\mu(y)
\end{equation}
converges for all $x\in U$.
\begin{enumerate}
\item\label{item:ContinuityIntegral}
If $\abs{g(x,y)}\leq h(y)$ for all $x\in U$ and if $g(\cdot,y)$ is continuous at $x_0$ for $\mu$-almost every $y$, then $f$ is continuous at $x_0$.
\item\label{item:DerivativeIntegral}
Fix an entry $x_i$ of $x$ and suppose that $g(x,y)$ is continuously differentiable as a function of $x_i$, for all fixed values of $y$ and $x_j, j\neq i$.
Suppose in addition that $\frac{\partial g}{\partial x_i}(x,y)$ is jointly measurable and $\bigabs{\tfrac{\partial g}{\partial x_i}(x,y)} \leq h(y)$ for all $x\in U$.
Then
\begin{equation}\label{DerivOfIntegral}
\frac{\partial f}{\partial x_i}(x) = \int_{\mathcal{Y}} \frac{\partial g}{\partial x_i}(x,y) \, d\mu(y)
\end{equation}
for all $x\in U$.
\item\label{item:ContinuityGradientIntegral}
If in addition $\frac{\partial f}{\partial x_i}(\cdot,y)$ is continuous at $x_0$ for $\mu$-almost every $y$, then $\frac{\partial f}{\partial x_i}$ is continuous at $x_0$.
\end{enumerate}\end{lemma}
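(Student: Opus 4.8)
All three parts are standard consequences of the Dominated Convergence Theorem, and the plan is to prove them in order, reducing part~\refitem{ContinuityGradientIntegral} to part~\refitem{ContinuityIntegral}.

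For part~\refitem{ContinuityIntegral}, since $U\subset\R^{k\times1}$ is a metric space it suffices to verify sequential continuity. I would take an arbitrary sequence $x^{(\ell)}\to x_0$ in $U$; then $g(x^{(\ell)},y)\to g(x_0,y)$ for $\mu$-almost every $y$ by the assumed pointwise continuity, while $\abs{g(x^{(\ell)},y)}\le h(y)$ with $h$ integrable. Dominated convergence gives $f(x^{(\ell)})=\int g(x^{(\ell)},y)\,d\mu(y)\to\int g(x_0,y)\,d\mu(y)=f(x_0)$, and since the sequence was arbitrary, $f$ is continuous at $x_0$.

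For part~\refitem{DerivativeIntegral}, I would fix $x\in U$ together with all coordinates $x_j$, $j\ne i$, reducing the question to one variable. For $t\ne0$ small enough that the segment from $x$ to $x+te_i$ lies in $U$ (possible because $U$ is open), the difference quotient of $f$ equals $\int\frac{g(x+te_i,y)-g(x,y)}{t}\,d\mu(y)$, and the one-dimensional Mean Value Theorem applied to $s\mapsto g(x+se_i,y)$ rewrites the integrand as $\frac{\partial g}{\partial x_i}(x+\xi(t,y)e_i,y)$ for some $\xi(t,y)$ between $0$ and $t$, which is bounded in modulus by $h(y)$. Along any sequence $t\to0$ the integrand converges pointwise in $y$ to $\frac{\partial g}{\partial x_i}(x,y)$, straight from the definition of the partial derivative, so a further application of dominated convergence shows that $\frac{\partial f}{\partial x_i}(x)$ exists and equals $\int\frac{\partial g}{\partial x_i}(x,y)\,d\mu(y)$; the joint-measurability hypothesis guarantees this integrand is $\mu$-measurable, and measurability of each difference quotient is automatic since $g(x,\cdot)$ is integrable for every $x\in U$.

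Part~\refitem{ContinuityGradientIntegral} then follows by applying part~\refitem{ContinuityIntegral} with $g$ replaced by $\frac{\partial g}{\partial x_i}$: the bound $\abs{\frac{\partial g}{\partial x_i}(x,y)}\le h(y)$ and the assumed continuity at $x_0$ for $\mu$-almost every $y$ are precisely its hypotheses, and its conclusion is continuity at $x_0$ of $x\mapsto\int\frac{\partial g}{\partial x_i}(x,y)\,d\mu(y)$, which equals $\frac{\partial f}{\partial x_i}$ by part~\refitem{DerivativeIntegral}. There is no genuine obstacle here; the only points needing a little care are keeping the segment $[x,x+te_i]$ inside the open set $U$ when applying the Mean Value Theorem, and confirming the measurability needed to invoke dominated convergence in parts~\refitem{ContinuityIntegral} and~\refitem{DerivativeIntegral}.
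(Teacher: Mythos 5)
Your proof is correct and takes essentially the same route as the paper: all three parts rest on the Dominated Convergence Theorem, with part~(c) obtained by applying part~(a) to the integral formula established in part~(b). The only cosmetic difference is in part~(b), where you dominate the difference quotient by $h(y)$ via the Mean Value Theorem, while the paper instead writes it as $\int_0^1 \frac{\partial g}{\partial x_i}(x+tue_i,y)\,du$ and applies dominated convergence to that double integral; both devices serve the same purpose.
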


\begin{proof}
For part~\ref{item:ContinuityIntegral}, the integrand $g(x,y)$ converges $\mu$-a.e.\ to $g(x_0,y)$ as $x\to x_0$ and is bounded by the integrable function $h$.
So the Dominated Convergence Theorem gives the result.

For part~\ref{item:DerivativeIntegral}, let $e_i$ denote the unit vector in the $i^\text{th}$ coordinate direction.
Then given $x\in U$, we can write
\begin{equation}\label{DiffAsIntOfDeriv}
\frac{f(x+te_i)-f(x)}{t} = \int_{\mathcal{Y}} \int_0^1 \frac{\partial g}{\partial x_i}(x+tue_i, y) \, du \, d\mu(y)
\end{equation}
for all $t\neq 0$ sufficiently small.
The integrand in \eqref{DiffAsIntOfDeriv} is bounded by the integrable function $h(y)\indicatorofset{[0,1]}(u)$ and converges pointwise as $t\to 0$, so \eqref{DerivOfIntegral} follows by the Dominated Convergence Theorem.

Finally part~\ref{item:ContinuityGradientIntegral} follows by applying part~\ref{item:ContinuityIntegral} to \eqref{DerivOfIntegral}.
\end{proof}

The next result explains when we can interpolate a given sequence by a continuously differentiable function.
Here and elsewhere, continuous differentiability on a set with boundary points includes differentiability at the boundary (interpreting derivatives as one-sided derivatives as appropriate) and the derivatives are required to be continuous up to and including the boundary.

\begin{lemma}\lblemma{SequenceToC1}
Let $\epsilon_n$ be a sequence with $0<\epsilon_{n+1}<\epsilon_n\in[0,1]$ for all $n\in\N$ and $\epsilon_n\to 0$ as $n\to\infty$, and let $r_n$ be a real-valued sequence defined for $n\in\N, n\geq n_0$.
Then
\begin{equation}\label{SequenceToC1Condition}
\lim_{n\to\infty} r_n=0, \qquad \lim_{n\to\infty} \frac{r_n-r_{n+1}}{\epsilon_n-\epsilon_{n+1}} = 0,
\end{equation}
is a necessary and sufficient condition for the existence of a continuously differentiable function $f:[0,1]\to\R$ satisfying $f(0)=0$, $f'(0)=0$ and $f(\epsilon_n) = a_n$ for all $n\geq n_0$.

If in addition 
\begin{equation}\label{SequenceToCkCondition}
\lim_{n\to\infty} \frac{r_n-r_{n+1}}{(\epsilon_n-\epsilon_{n+1})^k} = 0
\end{equation}
where $k\in\N$, $k\geq 2$, then $f$ can be taken to be $k$ times continuously differentiable.
\end{lemma}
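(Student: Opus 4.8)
The plan is to prove the ``only if'' direction by a one-line Mean Value Theorem argument, and the ``if'' direction by exhibiting $f$ explicitly as an antiderivative of a piecewise-rescaled bump. For necessity, suppose $f\colon[0,1]\to\R$ is continuously differentiable with $f(0)=f'(0)=0$ and $f(\epsilon_n)=r_n$ for $n\ge n_0$. Continuity of $f$ at $0$ gives $r_n=f(\epsilon_n)\to f(0)=0$. The Mean Value Theorem produces $\xi_n$ strictly between $\epsilon_{n+1}$ and $\epsilon_n$ with $(r_n-r_{n+1})/(\epsilon_n-\epsilon_{n+1})=f'(\xi_n)$; since $0<\xi_n<\epsilon_n\to0$ and $f'$ is continuous at $0$ with $f'(0)=0$, these difference quotients tend to $0$, establishing \eqref{SequenceToC1Condition}.

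For sufficiency, set $m_n=(r_n-r_{n+1})/(\epsilon_n-\epsilon_{n+1})$, so $m_n\to0$ by hypothesis. Fix once and for all a continuous $b\colon[0,1]\to\R$ with $b(0)=b(1)=0$ and $\int_0^1 b(u)\,du=1$, and define $\psi\colon[0,\epsilon_{n_0}]\to\R$ by $\psi(0)=0$ and
\[
\psi(x)=m_n\,b\!\left(\frac{x-\epsilon_{n+1}}{\epsilon_n-\epsilon_{n+1}}\right)
\qquad\text{for }x\in[\epsilon_{n+1},\epsilon_n],\ n\ge n_0.
\]
Both formulas give $0$ at each shared knot $\epsilon_n$, so $\psi$ is continuous on $(0,\epsilon_{n_0}]$, and $\sup_{[\epsilon_{n+1},\epsilon_n]}\abs{\psi}=\abs{m_n}\,\norm{b}_\infty\to0$ forces continuity at $0$ as well. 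Then $f(x)=\int_0^x\psi(t)\,dt$ is continuously differentiable on $[0,\epsilon_{n_0}]$ with $f(0)=0$ and $f'(0)=\psi(0)=0$; the normalisation of $b$ gives $\int_{\epsilon_{n+1}}^{\epsilon_n}\psi=m_n(\epsilon_n-\epsilon_{n+1})=r_n-r_{n+1}$, whence $f(\epsilon_n)=\lim_{N\to\infty}\int_{\epsilon_N}^{\epsilon_n}\psi=\lim_{N\to\infty}(r_n-r_N)=r_n$, using $r_n\to0$ and boundedness of $\psi$. Extending $f$ from $[0,\epsilon_{n_0}]$ to all of $[0,1]$ by any $C^1$ function matching the value $r_{n_0}$ and derivative $\psi(\epsilon_{n_0})$ at $\epsilon_{n_0}$ completes the construction.

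For the $C^k$ refinement one takes instead $b\in C^{k-1}[0,1]$ with $b^{(j)}(0)=b^{(j)}(1)=0$ for $0\le j\le k-1$ and $\int_0^1 b=1$: then on $[\epsilon_{n+1},\epsilon_n]$ one has $\psi^{(j)}(x)=m_n(\epsilon_n-\epsilon_{n+1})^{-j}b^{(j)}\!\bigl(\tfrac{x-\epsilon_{n+1}}{\epsilon_n-\epsilon_{n+1}}\bigr)$, which matches across the knots (all these derivatives vanish there) and satisfies $\sup_{[\epsilon_{n+1},\epsilon_n]}\abs{\psi^{(j)}}\le\abs{r_n-r_{n+1}}(\epsilon_n-\epsilon_{n+1})^{-(j+1)}\norm{b^{(j)}}_\infty\le\abs{r_n-r_{n+1}}(\epsilon_n-\epsilon_{n+1})^{-k}\norm{b^{(j)}}_\infty\to0$ by \eqref{SequenceToCkCondition}, using $j+1\le k$ and $\epsilon_n-\epsilon_{n+1}\le1$; hence $\psi\in C^{k-1}[0,\epsilon_{n_0}]$ with all its derivatives vanishing at $0$, so $f=\int_0^{\cdot}\psi\in C^k$, and one extends to $[0,1]$ as before. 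The only delicate point — and the main obstacle — is checking that the patched-together $\psi$ is genuinely continuous (resp.\ $C^{k-1}$) at the accumulation point $0$ and that $f$ then inherits the prescribed one-sided derivative conditions there; this is precisely where the hypotheses $m_n\to0$ (resp.\ \eqref{SequenceToCkCondition}) enter, through the uniform estimates on $\psi$ and its derivatives over the shrinking intervals $[\epsilon_{n+1},\epsilon_n]$. Everything else is a routine computation with the Fundamental Theorem of Calculus.
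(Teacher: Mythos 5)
Your proposal is correct and follows essentially the same route as the paper: both construct the derivative as a rescaled bump carrying mass $r_n-r_{n+1}$ on each interval $[\epsilon_{n+1},\epsilon_n]$, define $f$ as its integral from $0$, telescope to get $f(\epsilon_n)=r_n$, and use \eqref{SequenceToC1Condition} (resp.\ \eqref{SequenceToCkCondition}) to force the bump function and its derivatives to vanish at $0$, with the necessity direction being the same Mean Value Theorem argument in a slightly different guise. The one step you state more loosely than the paper is the conclusion that the one-sided derivatives $\psi^{(j)}(0)$ exist and vanish: knowing $\psi^{(j)}(x)\to 0$ as $x\to 0^{+}$, this follows by repeated application of the Mean Value Theorem, exactly as the paper spells out.
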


Note that \eqref{SequenceToCkCondition} is not sharp, unlike \eqref{SequenceToC1Condition}, but will be sufficient for our purposes: we will have $\epsilon_n=1/n$, so that $\epsilon_n-\epsilon_{n+1}$ is of order $1/n^2$, while $r_n$ will decay exponentially in $n$.

\begin{proof}
For necessity, $r_n\to 0$ follows by taking $f(\epsilon)\to f(0)=0$ along $\epsilon=\epsilon_n$.
We can write $\frac{r_n-r_{n+1}}{\epsilon_n-\epsilon_{n+1}} = \int_0^1 f'(u\epsilon_n + (1-u)\epsilon_{n+1})\,du$, and the integrand tends uniformly to $f'(0)=0$ as $n\to\infty$.

For sufficiency, fix a smooth function $\eta:\R\to\R$ whose support is a compact subset of $(0,1)$ and such that $\int_0^1 \eta(u)du = 1$.
Define
\begin{equation}\label{fepsilonFormula}
h(\epsilon)=\sum_{n=n_0}^\infty \frac{r_n-r_{n+1}}{\epsilon_n-\epsilon_{n+1}} \eta\left( \frac{\epsilon-\epsilon_{n+1}}{\epsilon_n-\epsilon_{n+1}} \right), 
\qquad
f(\epsilon) = \int_0^\epsilon h(y) \, dy.
\end{equation}
We claim that $f$ has the required properties.
Note that the $n^\text{th}$ term contributing to $h(\epsilon)$ is supported in a compact subset of $(\epsilon_{n+1},\epsilon_n)$.
Hence around each fixed $\epsilon\in\ocinterval{0,1}$ there is a neighbourhood in which at most one summand contributes to $h$, and it follows that $h$ is continuous on $\ocinterval{0,1}$ and can moreover be differentiated term-by-term.
Moreover $h(0)=0$, and $\frac{r_n-r_{n+1}}{\epsilon_n-\epsilon_{n+1}}\to 0$ as $n\to\infty$ implies that $h(\epsilon)\to 0$ as $\epsilon\to 0$.
(Here we have used the disjointness of the intervals $(\epsilon_{n+1},\epsilon_n)$ and the boundedness of $\eta$.)
Thus $h$ is continuous and bounded on $[0,1]$, implying that $f$ is continuously differentiable.

We must next verify that $f(\epsilon_n)=a_n$ or equivalently that $\int_0^{\epsilon_n} h(\epsilon) \, d\epsilon = r_n$, for all $n\geq n_0$.
By the choice of $\eta$, $\int_{\epsilon_{n+1}}^{\epsilon_n} h(\epsilon) \, d\epsilon = r_n-r_{n+1}$, and a finite sum implies $\int_{\epsilon_{N+1}}^{\epsilon_n} h(\epsilon) \, d\epsilon = r_n-r_{N+1}$.
Take $N\to\infty$.
The Dominated Convergence Theorem applies, so the integral converges to $\int_0^{\epsilon_n} h(\epsilon) \, d\epsilon$, while $r_{N+1}\to 0$ by assumption.

Finally we can differentiate term-by-term to find
\begin{equation}
f^{(j)}(\epsilon) = h^{(j-1)}(\epsilon) = \sum_{n=n_0}^\infty \frac{r_n-r_{n+1}}{(\epsilon_n-\epsilon_{n+1})^j}\eta^{(j-1)}\left( \frac{\epsilon-\epsilon_{n+1}}{\epsilon_n-\epsilon_{n+1}} \right)
\end{equation}
for $\epsilon>0$.
This function is continuous on $\ocinterval{0,1}$ by the same argument as for $h$.
If \eqref{SequenceToCkCondition} holds and $j\leq k$, it follows that $\lim_{\epsilon\decreasesto 0}f^{(j)}(\epsilon)=0$, and by repeated application of the Mean Value Theorem it follows that $f^{(j)}=0$ and so $f^{(j)}$ is continuous at $\epsilon=0$.
\end{proof}

\begin{lemma}\lblemma{FunctionsToC1}
Let $\epsilon_n$ be a sequence with $0<\epsilon_{n+1}<\epsilon_n\in[0,1]$ for all $n\in\N$ and $\epsilon_n\to 0$ as $n\to\infty$.
Let $S\subset\R^p$ and let $r_n\colon S_n\to\R$, $n\in\N$, be $k$ times continuously differentiable functions, $k\geq 1$, defined on subsets $S_n\subset S$.
Suppose that for each $x_0\in\interior S$, there exist a neighbourhood $G\subset S$ of $x_0$ and $n_0\in\N$ such that $G\subset S_n$ for all $n\geq n_0$ and
\begin{equation}\label{rnxConditions}
\begin{gathered}
\lim_{n\to\infty}\sup_{x\in G} r_n(x)=0, \qquad \lim_{n\to\infty}\sup_{x\in G} \frac{r_n(x)-r_{n+1}(x)}{(\epsilon_n-\epsilon_{n+1})^k} = 0,
\\
\lim_{n\to\infty} \sup_{x\in G} \frac{\frac{\partial^j}{\partial x_{i_1}\dotsb\partial x_{i_j}} (r_n(x)-r_{n+1}(x))}{(\epsilon_n-\epsilon_{n+1})^{k-j}} = 0, \quad i_j\in\set{1,\dotsc,p}, j=1,\dotsc,k.
\end{gathered}
\end{equation}
Then there exist a subset $\mathcal{Q}\subset S\times[0,1]$, open relative to $S\times[0,1]$ and containing $\set{(x,0)\colon x\in\interior S}$, and a $k$ times continuously differentiable function $f(x,\epsilon)$ defined on $\mathcal{Q}$, such that $f(x,0)=0$, $\frac{\partial^j f}{\partial\epsilon^j}(x,0)=0$ for $j=1,\dotsc,k$, and $f(x,\epsilon_n) = r_n(x)$ whenever $(x,\epsilon_n)\in\mathcal{Q}$.
\end{lemma}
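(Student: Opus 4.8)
The plan is to imitate the construction in the proof of \reflemma{SequenceToC1}, carrying the parameter $x$ along, with two extra ingredients: the interpolating function must first be built locally in $x$, since $r_n$ is only defined on $S_n$, and then patched together; and the smoothness check at $\epsilon = 0$ must now also track $x$-derivatives. First I would fix, for each $x_0 \in \interior S$, a neighbourhood $G = G_{x_0}$ and an index $n_0 = n_0(x_0)$ as furnished by the hypothesis, so that $G \subset S_n$ and the uniform limits \eqref{rnxConditions} hold over $G$ for all $n \ge n_0$. Taking the fixed smooth bump $\eta$ from the proof of \reflemma{SequenceToC1}, I would define, for $(x, \epsilon) \in G \times [0, \epsilon_{n_0})$,
\[
h_{x_0}(x, \epsilon) = \sum_{n \ge n_0} \frac{r_n(x) - r_{n+1}(x)}{\epsilon_n - \epsilon_{n+1}}\, \eta\!\left( \frac{\epsilon - \epsilon_{n+1}}{\epsilon_n - \epsilon_{n+1}} \right),
\]
a locally finite sum exactly as in the one-variable case: at any point with $\epsilon > 0$ at most one summand is nonzero, and near $\set{\epsilon = 0}$ the summands have disjoint $\epsilon$-supports.

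Next I would glue these local definitions. On an overlap $G_{x_0} \cap G_{x_0'}$, assuming $\epsilon < \epsilon_{n_0(x_0')}$, any summand of $h_{x_0}$ with index $n < n_0(x_0')$ has $\epsilon_{n+1} \ge \epsilon_{n_0(x_0')} > \epsilon$ and hence vanishes, so the two formulas agree. This produces a single function $h$ on $\mathcal{Q} := \bigcup_{x_0 \in \interior S} \bigl( G_{x_0} \times [0, \epsilon_{n_0(x_0)}) \bigr)$, which is open relative to $S \times [0,1]$ and contains $\set{(x, 0)\colon x \in \interior S}$; I would then set $f(x, \epsilon) = \int_0^\epsilon h(x, y)\, dy$, which is legitimate because $\mathcal{Q}$ contains, with each $(x, \epsilon)$, the whole segment $\set{x} \times [0, \epsilon]$. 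The interpolation identity $f(x, \epsilon_N) = r_N(x)$ follows as in \reflemma{SequenceToC1}, from $\int_{\epsilon_{n+1}}^{\epsilon_n} h(x, y)\, dy = r_n(x) - r_{n+1}(x)$, telescoping a finite sum, and letting the upper index go to infinity — the passage to the limit justified by dominated convergence, using that near $\epsilon = 0$ the integrand is uniformly bounded on $G$ by \eqref{rnxConditions} and that $\sup_{x \in G} r_M(x) \to 0$. Likewise $f(x, 0) = 0$.

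The remaining point is that $f$ is $k$ times continuously differentiable with $\partial_\epsilon^j f(x, 0) = 0$ for $j = 1, \dots, k$. Away from $\set{\epsilon = 0}$ this is immediate, since there $h$ equals locally a single term that is $C^k$ in $x$ (as $r_n, r_{n+1}$ are) and $C^\infty$ in $\epsilon$. For the boundary $\set{\epsilon = 0}$ I would differentiate the series term-by-term; by the disjointness of supports, each $\partial_x^\alpha \partial_\epsilon^\ell h(x, \epsilon)$ equals, at a given $\epsilon \in (\epsilon_{n+1}, \epsilon_n)$, the single term
\[
\frac{\partial_x^\alpha(r_n - r_{n+1})(x)}{(\epsilon_n - \epsilon_{n+1})^{\ell + 1}}\, \eta^{(\ell)}\!\left( \frac{\epsilon - \epsilon_{n+1}}{\epsilon_n - \epsilon_{n+1}} \right),
\]
and since $0 < \epsilon_n - \epsilon_{n+1} < 1$, the hypotheses \eqref{rnxConditions} (applied with $j = |\alpha|$, together with the order-$k$ bound on $r_n - r_{n+1}$ itself) dominate this uniformly on $G$ by $o(1)$ as $n \to \infty$ whenever $|\alpha| + \ell \le k - 1$. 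Hence all such derivatives of $h$ extend continuously across $\set{\epsilon = 0}$ with value $0$. Transferring to $f$ through $\partial_\epsilon^\ell f = \partial_\epsilon^{\ell - 1} h$ for $\ell \ge 1$ and $\partial_x^\alpha f = \int_0^\epsilon \partial_x^\alpha h\, dy$ for $\ell = 0$, and applying the Mean Value Theorem repeatedly exactly as in the proof of \reflemma{SequenceToC1}, yields continuity of all partials of total order $\le k$ on $\mathcal{Q}$ and the required vanishing at $\epsilon = 0$.

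The step I expect to be the main obstacle is this last one: the careful bookkeeping of which mixed $x$- and $\epsilon$-derivatives are controlled by which of the (deliberately non-sharp) quantitative conditions in \eqref{rnxConditions}, and checking that the top-order $x$-derivatives of $f$ remain continuous up to $\epsilon = 0$ — this is precisely where the powers $(\epsilon_n - \epsilon_{n+1})^{k - j}$ enter, and where the uniformity of every limit over the neighbourhood $G$, rather than merely pointwise in $x$, is indispensable. The gluing, though conceptually easy, also needs a little care to confirm that $\mathcal{Q}$ is genuinely relatively open and downward closed in $\epsilon$.
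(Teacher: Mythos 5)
Your proposal is correct and follows essentially the same strategy as the paper's proof: the bump-weighted series in $\epsilon$ built from the differences $r_n-r_{n+1}$, integration in $\epsilon$ to obtain $f$, the telescoping argument (with $\sup_G \abs{r_M}\to 0$) for the interpolation property, and termwise estimates from \eqref{rnxConditions} for the derivatives. The one genuine difference is how the partial domains $S_n$ are handled: the paper first extends each $r_n$ to a $C^k$ function $\tilde{r}_n$ on all of $\R^p$ (cutting off at distance $1/n$ from $S_n^c$), applies \reflemma{SequenceToC1} pointwise in $x$ to a single global series, and only then restricts to $\mathcal{Q}=\bigunion_{x\in\interior S}\tilde{G}_x\times[0,\epsilon_{\tilde{n}(x)})$, whereas you start each local series at $n_0(x_0)$ and glue using the disjointness of the $\epsilon$-supports; both routes work, your gluing argument on overlaps is valid, and the relative openness of $\mathcal{Q}$ is settled by shrinking each $G_{x_0}$ to an open ball, exactly as the paper does. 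The step you flag as the main obstacle is treated in the paper precisely as you sketch: termwise differentiation for every partial involving at least one $\epsilon$-derivative (where the condition $\abs{\alpha}+\ell\leq k-1$ yields the uniform $o(1)$ bound, followed by the Mean Value Theorem at $\epsilon=0$), and differentiation under the integral sign via \reflemma{DiffUnderInt} for the pure $x$-derivatives; the only caveat, which applies equally to the paper's own write-up, is that for pure $x$-derivatives of order exactly $k$ the dominating function demanded by \reflemma{DiffUnderInt} really requires something like $\sum_n\sup_G\bigabs{\partial^k_x(r_n-r_{n+1})}<\infty$ rather than merely the $j=k$ limit in \eqref{rnxConditions}, a point that is harmless in the intended application (where the $r_n$ and their derivatives decay exponentially in $n$) but is passed over lightly in both your outline and the paper.
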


\begin{proof}
Let $\tilde{r}_n(x)$ be a $k$ times continuously differentiable function defined on $\R^p$ that agrees with $r_n(x)$ on $\set{x\colon d(x, S_n^c)\geq 1/n}$.
Such a function can be constructed by, for instance, taking an infinitely differentiable function $\eta(t)$ with $\eta(t)=1$ if $t\geq 1$ and $\eta(t)=0$ if $t\leq 1/2$, and setting $\tilde{r}_n(x)=r_n(x)\eta(nd(x,S_n^c))$.

Let $x_0\in\interior S$ with the corresponding neighbourhood $G$ and $n_0\in\N$.
We can choose an open ball $\tilde{G}_{x_0}\subset G$ containing $x_0$ and $n_1\in\N$ such that every $x\in\tilde{G}_{x_0}$ satisfies $d(x,G^c)\geq 1/n_1$.
Write $\tilde{n}(x_0)=\max\set{n_0,n_1}$.
Then, for all $n\geq\tilde{n}(x_0)$, $r_n$ is defined on $\tilde{G}_{x_0}$ and agrees with $\tilde{r}_n$ there.
In particular, \eqref{rnxConditions} remains true when we replace $r_n$, $r_{n+1}$ and $G$ by $\tilde{r}_n$, $\tilde{r}_{n+1}$ and $\tilde{G}_{x_0}$.

For each $x\in\interior S$, apply \reflemma{SequenceToC1} with $r_n$ replaced by $\tilde{r}_n(x)$ and $n_0$ replaced by 1, noting that the assumptions apply because of \eqref{rnxConditions}, to produce functions $\epsilon\mapsto f(x,\epsilon)$ and $\epsilon\mapsto h(x,\epsilon)$.
By construction, $f(x,0)=0$ and $\frac{\partial f}{\partial\epsilon}(x,0)=0$. 
Moreover $f(x,\epsilon_n) = \tilde{r}_n(x)$ for all $n\in\N$, and therefore $f(x,\epsilon_n)=r_n(x)$ for all $n\geq\tilde{n}(x)$.
Thus if we set $\mathcal{Q}=\bigunion_{x\in\interior S} \tilde{G}_x \times \cointerval{0, \epsilon_{\tilde{n}(x)}}$ then $f(x,\epsilon_n)=r_n(x)$ whenever $(x,\epsilon_n)\in\mathcal{Q}$, as required.

It remains to show the continuity, in both variables, of $\frac{\partial^{j+j'} f}{\partial x_{i_1}\dotsb\partial x_{i_j}\partial\epsilon^{j'}}$, where $j+j'\leq k$.
If $j\geq 1$ we have
\begin{equation}
\frac{\partial^{j+j'} f}{\partial x_{i_1}\dotsb\partial x_{i_j}\partial\epsilon^{j'}}(x,\epsilon) = \frac{\partial^{j+j'-1} h}{\partial x_{i_1}\dotsb\partial x_{i_j}\partial\epsilon^{j'-1}}(x,\epsilon).
\end{equation}
As in the proof of \reflemma{SequenceToC1}, around any $\epsilon_0>0$ there is a neighbourhood $(\epsilon_1,\epsilon_2)$ containing $\epsilon_0$ such that at most one term contributes to the series defining $h(x,\epsilon)$ (cf.\ \eqref{fepsilonFormula}) throughout the region $\interior S\times(\epsilon_1,\epsilon_2)$.
Therefore $\frac{\partial^{j+j'-1} h}{\partial x_{i_1}\dotsb\partial x_{i_j}\partial\epsilon^{j'-1}}$ is continuous in the region $\interior S\times\ocinterval{0,1}$, and can be computed by termwise differentiation:
\begin{equation}
\frac{\partial^{j+j'-1} h}{\partial x_{i_1}\dotsb\partial x_{i_j}\partial\epsilon^{j'-1}}(x,\epsilon) 
= 
\sum_{n=1}^\infty \frac{\frac{\partial^j}{\partial x_{i_1}\dotsb\partial x_{i_j}} (r_n(x)-r_{n+1}(x))}{(\epsilon_n-\epsilon_{n+1})^{j'}}\eta^{(j'-1)}\left( \frac{\epsilon-\epsilon_{n+1}}{\epsilon_n-\epsilon_{n+1}} \right)
\end{equation}
By assumption $j' \leq k-j$, so \eqref{rnxConditions} and the argument from the proof of \reflemma{SequenceToC1} imply that $\frac{\partial^{j+j'-1} h}{\partial x_{i_1}\dotsb\partial x_{i_j}\partial\epsilon^{j'-1}}(x,\epsilon)\to 0$ as $\epsilon\decreasesto 0$, uniformly over $x$ in suitable neighbourhoods.
On the other hand, $\frac{\partial^{j'} h}{\partial\epsilon^{j'}}(x,0)$ vanishes identically by \reflemma{SequenceToC1}, and so therefore do its partial derivatives with respect to $x$.
We have therefore shown that $\frac{\partial^{j+j'-1} h}{\partial x_{i_1}\dotsb\partial x_{i_j}\partial\epsilon^{j'-1}}$, and therefore also $\frac{\partial^{j+j'} f}{\partial x_{i_1}\dotsb\partial x_{i_j}\partial\epsilon^{j'}}$, are continuous in both variables for $j'\geq 1$.
Finally for $j'=0$ we can repeatedly apply \reflemma{DiffUnderInt} to obtain
\begin{equation}
\frac{\partial^j f}{\partial x_{i_1}\dotsb\partial x_{i_j}}(x,\epsilon)
=\int_0^\epsilon \frac{\partial^j h}{\partial x_{i_1}\dotsb\partial x_{i_j}}(x,y)\, dy
= \int_0^1 \frac{\partial^j h}{\partial x_{i_1}\dotsb\partial x_{i_j}}(x,y) \indicator{y\leq\epsilon}.
\end{equation}
For any $(x_0,\epsilon_0)$, the function $(x,\epsilon)\mapsto \frac{\partial^j h}{\partial x_{i_1}\dotsb\partial x_{i_j}}(x,y) \indicator{y\leq\epsilon}$ is continuous at $(x_0,\epsilon_0)$ for almost every $y$, so a further application of \reflemma{DiffUnderInt} gives continuity in both variables for this case too.
\end{proof}

We are now ready to prove \refprop{PddtPError}.
\begin{proof}[Proof of \refprop{PddtPError}]
We first claim that we can find a continuous function $\delta(s,\theta)$ defined on $\mathcal{S}$ such that for all $(s,\theta)\in\mathcal{S}$, 
\begin{equation}\label{ReM0BoundedBelow}
\Re M_0(s+\ii\phi;\theta)>\delta(s,\theta) \qquad\text{for all $\phi\in\R^{1\times\xdim}$ with $\abs{\phi}\leq 2\delta(s,\theta)$}
\end{equation}
and
\begin{equation}\label{K''BoundedBelow}
v\Re K''_0(s+\ii\phi;\theta) v^T\geq \delta(s,\theta)\abs{v}^2\qquad\text{for all $v,\phi\in\R^{1\times\xdim}$ with $\abs{\phi}\leq 2\delta(s,\theta)$,}
\end{equation}
as well as the bounds \eqref{DecayBound}--\eqref{GrowthBound}.
In particular, we can locally define $K_0(s+\ii\phi;\theta)=\Log M_0(s+\ii\phi;\theta)$ for $\abs{\phi}\leq 2\delta(s,\theta)$, where $\Log$ denotes the standard branch of the logarithm with a branch cut along the negative real axis.
(Note that we did not need to specify this choice of branch cut for the logarithm in order to state \eqref{K''BoundedBelow} because $K'_0(s+\ii\phi;\theta)$ is given unambiguously as $K'_0 = M_0'/M_0$ in all cases.)
The function $K_0$ thus defined has the same smoothness properties as $M_0$.

For \eqref{ReM0BoundedBelow}, the Mean Value Theorem and \eqref{GrowthBound} with $k=0,\ell=1$ show that if $\abs{\phi}\leq \min\set{M_0(s;\theta)/4\gamma(s,\theta) 3^{\gamma(s,\theta)}, 1}$ then $\Re M_0(s+\ii\phi;\theta)\geq\frac{1}{2}M_0(s;\theta)$.
In all of the assertions, we are free to replace $\delta(s,\theta)$ by a smaller positive quantity, so the first part of the claim follows by replacing the function $\delta(s,\theta)$ from \eqref{DecayBound}--\eqref{GrowthBound} by 
\begin{equation}
\min\set{\delta(s,\theta), M_0(s;\theta)/4\gamma(s,\theta) 3^{\gamma(s,\theta)}, 1, \tfrac{1}{2}M_0(s;\theta)}.
\end{equation}
The bound \eqref{K''BoundedBelow} follows by a similar argument using \eqref{GrowthBound} with $k=0$ and $\ell\in\set{1,2,3}$.

Fix an infinitely differentiable function $\eta(s,\phi,\theta)$ with values in $[0,1]$ and satisfying $\eta(s,\phi,\theta)=0$ if $\abs{\phi}\geq 2\delta(s,\theta)$ and $\eta(s,\phi,\theta)=1$ if $\abs{\phi}\leq \delta(s,\theta)$.
We may assume moreover that $\phi\mapsto\eta(s,\phi,\theta)$ is radially symmetric for each fixed $(s,\theta)\in\mathcal{S}$.
(For instance, since $\delta$ is continuous, we may find an infinitely differentiable function $\tilde{\delta}(s,\theta)$ defined on $\interior\mathcal{S}$ such that $\frac{2}{3}\delta(s,\theta)\leq \tilde{\delta}(s,\theta)\leq \delta(s,\theta)$.
Then set $\eta(s,\phi,\theta) = \tilde{\eta}(\abs{\phi}/\tilde{\delta}(s,\theta))$ where $\tilde{\eta}$ is infinitely differentiable with $\tilde{\eta}(r)=0$ for $r\geq 2$ and $\tilde{\eta}(r)=1$ for $r\leq\frac{3}{2}$.)
In addition, all derivatives of $\eta$ are supported in the region $\delta(s,\theta)\leq\abs{\phi}\leq 2\delta(s,\theta)$.

In the rest of the proof, we will construct the functions $q_1,q_2$.
In order to show continuous differentiability, we will focus our attention on a fixed but arbitrary point $(s_0,\theta_0)\in\interior\mathcal{S}$, and suitable chosen neighbourhoods $U$ of $\theta_0$ and $W$ of $s_0$.
This will not entail any loss of generality because continuous differentiability is a local property, while, as we shall see, the construction of $q_1,q_2$ does not depend on the choice of $(s_0,\theta_0)$, $U$ or $W$.

Let $(s_0,\theta_0)\in\interior\mathcal{S}$ be given.
Since $\delta(s,\theta)$ and $\gamma(s,\theta)$ are continuous, we may choose neighbourhoods $U$, $W$ and constants $\delta_0>0$, $\gamma_0<\infty$ such that 
\begin{equation}\label{delta0gamma0}
\delta(s,\theta)\geq\delta_0, \quad \gamma(s,\theta)\leq\gamma_0 \qquad\text{for all $s\in W$, $\theta\in U$}.
\end{equation}
The numbers $\delta_0,\gamma_0$ and the neighbourhoods $U,W$ will now be fixed for the remainder of the proof.
We remark that the bounds \eqref{DecayBound}--\eqref{GrowthBound} and \eqref{ReM0BoundedBelow}--\eqref{K''BoundedBelow} are monotone in $\delta$ and $\gamma$, so they hold uniformly over $U,W$ with $\delta(s,\theta)$, $\gamma(s,\theta)$ replaced by $\delta_0,\gamma_0$.

Recall that $K_0(s+\ii\phi)$ can be defined for $\abs{\phi}\leq 2\delta(s,\theta)$, but may not be defined globally.
We will therefore rewrite the integrals in \eqref{PformulaK}--\eqref{dPdtFormulaK} by defining
\begin{equation}\label{I1I2Formula}
\begin{aligned}
w(s,\phi,\theta) &= \frac{M_0(s+\ii\phi;\theta)e^{-\ii\phi K_0'(s;\theta)}}{M_0(s)},
\\
I_1(s,\theta,n) &= \int_{\R^{1\times\xdim}} w(s,\phi,\theta)^n \frac{d\phi}{(2\pi)^\xdim},
\\
I_2(s,\theta,n) &= \int_{\R^{1\times\xdim}} w(s,\phi,\theta)^{n-1} n\frac{\partial w}{\partial t}(s,\phi,\theta) \frac{d\phi}{(2\pi)^\xdim}
\end{aligned}
\end{equation}
Let $\tau$ denote another entry $\theta_i$ or $s_j$ (possibly the same entry as $t$) and define
\begin{equation}\label{I3Formula}
I_3 = \int_{\R^{1\times\xdim}} w(s,\phi,\theta)^{n-2} \left[ n(n-1)\frac{\partial w}{\partial\tau}(s,\phi,\theta)\frac{\partial w}{\partial t}(s,\phi,\theta) + n \frac{\partial^2 w}{\partial\tau\partial t}(s,\phi,\theta)\right] \frac{d\phi}{(2\pi)^\xdim}.
\end{equation}
Thus the integrands for $I_2,I_3$ are $\frac{\partial}{\partial t}$ and $\frac{\partial^2}{\partial\tau\partial t}$ of the integrand for $I_1$.
Throughout the proof we will think of $t$ and $\tau$ as fixed but arbitrary; thus our conclusions about $I_2$ will apply \emph{mutatis mutandis} to the integral with $\frac{\partial w}{\partial t}$ replaced by $\frac{\partial w}{\partial\tau}$.

It is readily verified that $\frac{\partial w}{\partial t}$, $\frac{\partial w}{\partial\tau}$ and $\frac{\partial^2 w}{\partial\tau\partial t}$ can be expressed as multivariate polynomials in the arguments $\phi$, $M_0(s+\ii\phi)/M_0(s)$, $\frac{\partial M_0}{\partial t}(s+\ii\phi)$, $\frac{\partial M_0}{\partial\tau}(s+\ii\phi)$ and $\frac{\partial^2 M_0}{\partial\tau\partial t}(s+\ii\phi)$; the same quantities evaluated at $\phi=0$; $\frac{\partial M_0'}{\partial t}(s)$, $\frac{\partial M_0'}{\partial\tau}(s)$ and $\frac{\partial^2 M_0'}{\partial\tau\partial t}(s)$; and $1/M_0(s)$.
By \eqref{GrowthBound}, it follows that 
\begin{equation}\label{wDerivativeBounds}
\max\bigset{\abs{\tfrac{\partial w}{\partial t}}, \abs{\tfrac{\partial w}{\partial\tau}}, \bigabs{\tfrac{\partial^2 w}{\partial\tau\partial t}}} \leq C(1+\abs{\phi})^\alpha
\end{equation}
for some $C,\alpha<\infty$ that depend only on $\delta_0$, $\gamma_0$.
On the other hand, \eqref{DecayBound} implies
\begin{equation}\label{wBound}
\abs{w(s,\phi,\theta)}\leq (1+\delta_0\abs{\phi}^2)^{-\delta_0}
\end{equation}
since the upper bound in \eqref{DecayBound} is monotone in $\delta$.
So the integrands in \eqref{I1I2Formula}--\eqref{I3Formula} can be bounded by 
\begin{equation}\label{IntegrandBoundphi}
n^2 (C^2+C+1)(1+\abs{\phi})^{2\alpha} (1+\delta_0\abs{\phi}^2)^{-\delta_0(n-2)}
\end{equation}
uniformly in $s,\theta$, and the function in \eqref{IntegrandBoundphi} is integrable (for fixed $n$) as soon as $2\delta_0(n-2)-2\alpha>m$.
Thus if we define $n_0=3+(m+2\alpha)/2\delta_0$ then the integrands in \eqref{I1I2Formula}--\eqref{I3Formula} are dominated by integrable functions for each $n\geq n_0$.
In particular, by \reflemma{DiffUnderInt}, for all $n\geq n_0$, 
\begin{equation}\label{PartialsOfI12}
I_1(s,\theta,n)=P_n(s,\theta), \quad I_2=\frac{\partial I_1}{\partial t}=\frac{\partial P_n}{\partial t}, \quad I_3=\frac{\partial I_2}{\partial\tau}
\end{equation}
with $I_1,I_2,I_3$ continuous for fixed $n$.

Define 
\begin{multline}
h(s,\phi,\theta)=\eta(s,\phi,\theta)\left[ K_0(s+\ii\phi;\theta) - K_0(s;\theta) - \ii\phi K_0'(s;\theta) \right] 
\\
+ (1-\eta(s,\phi,\theta)) \left[ - \tfrac{1}{2}\phi K_0''(s;\theta)\phi^T \right].
\end{multline}
Note that $h$ is well-defined since by construction $\eta$ is supported in the region where $K_0$ is well-defined.
We remark that $h$ behaves quadratically in $\phi$ for $\abs{\phi}$ small.
To emphasise this, apply Lagrange's form of the remainder term,
\begin{equation}\label{LagrangeForm2}
f(1) - f(0) - f'(0) = \int_0^1 (1-u) f''(u)\, du,
\end{equation}
with the function 
\begin{equation}\label{fForLagrangeForm}
f(u)=\eta(s,\phi,\theta)K_0(s+\ii u\phi;\theta)-\tfrac{1}{2}u^2(1-\eta(s,\phi,\theta))\phi K_0''(s;\theta)\phi^T.
\end{equation}
Thus, writing
\begin{equation}
g(s,\phi,\theta,u) = \eta(s,\phi,\theta) K_0''(s+\ii u\phi;\theta) + (1-\eta(s,\phi,\theta)) K_0''(s;\theta), 
\end{equation}
we can rewrite $h$ as
\begin{equation}\label{hAsphigphi}
h(s,\phi,\theta) = - \int_0^1 (1-u)\phi g(s,\phi,\theta,u) \phi^T \, du
.
\end{equation}
Then \eqref{K''BoundedBelow} implies 
\begin{equation}\label{ghBound}
v \Re g(s,\phi,\theta,u)v^T \geq \delta_0\abs{v}^2, \qquad \Re h(s,\phi,\theta) \leq -\tfrac{1}{2}\delta_0\abs{\phi}^2.
\end{equation}

Since $K_0'' = M_0''/M_0 - (M_0'/M_0)^2$, the partial derivatives $\frac{\partial g}{\partial t}$, $\frac{\partial g}{\partial\tau}$ and $\frac{\partial^2 g}{\partial\tau\partial t}$ can be expressed as multivariate polynomials in the arguments $\frac{\partial M_0}{\partial t}(s+\ii u\phi)$, $\frac{\partial M_0}{\partial\tau}(s+\ii u\phi)$, $\frac{\partial^2 M_0}{\partial\tau\partial t}(s+\ii u\phi)$, $\frac{\partial M_0'}{\partial t}(s+\ii u\phi)$, $\frac{\partial M_0'}{\partial\tau}(s+\ii u\phi)$, $\frac{\partial^2 M_0'}{\partial\tau\partial t}(s+\ii u\phi)$, $\frac{\partial M_0''}{\partial t}(s+\ii u\phi)$, $\frac{\partial M_0''}{\partial\tau}(s+\ii u\phi)$, $\frac{\partial^2 M_0''}{\partial\tau\partial t}(s+\ii u\phi)$, and $1/M_0(s+\ii u\phi)$; the same quantities evaluated at $\phi=0$; and $\eta(s,\phi,\theta)$ and its derivatives.
Note that $1/M_0(s+\ii u\phi)$ may become unbounded if $M_0$ has zeros, but powers $1/M_0(s+\ii u\phi)^k$ only appear in combination with a factor $\eta(s,\phi,\theta)$ or one of its derivatives, all of which are supported in the region $\abs{\phi}\leq 2\delta(s,\theta)$ where \eqref{ReM0BoundedBelow} applies.
It follows that each such combination is uniformly bounded.
Hence, by \eqref{GrowthBound}, $g$ and its partial derivatives (and hence, by \reflemma{DiffUnderInt}, $h$ and its partial derivatives) grow at most polynomially in $\phi$: 
\begin{equation}\label{partialghBounds}
\max\bigset{\abs{\tfrac{\partial g}{\partial t}}, \abs{\tfrac{\partial g}{\partial\tau}}, \bigabs{\tfrac{\partial^2 g}{\partial\tau\partial t}}, \abs{\tfrac{\partial h}{\partial t}}, \abs{\tfrac{\partial h}{\partial\tau}}, \bigabs{\tfrac{\partial^2 h}{\partial\tau\partial t}}} \leq C'(1+\abs{\phi})^{\alpha'}
\end{equation}
for some $C',\alpha'<\infty$ that depend only on $\delta_0$, $\gamma_0$.
The same argument (involving up to 3 $s$-derivatives of $M_0''$ and $\frac{\partial M_0''}{\partial t}$) shows that
\begin{equation}\label{gradphigBounds}
\max\bigset{\bigabs{\tfrac{\partial g}{\partial\phi_i}}, \bigabs{\tfrac{\partial^2 g}{\partial\phi_i\partial\phi_j}}, \bigabs{\tfrac{\partial^3 g}{\partial\phi_i\partial\phi_j\partial\phi_k}}, \bigabs{\tfrac{\partial^2 g}{\partial\phi_i\partial t}}, \bigabs{\tfrac{\partial^3 g}{\partial\phi_i\partial\phi_j\partial t}}, \bigabs{\tfrac{\partial^4 g}{\partial\phi_i\partial\phi_j\partial\phi_k\partial t}}} \leq C''(1+\abs{\phi})^{\alpha''}
\end{equation}
for some $C'',\alpha''<\infty$ that also depend only on $\delta_0$, $\gamma_0$.

By analogy with \eqref{I1I2Formula}--\eqref{I3Formula}, define
\begin{equation}\label{J1J2J3Formula}
\begin{aligned}
J_1(s,\theta,\epsilon) &= \int_{\R^{1\times\xdim}} \exp\left( \epsilon^{-1} h(s,\phi,\theta) \right) \frac{d\phi}{(2\pi)^\xdim}
,
\\
J_2(s,\theta,\epsilon) &= \int_{\R^{1\times\xdim}} \exp\left( \epsilon^{-1} h(s,\phi,\theta) \right) \epsilon^{-1} \frac{\partial h}{\partial t}(s,\phi,\theta) \frac{d\phi}{(2\pi)^\xdim}
,
\\
J_3(s,\theta,\epsilon) &= \int_{\R^{1\times\xdim}} \exp\left( \epsilon^{-1} h(s,\phi,\theta) \right) \left[ \epsilon^{-1} \frac{\partial^2 h}{\partial\tau\partial t}(s,\phi,\theta) 
\right.
\\&\qquad\qquad\qquad\qquad\qquad\qquad\qquad
\left.
+ \, \epsilon^{-2} \frac{\partial h}{\partial\tau}(s,\phi,\theta)\frac{\partial h}{\partial t}(s,\phi,\theta) \right] \frac{d\phi}{(2\pi)^\xdim}.
\end{aligned}
\end{equation}
The bounds \eqref{ghBound}--\eqref{partialghBounds} imply that the integrands in \eqref{J1J2J3Formula} can be bounded by 
\begin{equation}\label{IntegrandBoundphih}
\epsilon^{-2}((C')^2+C'+1)(1+\abs{\phi})^{2\alpha'}\exp\left( -\tfrac{1}{2}\epsilon^{-1}\delta_0\abs{\phi}^2 \right)
\end{equation}
uniformly in $s,\theta$, which is integrable for any fixed $\epsilon>0$.
So \reflemma{DiffUnderInt} applies and shows 
\begin{equation}\label{PartialsOfJ12}
J_2=\frac{\partial J_1}{\partial t}, \quad J_3=\frac{\partial J_2}{\partial\tau} \qquad\text{for all }\epsilon>0.
\end{equation}
Furthermore $J_1,J_2,J_3$ are continuous in all three variables in the region $\epsilon>0$.

We will next show that, for $k=1,2,3$,
\begin{equation}\label{rnkFormula}
r_n^{(k)}(s,\theta) = \frac{I_k(s,\theta,n) - J_k(s,\theta,1/n)}{\hat{P}_n(s,\theta)}
\end{equation}
satisfies $r_n^{(k)}(s,\theta) = o(n^{-2})$ uniformly over $s\in W,\theta\in U$.
Note that the integrands defining $I_k(s,\theta,n)$ and $J_k(s,\theta,1/n)$ agree over the region $\abs{\phi}\leq\delta_0$ because $\eta(s,\phi,\theta)=1$ and $\exp(h)=w$ there.
When $n\geq n_0$ and $\abs{\phi}\geq\delta_0$, the quantities in \eqref{IntegrandBoundphi} and \eqref{IntegrandBoundphih} (with $\epsilon^{-1}$ replaced by $n$) are bounded by
\begin{multline}\label{ExpDecayingBound}
C''' (1+\abs{\phi})^{2\alpha'''} \left( (1+\delta_0\abs{\phi}^2)^{-\delta_0(n_0-2)} + \exp(-\tfrac{1}{2} n_0\delta_0\abs{\phi}^2) \right) 
\\
\cdot n^2 \left( (1+\delta_0^3)^{-\delta_0(n-n_0)} + \exp(-\tfrac{1}{2} (n-n_0)\delta_0^3) \right)
\end{multline}
uniformly in $s,\theta$.
Integrating over $\phi$ leads to the conclusion that $I_k(s,\theta,n) - J_k(s,\theta,1/n)$ decays exponentially in $n$, uniformly in $s,\theta$.
Since $\hat{P}_n(s,\theta)^{-1}=O(n^{\xdim/2})$, uniformly in $s,\theta$, we have
\begin{equation}\label{rnkon-2}
r_n^{(k)}(s,\theta) = o(n^{-2})\text{ uniformly over }s\in W,\theta\in U,
\end{equation}
as asserted.

Next we apply \reflemma{FunctionsToC1} to $r_n^{(k)}(s,\theta)$ for $k=1,2$.
We set $\epsilon_n = 1/n$, so that $\epsilon_n-\epsilon_{n+1} \sim n^{-2}$.
Recalling \eqref{PartialsOfI12} and \eqref{PartialsOfJ12}, the identity
\begin{equation}\label{ZoverhatP}
\frac{\partial}{\partial\tau} \left( \frac{Z}{\hat{P}_n(s,\theta)} \right) = \frac{\frac{\partial Z}{\partial\tau}}{\hat{P}_n(s,\theta)} - \frac{Z}{\hat{P}_n(s,\theta)} \frac{\partial}{\partial\tau}\log\hat{P}(s,\theta)
\end{equation}
with $Z=I_k-J_k$ allows us to express the gradients $\grad_s r_n^{(k)}$ and $\grad_\theta r_n^{(k)}$ for $k=1,2$ in terms of $r_n^{(2)}$ (or rather, the analogue of $r_n^{(2)}$ with $t$ replaced by $\tau$) and $r_n^{(3)}$.
Since $\frac{\partial}{\partial\tau}\log\hat{P}(s,\theta)$ is bounded and continuous in $s,\theta$, and does not depend on $n$, it follows that $\grad_s r_n^{(k)}$ and $\grad_\theta r_n^{(k)}$ are continuous and $o(n^{-2})$, uniformly in $s,\theta$, for $k=1,2$.
By \reflemma{FunctionsToC1} we can find continuously differentiable functions $\tilde{f}_1(s,\theta,\epsilon)$, $\tilde{f}_2(s,\theta,\epsilon)$ such that
\begin{equation}\label{tildefPJhatP}
\tilde{f}_1(s,\theta,1/n) = \frac{P_n(s,\theta) - J_1(s,\theta,1/n)}{\hat{P}_n(s,\theta)}, \qquad \tilde{f}_2(s,\theta,1/n) = \frac{\frac{\partial P_n}{\partial t}(s,\theta) - J_2(s,\theta,1/n)}{\hat{P}_n(s,\theta)}
\end{equation}
for $s\in W$, $\theta\in U$, $n\geq n_0$.

We claim that $J_k(s,\theta,1/n)/\hat{P}_n(s,\theta)$, $k=1,2$, can be extended to be continuously differentiable functions of $s,\theta,\epsilon=1/n$, including at $\epsilon=0$.
We have already defined $J_k$ for all $\epsilon>0$, and we can extend $\hat{P}_n(s,\theta)$ by declaring that \eqref{hatPn} holds whether or not $n>0$ is an integer.
To handle the extension to $\epsilon=0$, substitute $\phi=\psi\sqrt{\epsilon}$, $d\phi=\epsilon^{\xdim/2}d\psi$ in \eqref{J1J2J3Formula} and use \eqref{hAsphigphi}:
\begin{align}
\frac{J_1(s,\theta,\epsilon)}{\hat{P}_{1/\epsilon}(s,\theta)} &= 
\int_{\R^{1\times\xdim}} \exp\left( \epsilon^{-1} h(s,\phi,\theta) \right) \frac{\sqrt{\det(2\pi K_0''(s;\theta))}}{(2\pi)^\xdim \epsilon^{\xdim/2}} d\phi
\notag\\&
= \int_{\R^{1\times\xdim}} \frac{\exp\bigl( -\int_0^1 (1-u)\psi g(s,\psi\sqrt{\epsilon},\theta,u) \psi^T \, du \bigr)}{\sqrt{\det(2\pi K_0''(s;\theta)^{-1})}} d\psi
.
\label{J1Pratio}
\end{align}
Similar considerations hold for $J_2$ and $J_3$.
Define
\begin{align}
f_1(s,\theta,\epsilon) &= \int_{\R^{1\times\xdim}} \frac{\exp\bigl( -\int_0^1 (1-u)\psi g(s,\psi\sqrt{\epsilon},\theta,u) \psi^T \, du \bigr)}{\sqrt{\det(2\pi K_0''(s;\theta)^{-1})}} d\psi
,
\notag\\
f_2(s,\theta,\epsilon) &= \int_{\R^{1\times\xdim}} \frac{\exp\bigl( -\int_0^1 (1-u)\psi g(s,\psi\sqrt{\epsilon},\theta,u) \psi^T \, du \bigr)}{\sqrt{\det(2\pi K_0''(s;\theta)^{-1})}} 
\notag\\&\qquad\cdot
\left( -\int_0^1 (1-u)\psi \frac{\partial g}{\partial t}(s,\psi\sqrt{\epsilon},\theta,u) \psi^T \, du \right) d\psi
,
\notag\\
f_3(s,\theta,\epsilon) &= \int_{\R^{1\times\xdim}} \frac{\exp\bigl( -\int_0^1 (1-u)\psi g(s,\psi\sqrt{\epsilon},\theta,u) \psi^T \, du \bigr)}{\sqrt{\det(2\pi K_0''(s;\theta)^{-1})}} 
\notag\\&\qquad\cdot
\left( \int_0^1 (1-v)\psi \frac{\partial g}{\partial\tau}(s,\psi\sqrt{\epsilon},\theta,v) \psi^T \, dv \int_0^1 (1-u)\psi \frac{\partial g}{\partial t}(s,\psi\sqrt{\epsilon},\theta,u) \psi^T \, du 
\right.
\notag\\&\qquad\quad
\left.
- \int_0^1 (1-u)\psi \frac{\partial^2 g}{\partial\tau\partial t}(s,\psi\sqrt{\epsilon},\theta,u) \psi^T \, du \right) d\psi
\label{f1f2f3Formula}
\end{align}
for $\epsilon\geq 0$.
\reflemma{DiffUnderInt} applied repeatedly to \eqref{hAsphigphi} confirms that
\begin{equation}\label{fkJkhatP}
f_k(s,\theta,\epsilon) = \frac{J_k(s,\theta,\epsilon)}{\hat{P}_{1/\epsilon}(s,\theta)}
\end{equation}
for all $\epsilon>0$.
The integrands in \eqref{f1f2f3Formula} are continuous in all of their variables, including at $\epsilon=0$, and by \eqref{ghBound}--\eqref{partialghBounds} are bounded by
\begin{equation}\label{IntegrandBoundpsi}
((C')^2+C'+1)(1+\abs{\psi}^2)^{2\alpha'+2}\exp\left( -\tfrac{1}{2}\delta_0\abs{\psi}^2 \right)
\end{equation}
for all $\epsilon\in[0,1]$.
The function in \eqref{IntegrandBoundpsi} is integrable, so \reflemma{DiffUnderInt} implies that $f_1,f_2,f_3$ are continuous.
By another application of \eqref{ZoverhatP} with $Z=f_k$, $k=1,2$, the continuity of $f_2,f_3$ implies that $\frac{\partial f_1}{\partial\tau},\frac{\partial f_2}{\partial\tau}$ are continuous.

To complete the claim that $f_1,f_2$ are continuously differentiable, it suffices to show that $\frac{\partial f_1}{\partial\epsilon}$ and $\frac{\partial f_2}{\partial\epsilon}$ are continuous.
This is less clear: the integrands in \eqref{f1f2f3Formula} are typically not differentiable at $\epsilon=0$ because of the $\sqrt{\epsilon}$.
However, in a Taylor expansion, the $\sqrt{\epsilon}$ term is odd as a function of $\psi$ and therefore makes no contribution to the integral.
To see this, note that the integrands defining $f_1,f_2$ can be written in the form $H_k(\psi, \psi\sqrt{\epsilon})$, where 
\begin{equation}\label{H1H2Formula}
\begin{aligned}
H_1(v,\phi) &= \frac{\exp\bigl( -\int_0^1 (1-u)v g(s,\phi,\theta,u) v^T \, du \bigr)}{\sqrt{\det(2\pi K_0''(s;\theta)^{-1})}}, 
\\
H_2(v,\phi) &= - H_1(v,\phi) \int_0^1 (1-u)v \frac{\partial g}{\partial t}(s,\phi,\theta,u) v^T \, du.
\end{aligned}
\end{equation}
(For convenience we omit the dependence on $s,\theta$.)
Apply \eqref{LagrangeForm2} with $f(u)=H_k(v,u\phi)$ to find
\begin{equation}
H_k(\psi, \psi\sqrt{\epsilon}) = H_k(\psi,0) + \sqrt{\epsilon} \psi\grad_\phi H_k(\psi,0) + \epsilon \int_0^1 (1-u)\psi \grad_\phi\grad_\phi^T H_k(\psi, u\psi\sqrt{\epsilon}) \psi^T \, du.
\end{equation}
The quantities $H_k(v,\phi)$ are even as functions of $v$ for each fixed $\phi$, so that $\grad_\phi H_k(v,\phi)$ is also even as a function of $v$.
Hence $\psi\grad_\phi H_k(\psi,0)$ is odd as a function of $\psi$.
From \eqref{ghBound} and \eqref{gradphigBounds} it is straightforward to verify that $H_k(v,\phi)$, $\frac{\partial H_k}{\partial\phi_i}(v,\phi)$, $\frac{\partial^2 H_k}{\partial\phi_i\phi_j}(v,\phi)$, $\frac{\partial^3 H_k}{\partial\phi_i\phi_j\phi_\ell}(v,\phi)$, for $k=1,2$ and $i,j,\ell=1,\dotsc,\xdim$, can all be bounded by
\begin{equation}\label{HkBound}
C'''' (1+\abs{\phi}+\abs{v})^{\alpha''''} \exp\left( -\tfrac{1}{2}\delta_0\abs{v}^2 \right)
\end{equation}
for some constants $C'''',\alpha''''$ depending only on $\delta_0,\gamma_0$.
In particular, the term $\psi\grad_\phi H_k(\psi,0)$ is integrable, so that oddness implies that its integral is zero.
Thus
\begin{equation}\label{fWithoutOdd}
f_k(\theta,s,\epsilon) = \int_{\R^{1\times\xdim}} \left( H_k(\psi,0) + \epsilon \int_0^1 (1-u)\psi \grad_\phi\grad_\phi^T H_k(\psi, u\psi\sqrt{\epsilon}) \psi^T \, du \right) d\psi
.
\end{equation}
In \eqref{fWithoutOdd}, the integrand is now continuously differentiable even at $\epsilon=0$, and \eqref{HkBound} implies that its derivative with respect to $\epsilon$ is bounded by an integrable function.
Thus \reflemma{DiffUnderInt} applies and verifies that $\frac{\partial f_1}{\partial\epsilon}$ and $\frac{\partial f_2}{\partial\epsilon}$ are continuous, as claimed, and hence $f_1,f_2$ are continuously differentiable.

Finally define $q_k(s,\theta,\epsilon)=\tilde{f}_k(s,\theta,\epsilon)+f_k(s,\theta,\epsilon) - f_k(s,\theta,0)$ for $\epsilon\in[0,1/n_0]$, so that $q_1,q_2$ are continuously differentiable and $q_k(s,\theta,0)=0$ by construction. 
Combining \eqref{tildefPJhatP} and \eqref{fkJkhatP},
\begin{equation}\label{Pnfq}
\begin{aligned}
\frac{P_n(s,\theta)}{\hat{P}_n(s,\theta)} &= \frac{I_1(s,\theta,n)}{\hat{P}_n(s,\theta)} = \tilde{f}_1(s,\theta,1/n) + f_1(s,\theta,1/n) = f_1(s,\theta,0) + q_1(s,\theta,1/n)
,
\\
\frac{\frac{\partial P_n}{\partial t}(s,\theta)}{\hat{P}_n(s,\theta)} &= \frac{I_2(s,\theta,n)}{\hat{P}_n(s,\theta)} = \tilde{f}_2(s,\theta,1/n) + f_2(s,\theta,1/n) = f_2(s,\theta,0) + q_2(s,\theta,1/n)
.
\end{aligned}
\end{equation}
We need to extend $q_k$ to be defined for $\epsilon\in[-1/n_0,1/n_0]$ (this is for convenience only, since the Implicit Function Theorem typically assumes that the base point belongs to the interior of the domain) and this can be done arbitrarily as long as $q_k$ remains continuously differentiable.
Since $q_k(s,\theta,0)$ vanishes identically, one way to make this extension is to require $q_k$ to be an odd function of $\epsilon$.
Lastly \eqref{PddtPErrorFormula} follows from \eqref{Pnfq} by noting $g(s,0,\theta,u) = K_0''(s;\theta)$, $\frac{\partial g}{\partial t}(s,0,\theta,u) = \frac{\partial K_0''}{\partial t}(s;\theta)$ and calculating the Gaussian integrals
\begin{align*}
f_1(s,\theta,0) &= \int_{\R^{1\times\xdim}} \frac{\exp\bigl( -\frac{1}{2}\psi K_0''(s;\theta) \psi^T \bigr)}{\sqrt{\det(2\pi K_0''(s;\theta)^{-1})}} d\psi = 1,
\\
f_2(s,\theta,0) &= -\int_{\R^{1\times\xdim}} \frac{\exp\bigl( -\frac{1}{2}\psi K_0''(s;\theta) \psi^T \bigr)}{\sqrt{\det(2\pi K_0''(s;\theta)^{-1})}} 
\frac{1}{2} \psi \frac{\partial K_0''}{\partial t}(s,\phi) \psi^T d\psi 
\\
&= -\frac{1}{2}\trace\left( K_0''(s;\theta)^{-1} \frac{\partial K_0''}{\partial t}(s;\theta) \right) = \frac{\partial}{\partial t}\log\hat{P}(s,\theta).
\qedhere
\end{align*}
\end{proof}

To prove \refcoro{gradlogPhatsError}, we first state an intermediate result.

\begin{coro}\lbcoro{ddtlogPError}
Under the hypotheses of \refthm{GradientError}, there are continuously differentiable functions $q_5(s,\theta,\epsilon),q_6(s,\theta,\epsilon)$, with values in $\R^{1\times p}$ and $\R^{\xdim\times 1}$ respectively, defined on an open set $\mathcal{Q}''$ containing $\set{(s,\theta,0)\colon (s,\theta)\in\interior\mathcal{S}}$, such that $q_5(s,\theta,0)=0$, $q_6(s,\theta,0)=0$ and
\begin{equation}\label{gradlogDifference}
\begin{aligned}
\grad_\theta \log P_n(s,\theta) &= \grad_\theta \log\hat{P}(s,\theta) + q_5(s,\theta,1/n)
,
\\
\grad_s \log P_n(s,\theta) &= \grad_s \log\hat{P}(s,\theta) + q_6(s,\theta,1/n)
\end{aligned}
\end{equation}
whenever $(s,\theta,1/n)\in\mathcal{Q}''$.
\end{coro}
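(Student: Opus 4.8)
The plan is to obtain Corollary~\ref{C:ddtlogPError} from Proposition~\ref{P:PddtPError} by a purely algebraic division, carried out one coordinate at a time. Fix $t$ equal to one of the entries $\theta_i$ or $s_j$. Proposition~\ref{P:PddtPError} provides continuously differentiable functions $q_1(s,\theta,\epsilon)$ (the same for every $t$) and $q_2=q_2^{(t)}(s,\theta,\epsilon)$, defined on an open set $\mathcal{Q}$ containing $\set{(s,\theta,0)\colon(s,\theta)\in\interior\mathcal{S}}$, both vanishing at $\epsilon=0$, with $P_n=\hat{P}_n\,(1+q_1)$ and $\partial P_n/\partial t=\hat{P}_n\,(\tfrac{\partial}{\partial t}\log\hat{P}+q_2)$ whenever $(s,\theta,1/n)\in\mathcal{Q}$. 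Since $\hat{P}_n(s,\theta)>0$ (it is a positive power of $n$ times $(\det 2\pi K_0''(s;\theta))^{-1/2}$, and $K_0''$ is positive definite by \eqref{K''PosDef}), I would divide the two identities to obtain
\[
\frac{\partial}{\partial t}\log P_n(s,\theta)
=\frac{\partial P_n/\partial t}{P_n}
=\frac{\tfrac{\partial}{\partial t}\log\hat{P}(s,\theta)+q_2^{(t)}(s,\theta,1/n)}{1+q_1(s,\theta,1/n)} .
\]

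Next I would subtract $\tfrac{\partial}{\partial t}\log\hat{P}(s,\theta)$ from both sides and set
\[
q^{(t)}(s,\theta,\epsilon)=\frac{q_2^{(t)}(s,\theta,\epsilon)-q_1(s,\theta,\epsilon)\,\tfrac{\partial}{\partial t}\log\hat{P}(s,\theta)}{1+q_1(s,\theta,\epsilon)} .
\]
Because $q_1$ vanishes on the slice $\epsilon=0$ and is continuous, the set $\mathcal{Q}''$ on which additionally $1+q_1>0$ is open and still contains $\set{(s,\theta,0)\colon(s,\theta)\in\interior\mathcal{S}}$; there the denominator is locally bounded away from $0$, so $q^{(t)}$ is $C^1$ on $\mathcal{Q}''$ and $q^{(t)}(s,\theta,0)=0$. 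Here one uses that $\tfrac{\partial}{\partial t}\log\hat{P}(s,\theta)=-\tfrac12\trace\!\big(K_0''(s;\theta)^{-1}\tfrac{\partial K_0''}{\partial t}(s;\theta)\big)$, as in \eqref{loghatPDerivative}, is itself $C^1$ (indeed smooth) in $(s,\theta)$ on $\interior\mathcal{S}$: this is immediate from \eqref{GrowthBound}, which makes $K_0''$ and its $s$- and $\theta$-derivatives continuous, together with the invertibility \eqref{K''PosDef}.

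Finally I would reassemble the vector-valued conclusion. Intersecting the finitely many sets $\mathcal{Q}''$ arising from the $p+m$ choices of $t$ gives one open set on which all the $q^{(t)}$ are simultaneously $C^1$; let $q_5(s,\theta,\epsilon)\in\R^{1\times p}$ be the row vector with $i$-th entry $q^{(\theta_i)}(s,\theta,\epsilon)$ and $q_6(s,\theta,\epsilon)\in\R^{m\times 1}$ the column vector with $j$-th entry $q^{(s_j)}(s,\theta,\epsilon)$. Then \eqref{gradlogDifference} holds coordinatewise by construction, with $q_5(s,\theta,0)=0$ and $q_6(s,\theta,0)=0$. The integer-valued version is proved identically, replacing Proposition~\ref{P:PddtPError} by Proposition~\ref{P:PddtPErrorInt}. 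The only step requiring any care — and the main, though mild, obstacle — is keeping the quotient $C^1$, i.e.\ keeping $1+q_1$ bounded away from $0$; this is resolved entirely by passing to the smaller domain $\mathcal{Q}''$, using that $q_1$ vanishes for $\epsilon=0$ and is continuous.
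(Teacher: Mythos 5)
Your proposal is correct and follows essentially the same route as the paper: the paper's proof likewise fixes an entry $t$, forms the quotient $(q_2 - q_1\,\tfrac{\partial}{\partial t}\log\hat{P})/(1+q_1)$ from \refprop{PddtPError}, shrinks $\mathcal{Q}''$ to keep $1+q_1$ away from $0$ (using $q_1(s,\theta,0)=0$ and continuity), and assembles the $p+m$ scalar functions into $q_5$ and $q_6$. Your extra remark on the smoothness of $\tfrac{\partial}{\partial t}\log\hat{P}$ matches the paper's appeal to \eqref{s0hatPGradients}, so there is nothing to add.
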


\begin{proof}
Let $t$ denote one of the entries $\theta_i$ or $s_j$ and consider $\frac{\partial}{\partial t}\log P_n(s,\theta)$.
Comparing with the ratio of the two quantities in \eqref{PddtPErrorFormula}, set
\begin{align}
q_t(s,\theta,\epsilon) &= \frac{\frac{\partial}{\partial t}\log\hat{P}(s,\theta) + q_2(s,\theta,\epsilon)}{1 + q_1(s,\theta,\epsilon)} - \frac{\partial}{\partial t}\log\hat{P}(s,\theta)
\notag\\&
=\frac{q_2(s,\theta,\epsilon) - q_1(s,\theta,\epsilon)\frac{\partial}{\partial t}\log\hat{P}(s,\theta)}{1 + q_1(s,\theta,\epsilon)}
.
\end{align}
We have immediately that $q_t(s,\theta,0)=0$.
By \refprop{PddtPError} and \eqref{s0hatPGradients}, $q_1,q_2$ and $\frac{\partial}{\partial t}\log\hat{P}$ are continuously differentiable, so $q_t$ will be as well provided that $q_1(s,\theta,\epsilon)\neq -1$.
Since $q_1(s,\theta,0)=0$ and $q_1$ is continuous, we can rule out $q_1(s,\theta,\epsilon)=-1$ by shrinking $\mathcal{Q}''$ to be a suitable subset of $\mathcal{Q}$.
The functions $q_5,q_6$ are obtained by performing this construction $p+\xdim$ times, once for each of the entries of $\theta$ and $s$.
\end{proof}

\begin{proof}[Proof of \refcoro{gradlogPhatsError}]
By the assumption $(y_0,\theta_0)\in\mathcal{Y}^o$, we have $\hat{s}_0(\theta_0,y_0)\in\interior\mathcal{S}$, so \refcoro{ddtlogPError} implies that $q_5,q_6$ are defined and continuously differentiable in a suitable neighbourhood.
From \eqref{ChainRuleCol} we have
\begin{align}
&\grad_\theta \left( \Big. \log P_n(\hat{s}_0(\theta,y),\theta) \right) 
\notag\\&\quad
= \left( \big. \grad_\theta\log P_n \right) (\hat{s}_0(\theta,y),\theta) 
+ \left[ \left( \big. \grad_s^T \log P_n \right) (\hat{s}_0(\theta,y),\theta) \right] \grad_\theta \hat{s}_0^T(\theta,y)
\end{align}
and similarly for $\grad_\theta \left( \log \hat{P}_n(\hat{s}_0(\theta,y),\theta) \right)$.
Thus if we set
\begin{equation}
q_3(\theta,y,\epsilon) = q_5(\hat{s}_0(\theta,y),\theta,\epsilon) + q_6(\hat{s}_0(\theta,y),\theta,\epsilon)^T \grad_\theta \hat{s}_0^T(\theta,y)
\end{equation}
then the relation \eqref{gradlogDifferenceshat} holds, and the relation $q_3(\theta,y,0)=0$ and the continuous differentiability of $q_3$ follow from the corresponding properties for $q_5,q_6$ and the fact that $\grad_\theta\hat{s}_0^T$ is continuously differentiable, see \eqref{s0hatPGradients}.
\end{proof}

\section{Proofs of \refprop{PddtPErrorInt} and \refthm{IntegerValued}}\lbappendix{IntegerValuedProof}

We begin by verifying the assertion following \refthm{IntegerValued} that if $\P(X_\theta=x)>0$ for all $x\in\set{0,1}^{\xdim\times 1}$, then $\abs{M_0(s+\ii\phi;\theta)}< M_0(s;\theta)$ holds for all $\phi\in[-\pi,\pi]^{1\times\xdim}\setminus\set{0}$.
(We remark that if $Y$ satisfies the same property, then so does $X$: given $x\in\set{0,1}^{\xdim\times 1}$, there is a positive probability that $Y^{(1)}=x$ and $Y^{(i)}=0$ for $2\leq i\leq n$.)
To this end, note that if $\phi\in[-\pi,\pi]^{1\times\xdim}\setminus\set{0}$, then there exist $x^{(1)},x^{(2)}\in\set{0,1}^{\xdim\times 1}$ such that $e^{\ii\phi(x^{(2)}-x^{(1)})}\neq 1$.
Namely, since $\phi\neq 0$ there is some coordinate $i\in\set{1,\dotsc,m}$ such that $\phi_i\neq 0$, and it suffices to choose $x^{(1)},x^{(2)}$ so that they differ by 1 in the $i^\text{th}$ coordinate and are equal elsewhere.
In particular, if we set $z=e^{\ii \phi}$ then $\abs{z^x}=1$ for all $x\in\Z^{\xdim\times 1}$ and $z^{x^{(2)}-x^{(1)}}\neq 1$.
(Here and elsewhere, $z=e^{\ii\phi}$ is interpreted coordinatewise and $z^x$ means $\prod_{i=1}^\xdim z_i^{x_i}$ in multi-index notation.)
Hence
\begin{align}
\abs{M(s+\ii\phi;\theta)} &= \abs{\sum_{x\in\Z^{\xdim\times 1}} \P(X=x) e^{sx+\ii\phi x}}
\notag\\&
\leq \abs{\P(X=x^{(1)})e^{sx^{(1)}}z^{x^{(1)}} + \P(X=x^{(2)})e^{sx^{(2)}}z^{x^{(2)}}} 
\notag\\&\quad
+ \sum_{\substack{x\in\Z^{\xdim\times 1},\\ x\notin\shortset{x^{(1)},x^{(2)}}}} \abs{\P(X=x) e^{sx}z^x}
\notag\\&
= \abs{z^{x^{(1)}}} \abs{\P(X=x^{(1)})e^{sx^{(1)}} + \P(X=x^{(2)})e^{sx^{(2)}}z^{x^{(2)}-x^{(1)}} } 
\notag\\&\quad
+ \sum_{\substack{x\in\Z^{\xdim\times 1},\\ x\notin\shortset{x^{(1)},x^{(2)}}}} \P(X=x) e^{sx}
\notag\\&
< \P(X=x^{(1)})e^{s^T x^{(1)}} + \P(X=x^{(2)})e^{sx^{(2)}} + \sum_{\substack{x\in\Z^{\xdim\times 1},\\ x\notin\shortset{x^{(1)},x^{(2)}}}} \P(X=x) e^{sx}
\end{align}
because, for $a,b>0$ and $\abs{w}=1$, the inequality $\abs{a+bw} \leq a+b$ is an equality if and only if $w=1$.
That is, $\abs{M(s+\ii\phi;\theta)}<M(s;\theta)$, and the same result holds for $M_0$ because of the relation $\abs{M(s;\theta)}=\abs{M_0(s;\theta)}^n$.

The proof of \refprop{PddtPErrorInt} is almost identical to that of \refprop{PddtPError}, and we outline where and how they differ.

\begin{proof}[Proof of \refprop{PddtPErrorInt}]
The local bounds \eqref{ReM0BoundedBelow}--\eqref{K''BoundedBelow} use only \eqref{K''PosDef} and \eqref{GrowthBound}, so they continue to hold (after possibly decreasing $\delta(s,\theta)$) under the hypotheses of \refthm{IntegerValued}.
We may in addition assume that
\begin{equation}
\delta(s,\theta) < \tfrac{1}{2}\pi
\end{equation}
so that the region $\abs{\phi}\leq 2\delta(s,\theta)$, and therefore the support of $\eta$, are subsets of the region $(-\pi,\pi)^{1\times\xdim}$.

The bound \eqref{wDerivativeBounds} depends only on \eqref{GrowthBound} and therefore remains valid; indeed, as remarked in \refsubsubsect{IntegerValued}, the quantity $w$ is $2\pi$-periodic in each entry of $\phi$, so it is enough to know that $w$ is continuous.
We must establish an analogue of the bound \eqref{wBound}, which cannot be valid for all $\phi$ because of periodicity.
From \eqref{K''BoundedBelow} and the argument that led to \eqref{ghBound}, we can conclude that, uniformly over $s$ and $\theta$ in suitable neighbourhoods $W$ and $U$,
\begin{equation}
\abs{w(s,\phi,\theta)} \leq \exp\left( -\tfrac{1}{2}\delta_0\abs{\phi}^2 \right) \quad\text{for }\abs{\phi}\leq 2\delta(s,\theta).
\end{equation}
On the other hand, by the assumption in \refthm{IntegerValued}, 
\begin{equation}
\max_{\phi\in[-\pi,\pi]^{1\times\xdim}\colon \abs{\phi}\geq 2\delta(s,\theta)} \abs{w(s,\phi,\theta)} < 1 \quad\text{for all }(s,\theta)\in\interior\mathcal{S},
\end{equation}
and by continuity and compactness, this maximum is continuous as a function of $(s,\theta)$.
It follows that, shrinking $\delta_0$ if necessary, $w$ satisfies the bound \eqref{wBound} for all $\phi\in[-\pi,\pi]^{1\times\xdim}$.
Let $\tilde{w}(s,\phi,\theta)=w(s,\phi,\theta)\indicatorofset{[-\pi,\pi]^{1\times\xdim}}(\phi)$.
Differentiation with respect to $t$ or $\tau$ does not affect the indicator, so $\tilde{w}$ satisfies the bounds \eqref{wDerivativeBounds}--\eqref{wBound}.
With these bounds given, we can define $\tilde{I}_1,\tilde{I}_2,\tilde{I}_3$ as in \eqref{I1I2Formula}--\eqref{I3Formula} with $w$ replaced by $\tilde{w}$, and the same argument shows that \eqref{PartialsOfI12} holds with $I_k$ and $P_n$ replaced by $\tilde{I}_k$ and $P_{\mathrm{int},n}$.

The rest of the argument proceeds as in the proof of \refprop{PddtPError}: the construction and bounding of $h$ and $g$ use only \eqref{ReM0BoundedBelow}--\eqref{K''BoundedBelow}; the exponentially decaying bound \eqref{ExpDecayingBound} still follows from \eqref{wDerivativeBounds}--\eqref{wBound}, applied to $\tilde{w}$ and $h$; and the definition and analysis of the functions $f_1$, $f_2$, $f_3$, and of $q_1$, $q_2$, are unchanged.
\end{proof}

\begin{proof}[Proof of Theorems~\ref{T:IntegerValued} and \ref{T:LowerOrderSaddlepoint}\ref{item:LOIntegerValued}]
Analogues of Corollaries~\ref{C:gradlogPhatsError} and~\ref{C:ddtlogPError} for $P_{\mathrm{int}}(s,\theta)$ follow from \refprop{PddtPErrorInt} by the same argument as in \refappendix{MainPropProof}.
The analogues of Theorems~\ref{T:GradientError}--\ref{T:SamplingMLE} and \ref{T:LowerOrderSaddlepoint}\ref{item:LOGradient}--\ref{item:LOSampling} are identical since, as we shall see, all of those proofs use only the result of \refcoro{gradlogPhatsError}.
The proof of \refthm{MLEerrorPartiallyIdentifiable}\ref{item:PI1/n^2} uses an extension of \refcoro{gradlogPhatsError}, but this extension applies equally in the setting of \refthm{IntegerValued}.
\end{proof}

\section{Derivatives of \texorpdfstring{$F$}{F} and proof of \reflemma{F=0ISMLE}}\lbappendix{FandMLEproof}

In this section we verify that $F(s_0^T,\theta_0;y_0,0)=0$ and $\grad_{s^T,\theta} F(s_0^T,\theta_0;y_0,0)$ is non-singular, as asserted in the proof of \refthm{MLEerror}, and we prove \reflemma{F=0ISMLE}.

Since $\epsilon=0$, the assertions $F_1(s_0^T,\theta_0;y_0,0)=0$ and $F_2(s_0^T,\theta_0;y_0,0)=0$ reduce to \eqref{y0K0's0} and \eqref{RateFunctionImplicitCriticalPoint}, respectively.
The gradient of $F$ with respect to its first two variables (which we combine into a single column vector by concatenating $s^T$ and $\theta$) has the block decomposition
\begin{align}
\grad_{s^T,\theta} F(s_0^T,\theta_0;y_0,0) &= \mat{\grad_s^T F_1(s_0^T,\theta_0;y_0) & \grad_\theta F_1(s_0^T,\theta_0;y_0) \\ \grad_s^T F_2(s_0^T,\theta_0;y_0,0) & \grad_\theta F_2(s_0^T,\theta_0;y_0,0)}
\notag\\&
=\mat{K''_0(s_0;\theta_0) & \grad_s\grad_\theta K_0(s_0;\theta_0) \\ \left( \grad_s\grad_\theta K_0(s_0;\theta_0) \right)^T & \grad_\theta^T\grad_\theta K_0(s_0;\theta_0)}
,
\end{align}
with the diagonal blocks symmetric.
Abbreviate this as 
\begin{equation}\label{gradFBlockForm}
\grad_{s^T,\theta} F(s_0^T,\theta_0;y_0,0) = \mat{A & B\\B^T & D}.
\end{equation}
Then, with $I$ and $0$ denoting the identity matrix and zero matrix of the indicated sizes, 
\begin{equation}\label{gradFRowColOps}
\mat{I_{\xdim\times\xdim} & 0_{\xdim\times p} \\-B^T A^{-1} & I_{p\times p}} \mat{A & B\\B^T & D} \mat{I_{\xdim\times\xdim} & -A^{-1} B \\0_{p\times\xdim} & I_{p\times p}} =\mat{A & 0_{\xdim\times p}\\0_{p\times\xdim} & D-B^T A^{-1}B}.
\end{equation}
The matrix $A=K''_0(s_0;\theta_0)$ is positive definite, and we recognise $D-B^T A^{-1}B$ as the negative definite matrix $H$ from \eqref{RateFunctionImplicitHessianDefinite}.
In particular, both are non-singular, and hence $\grad_{s^T,\theta} F(s_0^T,\theta_0;y_0,0)$ is non-singular also, as claimed.

\begin{proof}[Proof of \reflemma{F=0ISMLE}]
Consider a solution $s=s_1,\theta=\theta_1$ of $F(s^T,\theta;y,1/n)=0$.
The condition $F_1(s^T,\theta;y)=0$ reduces to the saddlepoint equation \eqref{SESAR}, so that $s_1=\hat{s}_0(\theta,y)$.
Note that
\begin{equation}\label{R'xnFromF2}
\grad_\theta^T R_{x,n}(\theta) = F_2(\hat{s}_0^T(\theta,y),\theta;y,1/n)
\end{equation}
so the condition $F_2(s^T,\theta;y,1/n)=0$ shows that $\theta_1$ is a critical point of $R_{x,n}$.

To complete the proof it suffices to show that, possibly after shrinking $U,V$ and increasing $n_0$, the Hessian $\grad_\theta^T\grad_\theta R_{x,n}(\theta)$ is negative definite for all $\theta\in U,y\in V,n\geq n_0$.
From \eqref{R'xnFromF2} and \eqref{s0hatPGradients} we have
\begin{align}
\grad_\theta^T\grad_\theta R_{x,n}(\theta) &= \grad_\theta \left( F_2(\hat{s}_0^T(\theta,y),\theta;y,1/n) \right)
\notag\\
&= \grad_\theta F_2(\hat{s}_0^T(\theta,y),\theta;y,1/n) 
\notag\\&\quad
- \grad_s^T F_2(\hat{s}_0^T(\theta,y),\theta;y,1/n) K''_0(\hat{s}_0(\theta,y);\theta)^{-1} \grad_s\grad_\theta K_0(\hat{s}_0(\theta,y);\theta)
.
\label{RxnHessian}
\end{align}
We note that 
\begin{equation}\label{RxnHessianepsilon}
h(s,\theta,y,\epsilon) = \grad_\theta F_2(s^T,\theta;y,\epsilon) - \grad_s^T F_2(s^T,\theta;y,\epsilon) K''_0(s;\theta)^{-1} \grad_s\grad_\theta K_0(s;\theta)
\end{equation}
is a continuous function.
Moreover $h(s_0,\theta_0,y_0,0)$ reduces to the negative definite matrix $D-B^T A^{-1} B = H$ from \eqref{RateFunctionImplicitHessianDefinite} and \eqref{gradFRowColOps}.
By continuity, it follows that $h(s,\theta,y,\epsilon)$ has only negative eigenvalues for $s,\theta,y,\epsilon$ in a suitable neighbourhood of $s_0,\theta_0,y_0,0$.
From \eqref{RxnHessian} we have $\grad_\theta^T\grad_\theta R_{x,n}(\theta)=h(\hat{s}_0(\theta,y),\theta,y,1/n)$ which is therefore negative definite for all $\theta,y$ in suitable neighbourhoods and all $n$ large enough, as required.
\end{proof}

The proof of \refthm{LowerOrderSaddlepoint}\ref{item:LOMLE} uses also the fact that $\theta=\hat{\theta}^*_{\MLE\,\mathrm{in}\,U}(x,n)$ and $s=\hat{s}(\theta,x)$ are the solutions of $F(s^T,\theta;y,0)=0$.
This follows from the same proof as \reflemma{F=0ISMLE}, with the relation $\grad_\theta^T \log\hat{L}^*_0(\theta;y)=\hat{F}_2(\hat{s}_0^T(\theta,y),\theta;y,0)$ in place of \eqref{R'xnFromF2}.

\section{Proof of \refthm{BayesianError}}\lbappendix{BayesianErrorProof}

\refthm{BayesianError} contains two separate assertions, about $\pi_{\Theta\,\vert\,U,x}$ and about $\hat{\pi}_{\Theta\,\vert\,U,x}$.
We prove a stronger statement that removes the restriction $y=y_0$ and also applies to $\hat{\pi}^*_{\Theta\,\vert\,U,x}$ from \refthm{LowerOrderSaddlepoint}\ref{item:LOBayesian}.
To state it, define 
\begin{equation}
\hat{\theta}^*_{\MLE\,\mathrm{in}\,U; \, 0}(y) = \argmax_{\theta\in U} \hat{L}^*_0(\theta;y)
\end{equation}
and note that 
\begin{equation}\label{hattheta*xny}
\hat{\theta}^*_{\MLE\,\mathrm{in}\,U}(x,n) = \hat{\theta}^*_{\MLE\,\mathrm{in}\,U; \, 0}(y)
\end{equation}
depends only on $y$ and not $n$; see \eqref{L*L*0}.

\begin{prop}\lbprop{BayesianErrorGeneral}
Under the assumptions of \refthm{BayesianError}, there exist neighbourhoods $U\subset\thetadomain$ of $\theta_0$ and $V\subset \R^{\xdim\times 1}$ of $y_0$ such that, for all $y\in V$,
\begin{align}
&\text{under $\pi_{\Theta\,\vert\,U,x}$, $\hat{\pi}_{\Theta\,\vert\,U,x}$ or $\hat{\pi}^*_{\Theta\,\vert\,U,x}$,}
\notag\\&\qquad
\left( \sqrt{n}\left( \big. \Theta-\theta_{\MLE\,\mathrm{in}\,U}(x,n) \right), \sqrt{n}\bigl( \Theta-\hat{\theta}_{\MLE\,\mathrm{in}\,U}(x,n) \bigr), \sqrt{n}\bigl( \Theta-\hat{\theta}^*_{\MLE\,\mathrm{in}\,U}(x,n) \bigr) \right)
\notag\\&\qquad\qquad
\overset{d}{\to} (Z, Z, Z) \quad\text{as $n\to\infty$ with $y$ fixed}
,
\end{align}
where 
\begin{equation}
Z \sim \mathcal{N}( 0,-H(y)), \qquad H(y) = \grad_\theta^T\grad_\theta\log\hat{L}^*_0(\hat{\theta}^*_{\MLE\,\mathrm{in}\,U; \, 0}(y);y).
\end{equation}
\end{prop}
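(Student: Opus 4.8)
The plan is to prove a local asymptotic normality statement for the rescaled log-likelihood and then transfer it simultaneously to the three posteriors. Write $\ell_n(\theta)$ for one of $\log L_n(\theta;x)$, $\log\hat{L}_n(\theta;x)$, $\log\hat{L}^*(\theta;x)$, and $\theta_n$ for its unique local maximiser in $U$ supplied by \refthm{MLEerror} and \refthm{LowerOrderSaddlepoint}\ref{item:LOMLE}. First I would record, using \eqref{LL*Phats}, \eqref{L*L*0} and \eqref{hatPn}, that in each case $\tfrac1n\ell_n(\theta)$ equals $\log\hat{L}^*_0(\theta;y)$ plus a $\theta$-independent (possibly $n$-dependent) constant plus a term whose first and second $\theta$-derivatives are bounded uniformly over $\theta\in U$, $y\in V$, $n\ge n_0$: for $\hat{L}^*$ this extra term is absent, for $\hat{L}_n$ it is $\tfrac1n\log\hat{P}(\hat{s}_0(\theta;y),\theta)$ controlled via \eqref{s0hatPGradients}, and for $L_n$ it is $\tfrac1n\log P_n(\hat{s}_0(\theta;y),\theta)$ controlled via \refcoro{gradlogPhatsError} together with one further differentiation of the continuously differentiable functions $q_5,q_6$ of \refcoro{ddtlogPError}. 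Consequently $\grad_\theta^T\grad_\theta\bigl(\tfrac1n\ell_n(\theta)\bigr)$ — which, as in the proof of \reflemma{F=0ISMLE}, is a continuous function of $(\hat{s}_0(\theta;y),\theta,y,1/n)$ — converges, uniformly over $\theta\in U$, $y\in V$, to $\grad_\theta^T\grad_\theta\log\hat{L}^*_0(\theta;y)$. After shrinking $U$ and $V$ I may assume (this is exactly what \reflemma{F=0ISMLE} extracts, with the analogous statements for $\hat{L}_n,\hat{L}^*$) that this Hessian has all eigenvalues $\le -2c<0$ throughout $U\times V$ for $n\ge n_0$, that $\theta_n$ stays in a compact subset of $U$, that $\pi$ is continuous and positive on $U$, and that $\hat{\theta}^*_{\MLE\,\mathrm{in}\,U;\,0}(y)$ lies in $U$ for $y\in V$.

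Second, I would rescale via $\theta=\theta_n+u/\sqrt{n}$. Since $\theta_n$ is an interior critical point, a second-order Taylor expansion with Lagrange remainder gives $\ell_n(\theta_n+u/\sqrt{n})-\ell_n(\theta_n)=\tfrac12 u^T\bigl[\grad_\theta^T\grad_\theta(\tfrac1n\ell_n)\bigr](\tilde\theta)\,u$ for some $\tilde\theta$ on the segment from $\theta_n$ to $\theta_n+u/\sqrt{n}$. The uniform eigenvalue bound yields $\ell_n(\theta_n+u/\sqrt{n})-\ell_n(\theta_n)\le -c|u|^2$ whenever $\theta_n+u/\sqrt{n}\in U$; and since $\theta_n\to\theta_\infty(y):=\hat{\theta}^*_{\MLE\,\mathrm{in}\,U;\,0}(y)$ (by \refthm{LowerOrderSaddlepoint}\ref{item:LOMLE} and \eqref{hattheta*xny}) and the Hessian converges uniformly, for each fixed $u$ we get $\ell_n(\theta_n+u/\sqrt{n})-\ell_n(\theta_n)\to\tfrac12 u^T H(y)u$ with $H(y)=\grad_\theta^T\grad_\theta\log\hat{L}^*_0(\theta_\infty(y);y)$, while $\pi(\theta_n+u/\sqrt{n})\to\pi(\theta_\infty(y))\in(0,\infty)$ and $\pi$ is bounded on $U$. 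The density of $u=\sqrt{n}(\Theta-\theta_n)$ under the relevant restricted posterior is proportional, with constant of proportionality independent of $u$, to $\exp\bigl(\ell_n(\theta_n+u/\sqrt{n})-\ell_n(\theta_n)\bigr)\,\pi(\theta_n+u/\sqrt{n})\,\indicatorofset{U}(\theta_n+u/\sqrt{n})$, which converges pointwise to a constant multiple of $\exp(\tfrac12 u^T H(y)u)$ and is dominated by the integrable function $(\sup_U\pi)e^{-c|u|^2}$. Dominated convergence shows the normalising constants converge to a strictly positive limit, and Scheff\'e's lemma then gives $L^1$ convergence of the normalised densities, so $\sqrt{n}(\Theta-\theta_n)\convergesd\mathcal{N}(0,-H(y))$ under that posterior.

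Third, I would pass from the centering $\theta_n$ to the three common centerings at once. By \refthm{MLEerror} the quantities $\theta_{\MLE\,\mathrm{in}\,U}(x,n)$, $\hat{\theta}_{\MLE\,\mathrm{in}\,U}(x,n)$ differ by $O(1/n^2)$, and by \refthm{LowerOrderSaddlepoint}\ref{item:LOMLE} $\hat{\theta}^*_{\MLE\,\mathrm{in}\,U}(x,n)$ differs from $\theta_{\MLE\,\mathrm{in}\,U}(x,n)$ by $O(1/n)$; multiplying by $\sqrt{n}$, all these differences are deterministic and tend to $0$. Hence, under each of $\pi_{\Theta\,\vert\,U,x}$, $\hat{\pi}_{\Theta\,\vert\,U,x}$, $\hat{\pi}^*_{\Theta\,\vert\,U,x}$, the three rescaled and recentered vectors $\sqrt{n}(\Theta-\theta_{\MLE\,\mathrm{in}\,U})$, $\sqrt{n}(\Theta-\hat{\theta}_{\MLE\,\mathrm{in}\,U})$, $\sqrt{n}(\Theta-\hat{\theta}^*_{\MLE\,\mathrm{in}\,U})$ differ from the single vector $\sqrt{n}(\Theta-\theta_n)$ by deterministic $o(1)$ terms, so the marginal limit above upgrades immediately to joint convergence to $(Z,Z,Z)$, $Z\sim\mathcal{N}(0,-H(y))$. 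Finally, taking $y=y_0$ forces $\theta_\infty(y_0)=\theta_0$ by \eqref{RateFunctionImplicitCriticalPoint}--\eqref{RateFunctionImplicitHessianDefinite} and uniqueness in $U$, and then \eqref{loghatL*Hessian} identifies $H(y_0)$ with the matrix $H$ of \eqref{RateFunctionImplicitHessianDefinite}, recovering \refthm{BayesianError} (and \refthm{LowerOrderSaddlepoint}\ref{item:LOBayesian}).

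The main obstacle is the uniform control, over $\theta\in U$, $y\in V$ and $n\ge n_0$, of the \emph{second} $\theta$-derivatives of $\tfrac1n\log L_n(\theta;x)$: \refcoro{gradlogPhatsError} only asserts continuous differentiability of the gradient error, so one must differentiate the error functions $q_5,q_6$ of \refcoro{ddtlogPError} once more and invoke a compactness argument to get the uniform bound and the uniform Hessian convergence that make the Laplace approximation, and in particular the Gaussian dominating function, go through. I expect this to be routine given that \refprop{PddtPError} already produces continuously differentiable $q_1,q_2$ (hence continuously differentiable $q_5,q_6$), but it is the step that has to be argued rather than quoted verbatim.
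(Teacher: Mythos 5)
Your proof is correct and follows essentially the same route as the paper's: decompose $\log\tilde{L}_n=n\log\hat{L}^*_0+\log Q_n$, Taylor-expand with Lagrange remainder about the relevant local maximiser, dominate by a Gaussian via the uniform negative-definiteness of $\grad_\theta^T\grad_\theta\log\hat{L}^*_0$ on $U\times V$, pass to the limit by dominated convergence, and absorb the $O(1/n)$--$O(1/n^2)$ differences between the three centerings after the $\sqrt{n}$ rescaling. The ``obstacle'' you flag is not actually a gap: since $q_3$ (equivalently $q_5,q_6$) is already continuously differentiable, one further $\theta$-differentiation of the gradient identity is within the stated regularity, and this is exactly how the paper obtains the uniform bound on $\grad_\theta^T\grad_\theta\log Q_n$ by citing \refcoro{gradlogPhatsError}.
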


Note that $H(y)$ generalises the Hessian $H$ from \eqref{RateFunctionImplicitHessianDefinite}, which corresponds to $H(y_0)$.

\begin{proof}
Start with $U$ and $V$ satisfying the conclusions of Theorems~\ref{T:MLEerror} and \ref{T:LowerOrderSaddlepoint}\ref{item:LOMLE}.

Let $f_\Theta(\theta)$ denote the prior density for $\Theta$.
Since $f_\Theta(\theta_0)>0$ and $f_\Theta$ is continuous, we can shrink $U$ if necessary so that $\log f_\Theta(\theta)$ is bounded and continuous over $\theta\in U$.

Choose $\tilde{\pi}_n$ to be one of the posteriors $\pi_{\Theta\,\vert\,U,x}$, $\hat{\pi}_{\Theta\,\vert\,U,x}$ or $\hat{\pi}^*_{\Theta\,\vert\,U,x}$, let $\tilde{L}_n(\theta;x)$ be the corresponding choice out of $L_n(\theta;x), \hat{L}_n(\theta;x)$ or $\hat{L}^*_n(\theta;x)$, respectively, and let $\tilde{\theta}(x,n)$ denote the corresponding choice out of $\theta_{\MLE\,\mathrm{in}\,U}(x,n)$, $\hat{\theta}_{\MLE\,\mathrm{in}\,U}(x,n)$ or $\hat{\theta}^*_{\MLE\,\mathrm{in}\,U}(x,n)$, respectively.
Let $Q_n(\theta;y)$ be the corresponding choice out of the functions $P_n(\hat{s}_0(\theta,y),\theta)$, $\hat{P}_n(\hat{s}_0(\theta,y),\theta)$ or the constant function 1, respectively, so that
\begin{equation}\label{tildeLL*Q}
\log\tilde{L}_n(\theta;x) = n\log\hat{L}^*_0(\theta;y) + \log Q_n(\theta;y).
\end{equation}

Note from \refthm{MLEerror}, \refthm{LowerOrderSaddlepoint}\ref{item:LOMLE} and \eqref{hattheta*xny} that
\begin{equation}\label{tildethetaLimit}
\lim_{n\to\infty} \tilde{\theta}(x,n) = \hat{\theta}^*_{\MLE\,\mathrm{in}\,U; \, 0}(y)
\end{equation}
for fixed $y\in V$ and all three choices for $\tilde{\theta}$.

The Hessian matrix $H(y_0)=\grad_\theta^T\grad_\theta\log\hat{L}_0^*(\theta_0;y_0)$ is negative definite by \eqref{RateFunctionImplicitHessianDefinite}, so continuity allows us to shrink $U$ if necessary and choose $\delta>0$ small enough to ensure that 
\begin{equation}\label{L0*HessianBound}
v^T \grad_\theta^T\grad_\theta\log\hat{L}_0^*(\theta;y) v \leq -\delta \abs{v}^2
\end{equation}
for all $\theta\in U$, $y\in V$ and $v\in\R^{p\times 1}$.

Shrinking $U$ and $V$ further if necessary, we may assume that $U$ is a bounded convex neighbourhood and that the Hessians $\grad_\theta^T\grad_\theta\log Q_n$ and $\grad_\theta^T\grad_\theta\log\hat{L}^*_0$ are bounded and uniformly continuous over $\theta\in U$ and $n$ sufficiently large.
This is evident if $Q_n(\theta;y)$ is the constant 1 or the function $\hat{P}_n(\hat{s}_0(\theta,y),\theta)$ since both have gradients that do not depend on $n$ and are continuously differentiable functions of $\theta$.
For $P_n(\hat{s}_0(\theta,y),\theta)$ the statement follows from \refcoro{gradlogPhatsError}.

Let $\tilde{f}_{Z_n}(z)$ denote the posterior density function for $Z_n=\sqrt{n}(\Theta-\tilde{\theta}(x,n))$ under $\tilde{\pi}_n$.
By construction, $\tilde{f}_{Z_n}$ is chosen to have the form
\begin{equation}
c_n \tilde{L}_n\bigl( \tilde{\theta}(x,n)+z/\sqrt{n};x \bigr) f_\Theta\bigl( \tilde{\theta}(x,n)+z/\sqrt{n} \bigr) \indicator{z\in\mathrm{supp}Z_n},
\end{equation}
provided it is possible to choose $c_n=c_n(y,n)$ to make $\int \tilde{f}_{Z_n}(z)dz =1$.

Apply Lagrange's form of the Taylor series remainder term, 
\begin{equation}\label{LagrangeFormg}
g(1) = g(0) + g'(0) + \int_0^1 (1-u) g''(u)du,
\end{equation}
with $g(u) = \log\tilde{L}_n(\tilde{\theta}(x,n)+uz/\sqrt{n};x)$.
By construction, $\tilde{\theta}(x,n)$ is a critical point of $\theta\mapsto\tilde{L}_n(\theta;x)$, so that $g'(0)=0$, while 
\begin{equation}\label{g''zTHz}
g''(u) = n^{-1} z^T \grad_\theta^T\grad_\theta\log\tilde{L}_n\left( \tilde{\theta}(x,n)+\frac{z}{\sqrt{n}};x \right) z.
\end{equation}
Combining \eqref{tildeLL*Q} and \eqref{LagrangeFormg}--\eqref{g''zTHz}, we compute
\begin{align}
&\log\tilde{L}_n\left( \tilde{\theta}(x,n)+\frac{z}{\sqrt{n}};x \right)
\notag\\&\quad
= \log\tilde{L}_n\bigl( \tilde{\theta}(x,n);x \bigr) 
+ \int_0^1 (1-u) z^T \grad_\theta^T\grad_\theta\log \hat{L}^*_0\left( \tilde{\theta}(x,n)+\frac{uz}{\sqrt{n}};y \right) z \, du
\notag\\&\qquad
+ \frac{1}{n} \int_0^1 (1-u) z^T \grad_\theta^T\grad_\theta\log Q_n\left( \tilde{\theta}(x,n)+\frac{uz}{\sqrt{n}};y \right) z \, du
.
\label{logtildeLnFormula}
\end{align}
In the right-hand side of \eqref{logtildeLnFormula}, the first term is constant with respect to $z$ and does not affect $\tilde{f}_{Z_n}$.
By \eqref{tildethetaLimit} and uniform continuity, the second and third term converge to $\frac{1}{2}z^T H(y) z$ and $0$, respectively.
Since $\tilde{\theta}(x,n)$ converges to an interior point of $U$, it follows that $\indicator{z\in\mathrm{supp}Z_n}$ and $f_\Theta\bigl( \tilde{\theta}(x,n)+z/\sqrt{n} \bigr)$ converge pointwise to 1 and $f_\Theta(\hat{\theta}^*_{\MLE\,\mathrm{in}\,U; \, 0}(y))$, respectively.
Thus
\begin{equation}
\tilde{L}_n\bigl( \tilde{\theta}(x,n)+z/\sqrt{n};x \bigr) f_\Theta\bigl( \tilde{\theta}(x,n)+z/\sqrt{n} \bigr) \indicator{z\in\mathrm{supp}Z_n} / \tilde{L}_n\bigl( \tilde{\theta}(x,n);x \bigr)
\end{equation} 
converges pointwise to $c(y)\exp\left( \frac{1}{2}z^T H(y) z \right)$, while the bound \eqref{L0*HessianBound} and the boundedness of $f_\Theta$ and $\grad_\theta^T\grad_\theta\log Q_n$ show that it is bounded by $C\exp\left( (-\frac{1}{2}\delta+C/n)\abs{z}^2 \right)$.
By the Dominated Convergence Theorem, it follows that $\tilde{f}_{Z_n}(z)$ converges to the limiting density $c'\exp\left( \frac{1}{2} z^T H(y) z \right)$ and $Z_n$ converges under $\tilde{\pi}_n$ to the corresponding distribution, which is $\mathcal{N}(0,-H^{-1})$ as claimed.

Finally the joint convergence follows because, by Theorems~\ref{T:MLEerror} and \ref{T:LowerOrderSaddlepoint}\ref{item:LOMLE}, the differences between the three MLEs are $O(1/n)$ or $O(1/n^2)$ and are negligible compared to the $\sqrt{n}$ scaling.
\end{proof}

\begin{proof}[Proof of Theorems~\ref{T:BayesianError} and \ref{T:LowerOrderSaddlepoint}\ref{item:LOBayesian}]
These are the special case $y=y_0$ from \refprop{BayesianErrorGeneral}.
We note that both $\hat{\theta}^*_{\MLE\,\mathrm{in}\,U}(x,n)$ and $\hat{\theta}^*_{\MLE\,\mathrm{in}\,U; \, 0}(y)$ reduce to $\theta_0$ when $y=y_0$, so that $\grad_\theta^T\grad_\theta\log\hat{L}^*_0(\hat{\theta}^*_{\MLE\,\mathrm{in}\,U; \, 0}(y_0);y_0)$ reduces to the matrix $H$ from \eqref{RateFunctionImplicitHessianDefinite}.
\end{proof}

\section{Proof of Theorems~\ref{T:SamplingMLE}--\ref{T:SamplingMLEWellSpecified}}\lbappendix{SamplingMLEProof}

\refthm{SamplingMLE} follows by an application of the delta method to the function $G$ from \refsubsect{MLEerrorProof}.
Note that in the setup of \refthm{SamplingMLE}, $x$ and $y$ from \eqref{SAR} are replaced by $\chi_n$ and $\tfrac{1}{n}\chi_n$ respectively.
Write 
\begin{equation}
\bar{\zeta}_n = \frac{1}{n}\chi_n
\end{equation}
so that \eqref{chinCLT} is the assertion
\begin{equation}\label{zetanCLT}
\sqrt{n}\left( \bar{\zeta}_n - y_0 \right) \overset{d}{\longrightarrow} \mathcal{N}(0,\Sigma).
\end{equation}

\begin{proof}[Proof of Theorems~\ref{T:SamplingMLE} and \ref{T:LowerOrderSaddlepoint}\ref{item:LOSampling}]
The assumptions give a neighbourhood $V$ of $y_0$ and $n_0\in\N$ such that $\hat{\theta}_{\MLE\,\mathrm{in}\,U}(x,n)$ and $\theta_{\MLE\,\mathrm{in}\,U}(x,n)$ exist for all $y\in V$ and $n\geq n_0$.
Shrinking $U,V$ and increasing $n_0$ if necessary, we can assume that the constructions involving $G$ from the proof of \refthm{MLEerror} in \refsubsect{MLEerrorProof}, including \reflemma{F=0ISMLE}, apply for $y\in V,n\geq n_0$.
(Shrinking $U$ will not affect the values $\hat{\theta}_{\MLE\,\mathrm{in}\,U}(x,n)$ or $\theta_{\MLE\,\mathrm{in}\,U}(x,n)$ provided they still lie within $U$.)
For $n\geq n_0$, define the event
\begin{equation}
E_n = \set{\bar{\zeta}_n\in V} = \set{\frac{1}{n}\chi_n\in V}.
\end{equation}
The assumption \eqref{zetanCLT} implies that $\bar{\zeta}_n$ converges in probability to $y_0$, so $\P(E_n)\to 1$ as $n\to\infty$.
On the event $E_n$, \reflemma{F=0ISMLE} gives 
\begin{align}
\mat{\hat{s}^T\bigl( \theta_{\MLE\,\mathrm{in}\,U}(\chi_n,n), \chi_n \bigr) \\ \theta_{\MLE\,\mathrm{in}\,U}(\chi_n,n)} = G(\bar{\zeta}_n,1/n)
. 
\end{align}
Since $G$ is continuously differentiable,
\begin{align}
&\sqrt{n} \mat{\hat{s}^T\bigl( \theta_{\MLE\,\mathrm{in}\,U}(\chi_n,n), \chi_n \bigr) - s_0^T \\ \theta_{\MLE\,\mathrm{in}\,U}(\chi_n,n) - \theta_0} 
\notag\\&\quad
= \sqrt{n}\left( G(\bar{\zeta}_n,1/n) - G(y_0,0) \right)
\notag\\&\quad
= \sqrt{n} \left( \grad_{y,\epsilon} G(y_0,0) \mat{\bar{\zeta}_n - y_0 \\ 1/n} + o_\P\left( \big. \abs{\bar{\zeta}_n-y_0} + 1/n \right) \right)
\notag\\&\quad
= \grad_y G(y_0,0) \left[ \big. \sqrt{n} \left( \bar{\zeta}_n - y_0 \right) \right] + O(1/\sqrt{n}) + o_\P(1)
\end{align}
where $o_\P(B_n)$ denotes a random variable $A_n$ such that $A_n/B_n$ converges to 0 in probability as $n\to\infty$.
Restricting our attention to the second sub-block of this vector,
\begin{align}
&\sqrt{n} \left( \theta_{\MLE\,\mathrm{in}\,U}(\chi_n,n) - \theta_0 \right) 
= \mat{0_{p\times\xdim} & I_{p\times p}} \grad_y G(y_0,0) \left[ \big. \sqrt{n} \left( \bar{\zeta}_n - y_0 \right) \right] + o_\P(1)
.
\end{align}
In particular, $\sqrt{n}\left( \theta_{\MLE\,\mathrm{in}\,U}(\chi_n,n) - \theta_0 \right)$ will converge to a limiting distribution $Z$ that is asymptotically normal with mean 0.

To calculate the covariance matrix of $Z$, note that
\begin{equation}
\begin{aligned}
\grad_y G(y_0,0) &= \grad_{y,\epsilon} G(y_0,0) \mat{I_{\xdim\times\xdim}\\ 0_{1\times\xdim}}
\\
&= \left[ \Big. - (\grad_{s^T,\theta} F(s_0^T,\theta_0;y_0,0))^{-1} \grad_{y,\epsilon}F(s_0^T,\theta_0;y_0,0) \right] \mat{I_{\xdim\times\xdim}\\ 0_{1\times\xdim}}
.
\end{aligned}
\end{equation}
We compute
\begin{equation}
\grad_{y,\epsilon} F(s_0^T,\theta_0;y_0,0) \mat{I_{\xdim\times\xdim}\\ 0_{1\times\xdim}} = - \mat{I_{\xdim\times\xdim}\\ 0_{p\times\xdim}}
\end{equation}
and recalling \eqref{gradFBlockForm}--\eqref{gradFRowColOps} in which $D-B^T A^{-1} B=H$, 
\begin{align}
\mat{0 & I} \grad_y G(y_0,0) &= - \mat{0 & I} \left( \grad_{s^T,\theta} F \right)^{-1} \left[ \left( \grad_{y,\epsilon} F \right) \mat{I\\ 0} \right]
\notag\\&
= - \mat{0 & I} \left[ \mat{I & -A^{-1}B\\ 0 & I} \mat{A^{-1} & 0 \\ 0 & H^{-1}} \mat{I & 0\\ -B^T A^{-1} & I} \right] \left[ - \mat{I\\ 0} \right]
\notag\\&
= \mat{0 & I} \mat{A^{-1} & 0 \\ 0 & H^{-1}} \mat{I \\ -B^T A^{-1}} = - H^{-1} B^T A^{-1}
.
\label{gradGpartial}
\end{align}
From \eqref{zetanCLT} and \eqref{gradGpartial}, the covariance matrix of $Z$ is $H^{-1} B^T A^{-1} \Sigma \left( H^{-1} B^T A^{-1} \right)^T$.  
Since $H$ and $A$ are symmetric, this reduces to the expression in part~\ref{item:SamplingGeneral}.

To handle the joint convergence in Theorems~\ref{T:SamplingMLE}\ref{item:SamplingGeneral} and \ref{T:LowerOrderSaddlepoint}\ref{item:LOSampling}, we again observe that the differences between the three MLEs are negligible in the $\sqrt{n}$ scaling of \refthm{SamplingMLE}.

Finally \refthm{SamplingMLE}\ref{item:SamplingWellSpecified} follows from the observation, discussed in \refsubsubsect{WellSpecified}, that $s_0=0$ gives $D=0$ and $H=-B^T A^{-1} B$.
If in addition $\Sigma=A$ then $H^{-1} B^T A^{-1} \Sigma A^{-1} B H^{-1}$ simplifies to $-H^{-1}$, as claimed.
\end{proof}

\begin{proof}[Proof of \refthm{SamplingMLEWellSpecified}]
Part~\ref{item:HReduces} follows from the observations that $\grad_\theta^T \grad_\theta K_0(0;\theta_0)$ vanishes and $K_0''(0;\theta_0)$ is positive definite.

For part~\ref{item:ConsistentAN}, note by the Central Limit Theorem that $\chi_n=X_{\theta_0}$ satisfies \eqref{chinCLT} with $y_0=K_0'(0;\theta_0)$, $\Sigma=K_0''(0;\theta_0)$, so the consistency and asymptotic normality follow from \refthm{SamplingMLE}\ref{item:SamplingWellSpecified}.
Setting $s_0=0$, we can apply \refthm{MLEerror}, with $H$ negative definite by part~\ref{item:HReduces}.
Let $V$ be the neighbourhood of $y_0$ given in \refthm{MLEerror}.
By the Law of Large Numbers, $X_{\theta_0}/n\overset{a.s}{\to} y_0$ as $n\to\infty$, and in particular $\P(X_{\theta_0}/n \in V)\to 1$, as in the discussion following \refthm{MLEerror}.
Whenever this event occurs, we can use the bound from \refthm{MLEerror}, and we conclude that $\abs{\hat{\theta}_{\MLE\,\mathrm{in}\,U}(X_{\theta_0}) - \theta_{\MLE\,\mathrm{in}\,U}(X_{\theta_0})} = O_\P(1/n^2)$.

Part~\ref{item:HEstimator} follows by continuity.
Specifically, the Hessians mentioned in part~\ref{item:HEstimator} coincide with the continuous function $h(s,\theta,y,\epsilon)$ from the proof of \reflemma{F=0ISMLE}, evaluated at $\theta=\Theta$, $y=X_{\theta_0}/n$, $s=\hat{s}_0(\Theta,X_{\theta_0}/n)$, and $\epsilon=1/n$; the analogue of the function $h$ with $F_2$ replaced by $\hat{F}_2$; and the function $h$ evaluated instead at $\epsilon=0$, respectively.
In all cases the arguments converge as $n\to\infty$, and the limiting value $h(s_0,\theta_0,y_0,0)$ (with $s_0=0$) reduces to $H$.
\end{proof}

\section{Proofs of Theorems~\ref{T:MLEerrorPartiallyIdentifiable}--\ref{T:MLEerrorNormalApprox}}\lbappendix{MLEePIProof}

The proofs of Theorems~\ref{T:MLEerrorPartiallyIdentifiable}--\ref{T:MLEerrorNormalApprox} use a different, and more complicated, scaling compared to the proof of \refthm{MLEerror}.
Extending \eqref{PIChangeOfVariables}, we set
\begin{equation}\label{PIChangeOfVariablesProof}
x = \delta^{-2} y, \quad 
y = K_0'\left( 0;\smallmat{\omega'\\ \nu_0} \right) + \delta \xi, \quad
\theta = \mat{\omega \\ \nu} = \mat{\omega' + \delta w \\ \nu}
,
\end{equation}
where $\delta$ is a scaling parameter.
We will be interested in the case $\delta=1/\sqrt{n}$, but $\delta$ can take any sufficiently small value.

In \eqref{PIChangeOfVariables}, the scaling variable $\xi$ expresses the lower-order ``fluctuations'' of the observed value, centred around a mean parametrised by $\omega'$.
(For instance, if we set $x=X_\theta$ with $\theta=\smallmat{\omega'\\ \nu_0}$, the corresponding $\xi$ values will be asymptotically normal as $n\to\infty$, with no further rescaling needed.)
Correspondingly, in $\omega=\omega'+\delta w$ from \eqref{PIChangeOfVariablesProof}, the correction term $\delta w$ lets us encode the lower-order effect of $\xi$ on inference for the parameter $\omega$.

Define
\begin{equation}
A = K_0''(0;\theta_0), \qquad J = A^{-1} - A^{-1} B_\omega (B_\omega^T A^{-1} B_\omega)^{-1} B_\omega^T A^{-1},
\end{equation}
with \eqref{K''PosDef} and the assumption $\rank(B_\omega)=p_1$ justifying the existence of the matrix $J$, and
\begin{equation}\label{w0Formula}
w_0 = (B_\omega^T A^{-1} B_\omega)^{-1} B_\omega^T A^{-1} \xi_0.
\end{equation}
Then in the condition from Theorems~\ref{T:MLEerrorPartiallyIdentifiable}--\ref{T:MLEerrorNormalApprox} for $(w_0,\nu_0)$ to be a non-degenerate local MLE for observing $\Xi_{w,\nu}=\xi_0$, the value $w_0$ must be the one given by \eqref{w0Formula}.
Using this fixed value, we can express the MLE condition from Theorems~\ref{T:MLEerrorPartiallyIdentifiable}--\ref{T:MLEerrorNormalApprox} explicitly as the requirement that
\begin{equation}\label{xi0Constraint}
\xi_0^T J \frac{\partial K_0''}{\partial\nu_j}(0;\theta_0) J \xi_0 = \trace\left( A^{-1} \frac{\partial K_0''}{\partial\nu_j}(0;\theta_0) \right)
\quad\text{for }j=1,\dotsc,p_2
,
\end{equation}
and that the $p_2\times p_2$ symmetric matrix $E$ with $i,j$ entry
\begin{multline}\label{EijFormula}
E_{ij} = \xi_0^T J\left( \frac{1}{2}\frac{\partial^2 K_0''}{\partial\nu_i\partial\nu_j}(0;\theta_0) - \frac{\partial K_0''}{\partial\nu_i}(0;\theta_0) J \frac{\partial K_0''}{\partial\nu_j}(0;\theta_0) \right) J \xi_0 
\\
- \frac{1}{2} \trace\left( A^{-1} \frac{\partial^2 K_0''}{\partial\nu_i\partial\nu_j}(0;\theta_0) - A^{-2} \frac{\partial K_0''}{\partial\nu_i}(0;\theta_0)\frac{\partial K_0''}{\partial\nu_j}(0;\theta_0) \right)
\end{multline}
should be negative definite.

In the proof of \refthm{MLEerrorPartiallyIdentifiable}, \eqref{xi0Constraint}--\eqref{EijFormula} will arise naturally as first- and second-derivative conditions analogous to \eqref{RateFunctionImplicitCriticalPoint}--\eqref{RateFunctionImplicitHessianDefinite}.
Unlike in \refthm{MLEerror}, these conditions involve the lower-order terms in the saddlepoint approximation, and the derivatives of the log-determinants in $\log\hat{P}_n(\theta,s)$ lead to the traces in \eqref{xi0Constraint}--\eqref{EijFormula}.
The equivalence of \eqref{xi0Constraint}--\eqref{EijFormula} to the assertion about $\Xi$ can be verified by direct calculation or by tracing through the argument that follows with $X$ replaced by $\Xi$ and $n=1, \delta=1$.

\begin{proof}[Proof of \refthm{MLEerrorPartiallyIdentifiable}]
Throughout the proof we assume that \eqref{xi0Constraint}--\eqref{EijFormula} hold and the change of variables \eqref{PIChangeOfVariablesProof} applies.

As in the proof of \refthm{MLEerror}, we will produce a function $F$ such that a solution of $F=0$ corresponds to a critical point of $\theta\mapsto\log L_n(\theta;x)$.
In the proof of \refthm{MLEerror}, we could obtain $F$ by applying a single rescaling to the values of the log-likelihood function, because $\tfrac{1}{n}\log L_n$ converged as $n\to\infty$ to a well-behaved function with non-singular Hessian.
In this proof, by contrast, we will obtain $F$ from the asymmetrically rescaled gradients $\sqrt{n}\grad_\omega\log L_n$ and $\grad_\nu\log L_n$.
Another interpretation of this scaling is that we consider the gradients of $\log L_n$ with respect to $w$ and $\nu$, rather than $\omega$ and $\nu$, with no scaling factor applied to the values of $\log L_n$.

We note in advance that, because of the scaling of $\omega$ in \eqref{PIChangeOfVariablesProof}, the Implicit Function Theorem will tell us that $F=0$ has a solution that is unique when $w$ is constrained to lie in some fixed neighbourhood of $w_0$.
In terms of $\omega$, this amounts to uniqueness within a neighbourhood of size $\delta$ only, so later in the proof we will show that the claimed MLE is actually a maximiser within an $\omega$-neighbourhood of non-vanishing size.

Make the further change of variables
\begin{equation}\label{sigma0Formula}
s = \delta\sigma \quad\text{and define}\quad \sigma_0 = \xi_0^T J .
\end{equation}
We will define the function $F(\sigma^T,w,\nu;\omega',\xi,\delta)$ so that $F(\sigma_0^T,w_0,\nu_0;\omega_0,\xi_0,0)=0$ and, for $\delta=1/\sqrt{n}$, we have $F=0$ exactly when $\theta=\smallmat{\omega' + \delta w\\ \nu}$ is the MLE for the observed value $x=\delta^{-2} K_0'(0;\smallmat{\omega'\\ \nu_0}) + \delta^{-1}\xi$.

For convenience, write $\theta'=\smallmat{\omega'\\ \nu_0}$.
For $\delta\neq 0$, define the functions
\begin{equation}\label{F1FomegaFnuFFormula}
\begin{aligned}
&F_1(\sigma^T,w,\nu;\omega',\xi,\delta) = \delta^{-1}\left( K_0'(\delta\sigma; \theta)-K_0'( 0;\theta') \right)-\xi
,
\\
&F_\omega(\sigma^T,w,\nu;\omega',\xi,\delta) 
\\&\quad
= \delta^{-1}\grad_\omega^T K_0\left( \delta\sigma; \theta \right) 
+ \delta q_{3,\omega}\left( \theta,K_0'(0;\theta')+ \delta\xi,\delta^2 \right)^T
\\&\qquad
+ \delta\grad_\omega^T\log\hat{P}\left( \delta\sigma, \theta \right) 
\\&\qquad
- \delta \left( \grad_s\grad_\omega K_0\left( \delta\sigma;\theta \right) \right)^T K_0''\left( \delta\sigma;\theta \right)^{-1}\grad_s\log\hat{P}\left( \delta\sigma,\theta \right) 
,
\\
& F_\nu(\sigma^T,w,\nu;\omega',\xi,\delta)
\\&\quad
= \delta^{-2}\grad_\nu^T K_0(\delta\sigma;\theta) 
+ q_{3,\nu}\left( \theta,K_0'(0;\theta') + \delta\xi,\delta^2 \right)^T
\\&\qquad
+ \grad_\nu^T\log\hat{P}(\delta\sigma,\theta) 
\\&\qquad
- (\grad_s\grad_\nu K_0(\delta\sigma;\theta))^T K_0''(\delta\sigma;\theta)^{-1}\grad_s\log\hat{P}(\delta\sigma,\theta) 
,
\\
& F(\sigma^T,w,\nu;\omega',\xi,\delta) = \mat{F_1(\sigma^T,w,\nu;\omega',\xi,\delta) \\ F_\omega(\sigma^T,w,\nu;\omega',\xi,\delta) \\ F_\nu(\sigma^T,w,\nu;\omega',\xi,\delta)}
,
\end{aligned}
\end{equation}
where the row vector $q_3$ from \refcoro{gradlogPhatsError} is written in block form as $q_3 = \mat{q_{3,\omega} & q_{3,\nu}}$, and where we have used the relation $\theta=\smallmat{\omega'+\delta w\\ \nu}$ from \eqref{PIChangeOfVariablesProof} throughout.

Since $K_0'(0;\theta)$ does not depend on $\nu$ by assumption, we have $K_0'(0;\theta')=K_0'\left( 0;\smallmat{\omega'\\ \nu_0}\right )=K_0'\left( 0;\smallmat{\omega'\\ \nu}\right )$ and we can write
\begin{align}
F_1(\sigma^T,w,\nu;\omega',\xi,\delta) 
&
= \delta^{-1}\left( K_0'(\delta\sigma; \theta)-K_0'(0;\theta) + K_0'(0;\smallmat{\omega'+\delta w\\ \nu})-K_0'( 0;\smallmat{\omega'\\ \nu}) \right)-\xi
\notag\\&
= \int_0^1 \left( K_0''(u\delta\sigma;\theta) \sigma^T + \grad_s\grad_\omega K_0(0;\smallmat{\omega'+u\delta w\\ \nu})w \right) du -\xi 
.
\end{align}
Applying \reflemma{DiffUnderInt} to this representation and using \refcoro{gradlogPhatsError}, we see that $F_1$ can be extended to be a continuously differentiable function of all its variables in some neighbourhood around $\delta=0$.
Recalling that $\grad_\omega K_0(0;\theta)=0$ for all $\theta$, we can similarly rewrite the first term in $F_\omega$ as
\begin{align}
\delta^{-1}\grad_\omega^T K_0(\delta\sigma;\theta) &= \delta^{-1}\left( \grad_\omega^T K_0(\delta\sigma;\theta) - \grad_\omega^T K_0(0;\theta) \right)
\notag\\&
= \int_0^1 \left( \grad_s\grad_\omega K_0(u\delta\sigma;\theta) \right)^T \sigma^T du 
.
\end{align}
For $F_\nu$, $\grad_\nu K_0(0;\theta)$ vanishes, and now also $\grad_s\grad_\nu K_0(0;\theta) = \grad_\nu K_0'(0;\theta)$ vanishes, because by assumption $K_0'(0;\theta)$ does not depend on $\nu$.
So the first term in $F_\nu$ can be rewritten as
\begin{align}
\delta^{-2}\grad_\nu^T K_0(\delta\sigma;\theta) 
&
= \delta^{-2}\left( \grad_\nu^T K_0(\delta\sigma;\theta) - \grad_\nu^T K_0(0;\theta) - \delta \grad_s^T\grad_\nu^T K_0(0;\theta)\sigma^T \right)
\notag\\&
= \int_0^1 (1-u) \sum_{k,\ell=1}^m \sigma_k\sigma_\ell \grad_\nu^T \frac{\partial^2 K_0}{\partial s_k\partial s_\ell}(u\delta\sigma;\theta) du
.
\label{FnuTermRewritten}
\end{align}
In particular, $F_1,F_\omega,F_\nu,F$ can all be extended as continuously differentiable functions in a neighbourhood of $\delta=0$.

If we set $\delta=0$, we obtain
\begin{equation}\label{F1Fomegadelta0}
\begin{aligned}
F_1(\sigma^T,w,\nu;\omega',\xi,0) &= K_0''(0;\smallmat{\omega'\\ \nu}) \sigma^T + \grad_s\grad_\omega K_0(0;\smallmat{\omega'\\ \nu})w - \xi,
\\
F_\omega(\sigma^T,w,\nu;\omega',\xi,0) &= \left( \grad_s\grad_\omega K_0(0;\smallmat{\omega'\\ \nu}) \right)^T \sigma^T  
\end{aligned}
\end{equation}
and $F_\nu(\sigma^T,w,\nu;\omega',\xi,0)$ is the $p_2\times 1$ column vector whose $i^\text{th}$ entry is 
\begin{align}
e_i^T F_\nu(\sigma^T,w,\nu;\omega',\xi,0) &= \frac{1}{2} \sum_{k,\ell=1}^m \sigma_k\sigma_\ell \frac{\partial^3 K_0}{\partial\nu_i\partial s_k\partial s_\ell}(0;\smallmat{\omega'\\ \nu}) - \frac{1}{2}\frac{\partial}{\partial\nu_i} \log\det(K_0''(0;\smallmat{\omega'\\ \nu}))
\notag\\&
= \frac{1}{2} \sigma \frac{\partial K_0''}{\partial\nu_i}(0;\smallmat{\omega'\\ \nu}) \sigma^T - \frac{1}{2}\trace\left( K_0''(0;\smallmat{\omega'\\ \nu})^{-1} \frac{\partial K_0''}{\partial\nu_i}(0;\smallmat{\omega'\\ \nu}) \right)
\end{align}
by \eqref{loghatPDerivative}; here $e_i$ means the $i^\text{th}$ standard basis vector, interpreted as a column vector of the appropriate size.

Let $\tilde{A}$, $\tilde{B}_\omega$, $\tilde{J}$ denote the analogues of $A$, $B_\omega$, $J$ in which $K_0$ and its derivatives are evaluated at $(0;\smallmat{\omega' \\ \nu})$ rather than at $(0;\theta_0=\smallmat{\omega_0\\ \nu_0})$; thus $\tilde{A}$, $\tilde{B}_\omega$, $\tilde{J}$ reduce to $A$, $B_\omega$, $J$ when $\nu=\nu_0$ and $\omega'=\omega_0$.
By continuity, we may find a neighbourhood in which $\tilde{B}_\omega$ has rank $p_1$ and $\tilde{A}$ is non-singular, so that $\tilde{H}_\omega = -\tilde{B}_\omega^T \tilde{A}^{-1}\tilde{B}_\omega$ (the analogue of the Hessian $H$ from \eqref{RateFunctionImplicitHessianDefinite} when $s_0=0$, applied only to the $\omega$ variable; see \refsubsubsect{WellSpecified}) is negative definite.

With this notation, \eqref{F1Fomegadelta0} gives
\begin{equation}\label{F1Fomegadelta00iff}
\smallmat{F_1(\sigma^T,w,\nu;\omega',\xi,0) \\ F_\omega(\sigma^T,w,\nu;\omega',\xi,0)} = 0 \quad\text{if and only if}\quad \sigma^T= \tilde{J} \xi, \;\; w = (\tilde{B}_\omega^T \tilde{A}^{-1} \tilde{B}_\omega)^{-1} \tilde{B}_\omega^T \tilde{A}^{-1} \xi .
\end{equation}
If we further set $(\nu, \omega', \xi)=(\nu_0,\omega_0,\xi_0)$, then $\sigma, w$ reduce to the values $\sigma_0,w_0$ given by \eqref{sigma0Formula} and \eqref{w0Formula}, and with this choice of $\sigma$ the equation $F_\nu=0$ reduces to \eqref{xi0Constraint}.
Thus we have verified that 
\begin{equation}\label{F0PI}
F(\sigma_0^T, w_0, \nu_0; \omega_0, \xi_0, 0)=0.
\end{equation}

To differentiate $F$, first note that $\grad_s\grad_\omega K_0(0;\theta)$ does not depend on $\nu$ (since it is the gradient of $K_0'(0;\theta)$, which depends on $\omega$ only) and therefore $\grad_\nu F_\omega(\sigma^T,w,\nu;\omega',\xi,0)=0$.
Moreover neither $F_\omega(\sigma^T,w,\nu;\omega',\xi,0)$ nor $F_\nu(\sigma^T,w,\nu;\omega',\xi,0)$ depends on $w$.
Consequently, in block form, 
\begin{equation}\label{gradFBlockFormPI}
\grad_{\sigma^T,w,\nu} F(\sigma^T, w, \nu; \omega, \xi, 0) = 
\mat{\tilde{A} & \tilde{B}_\omega & \tilde{C} \\
\tilde{B}_\omega^T & 0 & 0 \\
\tilde{C}^T & 0 & \tilde{Q}}
\end{equation}
where $\tilde{C}$ is the $\xdim\times p_2$ matrix whose $j^\text{th}$ column is
\begin{equation}\label{CejFormula}
\tilde{C} e_j = \frac{\partial K_0''}{\partial\nu_j}(0;\smallmat{\omega' \\ \nu}) \sigma^T
\end{equation}
and $\tilde{Q}$ is the $p_2\times p_2$ matrix whose $i,j$ entry is
\begin{align}\label{QijFormula}
\tilde{Q}_{ij} &= \frac{1}{2} \sigma \frac{\partial^2 K_0''}{\partial\nu_i\partial\nu_j}(0;\smallmat{\omega' \\ \nu}) \sigma^T 
\notag\\&\quad
- \frac{1}{2} \trace\left( \tilde{A}^{-1} \frac{\partial^2 K_0}{\partial\nu_i\partial\nu_j}(0;\smallmat{\omega' \\ \nu}) - \tilde{A}^{-2} \frac{\partial K_0}{\partial\nu_i}(0;\smallmat{\omega' \\ \nu})\frac{\partial K_0}{\partial\nu_j}(0;\smallmat{\omega' \\ \nu}) \right) 
;
\end{align}
see for instance \cite[Exercise~8.4.1]{MagnusNeudeckerMatrixDifferentialCalc}.

To show the non-singularity of $\grad_{\sigma^T,w,\nu} F$, perform block-wise row and column operations:
\begin{multline}\label{ReducegradsigmawnuF1}
\mat{I & 0 & 0 \\
-\tilde{B}_\omega^T \tilde{A}^{-1} & I & 0 \\
-\tilde{C}^T \tilde{A}^{-1} & 0 & I}
\mat{\tilde{A} & \tilde{B}_\omega & \tilde{C} \\
\tilde{B}_\omega^T & 0 & 0 \\
\tilde{C}^T & 0 & \tilde{Q}}
\mat{I & -\tilde{A}^{-1} \tilde{B}_\omega & -\tilde{A}^{-1} \tilde{C} \\
0 & I & 0 \\
0 & 0 & I}
\\
=
\mat{\tilde{A} & 0 & 0\\
0 & \tilde{H}_\omega & -\tilde{B}_\omega^T \tilde{A}^{-1} \tilde{C}\\
0 & -\tilde{C}^T \tilde{A}^{-1} \tilde{B}_\omega & \; \tilde{Q} - \tilde{C}^T \tilde{A}^{-1} \tilde{C}}
\end{multline}
and
\begin{multline}\label{ReducegradsigmawnuF2}
\mat{I & 0 \\
\tilde{C}^T \tilde{A}^{-1} \tilde{B}_\omega \tilde{H}_\omega^{-1} \; & I}
\mat{\tilde{H}_\omega & -\tilde{B}_\omega^T \tilde{A}^{-1} \tilde{C}\\
-\tilde{C}^T \tilde{A}^{-1} \tilde{B}_\omega \; & \tilde{Q} - \tilde{C}^T \tilde{A}^{-1} \tilde{C}}
\mat{I & \; \tilde{H}_\omega^{-1} \tilde{B}_\omega^T \tilde{A}^{-1} \tilde{C} \\
0 & I}
\\
= 
\mat{\tilde{H}_\omega & 0\\
0 & \tilde{Q} - \tilde{C}^T \tilde{A}^{-1} \tilde{C} - \tilde{C}^T \tilde{A}^{-1} \tilde{B}_\omega \tilde{H}_\omega^{-1} \tilde{B}_\omega^T \tilde{A}^{-1} \tilde{C}
}
=
\mat{\tilde{H}_\omega & 0\\
0 & \tilde{Q} - \tilde{C}^T \tilde{J} \tilde{C}
}
.
\end{multline}
Combining \eqref{CejFormula}--\eqref{QijFormula}, we see that the $i,j$ entry of $\tilde{Q}-\tilde{C}^T \tilde{J} \tilde{C}$ is
\begin{align}
[\tilde{Q}-\tilde{C}^T \tilde{J} \tilde{C}]_{ij} &= \tilde{Q}_{ij} - e_i^T \tilde{C}^T \tilde{J} \tilde{C} e_j
\notag\\&
= \sigma\left( \frac{1}{2}\frac{\partial^2 K_0''}{\partial\nu_i\partial\nu_j}(0;\smallmat{\omega' \\ \nu}) - \frac{\partial K_0''}{\partial\nu_i}(0;\smallmat{\omega' \\ \nu}) \tilde{J} \frac{\partial K_0''}{\partial\nu_j}(0;\smallmat{\omega' \\ \nu}) \right) \sigma^T
\notag\\&\quad
- \frac{1}{2}\trace\left( \tilde{A}^{-1} \frac{\partial^2 K_0''}{\partial\nu_i\partial\nu_j}(0;\smallmat{\omega' \\ \nu}) - \tilde{A}^{-2} \frac{\partial K_0''}{\partial\nu_i}(0;\smallmat{\omega' \\ \nu})\frac{\partial K_0''}{\partial\nu_j}(0;\smallmat{\omega' \\ \nu}) \right)
.
\end{align}
If we now set $(\sigma^T,w,\nu;\omega',\xi)=(\sigma_0^T,w_0,\nu_0;\omega_0,\xi_0)$ and recall the relation $\sigma_0^T = J\xi_0$ from \eqref{sigma0Formula}, we see that this expression reduces to $E_{ij}$ from \eqref{EijFormula}, and thus \eqref{ReducegradsigmawnuF2} reduces to 
\begin{equation}\label{ReducegradsigmawnuF3}
\mat{I & 0 \\
C^T A^{-1} B_\omega H_\omega^{-1} \; &  I}
\mat{H_\omega & -B_\omega^T A^{-1} C\\
-C^T A^{-1} B_\omega \; & Q - C^T A^{-1} C}
\mat{I & \; H_\omega^{-1} B_\omega^T A^{-1} C \\
0 & I}
=
\mat{H_\omega & 0\\
0 & E
}
.
\end{equation}
Since $E$ is negative definite by assumption and therefore non-singular, it follows from \eqref{ReducegradsigmawnuF1} and \eqref{ReducegradsigmawnuF3} that $\grad_{\sigma^T,w,\nu}F(\sigma_0,w_0,\nu_0;\omega_0,\xi_0,0)$ is non-singular too.

By the Implicit Function Theorem, there exist $n_0\in\N$ and neighbourhoods $U_\omega$, $\tilde{U}_w$, $U_\nu$, $\tilde{V}$, $\tilde{W}$ of $\omega_0$, $w_0$, $\nu_0$, $\xi_0$, $\sigma_0$ and a continuously differentiable function $G(\omega',\xi,\delta)$ defined on $U_\omega\times\tilde{V}\times (-1/\sqrt{n_0},1/\sqrt{n_0})$ such that, for all $\omega'\in U_\omega$, $\xi\in\tilde{V}$, $\delta\in(-1/\sqrt{n_0},1/\sqrt{n_0})$, the point $\smallmat{\sigma^T\\ w \\ \nu} = G(\omega',\xi,\delta) = \smallmat{G_\sigma(\omega',\xi,\delta)^T \\ G_w(\omega',\xi,\delta) \\ G_\nu(\omega',\xi,\delta)}$ is the unique solution in $\tilde{W}\times \tilde{U}_w\times U_\nu$ of $F(\sigma^T, w, \nu; \omega',\xi,\delta)=0$.

To explain the relationship, analogous to \reflemma{F=0ISMLE}, between $G$ and the MLE problem, let $\hat{\sigma}(w,\nu,\omega',\xi,\delta)$ denote the solution to $F_1(\sigma^T,w,\nu;\omega',\xi,\delta)=0$.
The existence of such a solution, for $w,\nu,\omega',\xi,\delta$ in suitable neighbourhoods of $w_0,\nu_0,\omega_0,\xi_0,0$, is guaranteed by the Implicit Function Theorem since $\grad_\sigma^T F_1(\sigma_0,w_0,\nu_0;\omega_0,\xi_0,0)=A$, which is non-singular.
We remark that the solution, when it exists, is always unique: for $\delta=0$, we can explicitly solve \eqref{F1Fomegadelta0} because of \eqref{K''PosDef}, and for $\delta\neq 0$ the equation $F_1=0$ is a rescaling of the saddlepoint equation \eqref{SESAR}, so that
\begin{equation}
\hat{\sigma}(w,\nu,\omega',\xi,\delta) = \delta^{-1}\hat{s}_0(\smallmat{\omega'+\delta w\\ \nu}, K_0'(0;\smallmat{\omega'\\ \nu_0}) + \delta\xi) \quad\text{for }\delta\neq 0
.
\end{equation}
Moreover from \eqref{F0PI} we obtain $\hat{\sigma}(w_0,\nu_0,\omega_0,\xi_0,0)=\sigma_0$.
Then the analogue of \eqref{R'xnFromF2} is
\begin{multline}\label{gradwnulogLnFromFomegaFnu}
\grad_{w,\nu}^T\left( \log L_n(\smallmat{\omega'+ w/\sqrt{n}\\ \nu}; nK_0'(0;\smallmat{\omega'\\ \nu_0})+\sqrt{n}\xi) \right) 
\\
= 
\mat{F_\omega(\hat{\sigma}(w,\nu,\omega',\xi,1/\sqrt{n})^T, w, \nu; \omega', \xi, 1/\sqrt{n}) \\ 
F_\nu(\hat{\sigma}(w,\nu,\omega',\xi,1/\sqrt{n})^T, w, \nu; \omega', \xi, 1/\sqrt{n})}
.
\end{multline}

Similarly to \eqref{RxnHessianepsilon}, define
\begin{equation}\label{hFomegaFnuFormula}
h(w,\nu,\omega',\xi,\delta) = 
\mat{\grad_w F_\omega + \grad_\sigma^T F_\omega \grad_w\hat{\sigma}^T & \;\; \grad_\nu F_\omega + \grad_\sigma^T F_\omega \grad_\nu\hat{\sigma}^T \\
\grad_w F_\nu + \grad_\sigma^T F_\nu \grad_w\hat{\sigma}^T & \;\; \grad_\nu F_\nu + \grad_\sigma^T F_\nu \grad_\nu\hat{\sigma}^T}
\end{equation}
where the gradients of $F_\omega$ and $F_\nu$ are evaluated at $(\hat{\sigma}(w,\nu,\omega',\xi,\delta)^T, w, \nu; \omega', \xi, \delta)$ and the gradients of $\hat{\sigma}$ are evaluated at $(w,\nu,\omega',\xi,\delta)$.
Then $h$ is continuous, and we note by the chain rule \eqref{ChainRuleCol} that $h(w,\nu,\omega',\xi,1/\sqrt{n})$ is the Hessian corresponding to the gradient in \eqref{gradwnulogLnFromFomegaFnu}.
Moreover by implicitly differentiating the relation $F_1=0$ when $\delta=0$ (see \eqref{F1Fomegadelta0}) we obtain
\begin{equation}
\grad_w\hat{\sigma}^T(w_0,\nu_0,\omega_0,\xi_0,0) = -A^{-1} B_\omega, \qquad \grad_\nu\hat{\sigma}^T(w_0,\nu_0,\omega_0,\xi_0,0) = -A^{-1} C
\end{equation}
so that
\begin{equation}
h(w_0,\nu_0,\omega'_0,\xi_0,0) = 
\mat{H_\omega & -B_\omega^T A^{-1} C\\
-C^T A^{-1} B_\omega & \; Q - C^T A^{-1} C}
;
\end{equation}
we saw this matrix in \eqref{ReducegradsigmawnuF3}.
Note that $H_\omega=-B_\omega^T A^{-1} B_\omega$ is negative definite by construction since the $\xdim\times p_1$ matrix $B_\omega$ has rank $p_1$, while $E$ is negative definite by assumption.
Therefore \eqref{ReducegradsigmawnuF3} implies that $h(w_0,\nu_0,\omega'_0,\xi_0,0)$ is negative definite.
By continuity, it follows that the Hessian corresponding to the gradient in \eqref{gradwnulogLnFromFomegaFnu} is negative definite whenever $w, \nu, \omega', \xi$ lie in a suitable neighbourhood of $w_0,\nu_0,\omega_0,\xi_0$ and $n$ is sufficiently large.

The upshot of this discussion is that, perhaps after shrinking the neighbourhoods $U_\omega$, $U_\nu$, $\tilde{U}_w$, $\tilde{V}$, $\tilde{W}$ and taking $n_0$ sufficiently large, the point $w_{\max}=G_w(\omega',\xi,1/\sqrt{n})$, $\nu_{\max}=G_w(\omega',\xi,1/\sqrt{n})$ is the unique maximum of $(w, \nu)\mapsto \log L_n(\smallmat{\omega'+ w/\sqrt{n}\\ \nu}; x)$ when $w$ and $\nu$ are restricted to $\tilde{U}_w$ and $U_\nu$; $\omega'$ and $\xi$ are restricted to $U_\omega$ and $\tilde{V}$; $n\geq n_0$; and the change of variables \eqref{PIChangeOfVariables} applies.

To show that the corresponding point $\theta_{\max}=\smallmat{\omega' + w_{\max}/\sqrt{n}\\ \nu_{\max}}$ is the MLE in $U=U_\omega\times U_\nu$, we will need to show that (perhaps after shrinking the neighbourhoods and increasing $n_0$) the function $\theta\mapsto \log L_n(\theta;x)$ cannot have a maximum with $\theta\in U$ but $w = \sqrt{n}(\omega-\omega')\notin\tilde{U}_w$.
To this end, use \eqref{LagrangeForm2} and the fact that $\grad_\nu\log L_n$ and $\grad_\omega\log L_n$ vanish at $\theta_{\max}$ to write
\begin{align}
&\log L_n(\smallmat{\omega\\ \nu};x) 
\notag\\&\quad
= \log L_n(\theta_{\max};x) 
+ \int_0^1 \grad_\nu\log L_n(\smallmat{\omega_{\max}\\ \nu_{\max}+t(\nu-\nu_{\max})};x) (\nu-\nu_{\max}) \, dt
\notag\\&\qquad
+ \int_0^1 (\nu-\nu_{\max})^T \grad_\nu^T\grad_\omega\log L_n(\smallmat{\omega_{\max}\\ \nu_{\max}+t(\nu-\nu_{\max})};x)(\omega-\omega_{\max}) \, dt
\notag\\&\qquad
+ \int_0^1 (1-t) (\omega-\omega_{\max})^T \grad_\omega^T \grad_\omega\log L_n(\smallmat{\omega_{\max}+t(\omega-\omega_{\max})\\ \nu};x)(\omega-\omega_{\max}) \, dt
.
\end{align}
By \eqref{gradwnulogLnFromFomegaFnu}, $\grad_\nu\log L_n(\smallmat{\omega_{\max}\\ \nu};x)$ can be expressed in terms of $F_\nu$ evaluated at $w=w_{\max}$, and is therefore continuous as a function of $\nu$.
In particular, after shrinking the neighbourhoods appropriately, we may assume that $\abs{\grad_\nu\log L_n(\smallmat{\omega_{\max}\\ \nu_{\max}+t(\nu-\nu_{\max})};x)}\leq C_1$ for some constant $C_1<\infty$.
In \eqref{gradwnulogLnFromFomegaFnu}, $w$ and $\omega$ differ by a scaling factor $\sqrt{n}$, with $\grad_\omega = \sqrt{n}\grad_w$, so by a similar continuity argument using \eqref{hFomegaFnuFormula} we may assume that $\norm{\grad_\omega\log L_n(\smallmat{\omega_{\max}\\ \nu_{\max}+t(\nu-\nu_{\max})};x)}\leq C_2\sqrt{n}$ for some constant $C_2<\infty$.
On the other hand, since $H_\omega$ is negative definite, we can repeat the argument from the proof of \reflemma{F=0ISMLE}, taking the Hessian with respect to $\omega$ only, to find $c_1>0$ such that 
\begin{equation}
\frac{1}{n} w^T \grad_\omega^T \grad_\omega\log L_n(\theta;x) w \leq -c_1 \abs{w}^2
\end{equation}
for all $w\in\R^{p_1\times 1}$ and all $\theta$ sufficiently close to $\theta_{\max}$.

Combining all of these elements, 
\begin{multline}\label{logLnomeganuBound}
\log L_n(\smallmat{\omega\\ \nu};x) 
\leq 
\log L_n(\theta_{\max};x) + C_1 \abs{\nu-\nu_{\max}} 
\\
+ C_2 \sqrt{n} \abs{\nu-\nu_{\max}} \abs{\omega-\omega_{\max}} - \frac{1}{2} c_1 n \abs{\omega-\omega_{\max}}^2 
.
\end{multline}
We know that $\theta_{\max}$ is the unique maximiser under the restrictions $w\in \tilde{U}_w$, $\nu\in U_\nu$, and in particular we can find $c_2>0$ such that $\theta_{\max}$ is the unique maximiser under the restrictions $\abs{\omega-\omega_{\max}}\leq c_2/\sqrt{n}$, $\nu\in U_\nu$.
But if we shrink $U_\nu$ so that $\nu\in U_\nu$ implies $\abs{\nu-\nu_{\max}}\leq \min\set{c_1 c_2^2/4C_1, c_1 c_2/4C_2}$, then we can apply \eqref{logLnomeganuBound} to find $\log L_n(\theta;x)<\log L_n(\theta_{\max};x)$ whenever $\theta\in U_\omega\times U_\nu$ and $\abs{\omega-\omega_{\max}}> c_2/\sqrt{n}$, so that the MLE in $U$ must be $\theta_{\max}$ as claimed.
In particular, this argument establishes that
\begin{equation}\label{MLEfromGPI}
\theta_{\MLE\,\mathrm{in}\,U}(x,n) = \mat{\omega' + G_w(\omega',\xi,1/\sqrt{n})/\sqrt{n} \\ G_\nu(\omega',\xi,1/\sqrt{n})}
\end{equation}
for $n$ sufficiently large and $\xi$, $\omega'$ in suitable neighbourhoods, when the relations \eqref{PIChangeOfVariables} hold.

The remainder of the proof of part~\ref{item:PI1/n} follows closely the proof of \refthm{MLEerror}.
All of the reasoning above applies equally to the saddlepoint MLE $\hat{\theta}_{\MLE\,\mathrm{in}\,U}(x,n)$, the only change being that terms involving $q_3$ are absent.
Define the augmented function
\begin{equation}
\tilde{F}(\sigma^T,w,\nu;\omega',\xi,\delta) = \mat{F_1(\sigma^T,w,\nu;\omega',\xi,\delta) \\ F_\omega(\sigma^T,w,\nu;\omega',\xi,\delta) \\ F_\nu(\sigma^T,w,\nu;\omega',\xi,\delta) \\ \omega' \\ \xi \\ \delta}
,
\end{equation}
which has the continuously differentiable inverse function
\begin{equation}
\tilde{G}(u_1, u_\omega, u_\nu; \omega',\xi,\delta) = \mat{\tilde{G}_\sigma(u_1, u_\omega, u_\nu; \omega',\xi,\delta)^T \\ \tilde{G}_w(u_1, u_\omega, u_\nu; \omega',\xi,\delta) \\ \tilde{G}_\nu(u_1, u_\omega, u_\nu; \omega',\xi,\delta) \\ \omega' \\ \xi \\ \delta}
.
\end{equation}
We find
\begin{equation}\label{FofhatGPI}
F(\hat{G}(\omega',\xi,\delta); \omega',\xi,\delta) = \mat{0 \\ \delta q_{3,\omega}\left( \smallmat{\omega'+\delta\hat{G}_w(\omega',\xi,\delta) \\ \hat{G}_\nu(\omega',\xi,\delta)}, K_0'(0;\smallmat{\omega'\\ \nu_0})+\delta\xi , \delta^2 \right)^T \\ q_{3,\nu}\left( \smallmat{\omega'+\delta\hat{G}_w(\omega',\xi,\delta) \\ \hat{G}_\nu(\omega',\xi,\delta)}, K_0'(0;\smallmat{\omega'\\ \nu_0})+\delta\xi , \delta^2 \right)^T}
.
\end{equation}
This quantity has the form $\delta^2 q_7(\omega',\xi,\delta)$ for some continuous function $q_7$.
Using \eqref{MLEfromGPI} and its analogue for $\hat{\theta}_{\MLE\,\mathrm{in}\,U}$, we find as in the proof of \refthm{MLEerror} that
\begin{align}
&\hat{\omega}_{\MLE\,\mathrm{in}\,U}(x,n) - \omega_{\MLE\,\mathrm{in}\,U}(x,n) 
\notag\\&\quad
= n^{-1/2} \bigl( \hat{G}_w(\omega',\xi,1/\sqrt{n}) - G_w(\omega',\xi,1/\sqrt{n}) \bigr)
\notag\\&\quad
= n^{-1/2} \bigl( \tilde{G}_w(n^{-1}q_7(\omega',\xi,1/\sqrt{n});\omega',\xi,1/\sqrt{n}) - \tilde{G}_w(0;\omega',\xi,1/\sqrt{n}) \bigr)
.
\label{omegahat-omegaPI}
\end{align}
Continuity of $q_7$ and continuous differentiability of $\tilde{G}$ imply that this quantity is $O(n^{-3/2})$, uniformly over $\omega'$ and $\xi$ in suitable neighbourhoods.

The argument for $\abs{\hat{\nu}_{\MLE\,\mathrm{in}\,U}(x,n) - \nu_{\MLE\,\mathrm{in}\,U}(x,n)}$ is similar except that no scaling factor $1/\sqrt{n}$ arises from \eqref{MLEfromGPI}, so that the resulting bound is $O(1/n)$.
This completes the proof of part~\ref{item:PI1/n}.

For part~\ref{item:PI1/n^2}, the right-hand side of \eqref{FofhatGPI} can be written more specifically as
\begin{equation}
F(\hat{G}(\omega',\xi,\delta); \omega',\xi,\delta) = \mat{0 \\ \delta^3 q_8(\omega',\xi,\delta) \\ \delta^2 q_9(\omega',\xi,\delta)}
\end{equation}
for continuous functions $q_8,q_9$.
Abbreviating $\delta=1/\sqrt{n}$ for convenience, \eqref{omegahat-omegaPI} becomes
\begin{align}
&\hat{\omega}_{\MLE\,\mathrm{in}\,U}(x,n) - \omega_{\MLE\,\mathrm{in}\,U}(x,n) 
\notag\\&\quad
=
\delta\left( \tilde{G}_w(0,\delta^3 q_8(\omega',\xi,\delta), \delta^2 q_9(\omega',\xi,\delta);\omega',\xi,\delta) - \tilde{G}_w(0,0,0;\omega',\xi,\delta) \right)
\notag\\&\quad
= \delta^4 \int_0^1 \grad_{u_\omega}\tilde{G}_w(0,t\delta^3 q_8(\omega',\xi,\delta), t\delta^2 q_9(\omega',\xi,\delta);\omega',\xi,\delta) q_8(\omega',\xi,\delta) dt
\notag\\&\qquad
+ \delta^3 \int_0^1 \grad_{u_\nu}\tilde{G}_w(0,t\delta^3 q_8(\omega',\xi,\delta), t\delta^2 q_9(\omega',\xi,\delta);\omega',\xi,\delta) q_8(\omega',\xi,\delta) dt
\label{omegahat-omegaPISpecific}
\end{align}
In the right-hand side of \eqref{omegahat-omegaPISpecific}, the first term is already $O(\delta^4)=O(1/n^2)$ with the required uniformity.
To prove part~\ref{item:PI1/n^2} it is therefore enough to show that $\grad_{u_\nu}\tilde{G}_w(0,0,0;\omega',\xi,0)=0$ for all $\omega',\xi$ and that $\grad_{u_\nu}\tilde{G}_w$ is a locally Lipschitz function of its arguments.

To compute $\grad_{u_\nu}\tilde{G}_w$, note that $\grad_{u_1,u_\omega,u_\nu;\omega',\xi,\delta}\tilde{G} = (\grad_{\sigma^T,w,\nu;\omega',\xi,\delta}\tilde{F})^{-1}$ (where the gradient of $\tilde{F}$ is evaluated at the point $\tilde{G}$).
In block form
\begin{equation}
\begin{gathered}
\grad_{\sigma^T,w,\nu;\omega',\xi,\delta}\tilde{F} = \mat{\grad_{\sigma^T,w,\nu}F & \grad_{\omega',\xi,\delta}F \\ 0 & I}
,
\\
(\grad_{\sigma^T,w,\nu;\omega',\xi,\delta}\tilde{F})^{-1} = \mat{(\grad_{\sigma^T,w,\nu}F)^{-1} & \; -(\grad_{\sigma^T,w,\nu}F)^{-1}\grad_{\omega',\xi,\delta}F \\ 0 & I}
,
\end{gathered}
\end{equation}
so $\grad_{u_\nu}\tilde{G}_w$ is given by the $(2,3)$ block entry in the block form of $(\grad_{\sigma^T,w,\nu}F)^{-1}$.

Now set $\tilde{\sigma} = G_\sigma(0, 0, 0; \omega', \xi, 0)$, $\tilde{w} = G_w(0, 0, 0; \omega', \xi, 0)$, $\tilde{\nu} = G_\nu(0, 0, 0; \omega', \xi, 0)$.
Then, by construction, $\tilde{\sigma}$ and $\tilde{w}$ satisfy $F_1(\tilde{\sigma}^T,\tilde{w},\tilde{\nu};\omega',\xi,0)=0$ and $F_\omega(\tilde{\sigma}^T,\tilde{w},\tilde{\nu};\omega',\xi,0)=0$, so that $\tilde{\sigma}^T=\tilde{J}\xi$ by \eqref{F1Fomegadelta00iff}.
By \eqref{CejFormula}, the $j^\text{th}$ column of $\tilde{B}_\omega^T \tilde{A}^{-1} \tilde{C}$ becomes
\begin{equation}
\tilde{B}_\omega^T \tilde{A}^{-1} \tilde{C} e_j = \tilde{B}_\omega^T \tilde{A}^{-1} \frac{\partial K_0''}{\partial\nu_j}(0;\smallmat{\omega' \\ \nu}) \tilde{J} \xi
\end{equation}
which vanishes by \eqref{PIOffDiagonalVanishes}.
In particular, the right-hand side of \eqref{ReducegradsigmawnuF1} is already block-diagonal.
(Moreover its corner entry must reduce to $\tilde{E}$, the analogue of $E$, and by continuity we may restrict to a neighbourhood in which $\tilde{E}$ is non-singular.)
So we can invert to find
\begin{align}
\left( \grad_{\sigma^T,w,\nu}F(\tilde{\sigma},\tilde{w},\tilde{\nu};\omega',\xi,0)  \right)^{-1}
&= 
\mat{I & \tilde{A}^{-1}\tilde{B}_\omega & \tilde{A}^{-1}\tilde{C}\\
0 & I & 0\\
0 & 0 & I}
\mat{\tilde{A}^{-1} & 0 & 0\\
0 & \tilde{H}_\omega^{-1} & 0\\
0 & 0 & \tilde{E}^{-1}}
\mat{I & 0 & 0\\
\tilde{B}_\omega^T \tilde{A}^{-1} & I & 0\\
\tilde{C}^T \tilde{A}^{-1} & 0 & I}
\end{align}
and it can be verified that the $(2,3)$ block entry of the resulting matrix is 0.
We have therefore shown that $\grad_{u_\nu}\tilde{G}_w(0,0,0;\omega',\xi,0)=0$, as required.

Finally to show that $\grad_{u_\nu}\tilde{G}$ is locally Lipschitz, it is enough to show that $\tilde{G}$ is of class $C^2$, i.e., twice continuously differentiable, rather than only $C^1$.
Since matrix inversion is a smooth operation, it is enough to show that $F$ is $C^2$.
Note that $F$ includes partial derivatives $\frac{\partial^{k+\ell} K_0}{\partial\theta_{i_1}\dotsb\partial\theta_{i_k}\partial s_{j_1}\dotsb\partial s_{j_\ell}}$ with $k\leq 1, k+\ell\leq 3$, and the fact that the corresponding terms are $C^2$ follows under our assumptions from \eqref{GrowthBound} with $k\leq 3, k+\ell\leq 5$.
To argue that the other terms -- i.e., the terms involving $q_3$ -- are $C^2$, we show that $q_1$ and $q_2$ from \refprop{PddtPError} are themselves $C^2$; then the statement for $q_3$ follows by the argument from the proof of \refcoro{gradlogPhatsError} and the fact that $\hat{s}_0$ is $C^2$.
To prove the latter assertion, we explain the modifications to the proof of \refprop{PddtPError} needed to extend from $C^1$ to $C^2$.

Recall that $q_k(s,\theta,\epsilon) = \tilde{f}_k(s,\theta,\epsilon)+f_k(s,\theta,\epsilon) - f_k(s,\theta,0)$ for $k=1,2$ and $\epsilon\geq 0$.
The functions $\tilde{f}_1$, $\tilde{f}_2$ are constructed by applying \reflemma{FunctionsToC1} to the functions $r_n^{(1)}$ and $r_n^{(2)}$ from \eqref{rnkFormula}.
To conclude twice differentiability, we must show that $r_n^{(1)}$, $r_n^{(2)}$ and their second-order partial derivatives are $o(n^{-4})$, uniformly, rather than $o(n^{-2})$ as in \eqref{rnkon-2}.
To that end, let $\tilde{\tau}$ denote another entry $\theta_i$ or $s_j$, again possibly the same entry as $t$ or $\tau$.
Define $I_4(s,\theta,n)$, $J_4(s,\theta,\epsilon)$ and $r_n^{(4)}(s,\theta)$ in the same way as $I_k(s,\theta,n)$, $J_k(s,\theta,\epsilon)$ and $r_n^{(k)}(s,\theta)$, $k=1,2,3$, from \eqref{I1I2Formula}--\eqref{I3Formula}, \eqref{J1J2J3Formula} and \eqref{rnkFormula}, with the integrands of $I_4$ and $J_4$ taken to be $\frac{\partial}{\partial\tilde{\tau}}$ of the integrands for $I_3$ and $J_3$.
These integrands can be expressed in terms of the partial derivatives $\frac{\partial^{k+\ell} M_0}{\partial\theta_{i_1}\dotsb\partial\theta_{i_k}\partial s_{j_1}\dotsb\partial s_{j_\ell}}$ with $k\leq 3, k+\ell\leq 5$, to which \eqref{GrowthBound} applies under our assumptions.
We can therefore repeat the argument from the proof of \refthm{MLEerror} to conclude that $r_n^{(k)}(s,\theta)$ decays exponentially in $n$, and in particular $r_n^{(k)}(s,\theta)=o(n^{-4})$, all uniformly in $\theta,s$ in suitable neighbourhoods, for $k=1,2,3,4$.
Applying \eqref{ZoverhatP} twice, the partial derivatives $\frac{\partial^2 r_n^{(k)}}{\partial\tau\partial\tilde{\tau}}$ for $k=1,2$ can be expressed in terms of $r_n^{(k)}$ for $k=3,4$, so these partial derivatives are also $o(n^{-4})$, uniformly.
By \reflemma{FunctionsToC1}, $\tilde{f}_k$ is $C^2$ for $k=1,2$.

Next recall the functions $f_k(s,\theta,\epsilon)$, $k=1,2,3$, from \eqref{f1f2f3Formula}, and define $f_4$ similarly: namely, by differentiating the integrand for $f_3$ with respect to $\tilde{\tau}$, except that the factor $\sqrt{\det(2\pi K_0''(s;\theta)^{-1})}$ is treated as constant.
This integrand involves the same partial derivatives from \eqref{GrowthBound}, with $k\leq 3$ and $k+\ell\leq 5$.
The arguments from the proof of \refprop{PddtPError} can therefore be repeated to show that $f_k$ is continuous, $k=1,2,3,4$.
Further applications of \eqref{ZoverhatP} show that partial derivatives $\frac{\partial^2 f_k}{\partial\tau\partial\tilde{\tau}}$, $k=1,2$, can be expressed in terms of $f_3$, $f_4$ and are therefore continuous.

To show the continuity of $\frac{\partial^2 f_k}{\partial\epsilon^2}$, $k=1,2$, use the representation \eqref{fWithoutOdd} in terms of the functions $H_k(\psi,\phi)$ from \eqref{H1H2Formula}.
(Recall that $H_k(\psi,\phi)$ depend implicitly on $s,\theta$, ultimately through $K_0''$ and $\frac{\partial K_0''}{\partial t}$.)
The problematic part of this representation arises from the quantity $\epsilon \grad_\phi\grad_\phi^T H_k(\psi,u\psi\sqrt{\epsilon})$.
For this to be $C^2$ as a function of $\epsilon$, it is enough if $\grad_\phi\grad_\phi^T H_k(\psi,\phi)$ is $C^2$ as a function of $\phi$, together with bounds analogous to \eqref{HkBound}.
This part of the argument works as in the proof of \refprop{PddtPError}, now using \eqref{GrowthBound} with $k\in\set{0,1}, k+\ell\leq 7$.

Finally, by \eqref{ZoverhatP}, the continuity of $\frac{\partial^2 f_k}{\partial\tau\partial\epsilon}$ will follow from the continuity of $\frac{\partial f_{k+1}}{\partial\epsilon}$.
We have already proved this for $k=1$.
For $k=2$, define $H_3$ by analogy with $H_1$, $H_2$ and use the analogue of \eqref{fWithoutOdd}.
We therefore require $\grad_\phi\grad_\phi^T H_3(\psi,\phi)$ to be $C^1$ as a function of $\epsilon$, together with bounds analogous to \eqref{HkBound}.
This part of the argument also works as in the proof of \refprop{PddtPError}, now using \eqref{GrowthBound} with $k\leq 2, k+\ell\leq 7$.
\end{proof}

\begin{proof}[Proof of \refthm{MLEerrorNormalApprox}]
Replacing $X_\theta$ by $\tilde{X}_\theta$ amounts to replacing $K_0(s;\theta)$ by its second-order Taylor approximation $\tilde{K}_0(s;\theta) = sK_0'(0;\theta) + \tfrac{1}{2}s K_0''(0;\theta) s^T$; cf.\ \cite[section 5]{DavHauKraParameterLinearBirthDeath}.
Then the corresponding integrals in \eqref{PformulaK}--\eqref{dPdtFormulaK} reduce to $\hat{P}_n(s,\theta)$ and $\frac{\partial\hat{P}_n}{\partial t}(s,\theta)$ and the corresponding error terms $q_1,q_2,q_3$ vanish.
In the setup of the proof of \refthm{MLEerrorPartiallyIdentifiable}, the corresponding functions $F_1$, $F_\omega$, $F_\nu$, $F$ from \eqref{F1FomegaFnuFFormula}--\eqref{FnuTermRewritten} are obtained by replacing some, but not all, occurrences of $\delta$ by 0.
To interpolate between the two cases, define
\begin{equation}
\begin{aligned}
&F_{1,\CLT}(\sigma^T,w,\nu;\omega',\xi,\delta,\delta') 
\\&\quad
= \int_0^1 \left( K_0''(u\delta'\sigma;\theta)\sigma^T + \grad_s\grad_\omega K_0(0;\smallmat{\omega'+u\delta\\ \nu}) \right) du - \xi
\\
&F_{\omega,\CLT}(\sigma^T,w,\nu;\omega',\xi,\delta,\delta') 
\\&\quad
= \int_0^1 (\grad_s\grad_\omega K_0(u\delta'\sigma;\theta))^T \sigma^T du + \delta q_{3,\omega}(\theta,K_0'(0;\theta')+\delta\xi,(\delta')^2)^T
\\&\qquad
+ \delta\grad_\omega^T\log\hat{P}\left( \delta'\sigma,\theta \right) 
\\&\qquad
- \delta \left( \grad_s\grad_\omega K_0\left( \delta'\sigma;\theta \right) \right)^T K_0''\left( \delta'\sigma;\theta \right)^{-1}\grad_s\log\hat{P}\left( \delta'\sigma,\theta \right) 
,
\\
& F_{\nu,\CLT}(\sigma^T,w,\nu;\omega',\xi,\delta,\delta')
\\&\quad
= \int_0^1 (1-u) \sum_{k,\ell=1}^m \sigma_k\sigma_\ell \grad_\nu^T \frac{\partial^2 K_0}{\partial s_k\partial s_\ell}(u\delta'\sigma;\theta) du
\\&\qquad
+ q_{3,\nu}\left( \theta,K_0'(0;\theta') + \delta\xi,(\delta')^2 \right)^T
+ \grad_\nu^T\log\hat{P}(\delta'\sigma,\theta) 
\\&\qquad
- (\grad_s\grad_\nu K_0(\delta'\sigma;\theta))^T K_0''(\delta'\sigma;\theta)^{-1}\grad_s\log\hat{P}(\delta'\sigma,\theta) 
.
\end{aligned}
\end{equation}
Then setting $\delta'=\delta$ recovers the functions $F_1$, $F_\omega$, $F_\nu$ from the proof of \refthm{MLEerrorPartiallyIdentifiable}, whereas setting $\delta'=0$ recovers the analogue of $F_1$, $F_\omega$, $F_\nu$ in which $K_0$ is replaced by $\tilde{K}_0$ (including implicitly in $\log\hat{P}_n$ and its gradients).
As in the proof of \refthm{MLEerrorPartiallyIdentifiable}, we merge $F_1$, $F_\omega$, $F_\nu$ into a single function $F$, and then there is a continuously differentiable implicit function $G(\omega',\xi,\delta,\delta')$ solving $F(G(\omega',\xi,\delta,\delta'); \omega',\xi,\delta,\delta')=0$.
Write $G_\sigma$, $G_w$, $G_\nu$ for the sub-blocks of $G$.
Then, similar to \eqref{MLEfromGPI},
\begin{equation}
\begin{aligned}
\theta_{\MLE\,\mathrm{in}\,U}(x,n) &= \mat{\omega' + G_w(\omega',\xi,1/\sqrt{n},1/\sqrt{n})/\sqrt{n} \\ G_\nu(\omega',\xi,1/\sqrt{n},1/\sqrt{n})}
,
\\
\tilde{\theta}_{\MLE\,\mathrm{in}\,U}(x,n) &= \mat{\omega' + G_w(\omega',\xi,1/\sqrt{n},0)/\sqrt{n} \\ G_\nu(\omega',\xi,1/\sqrt{n},0)}
,
\end{aligned}
\end{equation}
so that
\begin{equation}
\tilde{\omega}_{\MLE\,\mathrm{in}\,U}(x,n) - \omega_{\MLE\,\mathrm{in}\,U}(x,n) = n^{-1/2}\left( G_w(\omega',\xi,1/\sqrt{n},0) - G_w(\omega',\xi,1/\sqrt{n},1/\sqrt{n})\right)
\end{equation}
and this quantity is $O(1/n)$, with the required uniformity, by the continuous differentiability of $G$.
The same reasoning applies to $\nu$, except that the extra factor $n^{-1/2}$ is absent.
\end{proof}

\section{Saddlepoint approximations compared to normal approximations}\lbappendix{SaddlepointVsCLT}

In this section we discuss the analogue of \eqref{LErrorRough} when a normal approximation is used in place of the saddlepoint approximation.
For the purposes of this discussion, we fix $\theta$ and revert to considering the density $f(x;\theta)$ and its saddlepoint approximation $\hat{f}(x;\theta)$ in place of $L(\theta;x)$ and $\hat{L}(\theta;x)$.

We remark that the normal approximation has one notable advantage over the saddlepoint approximation: the normal approximation corresponds to a stochastic approximation in which one distribution is approximated by another.
Thus normal approximations preserve our ability to reason probabilistically about the distribution of interest.
By contrast, the saddlepoint approximation to the density, $\hat{f}(x;\theta)$, need not be a density function itself.
In principle it may be possible to normalise $\hat{f}(x;\theta)$ by computing the integral $\int_{\R^{\xdim\times 1}} \hat{f}(x;\theta)\,dx$, but this integral is usually not susceptible to exact calculation and therefore requires computationally intensive numerical integration.

Apart from this, however, we shall see that the normal approximation is less accurate than the saddlepoint approximation in general, outside a relatively narrow window near the mean.
Indeed, because of the strong bound \eqref{LErrorPrecise}, it will suffice to compare the saddlepoint approximation and the normal approximation.
We remark that when $x=\E(X)$, so that $\hat{s}=0$, the saddlepoint approximation and the normal approximation coincide.

Suppose that $0\in\interior\mathcal{S}_\theta$, which implies that $X_\theta$ has mean $K'(0;\theta)$ and covariance matrix $K''(0;\theta)$.
Then we can write the normal density approximation as
\begin{equation}
\hat{f}_\CLT(x;\theta) = \frac{\exp\left( -\tfrac{1}{2}(x-K'(0;\theta))^T K''(0;\theta)^{-1} (x-K'(0;\theta)) \right)}{\sqrt{\det(2\pi K''(0;\theta))}}
.
\end{equation}
Similar to \eqref{hatP}--\eqref{hatLL*P}, we introduce a factorisation and change variables from $x$ to $s$ by setting $x=K'(s;\theta)$.
Define
\begin{equation}
\begin{aligned}
L^*_\CLT(s,\theta) &= \exp\left( -\tfrac{1}{2}(K'(s;\theta)-K'(0;\theta))^T K''(0;\theta)^{-1} (K'(s;\theta)-K'(0;\theta)) \right)
,
\\
\hat{P}_\CLT(s,\theta) &= \frac{1}{\sqrt{\det(2\pi K''(0;\theta))}} = \hat{P}(0,\theta)
,
\end{aligned}
\end{equation}
so that $\hat{f}_\CLT(K'(s;\theta);\theta) = L^*_\CLT(s,\theta)\hat{P}_\CLT(s,\theta)$ and
\begin{equation}
\frac{\hat{f}_\CLT(K'(s;\theta);\theta)}{\hat{f}(K'(s;\theta);\theta)} = \exp\left( \log L^*_\CLT(s,\theta) - \log L^*(s,\theta) + \log\frac{\hat{P}_\CLT(s,\theta)}{\hat{P}(s,\theta)} \right)
.
\end{equation}

We now turn to the standard asymptotic regime \eqref{SAR}, and we write $\hat{f}_\CLT=\hat{f}_{\CLT,n}$ and $\hat{f}=\hat{f}_n$ to make the $n$-dependence explicit.
Note that, similar to \eqref{L*L*0} and \eqref{s0hatPGradients}, both $\log L^*_\CLT$ and $\log L^*$ are proportional to $n$, whereas the powers $n^{-d/2}$ in $\hat{P}_\CLT/\hat{P}$ cancel, so that
\begin{equation}
\frac{\hat{f}_{\CLT,n}(K'(s;\theta);\theta)}{\hat{f}_n(K'(s;\theta);\theta)} = \exp\left( n g(s;\theta) + h(s;\theta) \right) \quad\text{where} \quad g(0;\theta)=h(0;\theta)=0.
\end{equation}
We will analyse the ratio $\hat{f}_{\CLT,n}/\hat{f}_n$ for values of $x$ near the mean by expanding $g$ and $h$ as Taylor series in $s$ near $s=0$, and comparing to the size of the saddlepoint error term, which is $O(1/n)$ by \eqref{LErrorPrecise}.
To simplify the exposition, we restrict our attention to the univariate case $\xdim=1$, so that we can write
\begin{equation}
\begin{aligned}
g(s;\theta) &= -\frac{(K'_0(s;\theta)-K'_0(0;\theta))^2}{2 K''_0(s;\theta)} - K_0(s;\theta) + s K'_0(s;\theta)
,
\\
h(s;\theta) &= \frac{1}{2}\log\frac{K''_0(s;\theta)}{K''_0(0;\theta)}
.
\end{aligned}
\end{equation}

Specifically, suppose that $0\in\interior\mathcal{S}_\theta$ and the assumptions of \refthm{GradientError} apply.
(Note that $0\in\interior\mathcal{S}_\theta$ implies that all moments of $Y_\theta$ are finite, which is stronger than necessary for the Central Limit Theorem to apply.)
For convenience, write $s_0=0$ and $y_0=K_0'(0;\theta)$, where $\theta_0=\theta$ is fixed in the present discussion.
(Since we are ignoring dependence on $\theta$, we could pass to the model where $K_0(s;\theta)=K_0(s;\theta_0)$ for all $\theta$; this means that in addition to $0\in\interior\mathcal{S}_\theta$ we have $(0,\theta)\in\interior\mathcal{S}$ and there is no difficulty in applying the results from \refsubsect{MainPropStatement}.)
Since $s_0=0$ is an interior point of $\mathcal{S}_\theta$, $(y_0,\theta)$ is an interior point of the open set $\mathcal{Y}^o$, and there is some $\delta>0$ such that the bound \eqref{LErrorPrecise} holds uniformly over the region $\abs{y-y_0}\leq\delta$.
The mapping $s\mapsto K'_0(s;\theta)$ is continuously differentiable with continuously differentiable inverse mapping $y\mapsto \hat{s}_0(\theta;y)$, so under this change of variables we can alternatively say that \eqref{LErrorPrecise} holds uniformly over the region $\abs{s}\leq\delta'$ for some $\delta'>0$.
By shrinking $\delta,\delta'$ if necessary, we can ensure that in the Taylor expansions that follow, the first non-zero terms are the dominant ones.

Now compute derivatives (with respect to $s$) of $g$ and $h$ at $s=0$.
We find
\begin{equation}
h'(s;\theta) = \frac{K'''_0(s;\theta)}{2K''_0(s;\theta)}, \qquad h''(s;\theta) = \frac{K''''_0(s;\theta)}{2K''_0(s;\theta)} - \frac{K'''_0(s;\theta)^2}{2K''_0(s;\theta)^2}
.
\end{equation}
A lengthier calculation leads to $g'(0;\theta)=g''(0;\theta)=0$ and
\begin{align}
g'''(0;\theta) &= - K'''_0(0;\theta) & g''''(0;\theta) &= -K''''_0(0;\theta) - 3\frac{K'''_0(0;\theta)^2}{K''_0(0;\theta)}.
\end{align}
We obtain different results depending on which of $K'''_0(0;\theta)$ and $K''''_0(0;\theta)$ are non-zero.
We will ignore the case where $K'''_0(0;\theta)=K''''_0(0;\theta)=0$, noting that for random variables with this property (including normal distributions) both the saddlepoint approximation and the normal approximation are more accurate than usual.

\textit{Case 1: $K'''_0(0;\theta)=0$ but $K''''_0(0;\theta)\neq 0$.}

If $Y$ (or equivalently $X$) has zero skew but non-zero excess of kurtosis, then $g'''(0;\theta)=h'(0;\theta)=0$ but $g''''(0;\theta)\neq 0$, $h''(0;\theta)\neq 0$.
Thus
\begin{equation}\label{SPAvsCLTNoSkew}
\frac{\hat{f}_{\CLT,n}(K'(s;\theta);\theta)}{\hat{f}_n(K'(s;\theta);\theta)} = \exp\left( O(n s^4) + O(s^2) \right).
\end{equation}
The error terms $O(n s^4)$ and $O(s^2)$ both become of order $1/n$ once $\abs{s}\approx n^{-1/2}$, or equivalently once $\abs{y-y_0}\approx n^{-1/2}$, $\abs{x-ny_0}\approx n^{1/2}$.
Thus in this case, the scale $\abs{y-y_0}\approx n^{-1/2}$, $\abs{x-ny_0}\approx n^{1/2}$ is the scale at which the normal approximation starts to have larger relative error than the saddlepoint approximation.

Note that at this scale, the normal approximation is still asymptotically reasonably accurate: the relative error (which, by \eqref{SPAvsCLTNoSkew} and \eqref{LErrorPrecise}, is $O(1/n) + O(ns^4) + O(s^2)$) remains $O(1)$ until $s\approx n^{-1/4}$, corresponding to $\abs{y-y_0}\approx n^{-1/4}$, $\abs{x-ny_0}\approx n^{3/4}$.
At this scale, the density itself is small, which is why the absolute error in the normal approximation remains reasonably small even uniformly.

\textit{Case 2: $K'''_0(0;\theta)\neq 0$.}

If $Y$, or equivalently $X$, exhibits skew, then $g$ and $h$ are controlled by lower order Taylor terms:
\begin{equation}\label{SPAvsCLTSkew}
\frac{\hat{f}_{\CLT,n}(K'(s;\theta);\theta)}{\hat{f}_n(K'(s;\theta);\theta)} = \exp\left( O(n s^3) + O(s) \right).
\end{equation}
Now the normal approximation starts to have larger relative error as soon as $\abs{s}\approx n^{-1}$, corresponding to $\abs{y-y_0}\approx n^{-1}$, $\abs{x-ny_0}\approx O(1)$.
In particular, for values in the range $\abs{y-y_0}\approx n^{-1/2}$, $\abs{x-ny_0}\approx n^{1/2}$ -- i.e., for the values of $x$ and $y$ predicted to occur by the Central Limit Theorem -- the relative accuracy of the normal approximation is $O(n^{-1/2})$ compared to $O(n^{-1})$ for the saddlepoint approximation.

\section{Examples}\lbappendix{Examples}

\refexample{X=AU} examines models where the observed data are formed by applying a deterministic many-to-one linear mapping to unobserved, or latent, variables, so that $X_\theta=AU_\theta$ for a fixed matrix $A$.
The hypotheses \eqref{SAR} and \eqref{DecayBound}--\eqref{GrowthBound}, and \eqref{RateFunctionImplicitHessianDefinite}/\eqref{MeanGradientFullRank} in the well-specified case, can be simplified by expressing them in terms of the latent variables $U_\theta$ and their dependence on the parameter $\theta$.

\refexample{MultipleSamplesWorkings} considers models where multiple independent samples are observed (possibly with different values of $n$) and gives simplified expressions for the hypotheses of Theorems~\ref{T:GradientError}--\ref{T:MLEerrorNormalApprox} in that case.

\refexample{ZhaBraFew} considers two related families of models studied by Zhang, Bravington \& Fewster \cite{ZhaBraFew2019} for population size estimation based on ambiguous count data.
The model implements the ambiguity by expressing the observed counts $X_\theta$ as a non-invertible linear function of latent counts $U_\theta$ that encode the underlying unambiguous counts, so that $X_\theta=A U_\theta$ as in \refexample{X=AU}.
These models fall into the framework of \eqref{SAR}, and are fully identifiable at the level of the sample mean in the well-specified case, so that Theorems~\ref{T:MLEerror}--\ref{T:SamplingMLE} apply when $s_0=0$ and $\theta_0$ is arbitrary.
The authors use saddlepoint approximations to compute MLEs, and we compare their findings with the results of this paper.

\refexample{DavHauKra} considers a branching process model used by Davison, Hautphenne \& Kraus \cite{DavHauKraParameterLinearBirthDeath} to estimate birth and death rates from a time series of population size observations.
Because of the branching property, the framework of \eqref{SAR} applies.
Their model is partially identifiable at the level of the sample mean in the well-specified case, so that Theorems~\ref{T:MLEerrorPartiallyIdentifiable}--\ref{T:MLEerrorNormalApprox} apply when $s_0=0$ and $\theta_0$ is arbitrary.
The authors compare saddlepoint MLEs with other estimators, including estimators based on normal approximations, and we compare their findings with the results of this paper.

\refexample{PedDavFok} discusses an autoregressive model used by Pedeli, Davison \& Fokianos \cite{PedDavFok2015} to model integer-valued autoregressive processes.
Their model cannot be expressed as a sum of $n$ i.i.d.\ terms, and we indicate why and to what extent \eqref{SAR} fails.

\refexample{Poisson} gives the explicit calculations for applying the saddlepoint approximation to the Poisson family of distributions.
The findings illustrate the assertions of \eqref{LErrorRough} and \refsubsubsect{ExpFamily}.
For comparison purposes, we can also compute approximate MLEs based on the normal approximation to the Poisson family.
The resulting exact calculations match the assertions of \refthm{MLEerrorNormalApprox}, which are sharp in this case.
These calculations also illustrate a potentially surprising pitfall: if applied to a negative observed value, the normal approximation likelihood leads to a well-defined MLE, in contrast to saddlepoint-based approximate likelihoods which correctly indicate that negative values are impossible.

Examples~\ref{ex:Gamma}--\ref{ex:GammaPI} discuss the Gamma family and various subfamilies, including two models with full identifiability (\refexample{GammaFI}) and partial identifiability (\refexample{GammaPI}) where explicit calculation shows the asymptotic sharpness of bounds from Theorems~\ref{T:GradientError}--\ref{T:LowerOrderSaddlepoint}.
The exact calculations give another indication of the different uniformity considerations: the normal approximation likelihood depends on the observed data values in a qualitatively different way than both the true likelihood and all the saddlepoint-based approximate likelihoods, and the functional forms only become approximately equal in a narrow tube around the mean values predicted by the model.

Examples~\ref{ex:Normal}--\ref{ex:NormalSquare} discuss the Normal family in two settings, depending on whether we observe a univariate or bivariate sufficient statistic.
The resulting likelihood function is exact in only one of these cases, but the corresponding MLEs, when defined, are exact in both cases.
\refexample{Normal} contains two sub-families illustrating that increasing $n$ affects the inferential uncertainty differently between fully identifiable and partially identifiable models.

Examples~\ref{ex:MLEWrong}--\ref{ex:DecayFails} use artificial examples to illustrate various technical features of saddlepoint MLEs: \refexample{MLEWrong} shows a case where the saddlepoint approximation gives incorrect global information about the shape of the likelihood function; \refexample{LhatVsL*Different} shows that a global maximum for $\hat{L}^*$ need not be a global maximum for $\hat{L}$; \refexample{QPlusGamma} gives a distribution with an ill-behaved density that nevertheless satisfies the regularity conditions \eqref{DecayBound}--\eqref{GrowthBound}; and \refexample{DecayFails} gives an example where \eqref{DecayBound} fails.

\subsection{Two general classes of examples}

\begin{example}[Linear mapping]\lbexample{X=AU}
Let $X=AU$, where $A\in\R^{\xdim\times\latentdim}$ is a fixed matrix and $U=U_\theta\in\R^{\latentdim\times 1}$ is an unobserved random vector whose moment generating function is known.
We are interested in the case $\latentdim > \xdim$, with $A$ of maximal rank $\xdim$, so that $X$ has non-singular covariance matrix if $U$ does.
Since $A$ is not invertible, observing $X$ does not allow us to identify the value of $U$, and a straightforward approach to computing the likelihood for $X$ requires integrating, or summing, over a $(\latentdim-\xdim)$-dimensional subspace of $U$ values compatible with a given $X$ value.
However, the moment generating function of $X$ can be readily calculated by $M_X(s;\theta)=M_U(sA;\theta)$, making the saddlepoint approximation an attractive alternative that avoids lengthy approximate integration.
A common case is that the entries of $U$ are independent, so that columns of $A$ with several non-zero entries induce complicated dependencies between the entries of $X$; however, $M_U$ will have a simple product form.

In the context of our results, assuming that $X_\theta$ is the sum of $n$ i.i.d.\ terms $Y^{(i)}_\theta$, as in \eqref{SAR}, amounts to the assumption that $U_\theta$ is itself the sum of $n$ i.i.d.\ terms $V^{(i)}_\theta$.
Provided that $A$ has rank $\xdim$, the assumptions \eqref{DecayBound}--\eqref{GrowthBound} for the CGF of $Y_\theta$ follow if the CGF of $V_\theta$ satisfies the corresponding bounds.
Moreover, in the well-specified case, the condition \eqref{RateFunctionImplicitHessianDefinite}/\eqref{MeanGradientFullRank} simplifies to the condition \eqref{MeanGradientFullRankX=AU}; see below.

In greater detail, write $K_U(t;\theta)=\E(e^{t U_\theta})$ for the MGF of $U$, with domain $\mathcal{T}_\theta = \shortset{t\in\R^{1\times\latentdim}\colon \E(e^{t U})<\infty}$.
As remarked, we have $K(s;\theta) = K_U(s A;\theta)$, which follows from the relation $sX = s(AU) = (sA)U$.
(Here we write $K$ for $K_X$.)

First, suppose that $s\mapsto sA$ maps $\interior\mathcal{S}_\theta$ into $\interior\mathcal{T}_\theta$, and that $K_U''(t;\theta)$ is non-singular for all $t\in\interior\mathcal{T}_\theta$.
Then \eqref{K''PosDef} holds, i.e., $K''(s;\theta)$ is non-singular for all $s\in\interior\mathcal{S}_\theta$.
To see this, differentiate twice and note that if $t=sA$, then $\frac{\partial t_a}{\partial s_i} = A_{ia}$.
Thus
\begin{equation}
\frac{\partial^2 K}{\partial s_i\partial s_j}(s;\theta) = \sum_{a,b=1}^\latentdim \frac{\partial^2 K_U}{\partial t_a\partial t_b}(sA;\theta) A_{ia} A_{jb} = \sum_{a,b=1}^\latentdim A_{ia} \frac{\partial^2 K_U}{\partial t_a\partial t_b}(sA;\theta) A^T_{bj} 
,
\end{equation}
which we recognise as the $i,j$ entry of the $\xdim\times\xdim$ matrix $A K''_U(sA;\theta) A^T$.
Since $K''_U(sA;\theta)$ is positive definite and $A$ has rank equal to the number of its rows, it follows that that $K''(s;\theta)=A K''_U(sA;\theta) A^T$ is non-singular as claimed.

Next, suppose $U=\sum_{i=1}^n V^{(i)}_\theta$, where $V^{(i)}_\theta$ are i.i.d.\ copies of $V_\theta$.
Then $X$ is the sum of $n$ i.i.d.\ terms $Y^{(i)}_\theta = A V^{(i)}_\theta$, as in \eqref{SAR}, and the MGFs $M_0(s;\theta)$ and $M_V(t;\theta)$ are related by $M_0(s;\theta) = M_V(sA;\theta)$.
Suppose that $A$ has rank $\xdim$, and that the analogues of \eqref{DecayBound}--\eqref{GrowthBound} hold in which $M_0$ replaced by $M_V$ and $\delta(s,\theta),\gamma(s,\theta)$ are replaced by $\tilde{\delta}(t,\theta)$ and $\tilde{\gamma}(t,\theta)$.
Then \eqref{DecayBound}--\eqref{GrowthBound} follow.
To see this, note that since $\tilde{s}A\neq 0$ for all $\tilde{s}\neq 0$, there must exist $c>0$ such that $\abs{\tilde{s}A}\geq c\abs{\tilde{s}}$ for all $\tilde{s}\in\R^{1\times\xdim}$.
Then 
\begin{align}
\abs{\frac{M_0(s+\ii\phi;\theta)}{M_0(s;\theta)}} &= \abs{\frac{M_V(sA + \ii\phi A;\theta)}{M_V(sA;\theta)}} 
\notag\\&
\leq (1+\tilde{\delta}(sA, \theta)\abs{\phi A})^{-\tilde{\delta}(sA,\theta)} 
\notag\\&
\leq (1+\tilde{\delta}(sA, \theta)c\abs{\phi})^{-\tilde{\delta}(sA,\theta)}
\label{M0DecayFromMVDecay}
\end{align}
so \eqref{DecayBound} holds with $\delta(s,\theta) = \tilde{\delta}(sA,\theta)\min(c,1)$.
In \eqref{GrowthBound}, each of the partial derivatives can be expressed as finite sums of partial derivatives of $M_V$ times products of entries of $A$.
So each of the partial derivatives in \eqref{GrowthBound} will be continuous and will grow at most polynomially in $\phi$ by an argument similar to \eqref{M0DecayFromMVDecay}, this time using the bound $\abs{\tilde{s}A} \leq C\abs{\tilde{s}}$ (for some constant $C<\infty$) in place of $\abs{\tilde{s}A} \geq c\abs{\tilde{s}}$.

Finally $X=AU$ gives a simple relation between the means, $\E(X_\theta) = A\E(U_\theta)$.
It follows that in the well-specified case, the condition for full identifiability simplifies: \eqref{MeanGradientFullRank} is equivalent to assuming that
\begin{equation}\label{MeanGradientFullRankX=AU}
A\grad_s\grad_\theta K_V(0;\theta_0)\text{ has rank }p.
\end{equation}
Equivalently, \eqref{MeanGradientFullRank} reduces to the condition that that the vectors $A \frac{\partial}{\partial \theta_i}\E(V_\theta)$, $i=1,\dotsc,p$, should be linearly independent (when evaluated at $\theta=\theta_0$).
\end{example}

\begin{example}[Multiple samples]\lbexample{MultipleSamplesWorkings}
Let $\vec{X}_\theta$ be a vector of dimension $\xdim=k \xdim_0$ formed by concatenating $k$ independent sub-blocks $X^{(1)}_\theta,\dotsc,X^{(k)}_\theta \in\R^{\xdim_0\times 1}$.
If the $X^{(j)}_\theta$'s are i.i.d.\ (as in the discussion in \refsubsubsect{MultipleSamples}) and if their common distribution satisfies the assumptions of \eqref{SAR}, then so will $\vec{X}_\theta$, with $\vec{Y}_\theta$ being a concatenation of $k$ independent copies of $Y_\theta$.

More generally, we will consider a situation in which $X^{(j)}_\theta$ is the sum of $n_j$ i.i.d.\ copies of $Y_\theta$, where the numbers $n_j$ are permitted to vary  but where all summands have a common parametric distribution.
To bring this setup closer to the setup of Theorems~\ref{T:GradientError}--\ref{T:IntegerValued}, make the change of variables 
\begin{equation}\label{vecnnvecbeta}
\vec{n}=n\vec{\beta}, \quad n_j=n\beta_j\text{ for $j=1,\dotsc,k$,} \quad\text{where $\vec{\beta}\in(0,\infty)^k$.}
\end{equation}
This scaling envisages a situation in which the $n_j$'s, though not necessarily equal, are all large on a common scale given by $n$.
See \refexample{DavHauKra} for an instance of this setup.
(Note however that $k$ is fixed in this discussion, and we exclude from consideration the joint limit $k\to\infty,n\to\infty$.)
Then we can write
\begin{equation}\label{KvecXFormula}
\vec{K}(\vec{s};\theta) = n \sum_{j=1}^k \beta_j K_0(s_j;\theta)
\end{equation}
for the CGF of $\vec{X}$.
Here $s_j$ denotes the $j^\text{th}$ sub-block of $\vec{s}$, with similar notation for other vector quantities.
For consistency of notation, we also write $\vec{x}$, $\vec{y}$ instead of $x$, $y$, with the relationship $\vec{x}=n\vec{y}$ holding implicitly.
Note that $y_j=x_j/n$, the $j^\text{th}$ sub-block of $\vec{y}$, no longer represents an implied sample mean; it is instead the scaled quantity $x_j/n_j = x_j/(n\beta_j) = y_j / \beta_j$ that has this interpretation.

If $\beta_j$ is not an integer and the distribution of $Y_\theta$ is not infinitely divisible, then $s_j\mapsto \beta_j K_0(s_j;\theta)$ need not be the CGF of a random variable.
Apart from this, however, 
\begin{equation}\label{Kvec0Formula}
\vec{K}_0(\vec{s};\theta) = \sum_{j=1}^k \beta_j K_0(s_j;\theta)
\end{equation}
behaves very much like $K_0$ and still has all the properties necessary for the proofs (for instance, the bounds \eqref{DecayBound}--\eqref{GrowthBound} for $\vec{M}_0$ follow easily from those for $M_0$).
Hence the results of Theorems~\ref{T:GradientError}--\ref{T:IntegerValued} still apply when $K$, $K_0$ are replaced by $\vec{K}$, $\vec{K}_0$, all implicitly depending on $\vec{\beta}$ as in \eqref{vecnnvecbeta}--\eqref{Kvec0Formula}, for $\vec{\beta}$ restricted to some neighbourhood of a given base point $\vec{\beta}_0$.
For instance, to prove \refthm{MLEerror}, we replace the function $F_2$ by 
\begin{multline}
\vec{F}_2(\vec{s}\,^T,\theta;\vec{y},\epsilon,\vec{\beta}) = 
\sum_{j=1}^k \beta_j \grad_\theta^T K_0(s_j;\theta) + \epsilon q_3(\theta,y_j,\epsilon/\beta_j)^T 
\\
+ \epsilon\left( \grad_\theta^T \log\hat{P}(s_j,\theta) - \left( \grad_s\grad_\theta K_0(s_j;\theta) \right)^T K_0''(s_j;\theta)^{-1}\grad_s\log\hat{P}(s_j,\theta) \right)
\end{multline}
in which $1/n$ is replaced by $\epsilon$ and $1/n_j$ is replaced by $\epsilon/\beta_j$.
Then $\vec{F}_2$ is still continuously differentiable in all its variables and the rest of the proof carries through as before.
In particular, since all of our uniform estimates are based on continuous differentiability, they remain valid if $\beta$ is allowed to vary over a compact subset of $(0,\infty)^k$.

Because $\vec{K}$ and $\vec{K}_0$ are sums of functions applied to each $s_j$ separately, their derivatives have simpler block forms.
The saddlepoint equation \eqref{SESAR} becomes 
\begin{equation}\label{SEMultipleSamples}
\beta_j K_0'(\hat{s}_j;\theta) = y_j, \qquad j=1,\dotsc,k;
\end{equation}
we note that $\hat{s}_j$ has the effect of parametrising the implied sample mean for the $j^\text{th}$ sub-block.
The condition \eqref{RateFunctionImplicitCriticalPoint} becomes 
\begin{equation}\label{vecKCriticalPoint}
\sum_{j=1}^k \beta_{0,j} \grad_\theta K_0(s_{0,j};\theta_0) = 0.
\end{equation}
Write $\vec{A}=\vec{K}_0''(\vec{s}_0;\theta_0)$ for the full $\xdim\times\xdim$ matrix of second derivatives with $\vec{\beta}=\vec{\beta}_0$, and write $A_j=K_0''(s_{0,j};\theta_0)$ for the $\xdim_0\times\xdim_0$ matrix corresponding to the $j^\text{th}$ sub-block.
Similarly, write $\vec{B}$, $\vec{H}$ for the quantities from Theorems~\ref{T:MLEerror}--\ref{T:SamplingMLE} based on $\vec{K}_0$ evaluated at $\vec{s}_0$, and write $B_j$, $H_j$ for the analogues of these quantities based on $K_0$ evaluated at $s_{0,j}$.
Then
\begin{equation}\label{vecAvecBvecH}
\begin{gathered}
\vec{A} = 
\mat{\beta_{0,1} A_1 & 0 & \dotsb & 0 \\ 
0 & \beta_{0,2} A_2 & \dotsb & 0 \\ 
\vdots & \vdots & \ddots & \vdots \\
0 & 0 & \dotsb & \beta_{0,k} A_k}
,
\quad
\vec{B} = 
\mat{\beta_{0,1} B_1 \\ 
\beta_{0,2} B_2 \\ 
\vdots\\
\beta_{0,k} B_k}
,
\\
\begin{aligned}
\vec{H} &= \grad_\theta^T \grad_\theta \vec{K}_0(\vec{s}_0;\theta_0) - \vec{B}^T \vec{A}^{-1} \vec{B} 
\\&
= \sum_{j=1}^k \beta_{0,j} \grad_\theta^T \grad_\theta K_0(s_{0,j};\theta_0) - \sum_{j=1}^k \beta_{0,j} B_j^T A_j^{-1} B_j 
= \sum_{j=1}^k \beta_{0,j} H_j.
\end{aligned}
\end{gathered}
\end{equation}

We now restrict our attention to the well-specified case $\vec{s}_0=0$, as in \refsubsubsect{WellSpecified}.
This amounts to the assumption that, in the limit $n\to\infty$, our observed vector $\vec{x}$ will be such that every sub-block's implied sample mean, $y_j/\beta_j$, has approximately the same value, and that this common value coincides with $\E(Y_{\theta_0})$ for some $\theta_0\in\thetadomain$.
Then the matrices $A_j$, $B_j$, $H_j$ in \eqref{vecAvecBvecH} all reduce to the same values $A$, $B$, $H$.
Furthermore, as in the discussion in \refsubsubsect{WellSpecified}, the condition \eqref{RateFunctionImplicitCriticalPoint}/\eqref{vecKCriticalPoint} holds automatically, and the condition \eqref{RateFunctionImplicitHessianDefinite} that $\vec{H}$ (or equivalently $H$) should be negative definite reduces to the condition \eqref{MeanGradientFullRank} that $B=\grad_s \grad_\theta K_0(0;\theta_0)$ should have rank $p$.  

Now suppose that the sub-block model is partially identifiable at the level of the sample mean, i.e., the parameter vector can be split so that \eqref{thetaSplitting} holds for the sub-block distributions $Y_\theta$ and the CGF $K_0$.
Then the full model is also partially identifiable at the level of the sample mean (i.e., \eqref{thetaSplitting} holds with $K_0$ replaced by $\vec{K}_0$ and $\E(Y_\theta)$ replaced by $\frac{1}{n}\E(\vec{X}_\theta)$) because $\E(\vec{X}_\theta)$ depends on $\omega$ and $\vec{\beta}$, but not on $\nu$.

The partially linearised model $\vec{\Xi}_{w,\nu}$ from \refthm{MLEerrorPartiallyIdentifiable} is also formed by concatenating $k$ independent sub-blocks $\Xi^{(j)}_{w,\nu}$ with distributions $\mathcal{N}(\beta_j B_\omega w, \beta_j K''_0(0;\smallmat{\omega_0\\ \nu}))$.
To continue the analysis, we use the explicit conditions \eqref{xi0Constraint}--\eqref{EijFormula} that are equivalent to the MLE condition from \refthm{MLEerrorPartiallyIdentifiable}.
We find $\vec{B}_\omega^T \vec{A}^{-1} \vec{B}_\omega = B_\omega^T A^{-1} B_\omega \sum_{j=1}^k \beta_j$ and
\begin{equation}
\vec{J} = \vec{A}^{-1} + \frac{1}{\sum_{j=1}^k \beta_{0,j}}
\mat{J-A^{-1} & J-A^{-1} & \dotsb & J-A^{-1} \\
J-A^{-1} & J-A^{-1} & \dotsb & J-A^{-1} \\
\vdots & \vdots & \ddots & \vdots \\
J-A^{-1} & J-A^{-1} & \dotsb & J-A^{-1}}
.
\end{equation}
Abbreviate $Q_j=\frac{\partial K_0''}{\partial\nu_j}(0;\theta_0)$ and note that its vector analogue $\vec{Q}_j=\frac{\partial\vec{K}_0''}{\partial\nu_j}$ has the same block-diagonal structure as $\vec{A}$, with diagonal blocks $\beta_{0,1} Q_j,\dotsc,\beta_{0,k} Q_j$.
After some calculation, we find that the condition \eqref{xi0Constraint} reduces to the condition
\begin{multline}\label{xi0ConstraintReduced}
\sum_{\ell=1}^k \frac{\xi_{0,\ell}^T A^{-1} Q_j A^{-1} \xi_{0,\ell}}{\beta_{0,\ell}} 
+ \frac{\bigl( \sum_{\ell=1}^k \xi_{0,\ell} \bigr)^T \left( J Q_j J - A^{-1} Q_j A^{-1} \right) \bigl( \sum_{\ell=1}^k \xi_{0,\ell} \bigr)}{\sum_{\ell=1}^k\beta_{0,\ell}} 
\\
=
k \trace\left( A^{-1} Q_j \right)
,\quad
j=1,\dotsc,p_2,
\end{multline}
while the entries $\vec{E}_{ij}$ from \eqref{EijFormula} reduce to
\begin{multline}\label{EijFormulaReduced}
\vec{E}_{ij} = 
\sum_{\ell=1}^k \frac{\xi_{0,\ell}^T \left( \tfrac{1}{2} A^{-1} Q_{ij} A^{-1} - A^{-1} Q_i A^{-1} Q_j A^{-1} \right) \xi_{0,\ell}}{\beta_{0,\ell}} 
\\
+ \frac{\bigl( \sum_{\ell=1}^k \xi_{0,\ell} \bigr)^T \left( \tfrac{1}{2} (J Q_{ij} J - A^{-1} Q_{ij} A^{-1}) - J Q_i J Q_j J + A^{-1} Q_i A^{-1} Q_j A^{-1} \right) \bigl( \sum_{\ell=1}^k \xi_{0,\ell} \bigr)}{\sum_{\ell=1}^k\beta_{0,\ell}} 
\\
- \frac{k}{2} \trace\left( A^{-1} Q_{ij} - A^{-2} Q_i Q_j \right)
,\quad
i,j=1,\dotsc,p_2.
\end{multline}
Further calculation and cancellation leads to
\begin{equation}
\vec{B}_\omega^T \vec{A}^{-1} \vec{Q}_j \vec{J} = \mat{B_\omega^T A^{-1} Q_j J \; & B_\omega^T A^{-1} Q_j J \; & \dotsb & B_\omega^T A^{-1} Q_j J}
.
\end{equation}
This calculation does not rely on any special features of $\theta_0$, so it applies equally to the vector analogue of the quantity in \eqref{PIOffDiagonalVanishes}.
In particular, unlike \eqref{xi0Constraint}/\eqref{xi0ConstraintReduced} and \eqref{EijFormula}/\eqref{EijFormulaReduced}, the condition in \refthm{MLEerrorPartiallyIdentifiable}\ref{item:PI1/n^2} is the same for all $k$.

Finally we remark that if $\xdim_0=p_1=1$, then $A$ and $B_\omega$ are both scalars, say $A=a$ and $B_\omega=b$, and the scalar $J$ vanishes by commutativity.
In particular, the condition \eqref{xi0Constraint}/\eqref{xi0ConstraintReduced} from \refthm{MLEerrorPartiallyIdentifiable}\ref{item:PI1/n^2} always holds.
However, the $\xdim\times\xdim$ matrix $\vec{J}$ does not vanish unless $k=1$.
\end{example}

\subsection{Examples from the literature}\lbappendix{FurtherLiterature}

\begin{example}[Capture-recapture and administrative datasets]\lbexample{ZhaBraFew}
Zhang, Bravington \& Fewster \cite{ZhaBraFew2019} use saddlepoint methods to analyse capture-recapture models with corrupted or incomplete identity records.
In a conventional capture-recapture experiment, an unknown population of animals is partially sampled at each of $r\geq 2$ different capture occasions.
Recorded sightings are compiled into (observed) \emph{capture histories} for each animal seen in the experiment.
The data vector $X$ records the counts for each possible capture history: for instance, one entry of $X$ counts the number of animals seen at the first and third capture occasions but not seen on any other occasion.
From the counts in $X$, we wish to infer the overall population size, including animals not seen on any capture occasion.
This model can also be interpreted for humans: a capture occasion is an administrative dataset, and capture histories encode entries in different datasets that can be traced back to the same individual. 

When $r\geq 3$, this framework can be expanded to consider misidentification and other mechanisms that can prevent observations of a single individual from being correctly linked.
In the animal context, sightings on different capture occasions may fail to be recognised as the same animal; in the human context, when an individual has records in only some of the datasets, it may be inconclusive whether these records belong to a single individual.
Such mechanisms, and similar models outlined in \refsubsect{IntroExample}, can be expressed by giving each individual a latent capture history fully specifying their status in the experiment (for instance, specifying on which capture occasions an animal was seen, not seen, or seen but misidentified).
Then, with $U$ the vector of counts of latent histories, we can write $X=AU$ as in \refexample{X=AU}, where $A$ is a deterministic matrix encoding the relationship between different observed and latent capture histories.
For instance, in the animal context, one entry of $U$ counts the number of animals seen on the first three capture occasions but misidentified on the third occasion.
Each such animal is counted twice in $X$ (as one animal seen on the first two occasions only, and as another ``ghost'' animal seen on the third occasion only) so the corresponding column of $A$ contains two ones in the appropriate rows, with zeroes otherwise.

There is no natural way to model the observed vector of counts, $X$, directly.
For instance, individuals that contribute to more than one count will tend to make the entries of $X$ positively correlated in parameter-dependent ways, and it is difficult to produce simple models for positively correlated integer-valued vectors.
However, the latent vector $U$ can be modelled naturally, whereupon the distribution of $X$ is specified indirectly via $X=AU$.

The authors model $U$ by the Multinomial$(N,\vec{\rho})$ distribution, with the parameter $N$ representing the unknown overall population size and $\vec{\rho}=\vec{\rho}(\theta')$ representing the probabilities of different latent capture histories, in terms of the other underlying model parameters encoded in $\theta'$.
Models of this kind are typically intractable.
Although each entry of $X$ has a Binomial distribution, the multivariate distribution of $X$ itself is not Multinomial; this arises because of entries of $U$ that contribute to more than one entry of $X$, or equivalently because of columns of $A$ with more than one non-zero entry.
In principle, the likelihood $\P(X_\theta=x)$ can be calculated exactly as a finite sum over non-negative-integer vectors $u$ satisfying $Au=x$, but the number of terms grows very rapidly and this option quickly becomes impractical.
The authors therefore compute saddlepoint MLEs and compare their findings with other estimation techniques.
Note that in practice, some adaptation of the saddlepoint method is necessary when $X$ includes some zero counts; see \cite[section~2]{ZhaBraFew2019}.

To relate their findings to the results of this paper, note that $U$ is the sum of $N$ i.i.d.\ indicator vectors encoding the latent capture histories, so that $X$ is also the sum of $N$ i.i.d.\ vectors and the standard asymptotic regime \eqref{SAR} applies with $n=N$. 
Here the value of $N$ is \emph{unknown} -- indeed it is the main parameter whose value is to be inferred from data.
The inferential setup of Theorems~\ref{T:GradientError}--\ref{T:IntegerValued} does not apply to the Multinomial formulation in \cite{ZhaBraFew2019}, because the unknown parameter $N$ is discrete rather than continuous (although in practice it is common to optimise over $N$ as if it were a continuous variable).
Nevertheless, our results suggest that inference on $N$ will have little approximation error, particularly if $N$ is large.

This is consistent with the authors' empirical findings.
In one model they consider, a combinatorial reformulation \cite{ValFewCarPat2014} allows the true likelihood to be computed as a (much smaller) finite sum.
Then the true and saddlepoint MLEs can be compared directly, and the authors find that the true and saddlepoint estimates are virtually indistinguishable \cite[section~3]{ZhaBraFew2019}.
For instance, their Figure~1 illustrates the conclusion of \refthm{SamplingMLE}: the joint sampling distribution of the true and saddlepoint MLE lies so close to the diagonal that the approximation error is invisible compared to the scale of sampling variability.
Their Figure~2, meanwhile, illustrates that the accuracy, or otherwise, of the saddlepoint approximation to the likelihood is not a direct guide to the accuracy of the saddlepoint MLE.
In another model \cite[section~4]{ZhaBraFew2019}, no exact calculation is available, and previous analysis employed an ad-hoc estimator for $N$ and a quasi-likelihood approach for the other model parameters.
The saddlepoint MLE gives virtually the same estimates for the other parameters \cite[Table~1 and Figure~4]{ZhaBraFew2019}, and also yields an estimator for $N$ that shows improved confidence interval coverage properties in a simulation study \cite[Figure~4 and section~4.4]{ZhaBraFew2019}.

We can adapt the model from \cite{ZhaBraFew2019} to the inferential setup of Theorems~\ref{T:GradientError}--\ref{T:IntegerValued} by recasting the discrete parameter $N$ as a random variable with a continuous parameter, $N\sim\Poisson(\lambda)$.
This is a common alternative assumption in capture-recapture models: $\lambda$ represents the population size intensity, and if $\lambda$ is large then $\lambda\approx N$.
Then the entries of $U$ have independent Poisson distributions with parameter vector $\lambda \vec{\rho}$.
If $\lambda$ is large and the entries of $\vec{\rho}$ are bounded away from 0, the counts in $X$ will all be large on a common scale $\lambda$.

Given an observed vector $x$ of counts, we can match the setup of \eqref{SAR} by setting $n=\max_i x_i$, so that $y=x/n$ is of order 1.
Make the substitution $\lambda=n\tilde{\lambda}$.
Following the notation of \refexample{X=AU}, if we set $V$ to be the vector of independent Poisson distributions with parameter vector $\tilde{\lambda}\vec{\rho}$ and set $U$ to be the sum of $n$ i.i.d.\ copies of $V$, then the entries of $U$ will have independent Poisson distributions with parameter vector $n\tilde{\lambda}\vec{\rho}=\lambda \vec{\rho}$, as desired.
In this rescaling, the the relevant values of $\tilde{\lambda}$ will be of order 1, and the parameter vector $\theta=\smallmat{\tilde{\lambda}\\ \theta'}$ does not scale with $n$.
Thus the framework of \eqref{SAR} applies.

For all the models the authors consider, the analogue of \eqref{DecayBound} holds because $\P(X_\theta=x)>0$ for all $x\in\set{0,1}^{\xdim\times 1}$, $\theta\in\thetadomain$, and for all $n$ sufficiently large, as in the discussion following \refthm{IntegerValued}.
Likewise \eqref{GrowthBound} holds because $\vec{\rho}(\theta')$ depends smoothly on the other parameters $\theta'$.
In particular, the integer-valued analogue of \refthm{GradientError} applies.

To show that Theorems~\ref{T:MLEerror}--\ref{T:SamplingMLE} apply, it suffices to verify \eqref{y0K0's0}--\eqref{RateFunctionImplicitHessianDefinite}.
That is, for given $\theta_0$ and $s_0$ satisfying \eqref{RateFunctionImplicitCriticalPoint}, we should determine whether the matrix $H$ from \eqref{RateFunctionImplicitHessianDefinite} is negative definite.
If it is, let $y_0$ be determined by \eqref{y0K0's0}, and then Theorems~\ref{T:MLEerror}--\ref{T:SamplingMLE} apply for the triple $(s_0,\theta_0,y_0)$.

For simplicity, we might choose to restrict our attention to the well-specified case $s_0=0$, as in \refthm{SamplingMLEWellSpecified} and \refsubsubsect{WellSpecified}.
Then it suffices to verify \eqref{MeanGradientFullRank}, and by the discussion in \refexample{X=AU} this amounts to verifying whether the vector $A\vec{\rho}(\theta'_0)$ and the vectors $A\frac{\partial\vec{\rho}}{\partial\theta'_j}(\theta'_0)$ (over all choices of parameter entries $\theta'_j$ other than $\tilde{\lambda}$) are linearly independent.
If they are, then Theorems~\ref{T:MLEerror}--\ref{T:SamplingMLEWellSpecified} apply for this choice of $\theta_0$.

The conditions just outlined are model-specific, and verifying them analytically becomes rather intricate even in the well-specified case and is not attempted here. 
However, the conditions can be investigated numerically, either for specific choices of $s_0$, $\theta_0$ or by searching the parameter space to find problematic examples.
Preliminary calculations suggest that in the models discussed here, \eqref{RateFunctionImplicitHessianDefinite} holds for \emph{all} choices of $\theta_0$ in the well-specified case $s_0=0$, and at least for a reasonably wide range of choices of $(s_0,\theta_0)$ beyond the well-specified case.
For the purposes of the present discussion, we may safely assume that Theorems~\ref{T:MLEerror}--\ref{T:SamplingMLEWellSpecified} apply.

By Theorems~\ref{T:MLEerror}--\ref{T:SamplingMLEWellSpecified}, the difference between the true and saddlepoint MLEs for the rescaled parameter $\tilde{\lambda}$ is of asymptotic order $1/n^2$.
(Strictly speaking, the results in this paper refer only to MLEs within a small neighbourhood, leaving open the possibility that the global true or saddlepoint likelihoods could have multiple competing local maxima or no global maximum.
However, the authors make no mention of any such problem, and we will ignore the issue for the present discussion.)
Reversing the rescaling $\lambda=n\tilde{\lambda}$, we conclude that the relative approximation error for the population size will be of order $1/n^2$.
This is small compared to the inferential uncertainty, which is of order $1/\sqrt{n}$ on the same relative scale.

Finally we remark that in the capture-recapture-type experiments described here, we naturally obtain only a single observation, rather than a sample of many i.i.d.\ observations.
The informational content of the experiment comes from having large observed counts, rather than many counts.
In this context, the natural way to evaluate the model's asymptotic properties would seem to be \eqref{SAR} rather than the usual large-sample limit.
In a similar way, the natural criterion for assessing the model's asymptotic identifiability would seem to be not the usual non-singularity of the Fisher information matrix, but rather the condition \eqref{RateFunctionImplicitHessianDefinite} from Theorems~\ref{T:MLEerror}--\ref{T:SamplingMLE}.
\end{example}

\begin{example}[Branching processes]\lbexample{DavHauKra}
Davison, Hautphenne \& Kraus \cite{DavHauKraParameterLinearBirthDeath} consider a continuous-time birth-death branching process $Z(t)$ that is observed at discrete times $t_0<t_1<\dotsb<t_k$.
The model parameters are the underlying per-individual birth and death rates.
The initial population is treated as fixed, and using the remaining $k$ observations the authors numerically compute true and saddlepoint MLEs for the per-individual birth and death rates.
They also compare the resulting MLEs to parameter estimates based on classical estimators for the mean and variance of Galton-Watson offspring distributions (which apply only when inter-observation intervals have equal length) and to a new estimator obtained via the saddlepoint approximation.

In more detail, the process $Z(t)$ is the continuous-time discrete-state Markov chain which, from state $i\in\Z_+$, either increases by 1 at rate $\lambda i$, or decreases by 1 at rate $\mu i$.
The parameters $\lambda$ and $\mu$ are the birth and death rates, respectively, per individual per unit time, and are encoded in the parameter vector $\theta$ (though we postpone the decision about precisely how).
We require both $\lambda$ and $\mu$ to be positive.
The population size is measured at $k+1$ deterministic times $t_0<t_1<\dotsb<t_k$, resulting in observed data $z_0,z_1,\dotsc,z_k$, modelled by the random $(k+1)$-dimensional vector $(Z_\theta(t_0),Z_\theta(t_1),\dotsc,Z_\theta(t_k))$.
Throughout, the initial population size is taken as fixed, so that all distributions are understood to be conditional on the event $\set{Z_\theta(t_0)=z_0}$, and the observed population sizes are all assumed to be positive.

This model is a Markov chain, and the univariate saddlepoint approximation can be applied to each of the transition probabilities.
To express their univariate setup in our notation, let $Y_{\theta,j}$ have the distribution of $Z_\theta(t_j)$ conditional on $Z_\theta(t_{j-1})=1$.
Let $Y^{(i)}_{\theta,j}$ denote copies of $Y_{\theta,j}$, independently across all $i$ and $j$, and set 
\begin{equation}
X_{\theta,j,n_j}=\sum_{i=1}^{n_j} Y^{(i)}_{\theta,j}.
\end{equation}
Then, by the branching property, $X_{\theta,j,n_j}$ has the distribution of the population size started from $n_j$ individuals.
If we set $n_j=z_{j-1}$, the likelihood can be factored as
\begin{align}
\P_\theta \! \condparentheses{Z(t_j)=z_j,j=1,\dotsc,k}{Z(t_0)=z_0} &= \prod_{j=1}^k \P_\theta \! \condparentheses{Z(t_j)=z_j}{Z(t_{j-1})=z_{j-1}} 
\notag\\&
= \prod_{j=1}^k \P(X_{\theta,j,n_j}=z_j)
.
\label{DHKFactorisation}
\end{align}
The authors apply the saddlepoint approximation to each factor $\P(X_{\theta,j,n_j}=z_j)$ separately.
In our notation, this amounts to forming $\vec{X}_\theta = (X_{\theta,j,n_j})_{j=1}^k$ and setting $\vec{x}=(z_j)_{j=1}^k$, $n_j=z_{j-1}$.
Note that for $1\leq j<k$, each $z_j$ appears twice, once as $n_{j+1}$ and once as $x_j$.
However, this affects neither the correctness of \eqref{DHKFactorisation}, which is an instance of the Markov property, nor the calculation of the MLEs $\theta_{\MLE}(\vec{x}),\hat{\theta}_{\MLE}(\vec{x})$ for a fixed but arbitrary $\vec{x}$.
Indeed, the right-hand side of \eqref{DHKFactorisation} is a well-defined likelihood whether or not $x_j=n_{j+1}$.

The authors conduct extensive numerical calculations to assess the accuracy of their approximations.
They find that the approximation error in the MLE is moderate even with small population sizes, and can be improved further -- to the point that the estimates become virtually indistinguishable -- by approximating instead distributions conditioned on non-extinction.
Furthermore, the saddlepoint approximation delivers a dramatic increase in speed, with computation times 50-150 times faster or more \cite[Table~2]{DavHauKraParameterLinearBirthDeath}.
This increase becomes even more marked when the population size increases: exact calculation becomes increasingly expensive, whereas the saddlepoint approximation can be computed in constant time \cite[Web Figure~2]{DavHauKraParameterLinearBirthDeath}.
The authors also compute approximate MLEs based on the normal approximation, as in \refthm{MLEerrorNormalApprox}.
They show that in the case of equal inter-observation intervals, these approximate MLEs coincide with non-likelihood-based estimators based on classical moment estimators for general Galton-Watson branching processes.
Moreover, even for general inter-observation intervals for which the classical estimators do not apply, they show that approximate MLEs based on the normal approximation satisfies consistency and asymptotic normality in a large-sample limit \cite[section~5, Lemma~2 and Theorem~3]{DavHauKraParameterLinearBirthDeath}.

To match with the setup of this paper, note that \eqref{SAR} applies to each each $X_{\theta,j,n_j}$ separately, with $n$ replaced by $n_j$.
It is interesting to note that the values $n_j$ are determined by the data \emph{values}, not the sampling mechanism or the size of the dataset.

Because the $n_j$ vary, we will use the framework of \refexample{MultipleSamplesWorkings}, with $\xdim_0=1$, $\xdim=k$.
For convenience and to match with \refexample{MultipleSamplesWorkings}, we will henceforth restrict our attention to the case where the inter-observation intervals are equal, $t_j-t_{j-1}=t$ for all $j$, so that all the summands $Y^{(i)}_{\theta,j}$, $j=1,\dotsc,k$, have a common parametric distribution, which we can interpret as an offspring distribution for a Galton-Watson branching process.
(Our analysis, however, should apply without significant change to the general case.)
The scaling \eqref{vecnnvecbeta} amounts to the assumption that all observed population sizes $z_j$ are of the same large order; this is what we would anticipate observing if the initial population size $z_0$ became large, with other quantities remaining fixed.

From \refexample{MultipleSamplesWorkings}, it is enough to study the univariate CGF $K_0$ associated to $Y_{\theta,j}$.
The authors show \cite[section~2]{DavHauKraParameterLinearBirthDeath} that $Y_{\theta,j}$ has a modified Geometric distribution: by time $t$, the population is extinct with probability $\alpha$ and otherwise follows a Geometric distribution (taking values in $\N$) with success probability $1-q$, where
\begin{equation}\label{alphaqFormula}
\alpha=\frac{\mu (e^{(\lambda-\mu) t}-1)}{\lambda e^{(\lambda-\mu) t}-\mu}, \qquad q = \frac{\lambda(e^{(\lambda-\mu)t}-1)}{\lambda e^{(\lambda-\mu) t}-\mu}
.
\end{equation}
Note that $\alpha$, $q$ depend on $\theta$ and $t$, and we suppress this dependence from our notation.
When $\lambda=\mu$, different formulas apply, which amount to extending the expressions in \eqref{alphaqFormula} by continuity at their removable singularities; here and elsewhere, we shall omit this step.

(We remark that the modified Geometric family of distributions is an exponential family.
For this two-parameter family, a bivariate sufficient statistic is given by $(Y, \indicator{Y=0})$, rather than $Y$ alone, so this example does not fall within the scope of \refsubsubsect{ExpFamily}.
Changing the summands from $Y$ to $(Y, \indicator{Y=0})$ would mean that, in addition to knowing the population sizes at each observation time, we would know the number of individuals who, since the previous observation time, died without giving birth to new children.
This is a quite different and simpler inferential setup; cf.\ the discussion in \cite[section~1]{DavHauKraParameterLinearBirthDeath}.)

The corresponding MGF and CGF for the modified Geometric distributions are
\begin{equation}\label{DHKM0K0}
\begin{gathered}
M_0(s;\theta) = \alpha + (1-\alpha)\frac{(1-q)e^s}{1-q e^s}
,
\\
K_0(s;\theta) = \log\left( \alpha+(1-q-\alpha)e^s \right) - \log\left( 1-qe^s \right)
,
\end{gathered}
\end{equation}
defined for $s < -\log q$.
Because $M_0$ depends smoothly on $\theta$, via $\alpha$ and $q$ from \eqref{alphaqFormula}, and because the probabilities $\P(Y_{\theta,j}=0)$ and $\P(Y_{\theta,j}=1)$ are both non-zero, the condition \eqref{GrowthBound} and the analogue of the condition \eqref{DecayBound} needed for \refthm{IntegerValued} hold.
We find $\mathcal{S}_\theta=(-\infty,-\log q)$, $\mathcal{X}_\theta=(0,\infty)$ and $\mathcal{X}^o=\mathcal{X}=(0,\infty)\times\thetadomain$, and indeed the authors give an explicit formula for $\hat{s}(\theta,x)$ defined for all $\theta\in\thetadomain$ and $x\in(0,\infty)$; see \cite[Lemma~1]{DavHauKraParameterLinearBirthDeath}.

As in \refexample{MultipleSamplesWorkings}, it follows that the conclusions of Theorems~\ref{T:GradientError}--\ref{T:IntegerValued} apply to $\vec{X}_\theta$.
\refthm{GradientError} applies without further hypotheses, and to assess the applicability of Theorems~\ref{T:MLEerror}--\ref{T:MLEerrorNormalApprox} we should next determine whether \eqref{y0K0's0}--\eqref{RateFunctionImplicitHessianDefinite}, or \eqref{thetaSplitting} and \eqref{xi0Constraint}--\eqref{EijFormula}, apply for particular choices of $\vec{y}_0$, using the simplifications to these conditions derived in \refexample{MultipleSamplesWorkings}.

We will primarily consider values for $\vec{y}_0$ that correspond to the well-specified case $\vec{s}_0=0$, as in \refsubsubsect{WellSpecified}.
We compute
\begin{equation}\label{DHKMeanFormula}
K_0'(0;\theta) = e^{(\lambda-\mu)t}
\end{equation}
and, as in \eqref{SEMultipleSamples}, the condition $\vec{y}_0=\vec{K}_0'(0;\theta_0)$ from \eqref{y0K0's0} reduces to $y_{0,j}=\beta_{0,j} e^{(\lambda_0-\mu_0)t}$ for all $j$.
Thus $\vec{s}_0=0$ means that, when $n$ is large, we should have
\begin{equation}
e^{(\lambda_0-\mu_0)t} = K_0'(0;\theta_0) = \frac{y_{0,j}}{\beta_{0,j}} \approx \frac{y_j}{\beta_j} = \frac{x_j}{n\beta_j} = \frac{x_j}{n_j} = \frac{z_j}{z_{j-1}} \quad\text{for }j=1,\dotsc,k.
\end{equation}
In other words, in the well-specified case with $n$ large, we should expect observed population sizes that change by a fixed common factor $e^{(\lambda_0-\mu_0)t}$.
Moreover, by choosing $\theta_0$ appropriately, we can arrange for the factor $e^{(\lambda_0-\mu_0)t}$ to take on any positive value.
(More generally, if the inter-observation intervals are not of equal length, the condition \eqref{y0K0's0} with $\vec{s}_0=0$ amounts to the condition that the observed population sizes should grow exponentially in time with some fixed exponential growth parameter.)
It seems reasonable to restrict ourselves to this case: any branching process model with finite offspring mean, even if it is not the birth-death model described here, will produce population sizes following the same pattern in the limit $n\to\infty$.

As discussed in \refexample{MultipleSamplesWorkings}, the condition \eqref{RateFunctionImplicitHessianDefinite} applied to $\vec{K}$ reduces when $\vec{s}_0=0$ to the condition that $\grad_s\grad_\theta K_0(0;\theta_0)$ should have rank $p$.
This condition fails: the matrix $\grad_s\grad_\theta K_0(0;\theta_0)$ is $\xdim_0\times p = 1\times 2$ and therefore cannot have rank $p=2$.
The model is therefore not fully identifiable at the level of the sample mean in the well-specified case.
(However, when $\vec{s}_0$ is not identically zero, a convexity argument shows that $\vec{H}$ from \eqref{vecAvecBvecH} is negative definite, so that \eqref{RateFunctionImplicitHessianDefinite} holds outside of the well-specified case.)

We therefore turn to Theorems~\ref{T:MLEerrorPartiallyIdentifiable}--\ref{T:MLEerrorNormalApprox} and seek to split the parameter vector as in \eqref{thetaSplitting}.
Recalling \eqref{DHKMeanFormula}, we can choose
\begin{equation}
\omega = \lambda - \mu,
\end{equation}
the Malthusian parameter for this model.
Various choices for $\nu$ are possible, such as $\nu=\mu$ or $\nu=\lambda$ or even, when the inter-observation intervals are equal, $\nu=(\lambda+\mu)\frac{e^{\omega t}-1}{\omega}e^{\omega t}$, which gives the variance $K_0''(0;\theta)$ of the offspring distribution.
We shall select 
\begin{equation}
\nu=\lambda+\mu, 
\end{equation}
the total rate of branching events per individual per unit time, which is closely related to the offspring variance.
Note however that there is essentially no freedom in the choice of $\omega$, which must be an invertible function of $K_0'(0;\theta)=e^{(\lambda-\mu)t}$.

With this choice of $\theta=\smallmat{\omega\\ \nu}$, we have $p_1=p_2=1$ and the condition that the rates $\lambda$, $\mu$ are both positive means that we set
\begin{equation}\label{DHKRFormula}
\thetadomain=\set{\theta\in\R^{2\times 1}\colon -\nu < \omega < \nu} = \set{\theta\in\R^{2\times 1}\colon \nu > \abs{\omega}}.
\end{equation}
Using the notation of \refexample{MultipleSamplesWorkings}, we compute
\begin{equation}
A = \nu_0 \frac{e^{\omega_0 t}-1}{\omega_0}e^{\omega_0 t}, \quad B_\omega = te^{\omega_0 t}, \quad Q_1 = \frac{e^{\omega_0 t}-1}{\omega_0}e^{\omega_0 t}, \quad Q_{11} = 0
,
\end{equation}
and as in \refexample{MultipleSamplesWorkings} we find $J=0$ since $\xdim_0=p_1=1$.
In particular, the scalar $B_\omega$ is non-zero; that is, $B_\omega$ is an $\xdim_0\times p_1=1\times 1$ matrix of rank $p_1=1$, as required for Theorems~\ref{T:MLEerrorPartiallyIdentifiable}--\ref{T:MLEerrorNormalApprox}.

The partially linearised model $\vec{\Xi}_{w,\nu}$ has independent entries $\Xi^{(j)}_{w,\nu} \sim \mathcal{N}(c \beta_j w, c' \beta_j \nu )$ for certain constants $c,c'$ depending only on $\omega_0$ and $t$.
Here the effect of the parameters splits, with $w$ affecting only the mean and $\nu$ affecting only the variance.
We can foresee that to have an MLE $(w_0,\nu_0)$ for $\vec{\Xi}_{w,\nu}=\xi_0$, we will need $\nu_0$ to make the model variances match a suitable population variance coming from $\xi_0$.
The factors $\beta_j$ complicate the calculation, however, and to continue we will use the explicit workings from \refexample{MultipleSamplesWorkings}.

Since $Q_1$ is a non-zero scalar we may cancel it from \eqref{xi0ConstraintReduced} to obtain
\begin{equation}
\sum_{\ell=1}^k \frac{\xi_{0,\ell}^2}{\beta_{0,\ell}} - \frac{(\sum_{\ell=1}^k \xi_{0,\ell})^2}{\sum_{\ell=1}^k \beta_{0,\ell}} = kA
\end{equation}
or, after some rearranging,
\begin{equation}\label{DHKxi0ConditionRearranged}
\nu_0 \frac{e^{\omega_0 t}-1}{\omega_0}e^{\omega_0 t} = \frac{\sum_{\ell=1}^k \beta_{0,\ell}}{k} \sum_{i=1}^k \frac{\beta_{0,i}}{\sum_{\ell=1}^k\beta_{0,\ell}} \left( \frac{\xi_{0,i}}{\beta_{0,i}} - \frac{\sum_{j=1}^k \xi_{0,j}}{\sum_{\ell=1}^k \beta_{0,\ell}} \right)^2
.
\end{equation}
This is the condition that the offspring variance should equal a multiple of an appropriately weighted population variance constructed out of the values $\xi_{0,i}/\beta_{0,i}$, $i=1,\dotsc,k$.
If $k>1$ and the values $\xi_{0,i}/\beta_{0,i}$ are not identically constant, then the right-hand side of \eqref{DHKxi0ConditionRearranged} is positive and we can always find $\nu_0>0$ satisfying \eqref{DHKxi0ConditionRearranged}.
However, the point $\theta_0=\smallmat{\omega_0\\ \nu_0}$ will fail to belong to $\thetadomain$, see \eqref{DHKRFormula}, if $\omega_0\neq 0$ and the right-hand side of \eqref{DHKxi0ConditionRearranged} is too small.
(Roughly speaking, for any given non-zero growth rate $\omega_0$, the model posits a non-zero minimum for the possible offspring variances, given by the boundary values $\lambda=\omega_0$, $\mu=0$ if $\omega_0>0$ or $\mu=\abs{\omega_0}$, $\lambda=0$ if $\omega_0<0$.
If the weighted population variance in \eqref{DHKxi0ConditionRearranged} does not exceed this minimal offspring variance, then we will be unable to solve \eqref{DHKxi0ConditionRearranged} with $\theta_0\in\thetadomain$.
The existence of vectors $\xi_0$ for which this occurs indicates that, even in the limit $n\to\infty$, it remains possible to observe data patterns that push the MLE to the boundary of $\thetadomain$.
This possibility would be ruled out, however, if in addition $k\to\infty$.)

Either way, a quick calculation shows that if \eqref{DHKxi0ConditionRearranged} has a solution then the scalar $\vec{E}=E_{11}$ is negative, as required for \refthm{MLEerrorPartiallyIdentifiable}.
Moreover, as discussed in \refexample{MultipleSamplesWorkings}, $\xdim_0=p_1=1$ implies that the condition \eqref{PIOffDiagonalVanishes} holds identically.

All in all, if we can solve \eqref{DHKxi0ConditionRearranged} for given $\xi_0$ with $\theta_0\in\thetadomain$, then the conclusions of Theorems~\ref{T:MLEerrorPartiallyIdentifiable}--\ref{T:MLEerrorNormalApprox} apply whenever $n$ is sufficiently large and the observed values are sufficiently close to the base points, in the manner specified by \eqref{PIChangeOfVariables}.
In particular, the saddlepoint MLE introduces an approximation error that is $O(1/n^2)$ for $\omega$ and $O(1/n)$ for $\nu$, while the approximate MLE based on the normal approximation introduces an error $O(1/n)$ for $\omega$ and $O(1/\sqrt{n})$ for $\nu$.

Restrict attention now to the saddlepoint approximation and return to the rates $\lambda = \tfrac{1}{2}(\nu+\omega)$ and $\mu=\tfrac{1}{2}(\nu-\omega)$.
Note that the approximation error for $\nu$, of size $O(1/n)$, swamps the approximation error for $\omega$, of size $O(1/n^2)$, giving an overall approximation error $O(1/n)$ for both $\lambda$ and $\mu$.
This reflects an intrinsic difference between the parameter $\omega$ and other parameters, rather than a feature of the saddlepoint approximation per se.
Indeed, the authors' theoretical results for estimators based on the normal approximation (or equivalently, as they discuss, based on classical Galton-Watson moment estimators) illustrate that different scaling results apply to estimators for $\omega$ compared to other parameters.
Roughly speaking, they show that the estimator for $\omega$ is asymptotically normal on a spatial scale $1/\sqrt{nk}$, while the estimators for $\lambda$ and $\mu$ are asymptotically normal on a spatial scale $1/\sqrt{k}$; see \cite[Theorems~1--2]{DavHauKraParameterLinearBirthDeath}.
Note that their results, which concern only sampling variability and not approximation error, apply to a different limiting framework than the results of this paper: they consider the limit $k\to\infty$, which for $\omega>0$ implies that also $n\to\infty$ with $n\approx e^{k \omega t}$.
In particular, $n\gg k$, so that the estimator for $\omega$ becomes dramatically more accurate than the separate estimates for $\lambda$ and $\mu$.

We remark that in the limit of long time series, $k\to\infty$, this model exhibits inhomogeneity: the numbers of summands $n_j$, $j=1,\dotsc,k$, can no longer be expected to have a common scale $n$.
Indeed, whenever $\omega\neq 0$, we would expect $\frac{\max_j n_j}{\min_j n_j}\approx e^{k\abs{\omega} t} \to\infty$ as $k\to\infty$.
Our results do not apply in such a limit, which has been excluded from consideration in this paper.
However, the heuristics of the proofs suggest that in such a limit, the common scaling parameter $n$ must be replaced by two distinct scaling parameters corresponding to $\sum_{j=1}^k n_j$ and $\sum_{j=1}^k 1/n_j$.

A situation where some but not all $n_j$'s tend to infinity may nevertheless be tractable.
For instance, the authors argue \cite[Web Appendix~B]{DavHauKraParameterLinearBirthDeath} that if the $n_j$ grow exponentially (which is what we would expect for a supercritical branching process, on the event of survival) then the approximation error in the likelihood remains bounded even in the limit $k\to\infty$.
We remark that results from this paper, particularly \refprop{PddtPError}, provide the missing uniform estimates needed for their argument; see \cite[assumption (A) in Web Appendix~B]{DavHauKraParameterLinearBirthDeath}.

Finally we mention another approximation that the authors consider.
The likelihood in \eqref{DHKFactorisation} can be approximated instead by applying the saddlepoint approximation to the conditional probability $\condP{X_{\theta,j,n_j}=z_j}{X_{\theta,j,n_j}\neq 0}$ (this is possible because both $\P(X_{\theta,j,n_j}=0)$ and the conditional CGF for $X_{\theta,j,n_j}$ conditioned to be non-zero can be calculated, although the resulting saddlepoint equation must be solved numerically).
The authors call the resulting approximation the adjusted saddlepoint approximation.
Heuristically, the adjustment accounts for the fact that $X_{\theta,j,n_j}$ may have an unusual point mass at 0, corresponding to the fact that 0 is an absorbing state for the Markov chain $Z(t)$ (indeed, for $\omega>0$ and $t\to\infty$, the distribution of $Z(t)$ is a non-trivial mixture of a point mass at 0 together with a distribution which, if rescaled by $e^{\omega t}$, converges to a limiting density).
The authors' numerical results confirm that the adjusted saddlepoint approximation gives higher accuracy than the ordinary saddlepoint approximation for this model. 
However, the results of this paper do not apply to the adjusted saddlepoint approximation because conditioning on $X\neq 0$ brings us out of the setting of \eqref{SAR}.
\end{example}

\begin{example}[INAR$(p)$ model]\lbexample{PedDavFok}
Pedeli, Davison \& Fokianos \cite{PedDavFok2015} fit time series data to an integer-valued autoregressive process given by $Z_i\sim\Binomial(Z_{i-1},q)+\epsilon_i$.
That is, the process undergoes Binomial thinning at each step, counteracted by i.i.d.\ integer-valued ``innovations'' $\epsilon_i$, and the parameter vector $\theta$ specifies the thinning probability $q$ as well as any parameters for the distribution of the $\epsilon_i$'s.
As in \refexample{DavHauKra}, the authors apply the saddlepoint approximation to the one-step transition probabilities $\P_\theta \! \condparentheses{Z_i=z_i}{Z_{i-1}=z_{i-1}}$, compute the saddlepoint MLE by maximising the resulting approximate likelihood, and study the accuracy (which is high compared to the sampling variability, see \cite[supplementary materials, Figures~SM10--SM11]{PedDavFok2015}) via simulation.

In our notation, for each step, we take $X$ to have the conditional distribution of $Z_i$ given $Z_{i-1}=z_{i-1}$, and $x=z_i$; and these univariate saddlepoint approximation are multiplied across steps as in \eqref{DHKFactorisation}.
This model does not quite fall into the standard asymptotic regime because of the innovations $\epsilon_i$: each $X$ is the sum of $z_{i-1}+1$ independent summands, but only $z_{i-1}$ of them are identically distributed.
If the innovations were absent (or if $\epsilon_i\sim\Binomial(k,q)$ with the same parameter $q$) then the setup of \eqref{SAR} would apply to each step, with $n=z_{i-1}$ (or $n=z_{i-1}+k$) and $Y$ having the Bernoulli$(q)$ distribution.
As in \refexample{DavHauKra}, we could adopt the setup of \refexample{MultipleSamplesWorkings} by using vectors $\vec{X}$, $\vec{x}$, $\vec{n}$, with $x_i=z_i$ and $n_i=z_{i-1}$.
Note that, as in \refexample{DavHauKra}, these values $n_i$ would be determined by the time series \emph{data values} that are being fit to the model, not the sample size.

In fact, the authors' main interest is in autoregressive processes of order $p\geq 2$, in which $Z_i$ is a sum based on the preceding $p$ values $Z_{i-1},Z_{i-2},\dotsc,Z_{i-p}$, all Binomially thinned with respective probabilities $q_1,\dotsc,q_p$, together with independent innovations.
Although its purpose is different, we might liken this model to that of \refexample{DavHauKra} by describing it as a multitype branching process with immigration.
The likelihood still has a factorisation across steps, similar to \eqref{DHKFactorisation}, and a univariate saddlepoint approximation can be applied at each step.
The random variables $X$ will now be a sum of $p+1$ independent terms: the innovation $\epsilon_i$, and the Binomial contributions from the preceding values (each of which individually follows the setup of \eqref{SAR}, for various values of $n$).
The results in this paper therefore provide a starting point for the analysis of this more elaborate scenario, and it is reasonable to expect similar results to apply.
\end{example}

\subsection{Specific examples from theory}

\begin{example}[Poisson distribution]\lbexample{Poisson}
Let $X_\lambda\sim\Poisson(\lambda)$, corresponding to
\begin{equation}
\begin{gathered}
K(s;\lambda) = \lambda (e^s-1), \quad \hat{s}(\lambda,x) = \log(x/\lambda),
\\
\hat{L}(\lambda;x) = \hat{f}(x;\lambda) = \frac{\exp(\lambda(x/\lambda-1) - x\log(x/\lambda))}{\sqrt{2\pi \lambda(x/\lambda)}} = e^{-\lambda}\lambda^x\frac{(e/x)^x}{\sqrt{2\pi x}}.
\end{gathered}
\end{equation}
Note that $\hat{f}$ is well-defined for $x>0$ even though $X$ does not have a density, and $\hat{f}(x;\lambda)$ is a reasonable approximation to $\P(X_\lambda=x)$ for $x\in\N$, corresponding to Stirling's approximation to $x!$ with relative error of order $1/x$.
Note also that as an approximation to the likelihood, the saddlepoint approximation is essentially exact for all $x>0$ since it has the form $c(x)\P(X_\lambda=x)$ where $c(x)$ depends on $x$ but not $\lambda$.
In particular, the saddlepoint MLE is exact for $x>0$, in accordance with the reasoning in \refsubsubsect{ExpFamily}.

If instead we write $Y_\theta\sim\Poisson(\theta)$ and $X=\sum_{i=1}^n Y^{(i)}_\theta$ in the setup of \eqref{SAR}, then $X\sim\Poisson(n\theta)$.
This amounts to a reparametrisation $\lambda=n\theta$, and if we expect an observation $x$ of order $n$ we can set $x=ny$ with $y>0$ of constant order, as in \eqref{SAR}.
Indeed the relative error in the likelihood, for an observed value $x=ny$, is of order $O(1/x)=O(1/ny)=O(1/n)$ for fixed $y>0$, as in \eqref{LErrorRough}.

The normal approximation for this model is $\tilde{X}_\lambda\sim\mathcal{N}(\lambda, \lambda)$.
To compute the MLE for an observed value $x$ under the normal approximation, as in \refthm{MLEerrorNormalApprox}, we solve for $\lambda>0$ in
\begin{equation}
0 = \frac{\partial}{\partial\lambda}\left( -\frac{(x-\lambda)^2}{2\lambda} - \frac{\log(2\pi\lambda)}{2} \right) = -\frac{\lambda^2+\lambda-x^2}{2\lambda^2},
\end{equation}
leading to
\begin{equation}
\tilde{\lambda}_\MLE(x) = \frac{\sqrt{4x^2+1}-1}{2} = \sqrt{x^2+1/4}-1/2
\end{equation}
provided $x\neq 0$.
If we substitute $x=ny$, $\lambda=n\theta$, then the normal approximation MLE for $\theta$ is $\tilde{\theta}_\MLE(x,n)=\sqrt{y^2+1/4n^2}-1/2n$.
If $y>0$, then the true MLE for $\theta$ is exactly $\theta_\MLE(x,n)=y$, so that the error in the normal approximation MLE is $O(1/n)$ in accordance with \refthm{MLEerrorNormalApprox}.
Note however that the normal approximation also leads to a well-behaved MLE for $x<0$, $y<0$, in complete contrast to the original Poisson model for which values $x<0$, $y<0$ are outside of the convex hull of the support.
This illustrates the remark following \refthm{MLEerrorNormalApprox} that away from the subset of $y$-values corresponding to true model means, the behaviour of the normal approximation MLE may be entirely different to that of the true MLE.
\end{example}

\begin{example}[Gamma distribution]\lbexample{Gamma}
Let $Z$ have the Gamma distribution with shape parameter $\alpha$ and rate parameter $r$.
As is well known, the Gamma family is an exponential family of distributions.
However, the details of the saddlepoint approximation, and the way in which the conclusions of \refsubsubsect{ExpFamily} apply to MLEs, vary depending on the exact choice of $X$.

Write $Y_1=Z$ and let $X_1$ be the sum of $n$ i.i.d.\ copies of $Y_1$. 
Then $X_1$ will have the Gamma$(n\alpha,r)$ distribution, and we find
\begin{equation}\label{MKhatshatLBasicGamma}
\begin{gathered}
M_{Y_1}(s;r,\alpha) = \left( \frac{r}{r-s} \right)^\alpha,
\quad
K_{Y_1}(s;r,\alpha) = \alpha \log r - \alpha\log(r-s)
\quad
\text{for $\Re(s)<r$},
\\
\hat{s}_1 = r-\frac{\alpha}{y}, 
\quad 
\hat{L}_{1,n}(r,\alpha;x) = r^{n\alpha} x^{n\alpha - 1} e^{-rx} \frac{e^{n\alpha}}{(n\alpha)^{n\alpha-1/2}\sqrt{2\pi}}
\quad
\text{for $x,y>0$}
.
\end{gathered}
\end{equation}
As a function of $r$, $\hat{L}_{1,n}(r,\alpha;x)$ is a constant multiple of the true likelihood, so using $\hat{L}_{1,n}$ to approximate the MLE for $r$ will be exact.
However, the last factor in $\hat{L}_{1,n}$ is not a constant multiple of $1/\Gamma(n\alpha)$ -- in fact it is the reciprocal of Stirling's approximation to $\Gamma(n\alpha)$ -- so using $\hat{L}_{1,n}$ to approximate the MLE for $\alpha$ will not be exact.
In the framework of \refsubsubsect{ExpFamily}, this corresponds to the observation that $X_1$ (or $Y_1$) is the sufficient statistic for the sub-family in which $r$ varies but $\alpha$ is fixed.

If we set $Y_2=\log Z$ and again let $X_2$ be the sum of $n$ i.i.d.\ copies of $Y_2$, then
\begin{equation}
M_{Y_2}(t;r,\alpha) = r^{-t}\frac{\Gamma(\alpha+t)}{\Gamma(\alpha)}, \quad K_{Y_2}'(t;r,\alpha) = \psi(\alpha+t) - \log r
\quad
\text{for $\Re(t)>-\alpha$},
\end{equation}
where $\psi(z)=\Gamma'(z)/\Gamma(z)$ denotes the digamma function.
The saddlepoint equation has no elementary solution but can be solved numerically.
Now $Y_2$ is the sufficient statistic for the sub-family in which $\alpha$ varies and $r$ is fixed, so using $\hat{L}_{2,n}$ to find the MLE for $\alpha$ will be exact.

Finally we can set $Y_3 = (Z, \log Z)^T$, the bivariate sufficient statistic for the entire Gamma exponential family:
\begin{equation}
\begin{gathered}
M_{Y_3}(s, t; r, \alpha) = \frac{r^\alpha}{(r-s)^{\alpha+t}}\frac{\Gamma(\alpha+t)}{\Gamma(\alpha)}
\qquad
\text{for $\Re(s)<r$, $\Re(t)>-\alpha$}
,
\\
\frac{\partial K_{Y_3}}{\partial s}(s, t; r, \alpha) = \frac{\alpha+t}{r-s},
\quad
\frac{\partial K_{Y_3}}{\partial t}(s, t; r, \alpha) = \psi(\alpha+t) - \log(r-s).
\end{gathered}
\end{equation}
The saddlepoint equation does not have an elementary solution, but the quantities $\hat{\alpha}=\hat{\alpha}_3 = \alpha + \hat{t}_3$ and $\hat{r}=\hat{r}_3 = r - \hat{s}_3$ will depend only on the observed value $x_3$ and not on $\alpha$ or $r$.
Specifically, if we write $x_3 = ny_3$ with $y_3=(\bar{z},\bar{\ell})^T$, then $\hat{\alpha},\hat{r},\hat{L}_{3,n}$ are determined by
\begin{equation}\label{SEGammaBivariate}
\begin{gathered}
\psi(\hat{\alpha}) - \log\hat{\alpha} = \bar{\ell} - \log\bar{z}, 
\qquad 
\hat{r} = \frac{\hat{\alpha}}{\bar{z}}, 
\\
\hat{L}_{3,n}(r,\alpha;x_3) = 
\left( \frac{r^\alpha e^{\alpha\bar{\ell}-r\bar{z}}}{\Gamma(\alpha)} \right)^n \frac{\hat{\alpha} \left[ \Gamma(\hat{\alpha}) e^{\hat{\alpha}(1-\psi(\hat{\alpha}))} \right]^n}{2\pi n\bar{z}\sqrt{\hat{\alpha}\psi'(\hat{\alpha})-1}}
.
\end{gathered}
\end{equation}
Consulting for instance \cite[equation 8.361.3]{GRToISP2007}, we have
\begin{equation}\label{GRpsiFormula}
\psi(\alpha) - \log\alpha = -\frac{1}{2\alpha} - \int_0^\infty \frac{2t\, dt}{(t^2+\alpha^2)(e^{2\pi t}-1)},
\end{equation}
and in particular we see that the function $\alpha\mapsto \psi(\alpha)-\log\alpha$ is strictly increasing and maps $(0,\infty)$ onto $(-\infty,0)$.
So \eqref{SEGammaBivariate} will have a solution if and only if $\bar{z}>0$ and $\bar{\ell} < \log\bar{z}$.

In practice, this means that the saddlepoint approximation for $X_3$ can only be applied when $n\geq 2$.
For the case $n=1$, the values of $Y_3$ will lie on the curve $\bar{\ell}=\log\bar{z}$ where $\hat{\alpha}$ is undefined, and $Y_3$ does not have a density; likewise if we take $\bar{z}\decreasesto e^{\bar{\ell}}$, we can verify that $\hat{\alpha}\to\infty$ and $\hat{L}_{3,n}\to\infty$.
On the other hand, as soon as $n\geq 2$, $X_3$ is supported in the region where $\bar{\ell}<\log\bar{z}$ (this is Jensen's inequality applied to the summands $Y_3^{(1)},\dotsc,Y_3^{(n)}$) and $\hat{L}_{3,n}$ is finite there.
As in \refsubsubsect{ExpFamily}, as soon as $n\geq 2$, using $\hat{L}_{3,n}$ to find the MLEs for both $r$ and $\alpha$ will be exact.
\end{example}

\begin{example}[Gamma subfamily with full identifiability]\lbexample{GammaFI}
Let $Y_\theta\in\R$ have the Gamma distribution with parameters $\alpha=\theta$, $r=1$, where $\theta\in(0,\infty)=\thetadomain$ and $\mathcal{S}_\theta=(-\infty,1)$ for all $\theta\in\thetadomain$.
Form $X_\theta\in\R$ by summing $n$ i.i.d.\ copies of $Y_\theta$, so that $X_\theta$ has the $\mathrm{Gamma}(n\theta,1)$ distribution.
Let $\vec{X}_\theta\in\R^{k\times 1}$ be the vector whose entries are $k$ i.i.d.\ copies of $X_\theta$.
This is the situation of \refexample{MultipleSamplesWorkings} with $n_j=n$, $\beta_j=1$ for $j=1,\dotsc,k$, and 
\begin{equation}
K_0(s;\theta) = - \theta\log(1-s)
.
\end{equation}
Note that $X$ and $Y$ reduce to $X_1$ and $Y_1$ from the first part of \refexample{Gamma}.
Here, however, $Y$ is not the sufficient statistic corresponding to the sub-family in which $r$ is fixed and $\alpha$ varies, so the logic of \refsubsubsect{ExpFamily} does not apply and, as we shall see, the MLE is not exact.

It is readily seen that $M_0$ and all its derivatives all take the form $C(1-s)^{-c_1}(\log(1-s))^{c_2}$ for various $C,c_1,c_2$ depending smoothly on $\theta$, and the bounds \eqref{DecayBound}--\eqref{GrowthBound} follow.
The Hessian from \eqref{RateFunctionImplicitHessianDefinite} is the $1\times 1$ matrix (i.e., scalar) $H=-1/\theta_0$, which is negative definite for all choices of $\theta_0$.
Consequently this model falls into the well-specified case from Theorems~\ref{T:MLEerror}--\ref{T:SamplingMLE}.

As in \refexample{MultipleSamplesWorkings}, set $x_i=n y_i$ for $i=1,\dotsc,k$, and let $\vec{x}\in\R^{k\times 1}$ be the vector with entries $x_i$.
From \eqref{MKhatshatLBasicGamma} we find that if $x_i,y_i>0$ for all $i$, then
\begin{gather}
\log\hat{L}_{\vec{X}}(\theta;\vec{x}) = nk\theta - nk\theta\log\theta + \frac{k}{2}\log\theta - \frac{k}{2}\log(2\pi n) + \sum_{i=1}^k \left( (n\theta-1)\log y_i - n y_i \right),
\notag\\
\frac{\partial}{\partial\theta}\log\hat{L}_{\vec{X}}(\theta;\vec{x}) = nk\left( -\log\theta + \frac{1}{2n\theta} + \frac{1}{k}\sum_{i=1}^k \log y_i \right)
.
\label{GammaFIgradloghatL}
\end{gather}
The lower-order approximation $\log\hat{L}^*$ from \refsubsect{LowerOrder} omits non-leading terms:
\begin{gather}
\log\hat{L}^*_{\vec{X}}(\theta;\vec{x}) = nk\theta - nk\theta\log\theta + n\sum_{i=1}^k \left( \theta\log y_i - y_i \right)
,
\notag\\
\frac{\partial}{\partial\theta}\log\hat{L}^*_{\vec{X}}(\theta;\vec{x}) = nk\left( -\log\theta + \frac{1}{k}\sum_{i=1}^k \log y_i \right)
.
\label{GammaFIgradloghatL*}
\end{gather}
On the other hand, we can easily compute the true likelihood:
\begin{gather}
\log L_{\vec{X}}(\theta;\vec{x}) = k(n\theta-1)\log n - k\log\Gamma(n\theta) + \sum_{i=1}^k \left( (n\theta-1)\log y_i - n y_i \right)
,
\notag\\
\frac{\partial}{\partial\theta}\log L_{\vec{X}}(\theta;\vec{x}) = nk\left( \log n - \psi(n\theta) + \frac{1}{k}\sum_{i=1}^k \log y_i \right)
,
\end{gather}
where again $\psi(z)=\Gamma'(z)/\Gamma(z)$ denotes the digamma function; using \eqref{GRpsiFormula}, this becomes
\begin{equation}\label{GammaFIgradlogL}
\frac{\partial}{\partial\theta}\log L_{\vec{X}}(\theta;\vec{x}) = nk\left( -\log\theta + \frac{1}{2n\theta} + \int_0^\infty \frac{2t\, dt}{(t^2+n^2\theta^2)(e^{2\pi t}-1)} + \frac{1}{k}\sum_{i=1}^k \log y_i \right)
.
\end{equation}
It is readily verified that the (global) MLEs $\hat{\theta}_\MLE(\vec{x},n)$, $\theta_\MLE(\vec{x},n)$, $\hat{\theta}^*_\MLE(\vec{x},n)$ are given by the unique values of $\theta$ for which the derivatives in \eqref{GammaFIgradloghatL}, \eqref{GammaFIgradloghatL*}, or \eqref{GammaFIgradlogL}, respectively, become zero.
The formulas \eqref{GammaFIgradloghatL} and \eqref{GammaFIgradlogL} lead to implicit equations with no elementary solution, but from \eqref{GammaFIgradloghatL*} we find
\begin{equation}\label{GammaFIhattheta*}
\hat{\theta}^*_\MLE(\vec{x},n) = \left( \prod_{i=1}^k y_i \right)^{1/k},
\end{equation}
the geometric mean of the implied sample means $y_1,\dotsc,y_k$.

Note that the bracketed terms in \eqref{GammaFIgradloghatL} and \eqref{GammaFIgradloghatL*} differ from the bracketed term in \eqref{GammaFIgradlogL} by terms of order $1/n^2$ and $1/n$, respectively.
Using the Implicit Function Theorem it follows that $\hat{\theta}_\MLE(\vec{x},n)$ and $\hat{\theta}^*_\MLE(\vec{x},n)$ differ from the true MLE $\theta_\MLE(\vec{x},n)$ by an error of order $1/n^2$ and $1/n$, respectively.
For instance, we can explicitly compare $\hat{\theta}_\MLE$ with $\hat{\theta}^*_\MLE$: from $-\log\hat{\theta}_\MLE + \frac{1}{2n\hat{\theta}_\MLE} + \log\hat{\theta}^*_\MLE = 0$ we find 
\begin{equation}\label{GammaFIhatthetavshattheta*Explicit}
\hat{\theta}_\MLE = \hat{\theta}^*_\MLE\exp\left( \frac{1}{2n\hat{\theta}^*_\MLE}\exp\left( -\frac{1}{2n\hat{\theta}_\MLE} \right) \right)=\hat{\theta}^*_\MLE + \frac{1}{2n}+O(n^{-2})
,
\end{equation}
with a difference of order $1/n$ as asserted.
These errors match with the assertions of Theorems~\ref{T:MLEerror} and \ref{T:LowerOrderSaddlepoint}\ref{item:LOMLE} and show that those asymptotic bounds are sharp in general.

We can also compare with MLE obtained from the normal approximation, as in \refsubsubsect{NormalApproxResult}.
The normal approximation for this model is the vector $\tilde{\vec{X}}_\theta$ whose $k$ entries are i.i.d.\ drawn from the $\mathcal{N}(n\theta,n\theta)$ distribution.
A computation similar to that of \refexample{Poisson} leads to
\begin{equation}\label{GammaFIMLENormalApprox}
\tilde{\theta}_\MLE(\vec{x},n) = \sqrt{\frac{1}{k} \sum_{i=1}^k y_i^2 +\frac{1}{4n^2}} - \frac{1}{2n}
.
\end{equation}

If we make the rescaling $x_i=\theta' n + \xi_i\sqrt{n}$, $y_i=\theta'+\xi_i/\sqrt{n}$ as in \eqref{PIChangeOfVariables}, with $\theta'\in(0,\infty)$ and $\vec{\xi}$ taken to be of constant order, then we find
\begin{align}
\label{GammaFIMLENormalPICV}
&\tilde{\theta}_\MLE(\vec{x},n) 
= \theta' + \frac{\overline{\xi}}{\sqrt{n}} + \frac{1}{2n\theta'}\left( \overline{\xi^2} - (\overline{\xi})^2 - \theta' \right) - \frac{\overline{\xi}}{2 n^{3/2}(\theta')^2}\left( \overline{\xi^2}-(\overline{\xi})^2 \right) + O(n^{-2})
,
\end{align}
where we have written $\overline{\xi} = \frac{1}{k}\sum_{i=1}^k \xi_i$, $\overline{\xi^2} = \frac{1}{k}\sum_{i=1}^k \xi_i^2$ for the sample moments corresponding to the sample $\xi_1,\dotsc,\xi_k$.
By way of comparison, expanding $\hat{\theta}^*_\MLE(x,n)=\theta'\exp\left( \frac{1}{k}\sum_{i=1}^k\log(1+\xi/\sqrt{n}\theta') \right)$ leads to
\begin{multline}
\hat{\theta}^*_\MLE(\vec{x},n) 
\\
= \theta' + \frac{\overline{\xi}}{\sqrt{n}} - \frac{1}{2n\theta'}\left( \overline{\xi^2} - (\overline{\xi})^2 \right) + \frac{1}{6n^{3/2}(\theta')^2}\left( 2\overline{\xi^3} - 3\overline{\xi} \, \overline{\xi^2} + (\overline{\xi})^3 \right) + O(n^{-2})
,
\end{multline}
where $\overline{\xi^3}=\frac{1}{k}\sum_{i=1}^k \xi_i^3$, and \eqref{GammaFIhatthetavshattheta*Explicit} leads to
\begin{multline}\label{GammaFIMLEPICV}
\theta_\MLE(\vec{x},n) = \hat{\theta}_\MLE(\vec{x},n) + O(n^{-2})
\\
= \theta' + \frac{\overline{\xi}}{\sqrt{n}} - \frac{1}{2n\theta'}\left( \overline{\xi^2} - (\overline{\xi})^2 - \theta' \right) + \frac{1}{6n^{3/2}(\theta')^2}\left( 2\overline{\xi^3} - 3\overline{\xi} \, \overline{\xi^2} + (\overline{\xi})^3 \right) + O(n^{-2})
.
\end{multline}
In particular, in this scaling regime, $\tilde{\theta}_\MLE(x,n)$ differs from the true MLE by a term of order $1/n$ (for fixed $k$, on the assumption that the population variance of the sample $\xi_1,\dotsc,\xi_k$ does not come unusually close to $\theta'$) in accordance with the assertions of \refthm{MLEerrorNormalApprox} about $\omega$.
(We remark that \refthm{MLEerrorNormalApprox} includes the case $p_1=p$, $p_2=0$ where the model is fully identifiable at the level of the sample mean.
So does \refthm{MLEerrorPartiallyIdentifiable}, but its conclusions are weaker than those of \refthm{MLEerror} when both theorems apply.)

Since we have explicit formulas for all values of $k$, we can examine the interplay between the limit $n\to\infty$ from \eqref{SAR} and the large-sample limit $k\to\infty$ for this model.
Suppose as in \refsubsubsect{MultipleSamples} that the entries $x_i$ are themselves random and generated independently according to the distribution $X_{\theta_0}$ with true parameter value $\theta_0$.
(Note that under this assumption, the rescaled entries $y_i$ follow the $\mathrm{Gamma}(n\theta_0,n)$ distribution, not the distribution $Y_{\theta_0}$.)
Then we naturally make the change of variables \eqref{PIChangeOfVariables} with $\theta'=\theta_0$, and in the limit $n\to\infty$ the entries $\xi_i$ will be i.i.d.\ with limiting distribution $\mathcal{N}(0,\Var(Y_{\theta'})) = \mathcal{N}(0,\theta')$.
Then \eqref{GammaFIMLENormalPICV}--\eqref{GammaFIMLEPICV} show that the sampling distributions of $\hat{\theta}_\MLE$, $\hat{\theta}^*_\MLE$, $\tilde{\theta}_\MLE$ and $\theta_\MLE$ all concentrate around the true parameter, with variability on a scale of order $1/\sqrt{nk}$.
Thus both $k\to\infty$ and $n\to\infty$ contribute to reducing the sampling variability of all the estimators.

Consider now the approximation error, where the saddlepoint-based MLE approximations $\hat{\theta}_\MLE$ and $\hat{\theta}^*_\MLE$ show different behaviour to the normal-based MLE approximation $\tilde{\theta}_\MLE$.
The approximation errors for $\hat{\theta}_\MLE$ and $\hat{\theta}^*_\MLE$ arise because of the differences of order $1/n^2$ and $1/n$ between \eqref{GammaFIgradloghatL}/\eqref{GammaFIgradloghatL*} and \eqref{GammaFIgradlogL}.
Taking $k\to\infty$ means that the random term $\frac{1}{k}\sum_{i=1}^k\log y_i$ will have smaller sampling variability, but the asymptotic sizes of the saddlepoint approximation errors will be unaffected and remain of order $1/n^2$ or $1/n$.
In particular, the saddlepoint approximation errors are uniform in $k$.

The normal approximation error improves in the limit $k\to\infty$.
To see this, note that under our assumptions, $\xi_i$ are drawn independently with variance $\Var(Y_{\theta'})=\theta'$.
It follows that $\overline{\xi^2}-(\overline{\xi})^2-\theta'$ is of order $1/\sqrt{k}$ and the approximation error is of order $1/n\sqrt{k}$, with random fluctuations.
However, this result is not robust to mis-specification: if the data do not have sample variances asymptotically equal to their sample means, $\overline{\xi^2}-(\overline{\xi})^2-\theta'$ will converge to a non-zero determinstic constant as $k\to\infty$, and the approximation error will remain of order $1/n$.
Furthermore, the result relies implicitly on the assumption that all the implied sample means $y_1,\dotsc,y_k$ are asymptotically equal to leading order.
This is not apparent in \eqref{GammaFIMLENormalPICV}, which is based on the change of variables $y_i=\theta'+\xi_i/\sqrt{n}$ for which the leading-order term is constant by construction.
Instead, return to \eqref{GammaFIMLENormalApprox}.
We see that $\tilde{\theta}_\MLE(x,n)$ depends only on the root-mean-square average of the entries $y_i$, whereas from \eqref{GammaFIgradloghatL}--\eqref{GammaFIhattheta*} we see that $\hat{\theta}_\MLE(\vec{x},n)$, $\theta_\MLE(\vec{x},n)$, $\hat{\theta}^*_\MLE(\vec{x},n)$ depend only on the geometric mean of the entries $y_i$.
When the $y_i$ are not all asymptotically equal, these two averages will be asymptotically different as $n\to\infty$, and $\tilde{\theta}_\MLE(x,n)$ will converge to a limit that differs from the common limiting value of $\hat{\theta}_\MLE(\vec{x},n)$, $\theta_\MLE(\vec{x},n)$, and $\hat{\theta}^*_\MLE(\vec{x},n)$.
This is another instance of the remark following \refthm{MLEerrorNormalApprox}: away from the curve $\set{\vec{y}_0=(\theta',\dotsc,\theta')^T\colon \theta'\in\thetadomain}$, the normal approximation model $\theta\mapsto\tilde{\vec{X}}_\theta$ differs considerably from the true model $\theta\mapsto\vec{X}_\theta$.
By way of comparison, the saddlepoint approximation errors are uniform in $k$ and $\vec{x}$ so long as the geometric mean of $y_1,\dotsc,y_k$ remains confined to a compact subset of $(0,\infty)$.
\end{example}

\begin{example}[Gamma subfamily with partial identifiability]\lbexample{GammaPI}
Let $Y_\theta\in\R$ have the Gamma distribution with parameters $\alpha=\theta$, $r=\theta$, where $\theta\in(0,\infty)=\thetadomain$ and $\mathcal{S}_\theta=(-\infty,\theta)$.
Form $X_\theta\in\R$ by summing $n$ i.i.d.\ copies of $Y_\theta$, so that $X_\theta$ has the $\mathrm{Gamma}(n\theta,\theta)$ distribution.
Let $\vec{X}_\theta\in\R^{k\times 1}$ be the vector whose entries are $k$ i.i.d.\ copies of $X_\theta$.
This is the situation of \refexample{MultipleSamplesWorkings} with $n_j=n$, $\beta_j=1$ for $j=1,\dotsc,k$, and 
\begin{equation}
K_0(s;\theta) = \theta\log\theta - \theta\log(\theta-s)
.
\end{equation}

By construction, $\E(Y_\theta)=K_0'(0;\theta)=1$ for all $\theta$.
Therefore, in the well-specified case $s_0=0$, this model falls into the partially identifiable case from Theorems~\ref{T:MLEerrorPartiallyIdentifiable}--\ref{T:MLEerrorNormalApprox}, with $p_1=0$, $p_2=1$ and $\nu=\theta$.
(We remark that Theorems~\ref{T:MLEerrorPartiallyIdentifiable}--\ref{T:MLEerrorNormalApprox} both apply in the boundary case $p_1=0,p_2=p$.)

Following the scaling \eqref{PIChangeOfVariables}, set $x_i=n+\xi_i\sqrt{n}$, $y_i=1+\xi_i/\sqrt{n}$ for $i=1,\dotsc,k$, and let $\vec{x}\in\R^{k\times 1}$ be the vector with entries $x_i$.
From \eqref{MKhatshatLBasicGamma} we find
\begin{gather}
\log\hat{L}_{\vec{X}}(\theta;\vec{x}) = \frac{k}{2}\log\theta - \frac{k}{2}\log(2\pi n) + \sum_{i=1}^k \left( (n\theta-1)\log\left( 1+\tfrac{\xi_i}{\sqrt{n}} \right) - \theta\xi_i\sqrt{n} \right),
\notag\\
\frac{\partial}{\partial\theta}\log\hat{L}_{\vec{X}}(\theta;\vec{x}) = \frac{k}{2\theta} + \sum_{i=1}^k n\left( \log\left( 1+\tfrac{\xi_i}{\sqrt{n}} \right) - \tfrac{\xi_i}{\sqrt{n}} \right)
.
\label{GammaPIgradloghatL}
\end{gather}
On the other hand, we can easily compute the true likelihood:
\begin{align}
\log L_{\vec{X}}(\theta;\vec{x}) &= k\left( \big. n\theta\log(n\theta) - n\theta - \log\Gamma(n\theta) \right) - k\log n 
\notag\\&\quad
+ \sum_{i=1}^k \left( (n\theta-1)\log\left( 1+\tfrac{\xi_i}{\sqrt{n}} \right) - \theta\xi_i\sqrt{n} \right)
,
\notag\\
\frac{\partial}{\partial\theta}\log L_{\vec{X}}(\theta;\vec{x}) &= nk\left( \log(n\theta) - \psi(n\theta) \right) + \sum_{i=1}^k n\left( \log\left( 1+\tfrac{\xi_i}{\sqrt{n}} \right) - \tfrac{\xi_i}{\sqrt{n}} \right)
\end{align}
where again $\psi(z)=\Gamma'(z)/\Gamma(z)$ denotes the digamma function; using \eqref{GRpsiFormula}, this becomes
\begin{equation}\label{GammaPIgradlogL}
\frac{\partial}{\partial\theta}\log L_{\vec{X}}(\theta;\vec{x}) = \frac{k}{2\theta} + \int_0^\infty \frac{2nk t\, dt}{(t^2+n^2\theta^2)(e^{2\pi t}-1)} + \sum_{i=1}^k n\left( \log\left( 1+\tfrac{\xi_i}{\sqrt{n}} \right) - \tfrac{\xi_i}{\sqrt{n}} \right)
.
\end{equation}
Finally we can compute the normal approximation likelihood based on $\tilde{X}_i\sim\mathcal{N}(1,n/\theta)$:
\begin{equation}\label{GammaPIgradlogtildeL}
\log\tilde{L}_{\vec{X}}(\theta;\vec{x}) = \frac{k}{2}\log\frac{\theta}{2\pi n} - \frac{\theta}{2} \sum_{i=1}^k \xi_i^2,
\quad
\frac{\partial}{\partial\theta}\log\tilde{L}_{\vec{X}}(\theta;\vec{x}) = \frac{k}{2\theta} - \frac{1}{2}\sum_{i=1}^k \xi_i^2
.
\end{equation}
It is readily verified that, whenever $\vec{\xi}$ is not identically zero, the (global) MLEs $\hat{\theta}_\MLE(\vec{x},n)$, $\theta_\MLE(\vec{x},n)$, $\tilde{\theta}_\MLE(\vec{x},n)$ are given by the unique values of $\theta$ for which the derivatives in \eqref{GammaPIgradloghatL}, \eqref{GammaPIgradlogL}, or \eqref{GammaPIgradlogtildeL}, respectively, become zero.

Note that \eqref{GammaPIgradloghatL} and \eqref{GammaPIgradlogL} differ by a term of order $1/n$.
Using the Implicit Function Theorem it follows that $\hat{\theta}_\MLE(\vec{x},n) - \theta_\MLE(\vec{x},n)$ is of order $1/n$.
Similarly, \eqref{GammaPIgradloghatL} and \eqref{GammaPIgradlogtildeL} differ by a term of order $1/\sqrt{n}$.
Direct computation verifies that $\tilde{\theta}_\MLE(\vec{x},n)-\hat{\theta}_\MLE(\vec{x},n)$, and consequently also $\tilde{\theta}_\MLE(\vec{x},n)-\theta_\MLE(\vec{x},n)$, is of order $1/\sqrt{n}$.
These conclusions match the assertions from Theorems~\ref{T:MLEerrorPartiallyIdentifiable}--\ref{T:MLEerrorNormalApprox} about $\nu$, and show that those assertions are sharp in general.

We remark also that $n$ and $k$ affect the inference problem in non-overlapping ways: taking $n\to\infty$ decreases the approximation error, while taking $k\to\infty$ decreases the inferential uncertainty.
However, increasing $k$ does not affect the approximation error, since the values of the MLEs, and the differences between them, depend only on certain sample means and not on $k$ directly.
Likewise, increasing $n$ does not change the steepness of the likelihood function (in a Bayesian setup) and does not substantially affect the sampling variability (in a frequentist setup) of the sample means on which the MLEs depend.
\end{example}

\begin{example}[Normal distribution]\lbexample{Normal}
Let $X_\theta\sim\mathcal{N}(\mu,\Sigma)\in\R^{\xdim\times 1}$ be multivariate normal with mean vector $\mu\in\R^{\xdim\times 1}$ and symmetric positive-definite covariance matrix $\Sigma\in\R^{\xdim\times\xdim}$.
We allow $\mu=\mu(\theta)$ and $\Sigma=\Sigma(\theta)$ to vary depending on underlying model parameters, which we encode in the vector $\theta$, and we require this dependence to be smooth.
For the initial part of our discussion, however, the nature of this dependence need not concern us and we will suppress it from the notation.

We compute 
\begin{equation}
\begin{gathered}
M(s;\theta) = \exp\left( s \mu + \tfrac{1}{2} s \Sigma s^T \right), 
\qquad
K(s;\theta) = s \mu + \tfrac{1}{2} s \Sigma s^T, 
\\
K'(s;\theta) = \mu + \Sigma s^T,
\qquad
K''(s;\theta) = \Sigma,
\qquad
\hat{s}(x,\theta) = (x-\mu)^T \Sigma^{-1},
\end{gathered}
\end{equation}
with $\mathcal{S}_\theta=\R^{1\times\xdim}$ for all $\theta$.
This leads to
\begin{align}
\hat{s}(x,\theta) \Sigma \hat{s}(x,\theta)^T &= \left( x-\mu \right)^T \Sigma^{-1} \Sigma \Sigma^{-1} \left( x-\mu \right) = \left( x-\mu \right)^T \Sigma^{-1} \left( x-\mu \right)
\end{align}
and
\begin{align}
\hat{L}(\theta;x) 
&= \frac{\exp\left( \hat{s}(\theta,x) \mu + \frac{1}{2} \hat{s}(x,\theta) \Sigma \hat{s}(x,\theta)^T - \hat{s}(\theta,x) x \right)}{\sqrt{\det(2\pi \Sigma)}}
\notag\\&
= \frac{\exp\left( \left( x-\mu \right)^T \Sigma^{-1} \left( \mu - x \right) + \frac{1}{2} \left( x-\mu \right)^T \Sigma^{-1} \left( x-\mu \right) \right)}{\sqrt{\det(2\pi \Sigma)}}
\notag\\&
=\frac{\exp\left( - \frac{1}{2} \left( x-\mu \right)^T \Sigma^{-1}\left( x-\mu \right) \right)}{\sqrt{\det(2\pi \Sigma)}}
.
\label{NormalSaddlepointf}
\end{align}
In particular, the saddlepoint approximation is exact for the normal distribution, for all choices of parameters.

To match the setup of \eqref{SAR}, we can take $X$ to be the sum of $n$ i.i.d.\ copies of $Y_\theta$ with $Y_\theta\sim\mathcal{N}(\tilde{\mu},\tilde{\Sigma})$.
As in \refexample{Poisson}, this amounts to the reparametrisation $\mu=n\tilde{\mu}$, $\Sigma=n\tilde{\Sigma}$, where we assume that $\tilde{\mu}=\tilde{\mu}(\theta)$ and $\tilde{\Sigma}=\tilde{\Sigma}(\theta)$ depend smoothly on $\theta$ and $\tilde{\Sigma}(\theta)$ is positive definite for all $\theta\in\thetadomain$.
It is straightforward to verify that the hypotheses of \refthm{GradientError} apply.
To verify \eqref{DecayBound}, note that
\begin{align}
\abs{\frac{M_0(s+\ii\phi;\theta)}{M_0(s;\theta)}} &= \abs{\exp\left( \ii\phi \tilde{\mu}(\theta) + \ii\phi\tilde{\Sigma}(\theta) s^T - \tfrac{1}{2}\phi \tilde{\Sigma}(\theta) \phi^T \right)} = \exp\left( - \tfrac{1}{2}\phi \tilde{\Sigma}(\theta) \phi^T \right).
\end{align}
Since $\tilde{\Sigma}(\theta)$ is positive definite, there is a number $c(\theta)$ such that $\tfrac{1}{2}\phi \tilde{\Sigma}(\theta) \phi^T \geq c(\theta)\abs{\phi}^2$ for all $\phi\in\R^{1\times\xdim}$, and indeed it suffices to choose $c(\theta)$ to be half the smallest eigenvalue of $\tilde{\Sigma}(\theta)^{-1}$, which is positive and varies continuously as a function of $\tilde{\Sigma}(\theta)$ and hence of $\theta$.
Then the estimate $e^{-q}\leq 1/(1+q)$ for $q>-1$ gives $\abs{\frac{M_0(s+\ii\phi;\theta)}{M_0(s;\theta)}}\leq (1+c(\theta)\abs{\phi}^2)^{-1}$, so that we can take $\delta(s,\theta)=\min(c(\theta),1)$.
For \eqref{GrowthBound}, note that we can consider $K_0(s+\ii\phi;\theta)$ as a single-valued function (without branch cuts arising from the complex logarithm) for all $\phi$.
Since we assumed that $\tilde{\mu}$ and $\tilde{\Sigma}$ depended smoothly on $\theta$, it is immediate that all of the partial derivatives of $K_0(s+\ii\phi;\theta)$ (of order at most 6) are continuous and grow at most polynomially in $\abs{\phi}$, and indeed will be bounded by $C(\theta)(1+\abs{\phi})^2$ for some continuous function $C(\theta)$ given in terms of $\tilde{\mu}(\theta)$, $\tilde{\Sigma}(\theta)$ and their partial derivatives.
The partial derivatives of $M_0$ can be expressed in terms of $M_0$ and the partial derivatives of $K_0$, so \eqref{GrowthBound} follows.
A closer look at this argument shows that $\tilde{\mu}$ and $\tilde{\Sigma}$ need only be twice continuously differentiable.

Consider now the MLE problem for an observation $x$ of $X_\theta$.
The nature of the MLE, if any, will depend on how the distributional parameters $\mu$ and $\Sigma$, or equivalently $\tilde{\mu}$ and $\tilde{\Sigma}$, depend on the model parameters $\theta$.
If the parameter vector $\theta$ allows all values of $\mu$ and $\Sigma$ (which can be arranged by taking $\theta$ in a suitable open subset of $\R^{(\xdim+\xdim(\xdim+1)/2)\times 1}$) then no MLE will exist, for any value of $n$.
This is no surprise, because, for any value of $n$, we observe only one $\xdim$-dimensional vector $x$, which does not contain enough information to identify both $\mu$ and $\Sigma$.
(The summands $Y^{(1)},\dotsc,Y^{(n)}$ that go into $X$ contain a total of $n\xdim$ scalar entries, so they would provide enough information to identify both $\mu$ and $\Sigma$ as soon $n\geq 1+\frac{1}{2}(\xdim+1)$, but this does not help us because these summands are not observed.)
Indeed, the likelihood will approach its supremum $\infty$ in the limit where $\mu=x$ and $\Sigma\to 0$.

\textit{A fully identifiable submodel.}
An MLE may exist if the parametrisation keeps $\Sigma$ bounded away from 0.
A simple example is where $\tilde{\Sigma}$ is fixed and $\theta$ parametrises the unknown mean $\E(Y_\theta)$ of the summands, $\tilde{\mu}(\theta)=\theta$.
Then it is elementary to verify $\theta_\MLE(x,n)=x/n$.
Since the saddlepoint approximation to the likelihood is exact, it follows that the saddlepoint MLE will be exact also, $\hat{\theta}_\MLE(x,n)=\theta_\MLE(x,n)=x/n$.
To match with the setup of Theorems~\ref{T:MLEerror}--\ref{T:SamplingMLEWellSpecified}, compute
\begin{equation}
\begin{gathered}
K_0'(s;\theta) = \theta + \tilde{\Sigma} s^T,
\qquad
\grad_\theta K_0(s;\theta) = s, 
\\
K_0''(s;\theta) = \tilde{\Sigma},
\quad
\grad_\theta^T \grad_\theta K_0(s;\theta) = 0, 
\quad
\grad_s\grad_\theta K_0(s;\theta) = I_{\xdim\times\xdim}
.
\end{gathered}
\end{equation}
Thus the solutions of \eqref{y0K0's0}--\eqref{RateFunctionImplicitCriticalPoint} are $(s_0,\theta_0)=(0,y_0)$ with $y_0\in\R^{\xdim\times 1}$ arbitrary, and we compute $H=-\tilde{\Sigma}$, which is negative definite as required by \eqref{RateFunctionImplicitHessianDefinite}.
The fact that $s_0=0$ indicates that, in the terminology of \refsubsubsect{WellSpecified}, this model is well-specified at the level of the mean for an arbitrary observed value $x=ny_0$.
By the discussion in \refsubsubsect{WellSpecified}, to verify \eqref{RateFunctionImplicitHessianDefinite} it would also suffice to check that the gradient of the mapping $\theta\mapsto\E_\theta(Y_0)$ is non-singular, which is trivial since this is the identity mapping.
Hence this model is fully identifiable at the level of the mean, and the conclusions of Theorems~\ref{T:MLEerror}--\ref{T:SamplingMLEWellSpecified} apply.
Note in particular that the inferential uncertainty in this model decreases to zero in the limit $n\to\infty$: when $n$ is large, even a single observation of $X_\theta$ is highly informative for the value of the mean.

\textit{A partially identifiable submodel.}
Different behaviour can be obtained if the model is partially identifiable.
Let $\theta=\smallmat{\omega\\ \nu}$ with $\omega\in\R$ and $\nu>0$.
Set $\tilde{\mu}(\theta)=\omega\ones_{\xdim\times 1}$ and $\tilde{\Sigma}(\theta)=\nu I_{\xdim\times\xdim}$, where $\ones_{\xdim\times 1}$ denotes the $\xdim\times 1$ vector all of whose entries are 1.
In other words, $Y_\theta$ consists of $\xdim$ i.i.d.\ copies of the univariate $\mathcal{N}(\omega,\nu)$ distribution, and $X_\theta$ consists of $\xdim$ i.i.d.\ copies of the univariate $\mathcal{N}(n\omega,n\nu)$ distribution.
(This is the setup of \refexample{MultipleSamplesWorkings} with $k=\xdim$, $\xdim_0=1$, but for consistency with the rest of this example we do not use the notation $\vec{X}_\theta$, etc.)
Then
\begin{equation}
\begin{gathered}
K'_0(s;\theta) = \omega\ones_{\xdim\times 1} + \nu s^T,
\qquad
\grad_\theta K_0(s;\theta) = \mat{\sum_{i=1}^\xdim s_i & \tfrac{1}{2}\sum_{i=1}^\xdim s_i^2}
,
\end{gathered}
\end{equation}
so that the solutions of \eqref{y0K0's0}--\eqref{RateFunctionImplicitCriticalPoint} are given by $s_0=0$, $y_0=\omega\ones_{\xdim\times 1}$ with $\theta_0$ arbitrary.
However, $\grad_s\grad_\theta K_0(0;\theta_0)=\mat{\ones_{\xdim\times 1} & 0_{\xdim\times 1}}$ has rank 1, so the Hessian from \eqref{RateFunctionImplicitHessianDefinite} is singular and Theorems~\ref{T:MLEerror}--\ref{T:SamplingMLEWellSpecified} do not apply.

Instead, the setup of \refthm{MLEerrorPartiallyIdentifiable} applies, since $\nu$ affects the variance but not the mean.
There is no need to apply \refthm{MLEerrorPartiallyIdentifiable} because the true and saddlepoint MLEs coincide identically, but it is still of interest to note the effect of $n$ on inference for $\omega$ and $\mu$.
We find
\begin{equation}
\omega_\MLE(x) = \frac{1}{\xdim}\sum_{i=1}^\xdim \frac{x_i}{n},
\qquad
\nu_\MLE(x) = \frac{1}{n \xdim}\sum_{i=1}^\xdim \left( x_i-\tfrac{1}{\xdim}\textstyle{\sum_{j=1}^\xdim} x_j \right)^2
.
\end{equation}
In the notation of \eqref{PIChangeOfVariables}, $\omega_\MLE(x)$ is the sample mean of $y_1,\dotsc,y_\xdim$, whereas $\nu_\MLE(x)$ is the population variance corresponding to the lower-order quantities $\xi_1,\dotsc,\xi_\xdim$.
If we assume that $x$ is sampled according to the model distribution $X_{\theta_0}$, as in \refthm{SamplingMLEWellSpecified}, then in the limit $n\to\infty$ each value $y_i$ concentrates around $\omega_0$, whereas each $\xi_1,\dotsc,\xi_\xdim$ follows the $\mathcal{N}(0,\nu)$ distribution regardless of the value of $n$.
Thus $\omega_\MLE(X_{\theta_0})$ becomes increasingly concentrated, whereas $\nu_\MLE(X_{\theta_0})$ retains non-trivial sampling variability and does not exhibit concentration in the limit $n\to\infty$.
We might say that $n\to\infty$ makes an observation of $X$ increasingly informative for inference about $\omega$, but not for inference about $\nu$.
Correspondingly, from
\begin{equation}
\log L(\theta;x) = -\frac{n\xdim}{2\nu}\left( \omega-\omega_\MLE(x) \right)^2 - \frac{\xdim}{2}\left( \frac{\nu_\MLE(x)}{\nu}+\log\nu \right) - \frac{\xdim}{2}\log(2\pi n),
\end{equation}
we see that in the limit $n\to\infty$, the log-likelihood becomes increasingly steep in the $\omega$ direction, but not in the $\nu$ direction.

This phenomenon explains why there is no analogue of Theorems~\ref{T:BayesianError}--\ref{T:SamplingMLEWellSpecified} in the partially identifiable case: the inferential uncertainty for $\nu$ remains of order 1 as $n\to\infty$.
Nevertheless, \refthm{MLEerrorPartiallyIdentifiable} shows that the MLE approximation error continues to vanish as $n\to\infty$ under general conditions.
\end{example}

\begin{example}[Normal distribution and its square]\lbexample{NormalSquare}
Let $Z\sim\mathcal{N}(\mu,\sigma^2)$ have the univariate normal distribution, and set $Y=(Z,Z^2)$.
Set $\theta=\smallmat{\mu\\ \sigma^2}$ with the restriction $\sigma^2>0$.
Then
\begin{equation}
M_0(s;\theta) = \frac{\exp\left( \frac{\mu s_1 + \frac{1}{2}\sigma^2 s_1^2 + \mu^2 s_2}{1-2s_2\sigma^2} \right)}{\sqrt{1-2s_2\sigma^2}}
\qquad
\text{for $\Re(s_2)<\tfrac{1}{2}\sigma^2$}
.
\end{equation}
This expression can be simplified by noting that $Y$ is a sufficient statistic for an exponential family, for which a natural parameter is
\begin{equation}
\eta = (\eta_1, \eta_2) = \left(\tfrac{\mu}{\sigma^2}, -\tfrac{1}{2\sigma^2} \right) \qquad\text{(with $\eta_2<0$)}.
\end{equation}
In terms of $\eta$, 
\begin{equation}
M_0(s;\theta) = \sqrt{\frac{\eta_2}{\eta_2+s_2}} \exp\left( \frac{\frac{s_2\eta_1^2}{4\eta_2} - \frac{\eta_1 s_1}{2} - \frac{s_1^2}{4}}{\eta_2 + s_2} \right)
\quad
\text{for $\Re(s_2)<-\eta_2$}, 
\end{equation}
and some calculation leads to the conclusion that $M_0$ and $K_0$ can be written as
\begin{equation}
\begin{gathered}
M_0(s;\theta)=\frac{e^{\rho_0(\eta+s)}}{e^{\rho_0(\eta)}}, \quad K_0(s;\theta)=\rho_0(\eta+s)-\rho_0(\eta), \quad\text{where}
\\
\rho_0(\eta) = \frac{\eta_1^2}{-4\eta_2} - \frac{1}{2}\log(-\eta_2)
\end{gathered}
\end{equation}
(which may be obtained from $\exp(\rho_0(\eta))=\int_{-\infty}^\infty e^{\eta_1 z + \eta_2 z^2} \frac{dz}{\sqrt{\pi}}$, with $\frac{dz}{\sqrt{\pi}}$ acting as the reference measure).
Then
\begin{equation}
\frac{\partial\rho_0}{\partial\eta_1}(\eta) = \frac{\eta_1}{-2\eta_2}, \qquad \frac{\partial\rho_0}{\partial\eta_2}(\eta) = \left( \frac{\eta_1}{-2\eta_2} \right)^2 + \frac{1}{-2\eta_2},
\end{equation}
and if we follow the notation of \refsubsubsect{ExpFamily} in setting $\hat{\eta} = \eta + \hat{s}$, so that $\hat{\eta}$ is the MLE for the natural parameter $\eta$, then we find
\begin{equation}\label{NormalSquareSaddlepointSolution}
\frac{\hat{\eta}_1}{-2\hat{\eta}_2} = y_1, \quad \frac{1}{-2\hat{\eta}_2} = y_2-y_1^2, \quad \hat{\eta}_1 = \frac{y_1}{y_2-y_1^2} \quad \hat{\eta}_2 = \frac{-1}{2(y_2-y_1^2)},
\end{equation}
provided that $y_2>y_1^2$.
If we think of $x$ as the sum of $n$ pairs $(z(i),z(i)^2)$, then $y=x/n$ is the corresponding implied sample mean and we could denote its entries as $y_1=\overline{z}$, $y_2=\overline{z^2}$.
Then, changing back to the parameters $\mu,\sigma^2$ and their corresponding MLEs $\hat{\mu},\hat{\sigma}^2$, this reduces to the familiar and unsurprising statement that $\hat{\mu}=\overline{z}$ and $\hat{\sigma}^2=\overline{z^2}-\overline{z}^2$, the population mean and variance, respectively, corresponding to the $n$ values $z(i)$.

Turning to the saddlepoint approximation, differentiate $\rho$ then, for convenience, return to the parameters $\mu,\sigma^2$:
\begin{equation}
\begin{gathered}
\rho_0(\eta) = \frac{\mu^2}{2\sigma^2} + \frac{1}{2}\log(2\sigma^2)
,
\qquad
\rho_0(\eta)-\eta\rho'_0(\eta) = \frac{1}{2} + \frac{1}{2}\log(2\sigma^2)
,
\\
\rho''_0(\eta) = 
\mat{\frac{1}{-2\eta_2} & \frac{\eta_1}{2\eta_2^2} \\
\frac{\eta_1}{2\eta_2^2} & \; \frac{\eta_1^2}{-2\eta_2^3} + \frac{1}{2\eta_2^2}}
=
\mat{\sigma^2 & 2\mu\sigma^2 \\
2\mu\sigma^2 & \; 4\mu^2\sigma^2+2\sigma^4}
,
\\
\det(2\pi n\rho''_0(\eta)) = 8\pi^2 n^2 \sigma^6
.
\end{gathered}
\end{equation}
We now apply \eqref{SPAExpFamily} with $\rho=n\rho_0$, noting that $\rho'(\hat{\eta})=x$ by construction.
Recalling that $\hat{\sigma}^2 = y_2-y_1^2 = \frac{x_2}{n}-\frac{x_1}{n^2}$,
\begin{equation}
\begin{aligned}
\hat{L}_n(\theta;x) 
&= \frac{\exp\left( n\left( \rho_0(\hat{\eta}) - \hat{\eta}\rho'_0(\hat{\eta}) \right) \right)}{\sqrt{\det(2\pi n\rho''_0(\hat{\eta}))}} \exp\left( \eta x-n\rho_0(\eta) \right)
\\
&= \frac{\exp\left( \frac{n}{2}+\frac{n}{2}\log(2\hat{\sigma}^2) \right)}{\sqrt{8\pi^2 n^2 \hat{\sigma}^6}} \exp\left( \eta_1 x_1 + \eta_2 x_2 - \frac{n\mu^2}{2\sigma^2} - \frac{n}{2}\log(2\sigma^2) \right)
\\
&= \frac{e^{n/2}2^{n/2}}{2^{3/2}\pi n} \left( \hat{\sigma}^2 \right)^{\frac{n-3}{2}} \frac{\exp\left( \frac{x_1\mu}{\sigma^2} - \frac{x_2}{2\sigma^2} - \frac{n\mu^2}{2\sigma^2} \right)}{2^{n/2}(\sigma^2)^{n/2}}
\\
&= \frac{e^{n/2}}{2^{3/2}\pi n} \left( \frac{x_2}{n}-\frac{x_1^2}{n^2} \right)^{\frac{n-3}{2}} \frac{\exp\left( \frac{x_1\mu}{\sigma^2} - \frac{x_2}{2\sigma^2} - \frac{n\mu^2}{2\sigma^2} \right)}{(\sigma^2)^{n/2}}
.
\end{aligned}
\end{equation}
We note that this expression requires $y_2>y_1^2$, or equivalently $x_2>x_1^2/n$, which is the condition for \eqref{NormalSquareSaddlepointSolution} to give a valid solution to $\rho'_0(\eta)=y$.
In particular, the saddlepoint approximation will not make sense if we try to apply it with $n=1$, i.e., if we try to apply it directly to $Y$, which makes sense because $Y$ is supported on the curve $\set{(z,z^2)\colon z\in\R}$ and does not have a density.

By comparison, for $n\geq 2$, the true likelihood can be directly calculated as
\begin{equation}
L_n(\theta;x) = \frac{\exp\left( -\frac{1}{2n\sigma^2}(x_1-n\mu)^2  \right)}{\sqrt{2\pi n\sigma^2}} \frac{(\frac{x_2-x_1^2/n}{2\sigma^2})^{\frac{n-3}{2}} \exp\left( -\frac{1}{2\sigma^2}\left( x_2-x_1^2/n \right) \right) }{2\sigma^2\Gamma(\frac{n-1}{2})} \indicator{x_2 > x_1^2/n}
\end{equation}
using the observation that $X_1$ and $X_2-X_1^2/n$ are independent with respective distributions $\mathcal{N}(n\mu,n\sigma^2)$ and $\mathrm{Gamma}(\frac{n-1}{2},\frac{1}{2\sigma^2})$.
Taking the ratio of these two expressions leads, after some simplification, to
\begin{equation}
\frac{\hat{L}_n(\theta;x)}{L_n(\theta;x)} = \frac{2^{\frac{n-3}{2}} e^{\frac{n}{2}} \Gamma(\frac{n-1}{2})}{n^{\frac{n}{2}-1}\sqrt{\pi}}
\end{equation}
and an application of Stirling's approximation confirms that this ratio tends to 1 as $n\to\infty$.

Note that the saddlepoint approximation did not give the exact density for $X$, notwithstanding the result of \refexample{Normal}.
In this case the ratio of the true and exact densities is a number depending on $n$ but not on $x$ or $\theta$.
The fact that the ratio does not depend on $\theta$ follows by the reasoning in \refsubsubsect{ExpFamily}; in particular, the saddlepoint MLE is exact, which we already knew from \refsubsubsect{ExpFamily}.
The fact that the ratio does not depend on $x$ is a particular feature of this example.
\end{example}

\begin{example}[Saddlepoint MLE may be incorrect globally]\lbexample{MLEWrong}
Let $Y_\theta = \frac{1}{2}e^{-\theta^2}Z + B$ where $Z\sim\mathcal{N}(0,1)$ and $\P(B=1)=\P(B=-1)=1/2$, with $B$ and $Z$ independent, and let $y_0=0$.
Thus $Y_\theta$ is a mixture of two normal distributions, with means $1$ and $-1$ and common variance $\frac{1}{4}e^{-2\theta^2}$.
We consider the case $y=0$, so that $x=0$ also.

For the saddlepoint approximation, we compute
\begin{equation}
\begin{gathered}
M_0(s;\theta) = e^{\frac{1}{8}e^{-2\theta^2}s^2}\cosh s, \quad K_0(s;\theta) = \frac{1}{8}e^{-2\theta^2}s^2 + \log\cosh s, 
\\
K_0''(s;\theta)=\frac{1}{4}e^{-2\theta^2}+\sech^2 s.
\end{gathered}
\end{equation}
Since $y=0$ coincides with the mean, we will have $\hat{s}_0(\theta,y)=\hat{s}(\theta,x)=0$ for all $\theta$ and
\begin{equation}
\hat{L}_n(\theta;0) = \frac{1}{\sqrt{2\pi n K''_0(0;\theta)}} = \frac{1}{\sqrt{2\pi n(1+e^{-2\theta^2}/4)}},
\end{equation}
which is equally easy to compute for all values of $n$.
In particular, we see that $\hat{L}_n(\theta;0)$ approaches its supremum in the limit where $\theta\to\pm\infty$.

We now turn to the true likelihood.
First consider $n=1$.
Then we can explicitly compute
\begin{equation}
L_1(\theta;0) = \frac{\exp\left( -2e^{2\theta^2}(\pm 1)^2 \right)}{\sqrt{2\pi(e^{-2\theta^2}/4)}}
, \qquad
\log L_1(\theta;0) = -2e^{2\theta^2} + \theta^2-\log\sqrt{\pi/2},
\end{equation}
and the unique global MLE is $\theta_\MLE(x=0,n=1) = 0$, the $\theta$ value that maximises the variance.
This makes intuitive sense: in each normally-distributed mixture component, the observed value $0$ is more than one standard deviation away from the mean, so increasing the variance will increase the density and hence the contribution to the overall likelihood.
We see also that $\log L_1(\theta;0)\to-\infty$ as $\theta\to\pm\infty$, corresponding to the observation that when $\theta$ is large the two mixture densities fall off quickly away from their respective means $1$ and $-1$.

In particular, for $n=1$, the true and saddlepoint likelihoods have completely different behaviour: the true likelihood has a unique maximum at $\theta=0$, whereas the saddlepoint approximation incorrectly predicts that the likelihood increases when $\theta\to\pm\infty$.
When $\abs{\theta}$ is large, the relative error in the saddlepoint approximation becomes large: the saddlepoint approximation predicts a bounded likelihood, whereas the true density at $y=0$ approaches zero.
Roughly speaking, this is because the distribution is bimodal with two increasingly narrow peaks.
The ``missing'' probability mass in the trough between the two peaks does not significantly affect the first or second moments, and the saddlepoint approximation does not detect it.
This applies equally to the usual normal approximation (which coincides with the saddlepoint approximation since $\hat{s}=0$) and indeed it is difficult to see how any general-purpose approximation based on moments could handle bimodality well.

For general $n$, the situation becomes more complicated, and changes considerably depending on the parity of $n$.
We can consider $X$ as a mixture of $n+1$ normally distributed components, each with variance $\frac{n}{4}e^{-2\theta^2}$, and with means and mixture probabilities corresponding to the values and probabilities for the sum of $n$ i.i.d.\ copies of $B$.
If $n$ is odd, the mean of each of these normally distributed components is an odd, and in particular non-zero, integer, so we can make the upper bound 
\begin{equation}
L_n(\theta;0) \leq \frac{\exp\left( -2e^{2\theta^2}(\pm 1)^2/n \right)}{\sqrt{2\pi(ne^{-2\theta^2}/4)}}, \qquad\text{$n$ odd.}
\end{equation}
In particular, similar to the case $n=1$, the likelihood tends to zero as $\theta\to\pm\infty$.
This is again different from the saddlepoint approximation, where $\hat{L}_n(\theta;0)$ continues to have its supremum in the limit $\theta\to\pm\infty$.

On the other hand, if $n$ is even, one of the mixture components has mean zero and we have the lower bound
\begin{equation}
L_n(\theta;0) \geq \binom{n}{n/2}2^{-n} \cdot \frac{1}{\sqrt{2\pi(ne^{-2\theta^2}/4)}} = c_n e^{\theta^2}, \qquad\text{$n$ even,}
\end{equation}
where $\binom{n}{n/2}2^{-n} \approx n^{-1/2}$ is the probability that the sum of $n$ i.i.d.\ copies of $B$ has value 0.
Thus the true likelihood grows rapidly as $\theta\to\pm\infty$.
The saddlepoint approximation misses this feature too (although, coincidentally, it ends up correctly predicting that the supremum occurs when $\theta\to\pm\infty$) this time because the true density has a high but narrow peak at $y=0$ when $\abs{\theta}$ is large.
In this case too, the ``extra'' probability mass in the peak does not greatly affect the variance (which arises primarily from variation of the mean across different mixture components) and the saddlepoint approximation does not detect it.
A similar effect occurs in \cite[sections~4.1 and 4.2]{KleSka2008}: the mixture distribution defined in \cite[equation~(5)]{KleSka2008} has a highly peaked component when $\omega$ is small (and even more so in the limit $\omega\decreasesto -\frac{1}{4}$, which they exclude from consideration) and the standard saddlepoint approximation does not handle the resulting MLE problems well.
\end{example}

\begin{example}[Global maximum for $\hat{L}^*$ need not give global maximum for $\hat{L}$]\lbexample{LhatVsL*Different}
Consider the univariate parametric model $Y\sim\mathcal{N}(\mu(\theta),\sigma(\theta)^2)$, $\theta\in\R=\thetadomain$.
We will choose the parametrising functions so that, for an observation with $y=0$ and hence $x=0$, the value $\theta=0$ is the unique global maximiser for $\theta\mapsto \hat{L}^*(\theta;0)$ but neither the true likelihood $L_n(\theta;0)$ nor its saddlepoint approximation $\hat{L}_n(\theta;0)$ have a global maximum, for any value of $n$.
Note that $L_n(\theta;x)=\hat{L}_n(\theta;x)$ already because the distribution is normal, so this example concerns the distinction between $L_n(\theta;0)$ and $\hat{L}^*(\theta;0)$ only.

Adapting the notation of \refexample{Normal} leads to
\begin{equation}
\begin{gathered}
K_0(s;\theta) = \mu s + \tfrac{1}{2}\sigma^2 s^2, \quad \hat{s}_0(\theta,0)=\hat{s}(\theta,0)=-\mu/\sigma^2, 
\\
\log\hat{L}^*(\theta;0) = n\log L^*_0(\hat{s}(\theta,0),\theta) = n\left( \mu(-\mu/\sigma^2) + \tfrac{1}{2}\sigma^2(-\mu/\sigma^2)^2 \right) = -\frac{n\mu^2}{2\sigma^2}
,
\end{gathered}
\end{equation}
where for convenience we have omitted the dependence of $\mu$ and $\sigma^2$ on $\theta$.
On the other hand, $X$ has the $\mathcal{N}(n\mu,n\sigma^2)$ distribution so
\begin{equation}
\log L_n(\theta;0) = -\frac{(0-n\mu)^2}{2(n\sigma^2)} - \frac{1}{2}\log(2\pi(n\sigma^2)) = -\frac{n\mu^2}{2\sigma^2} - \frac{1}{2}\log(2\pi n\sigma^2).
\end{equation}

We now make a choice of parametrisation to make the claimed properties hold.
We can make $\theta=0$ the unique global maximiser for $\theta\mapsto \hat{L}^*(\theta;0)$ if we arrange $\mu(0)=0$, $\mu(\theta)\neq 0$ for all $\theta\neq 0$ and $\mu(\theta)^2/\sigma(\theta)^2\to\infty$ as $\theta\to\pm\infty$.
On the other hand, we can make $\log L_n(\theta;0)\to\infty$ as $\theta\to\pm\infty$ if in addition we arrange $\sigma(\theta)\to 0$ in such a way that $-\log(\sigma(\theta)^2) \gg \mu(\theta)^2/\sigma(\theta)^2$ as $\theta\to\pm\infty$.
One choice that acheives this is
\begin{equation}
\mu(\theta) = \theta e^{-\theta^4}, \qquad \sigma(\theta)^2 = e^{-2\theta^4}.
\end{equation}
\end{example}

\begin{example}[Likelihoods may be ill-behaved for small $n$]\lbexample{QPlusGamma}
Let $Z=Z_{\alpha,r}$ have the $\mathrm{Gamma}(\alpha,r)$ distribution and let $G=G_p$ have the $\N$-valued $\Geometric_{\geq 1}(p)$ distribution (i.e., $\P(G=k)=p(1-p)^{k-1}$ for $k\in\N$), independently, where $\alpha,r>0$ and $0<p<1$.
Thus, writing $\theta=(\alpha,r,p)$, we are taking $\theta\in\thetadomain=\in(0,\infty)^2\times(0,1)$ .
Let $q(k)\colon\N\to\mathbb{Q}$ be an enumeration of the rational numbers $\mathbb{Q}$ such that $\abs{q(k)}\leq k$ for all $k\in\N$.
Set $Y=Y_\theta=Z_{\alpha,r}+q(G_p)$.
We can think of $Y$ as a mixture of shifted Gamma distributions $Z+q(k)$, with mixture probabilities $\P(G=k)$.

Each mixture distribution $Z+q(k)$ is absolutely continuous with respect to Lebesgue measure, so $Y_\theta$ has a density for any choice of $\theta$.
However, whenever $\alpha<1$, the density of $Z+q(k)$ is unbounded in a neighbourhood of $q(k)$, and it follows that, for such $\alpha$, $Y_\theta$ does not have a density function that is bounded (nor, \textit{a fortiori}, continuous) on any non-trivial interval.

Despite this, $Y_\theta$ and its MGF $M_0$ satisfy the assumptions of \refthm{GradientError}.
The assumption $\abs{q(k)}\leq k$ implies that $q(G)$ has exponentially bounded tails for any fixed $\theta$ and 
\begin{equation}
\bigabs{e^{sq(G)}} \leq e^{\abs{s} G} = \frac{p}{1-(1-p)e^{\abs{s}}},
\end{equation}
so that $M_{q(G)}(s;\theta)$ is defined at least in the interval $\abs{s}<\log(1/(1-p))$.
The identity
\begin{equation}\label{MMZqG}
M_0(s+\ii\phi;\theta) = M_Z(s+\ii\phi;\theta) M_{q(G)}(s+\ii\phi;\theta)
\end{equation}
holds for all $(s,\theta)\in\mathcal{S}$ and all $\phi\in\R$ (to see this, recall that $(s,\theta)\in\mathcal{S}$ requires $e^{sZ} e^{sq(G)}$ to be absolutely integrable, and for independent random variables $U$, $V$, the product $UV$ is absolutely integrable if and only if both $U$ and $V$ are absolutely integrable).
Hence
\begin{align}
\abs{\frac{M_0(s+\ii\phi;\theta)}{M_0(s;\theta)}} 
&= \abs{\frac{M_Z(s+\ii\phi;\theta) M_{q(G)}(s+\ii\phi;\theta)}{M_Z(s;\theta) M_{q(G)}(s;\theta)}}
\leq 
\abs{\frac{M_Z(s+\ii\phi;\theta)}{M_Z(s;\theta)}}
\end{align}
because of the trivial bound $\abs{\E(e^{(s+\ii\phi) q(G)})} \leq \E(\abs{e^{(s+\ii\phi) q(G)}}) = \E(e^{s q(G)})$ for all $\phi$.
So \eqref{DecayBound} holds because of the corresponding bound for the Gamma distribution, see \refexample{Gamma}.
Indeed, this argument shows generally that if $Y=Z_1+Z_2$ is the sum of two independent random variables, \eqref{DecayBound} follows if either $Z_1$ or $Z_2$ satisfies the corresponding bound.

Verifying \eqref{GrowthBound} is somewhat inconvenient because we lack an explicit formula and must use indirect arguments.
First consider $\frac{\partial}{\partial p}M_{q(G_p)}(s+\ii\phi;\theta)$.
Let $c_1(s,\theta)$ denote the minimum distance from $(s,\theta)$ to $\mathcal{S}^c$.
Then $c_1$ is positive on $\interior\mathcal{S}$ and continuous.
Consequently $C_2(s,\theta)=\sup_{s'\colon\abs{s'-s}\leq \frac{1}{2} c_1(s,\theta)} M_0(s';\theta)$ is finite and continuous on $\interior\mathcal{S}$, and we may choose $c_3(s,\theta)$ positive and continuous on $\interior\mathcal{S}$ such that $\abs{s}c_3(s,\theta)\leq\frac{1}{2}c_2(s,\theta)$ (for instance, $c_3(s,\theta)=\min(c_2(s,\theta)/2\abs{s}, 1)$).
Then for $(s,\theta)\in\interior\mathcal{S}$ we compute
\begin{align}
\abs{\frac{\partial}{\partial p}M_{q(G_p)}(s+\ii\phi;\theta)} &= \abs{\frac{\partial}{\partial p} \sum_{k=1}^\infty p(1-p)^{k-1} e^{(s+\ii\phi)q(k)}}
\notag\\&
= \abs{\sum_{k=1}^\infty \frac{\partial}{\partial p} \left( p(1-p)^{k-1} \right) e^{(s+\ii\phi)q(k)}}
\notag\\&
\leq \sum_{k=1}^\infty p(1-p)^{k-1} \abs{ \frac{1}{p}-\frac{k-1}{1-p} } \abs{e^{(s+\ii\phi)q(k)}}
\notag\\&
\leq \frac{1}{p(1-p)} \sum_{k=1}^\infty p(1-p)^{k-1} (1+k) e^{s q(k)}
\notag\\&
= \frac{\E((1+G_p)e^{s q(G_p)})}{p(1-p)}
\notag\\&
\leq \frac{\norm{1+G_p}_{\mathscr{L}^{1+1/c_3(s,\theta)}} \E(e^{s(1+c_3(s,\theta))q(G_p)})^{1/(1+c_3(s,\theta))}}{p(1-p)}
\notag\\&
\leq \frac{C_2(s,\theta)^{1/(1+c_3(s,\theta))} \norm{1+G_p}_{\mathscr{L}^{1+1/c_3(s,\theta)}} }{p(1-p)}
\label{partialpMqG}
\end{align}
by H\"older's inequality with dual indices $1+1/c_3$ and $1+c_3$.
The upper bound in \eqref{partialpMqG} is continuous as a function of $s$ and $\theta$.
(The interchange of differentiation and infinite summation in the second line of \eqref{partialpMqG} can be justified by the same chain of inequalities: given $\theta=(\alpha,r,p_0)$ with $p_0\in(0,1)$, one can find an interval $[p_1,p_2]\subset(0,1)$ with $p_0$ in its interior such that $(s,\theta')\in\mathcal{S}$ whenever $\theta'=(\alpha,r,p)$ with $p\in[p_1,p_2]$, and one can find a constant $C<\infty$ such that $\frac{\partial}{\partial p}(p(1-p)^{k-1}) \leq C(1+k)p_1 (1-p_1)^{k-1}$ for all $k\in\N$.
Then \eqref{partialpMqG} applied with $p=p_1$, together with \reflemma{DiffUnderInt} applied with $g(p,k)=p(1-p)^{k-1} e^{(s+\ii\phi)q(k)}$ and $\mu$ taken to be counting measure on $\N$, verify the assertion.)
A similar argument applies to other derivatives of $M_{q(G_p)}(s;\theta)$: instead of $(1+G_p)/[p(1-p)]$ we obtain other polynomials involving $G_p$ or $q(G_p)$, divided by a product of the form $p^i (1-p)^j$, all of which can be bounded by similar continuous functions.
Finally \eqref{GrowthBound} follows because \eqref{MMZqG} allows us to express derivatives of $M_0$ in terms of $M_Z$ and its derivatives, for which we have bounds from \refexample{Gamma}, and $M_{q(G)}$ and its derivatives.

We remark that \eqref{DecayBound}--\eqref{GrowthBound} imply that $X$ has a continuous and bounded density when $n$ is sufficiently large, even though the summands $Y$ had ill-behaved densities when $\alpha<1$.
We can also see this directly: note that $X$ is the sum of a $\mathrm{Gamma}(n\alpha,r)$ distribution plus $n$ copies of $q(G)$, all independent.
If $n\alpha>1$, the $\mathrm{Gamma}(n\alpha,r)$ term has a continuous and bounded density, so that $X$ must as well.
This is exactly the threshold at which $M_0(s+\ii\phi;\theta)^n$ decays quickly enough to be integrable as a function of $\phi$; and \eqref{DecayBound} is an obvious sufficient condition to ensure that this integrability must always occur for $n$ sufficiently large.
\end{example}

\begin{example}[A distribution for which \eqref{DecayBound} fails]\lbexample{DecayFails}
We can modify the preceding distribution to make \eqref{DecayBound} fail.
Let $Y$ have a mixture distribution with mixture probabilities $p_i>0$ for all $i\in\N$ and component distributions of the form $Z_{\alpha_i,r_i}+c_i$, where $Z_{\alpha_i,r_i}$ follows the $\Gamma(\alpha_i,r_i)$ distribution and $c_i$ are constants (all, optionally, depending on some underlying parameter $\theta$) so that
\begin{equation}
M_0(s;\theta) = \sum_{i=1} p_i e^{c_i s}\left( \frac{r_i}{r_i-s} \right)^{\alpha_i}
\end{equation}
whenever $M_0(s;\theta)$ is defined.

Now choose $\alpha_i$ so that $\inf_i \alpha_i = 0$.
(By imposing bounds on the sizes of $p_i,r_i,c_i$, we can still ensure that $M_0$ is defined for $s$ in some neighbourhood of 0: for instance, we may take $c_i=0$ and $r_i=1$ for all $i$, with the mixture probabilities chosen so that $p_i\leq 2^{-i-\alpha_i}$ for $i\geq 2$, in which case $M_0$ is defined whenever $s<1/2$.)
Then \eqref{DecayBound} fails.
Indeed, $M_0(\ii\phi;\theta)$ contains terms that decay as $\abs{\phi}^{-\alpha_i}$ when $\abs{\phi}\to\infty$, and since the $\alpha_i$ become arbitrarily small this precludes the bound \eqref{DecayBound}.
Another way to see this is to note that if \eqref{DecayBound} holds, then $X_\theta$ must have a bounded density whenever $n$ is large enough.
But for any $n$, $X$ will itself be a mixture model in which one component is a shifted Gamma$(n\alpha_i,r_i)$ distribution.
Since $\inf_i \alpha_i=0$ we can find $i=i(n)$ so that $n\alpha_i<1$, so that the Gamma$(n\alpha_i,r_i)$ density has a singularity and $X$ cannot have a bounded density.
\end{example}

\end{appendix}

\end{document}